\newcommand{\white}[1]{{\textcolor{white}{#1}}}
\newcommand{\cV}{{\mathcal V}}
\newcommand{\tf}{{\mathtt f}}
\newcommand{\uphi}{\underline{\varphi}}
\newcommand{\umu}{\underline{\mu}}
\newcommand{\Res}[2]{\mathtt{R}\begingroup 
\setlength\arraycolsep{0pt}{\scriptsize \begin{matrix} #2 \\[0.7mm] #1 \end{matrix} }\endgroup}
\newcommand{\ent}[6]{\begingroup 
\setlength\arraycolsep{-2pt}\begin{matrix}{\tB_{#1}^{[#2]}} &\white{{|}^{|}}_{#3,#4}^{#5,#6}\end{matrix}\endgroup}
\newcommand{\piuinf}{\scalebox{0.7}{$(+\infty)$}\;}
\newcommand{\betone}[3]{[b_{#1}^{(\tp)}]_{#2}^{#3}}
\newcommand{\betonep}[3]{\big[b_{#1}^{(\tp)}\big]_{#2}^{#3}}
\newcommand{\ac}[1]{\mathtt{n}_{#1}^{(\tp)}}
\newcommand{\bc}[1]{\mathtt{b}_{#1}^{(\tp)}}
\newcommand{\hbc}[1]{\widehat{\;\mathtt{b}}_{#1}^{(\tp)}\!}
\newcommand{\e}{\epsilon}
\newcommand{\ttf}{\mathtt{f}}
\newcommand{\im}{\mathrm{i}\,}
\newcommand\restr[2]{{
  \left.\kern-\nulldelimiterspace 
  #1 
  \vphantom{\big|} 
  \right|_{#2} 
  }}
\newcommand{\ch}{{\mathtt c}_{\mathtt h}}
\theoremstyle{plain}
\newtheorem{lem}{Lemma}
\newtheorem{teo}[lem]{Theorem}
\newtheorem{prop}[lem]{Proposition}
\newtheorem{ass}[lem]{Assumption}
\newtheorem{clem}[lem]{Crucial Lemma}
\theoremstyle{definition}
\newtheorem{sia}[lem]{Definition}
\newtheorem{rmk}[lem]{Remark}
\newtheorem{ntt}[lem]{Notation}
\newcommand{\nequiv}{{\; \scriptsize \begin{matrix} \cancel{\equiv}\end{matrix}\; }}
\renewcommand{\bar}{\overline}
\newcommand{\sgn}{\mathrm{sgn}}
\newcommand{\vet}[2]{\begin{bmatrix}#1 \\ #2 \end{bmatrix}}
\newcommand{\uno}{\mathrm{Id}}
\newcommand{\bR}{\mathbb{R}}
\newcommand{\bT}{\mathbb{T}}
\newcommand{\bZ}{\mathbb{Z}}
\newcommand{\bN}{\mathbb{N}}
\newcommand{\bQ}{\mathbb{Q}}
\newcommand{\bC}{\mathbb{C}}
\newcommand{\cL}{{\cal L}}
\newcommand{\cO}{\mathcal{O}}
\newcommand{\cR}{\mathcal{R}}
\newcommand{\cJ}{\mathcal{J}}
\newcommand{\cB}{{\cal B}}
\newcommand{\tB}{\mathtt{B}}
\newcommand{\tJ}{\mathtt{J}}
\newcommand{\tL}{\mathtt{L}}
\newcommand{\tp}{\mathtt{p}}
\newcommand{\de}{\mathrm{d}}
\newcommand{\pa}{\partial}
\newcommand{\tth}{\mathtt{h}}
\newcommand{\cH}{\mathcal{H}}
\newcommand{\bro}{\bar\rho}
\newcommand{\tg}{\mathtt{g}}
\numberwithin{equation}{section}
\title{\bf 
Infinitely many  isolas of \\
 modulational instability for Stokes waves
}
\begin{document}

 \author{Massimiliano Berti, Livia Corsi, Alberto Maspero, Paolo Ventura}

\date{}

\maketitle

\noindent 
{\bf Abstract.}  
This paper proves long-standing conjectures regarding  the 
existence of {\it infinitely}  many   
high-frequency  modulational  instability 
``isolas" 
for a Stokes wave in 
arbitrary depth $ \tth > 0 $,   
under  
longitudinal perturbations. 
We provide a complete characterization of the unstable spectral bands in the $L^2(\mathbb{R})$-spectrum of the water wave equations linearized around a Stokes wave of sufficiently small amplitude $\e$.
The unstable spectrum
is the union of 
isolated
``\emph{isolas}" of elliptical shape,   
indexed by integers $ \tp\geq 2 $, 
each with  semiaxis of size 
$  |\beta_1^{(\tp)} (\tth)| \e^\tp+ \cO(\e^{\tp+2} )$. 
As first key achievement, we obtain 
an explicit formula for the coefficient  $ \beta_1^{(\tp)} (\tth) $
for {\it any} $ \tp \geq 2 $, that 
remarkably depends
solely on 
the maximal 
Taylor-Fourier coefficients of the Stokes wave.  
We provide simple 
expressions of the asymptotic expansion of such 
coefficients  in the shallow-water limit 
$ \tth \to 0^+ $, for any $ \tp \geq 2 $.
This allows to establish that the analytic function $\beta_1^{(\tp)}(\tth)$ is
not zero for any $\tp \geq  2$, 
by verifying that a combinatorial sum is not zero;
{this relies  on a crucial 
combinatorial identity due to Koutschan, van Hoeij,
and Zeilberger. } 

\tableofcontents

\section{Introduction}

Stokes waves are among the most renowned solutions of the gravity water waves equations. Discovered in the groundbreaking work 
\cite{stokes} of Stokes in 1847, 
these waves are spatially periodic and travel 
steadly in the $ x $-direction, 
persisting for all times 
despite dispersive effects.  
  The first mathematically rigorous proof of their existence was given by \cite{Struik,LC,Nek} almost one century ago.

 A fundamental physical question concerns their stability or instability. 
 The mathematical problem can be formulated as follows.  Consider 
   a $2\pi$-periodic  Stokes wave 
    with amplitude $0< \e \ll 1$
    in an ocean with depth $ \tth > 0 $. 
The linearized  water waves equations at the Stokes waves
are, in the 
reference frame moving with the speed of the wave, 
 a linear autonomous system 
\begin{equation}
\label{linoriginale}
\pa_t h (t,x) = \mathcal{L}_{\e}
( \tth) \, [h(t,x)]   
\end{equation}
where 
$  \mathcal{L}_{\e}
(\tth)  $
is a  real Hamiltonian pseudo-differential operator  with $ 2 \pi $-periodic coefficients, cfr.  \eqref{cLepsilon}.
The dynamics
of \eqref{linoriginale} for    
perturbations $ h (t,\cdot) $  in  $ L^2 (\bR) $ is fully determined by answering the following
\begin{itemize}\item
{\sc Question: } {\it What is the $ L^2 (\bR)$ spectrum of
$ \mathcal{L}_{\e}
(\tth) $?} 
\end{itemize}

So far, it has only been known that such spectrum contains a figure ``8" close to the origin \cite{BrM,NS,BMV1,BMV3,BMV_ed} and an isola right above it 
\cite{HY,CNS, CNS2, JRSY, BMV4}
(and its  symmetric one below).  
Based on numerical simulations,
Deconink-Oliveras \cite{DO}  conjectured 
 the existence of an infinite sequence  
 of isolated unstable spectral bands within the spectrum, 
 located along the imaginary axis, 
exhibiting a near-elliptical shape --named ``isolas"--
shrinking exponentially fast away from the origin.

The objective of this paper is to rigorously prove their existence. 

\begin{teo}\label{thm:main}
For any integer  $ {\mathtt p} \geq 2$,  there exist $\e_1^{({\mathtt p} )} >0$ and a closed set of isolated depths ${\cal S}^{({\mathtt p} )} \subset (0, + \infty)$ such that for any 
$\tth \notin 
{\cal S}^{({\mathtt p} )} $ and $0< \e \leq \e_1^{({\mathtt p} )}$, the spectrum $\sigma_{L^2(\bR)} \big( \cL_\e(\tth) \big) $ 
contains  
$\tp -1$ disjoint ``isolas" in the complex upper half-plane,
see Figure \ref{fig:isole}, plus the symmetric ones in 
the lower half-plane. 
For any 
$\ell=2,\dots,{\mathtt p} $, the $\ell$-th  isola 
is approximated by an ellipse
with semiaxes of size
 $ \sim | \beta_1^{(\ell)}(\tth)| \e^\ell $, centered 
 on the imaginary axis at a point
  $ \im \omega_*^{(\ell)}(\tth) + \cO(\e^2) $, 
where 
\begin{equation}\label{iomegap}
0<\omega_*^{(2)}(\tth )  
< \dots < \omega_*^{(\ell)}(\tth ) < \dots
\qquad \text{satisfy}\qquad
\lim_{\ell \to + \infty}
\omega_*^{(\ell)}(\tth) =  + \infty \,  .
\end{equation} 
Each  $ \beta_1^{(\ell)}(\tth)$ is a non-zero analytic function of the depth
$ \tth > 0 $.  
\end{teo}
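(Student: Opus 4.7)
The natural approach is a Bloch--Floquet reduction followed by Kato perturbation theory and a Birkhoff-normal-form computation carried out to arbitrarily high order in $\e$. Since the coefficients of $\cL_\e(\tth)$ are $2\pi$-periodic in $x$, the Bloch--Floquet decomposition identifies
\[
\sigma_{L^2(\bR)}\big(\cL_\e(\tth)\big)=\bigcup_{\mu\in[-\tfrac12,\tfrac12)}\sigma_{L^2(\bT)}\big(\cL_{\mu,\e}(\tth)\big),
\]
where $\cL_{\mu,\e}(\tth):=e^{-\im\mu x}\cL_\e(\tth)\,e^{\im\mu x}$ has compact resolvent on $L^2(\bT)$, so the analysis reduces to tracking the point eigenvalues of $\cL_{\mu,\e}(\tth)$ as $(\mu,\e)$ vary in a small neighborhood of each anticipated collision.

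At $\e=0$ the operator $\cL_{\mu,0}(\tth)$ is Fourier-diagonal, with eigenvalues $\pm\im\,\Omega_\mu(k)$, $k\in\bZ$, given by the Doppler-shifted dispersion relation in the frame moving with the Stokes-wave speed. For each $\ell\ge 2$ I would identify a unique Floquet parameter $\mu_*^{(\ell)}(\tth)\in[-\tfrac12,\tfrac12)$ at which two eigenvalues of \emph{opposite Krein signature}, associated with Fourier modes whose indices differ by $\ell$, collide at the imaginary point $\im\,\omega_*^{(\ell)}(\tth)$. The strict monotonicity and divergence \eqref{iomegap} then follow directly from the monotonicity of $\Omega_\mu(k)$ in $|k|$; the closed isolated set $\cS^{(\tp)}$ consists of those depths where a collision with $\ell\le\tp$ ceases to be simple (because a third Fourier mode collides at the same $\mu_*$) together with the zeros of the leading coefficient described below.

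Near each simple collision I would invoke Kato's analytic perturbation theory to construct a spectral projector $P_{\mu,\e}^{(\ell)}$ analytic in $(\mu,\e)$ onto a two-dimensional invariant subspace, and conjugate $\cL_{\mu,\e}(\tth)$ by an analytic basis of its range to obtain a $2\times 2$ reduced matrix $\tL^{(\ell)}_{\mu,\e}(\tth)$ that inherits the Hamiltonian and reversible structure of $\cL_\e(\tth)$. A Birkhoff-normal-form procedure consisting of $\ell-1$ successive symplectic and analyticity-preserving changes of variables eliminates the non-resonant couplings of order $<\ell$ between the two colliding modes, leaving an off-diagonal coupling of size $\beta_1^{(\tp)}(\tth)\,\e^\ell+O(\e^{\ell+1})$ with $\beta_1^{(\tp)}(\tth)$ an analytic, not identically zero function of $\tth$. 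The discriminant of $\tL^{(\ell)}_{\mu,\e}(\tth)$ is then a concave quadratic in $(\mu-\mu_*^{(\ell)}(\tth))/\e^\ell$, whose complex square roots trace out an ellipse of semiaxes proportional to $|\beta_1^{(\tp)}(\tth)|\,\e^\ell$, as claimed.

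The principal obstacle is the high-order normal-form computation itself: the $\ell-1$ canonical transformations couple an a priori growing set of Fourier modes and must preserve Hamiltonianity and analyticity uniformly in $\tth$, while producing an explicit expression for $\beta_1^{(\tp)}(\tth)$—a rational function of the Taylor coefficients of the Stokes wave up to order $\tp$—that must be shown analytic and not identically zero in $\tth$, so that its discrete zero set may be absorbed into $\cS^{(\tp)}$. Once the reduced matrix has the claimed asymptotic form, disjointness of the $\tp-1$ isolas and their separation from the figure-8 and the first isola near the origin follow by choosing $\e_1^{(\tp)}$ small compared with the minimum spectral gap $\omega_*^{(\ell+1)}(\tth)-\omega_*^{(\ell)}(\tth)$ for $\ell\le\tp$.
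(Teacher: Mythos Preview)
Your overall architecture matches the paper: Bloch--Floquet reduction, identification of the opposite-Krein collisions at $\im\omega_*^{(\ell)}(\tth)$, Kato reduction to a $2\times 2$ Hamiltonian--reversible matrix, and reading the isola off the discriminant. The paper does not literally perform $\ell-1$ normal-form steps; instead it computes the reduced matrix directly via Kato's transformation operator and shows, using a structural property of the Stokes wave (the $\ell$-th Taylor coefficient $\eta_\ell,\psi_\ell$ is a trigonometric polynomial of degree $\le\ell$ with harmonics of the same parity as $\ell$), that the off-diagonal entry automatically starts at order $\e^\ell$. This parity/degree bookkeeping replaces your normal-form iteration, but the effect is the same. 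One minor correction: $\cS^{(\tp)}$ contains no ``triple-collision'' depths---the paper proves that away from zero the unperturbed spectrum has only simple or double eigenvalues for \emph{every} $\tth>0$---so $\cS^{(\tp)}$ is purely the union of the zero sets of $\beta_1^{(\ell)}$ for $2\le\ell\le\tp$.

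The genuine gap is that you name the principal obstacle---$\beta_1^{(\ell)}(\tth)\not\equiv 0$---but propose no mechanism for it, and this is where essentially all of the paper's work lies. The coefficient $\beta_1^{(\ell)}(\tth)$ is \emph{not} a rational function of the Stokes-wave Taylor data alone: it also depends on the implicitly defined collision parameter $\uphi(\ell,\tth)$ through the unperturbed frequencies, so no purely algebraic non-vanishing argument is available. The paper's route is to compute the Laurent expansion of $\beta_1^{(\ell)}(\tth)$ as $\tth\to 0^+$ and show it diverges to $-\infty$. This requires (i) an inductive derivation of the shallow-water asymptotics of the leading Fourier coefficients $\eta_\ell^{[\ell]},\psi_\ell^{[\ell]},a_\ell^{[\ell]},p_\ell^{[\ell]}$ of the Stokes wave to \emph{two} orders, for every $\ell$; (ii) substitution into an explicit formula for $\beta_1^{(\ell)}$ as a sum of $3^{\ell-1}$ entanglement terms; (iii) the discovery that the leading Laurent coefficient vanishes identically---a nontrivial combinatorial identity---forcing the expansion one order further; and (iv) evaluation of the subleading coefficient via a second combinatorial identity, ultimately established by computer algebra. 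Your sketch gives no hint of any of these steps, and without them the proof does not close.
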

\begin{figure}[h!!!]\centering \subcaptionbox*{}[.45\textwidth]{\includegraphics[width=6cm]{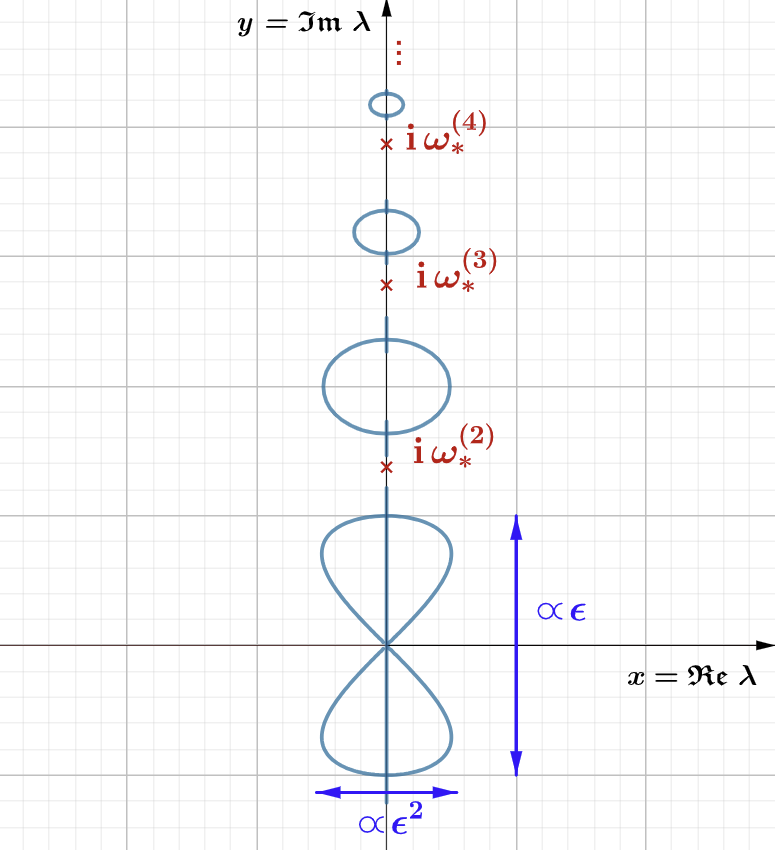}}
\subcaptionbox*{}[.45\textwidth]{
\includegraphics[width=7cm]{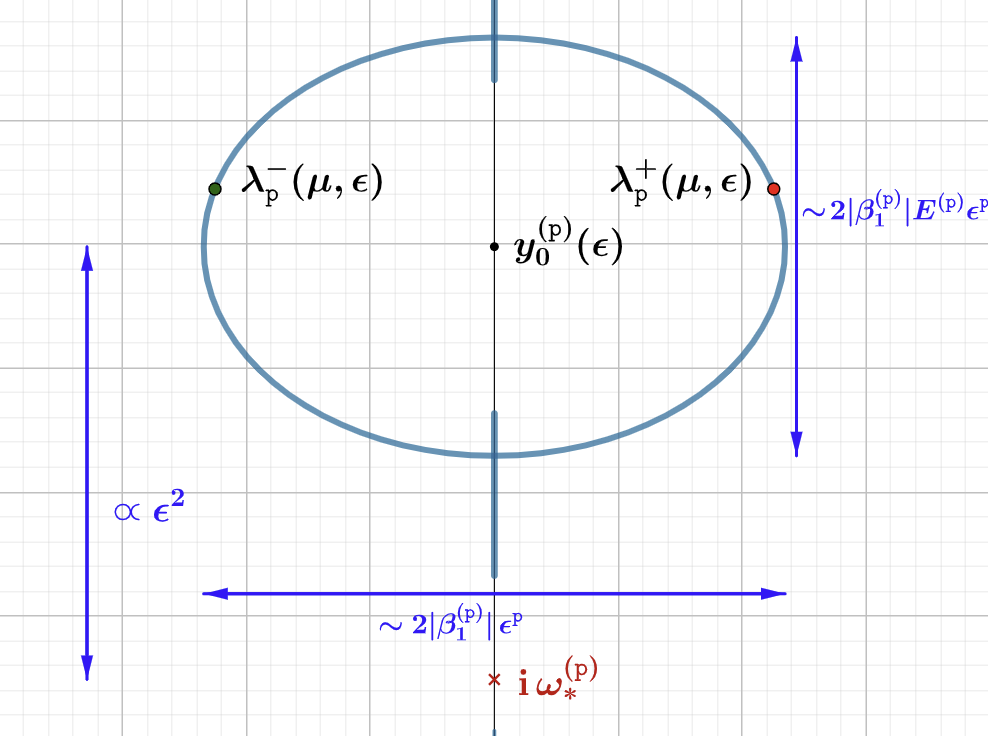} }
 \vspace{-0.8cm}
\caption{ \label{fig:isole} 
Spectral bands with non zero real part of the 
$ L^2 (\bR) $-spectrum of
$ \cL_{\e} $. 
On the right,  zoom of the $\tp$-th unstable isola. 
Its center $ y_0^{({\mathtt p} )} (\e) $ is $ \cO (\e^2) $ distant from $ \im \omega_*^{({\mathtt p} )} $ and its size is $ \propto \e^{{\mathtt p} } $. As shown in Theorem \ref{thm:main2}, two unstable 
eigenvalues $\lambda^{\pm}_\tp(\mu,\e)$ of the Floquet operator $\cL_{\mu,\e}$  span the $\tp$-th isola for Floquet exponents $\mu \in (\mu_\wedge^{({\mathtt p} )}(\e),\mu_\vee^{({\mathtt p} )}(\e) ) $ and recollide on the imaginary axis at the ends of the interval.  
}
\end{figure}

A more  complete statement is given in Theorem \ref{thm:main2}.
We point out that
 the number of isolas 
 in the spectrum of ${\cal L}_\e (\tth) $
 given by Theorem \ref{thm:main}
  is {\em arbitrarily large},
 provided $\e$ is sufficiently small.

\smallskip

 Let us summarize the state of the art
 on this problem.

 In the sixties 
 Benjamin and Feir \cite{BF,Benjamin}, Whitham \cite{Whitham},  Lighthill \cite{Li} and Zakharov \cite{Z0,ZK} discovered, 
 through experiments and formal arguments, that small amplitude  Stokes waves in sufficiently deep water are modulationally unstable
when subject to long-wave perturbations.  
This phenomenon, called ``Benjamin-Feir'' --or modulational-- instability, 
is nowadays supported by an enormous  amount of  
physical observations and numerical simulations, see e.g. \cite{DO,KDZ},  
and detected in  several dispersive and fluid PDE models, such as KdV, gKdV, NLS, Whitham equation, ..., see \cite{Wh,SHCH,GH,HK,BJ,J,HJ,BHJ,HP, MR}.

In mathematical terms, this phenomenon corresponds to the emergence of spectral bands 
of $ \cL_\e $  close to zero, spanned by 
eigenvalues with non zero real part. 
The first rigorous proof 
of local branches of Benjamin-Feir spectrum was obtained   by Bridges-Mielke \cite{BrM}  in finite depth,  see also \cite{HY}, and by Nguyen-Strauss \cite{NS}  
in deep water. We also mention the nonlinear result in Chen-Su \cite{ChenSu}.  Recently 
Berti-Maspero-Ventura \cite{BMV1,BMV2,BMV_ed} proved that these local branches extend to form a complete figure ``8'',
as observed numerically in \cite{DO}. 
This settles the question about the shape of the spectrum close to the origin. 

\smallskip

Preliminary numerical studies concerning the existence of unstable spectrum situated away from the origin were carried out by McLean 
\cite{Mc1,Mc2} in the 1980s, analyzing longitudinal and transversal wave perturbations. Later Deconinck and Oliveras \cite{DO}  
numerically 
computed the first  isolas of high-frequency instability 
in the longitudinal case and 
conjectured 
 the existence of infinitely many ones parameterized by integers $\tp \geq 2 $.
 In the last years a lot of 
 work has been devoted to this problem yielding the
 formulation of the following conjectures,  see e.g. \cite{CrT,CD}: 
 for any $ \tp \geq 2 $
\begin{align}\label{weakconj}
&\bullet \textup{ \underline{Weak conjecture}:  for any depth $ \tth > 0 $, the size of the $\tp$-th unstable isola, if present, is $\cO(\e^\tp)$.} \tag{\bf W}\\ \label{strongconj}
&\bullet  \tag{\bf S} \textup{ \underline{Strong conjecture}:
for ``many" depths, the $\tp$-th isola 
exists and with size asymptotically $\sim \e^\tp$. }
\end{align}
Clearly \eqref{strongconj} implies \eqref{weakconj}. 
Rigorous results have been proved  for $ \tp = 2 $  
in finite depth
in  \cite{HY}  and for  deep water
in \cite{BMV4}. 
For $\tp= 3 $   a formal result is given in \cite{CDT}.  

Theorems \ref{thm:main} and \ref{thm:main2} solve both 
\eqref{weakconj} and  \eqref{strongconj} for any $ \tp \geq 2 $,  as well as  the 
\begin{align} \notag 
\bullet  &\text{ \underline{Improved weak conjecture}: 
for the  
``degenerate" depths
excluded in \eqref{strongconj}
the $\tp$-th unstable isola,}\\ \tag{\bf W'}
&\text{if present,  has size $\cO(\e^{\tp+2})$,  for any 
$ \tp \geq 2 $.} \label{impweakconj}
\end{align}

\smallskip 


We also mention the results \cite{CNS, CNS2, JRSY,HTW} 
regarding transversal wave perturbations.

A fundamental difficulty in proving \eqref{strongconj} lies 
in establishing the non-degeneracy of a 
specific coefficient which characterizes
the real part of the eigenvalues. 
When  $ \tp = 2 $ 
this coefficient  depends 
solely on 
 the second-order Taylor expansion of the Stokes wave
 (on fourth-order  in the degenerate case \cite{BMV4}
and third-order for transversal perturbations in \cite{CNS, CNS2}) 
thus enabling  explicit computations 
with a manageable  effort
(frequently assisted by computer algebra systems and numerical methods). 

On the other hand, the rigorous proof of 
the strong conjecture \eqref{strongconj} 
faces the following 
challenges: 
\begin{itemize}\label{challenge}
\item[(1)] Obtaining the expression of the relevant Taylor coefficient
 --the function $ \beta_1^{(\tp)} (\tth) $ in Theorem \ref{thm:main}-- 
for {\it any} integer $ \tp \geq 2 $. 
We achieve the result in Theorem \ref{lem:expansionL}, that provides  
an explicit formula of  $\beta_1^{(\tp)} (\tth)$  in terms of the ``maximal"  
Taylor-Fourier coefficients of the Stokes wave.
\item[(2)] 
 Demonstrating that this 
coefficient $ \beta_1^{(\tp)} (\tth) \not \equiv 0 $ does not vanish identically in $\tth  $ for any $ \tp \geq 2 $. 
This is proved in Section \ref{combinatoricsincoming} 
relying on the 
formulas for 
the maximal Taylor-Fourier 
coefficients of the Stokes waves in the 
shallow limit 
$ \tth \to 0^+ $, given in 
Proposition \ref{lem:asympetapsi}.
\end{itemize}
Both steps are considerably demanding. 
Although Kato's perturbation theory of eigenvalues is algorithmic in principle, its direct practical implementation proves computationally infeasible.   
Inputting all the 
extensive power series coefficients of the Stokes waves
at any order in $ \e $,  
along with 
the Taylor expansion of the Dirichlet-to-Neumann operator,  and calculating the numerous Kato projectors 
required to 
compute the expression of the function
$ \beta_1^{(\tp)} (\tth) $ for any $ \tp $, 
is not practicable, even with current computing power.  Furthermore, although recursive, the  power series expansion of the Stokes wave coefficients,
are not explicitly known at any order, 
which is 
required to check the non vanishing of 
$ \beta_1^{(\tp)} (\tth) $ for any $ \tp $. 

In Section \ref{sec:res1} 
we  describe in detail the results
and, in Section \ref{sec:res2}, the main ideas to achieve them. 

\subsection{Main result} 
\label{sec:res1}

Since the operator $\cL_\e := \cL_\e(\tth)$ has $2\pi$-periodic coefficients, the  starting point to obtain Theorem \ref{thm:main} is the Bloch-Floquet decomposition of the spectrum 
 \begin{equation}
     \label{BFtheory}
     \sigma_{L^2(\bR)}\big( 
     \cL_\e  \big)
 = \bigcup_{\mu \in \big[-\tfrac12, \tfrac12 \big)}
 \sigma_{L^2(\bT)}\big( \cL_{\mu,\e} 
 \big)\, \quad \textup{where}\quad 
 \cL_{\mu,\e} :=
\cL_{\mu,\e} (\tth) := e^{-\im \mu x}\, \cL_\e\,  e^{\im \mu x}  
 \end{equation}
is a complex pseudo-differential  
operator,
with $2 \pi $ coefficients, 
depending on $    \mu $, see \eqref{WW}. The spectrum  of
$ \cL_{\mu,\e} $ 
on 
$ L^2 (\bT ) $ 
is discrete and its eigenvalues  span, as $ \mu $ varies,   the continuous spectral  bands of $\sigma_{L^2(\bR)}\big( 
     \cL_\e  \big)$. 
The parameter 
$ \mu $ is usually referred to as the Floquet exponent.
A direct consequence of the Bloch-Floquet decomposition is that any 
solution 
of \eqref{linoriginale} can be decomposed as a linear superposition of Bloch-waves
\begin{equation}
\label{hesplode}
h(t,x) =
e^{ \lambda t}
e^{\im \mu x} v(x)
\end{equation}
where  $\lambda
$ is an eigenvalue of $\cL_{\mu,\e} $ 
with  associated  eigenvector 
$ v(x) \in L^2(\bT) $. 
 If $\lambda $ is  unstable, i.e.\ has positive real part,  then the 
solution 
\eqref{hesplode}
 grows exponentially fast in time.  
 

We remark that the spectrum
$ \sigma_{L^2(\bT)}(\cL_{\mu,\e}) $
is a set which is
$1 $-periodic in $ \mu $, and thus
it is sufficient to consider 
$ \mu $ in  the first zone of Brillouin $ [-\tfrac12, \tfrac12) $. Furthermore by the reality 
of $ \cL_{\e} $ the spectrum $ \sigma_{L^2(\bT)}(\cL_{-\mu,\e}) $
is the complex conjugated of 
$ \sigma_{L^2(\bT)}(\cL_{\mu,\e})$
and we can restrict  to $\mu \in [0,\tfrac12) $.

\smallskip
The Hamiltonian nature of the operator $\cL_{\mu,\e}  $ constrains the eigenvalues with nonzero real part to arise, for
small $ \e > 0  $, as perturbation of {\it multiple}  eigenvalues of  $\cL_{\mu,0} (\tth) $.  The spectrum of $\cL_{\mu,0}(\tth) $ is  purely imaginary. 
As stated in Lemma \ref{thm:unpert.coll}, for any 
$\tth > 0 $ and $ \mu \in \bR$, the operator $\cL_{\mu,0} (\tth) $ possesses, away from $0$, only simple or double eigenvalues: 
there exists a diverging sequence of 
Floquet exponents  
 $$
 \umu=\uphi(\tp,\tth) > 0 \, , \qquad  
 \tp \in \bN \, ,  \ \tp 
 \geq 2 \, ,  
 $$
(and its integer  shifts $ \umu +k$, $ k \in \bZ $), such that  
$\cL_{\umu,0}(\tth) $ possesses   double  eigenvalues
$ \{\pm \im \omega_*^{(\tp)}(\tth ) \}_{\tp=2,3,\dots} $  
forming  a diverging sequence  
as in \eqref{iomegap}. 
 In other words  the unperturbed spectral bands of $ \cL_{\mu,0} 
 (\tth) $ intersect at $ \umu = \underline{\varphi} (\tp,\tth) $,
 mod $ 1$.

To state the next result, we introduce the following notation.

\begin{itemize} \item  {\bf Notation}.
We denote by  $ \cO (y^{m_1}\e^{n_1},\dots,y^{m_q}\e^{n_q}) $ ($y =\mu$, $\delta$ or $\nu$ depending on the context) 
an analytic function 
with values in a Banach space $ X $ that  satisfies,  for some $ C > 0 $ and $(y, \e)$ small, the bound
 $ \| \cO  (y^{m_1}\e^{n_1},\dots,y^{m_q}\e^{n_q}) \|_X   \leq C \sum_{j = 1}^q|y|^{m_j}|\e|^{n_j}\, 
 . $
  We denote by $r(y^{m_1}\e^{n_1},\dots,y^{m_q}\e^{n_q}) $ any
scalar  function  $\cO(y^{m_1}\e^{n_1},\dots,y^{m_q}\e^{n_q})$ which is  also  real analytic. For any  $\delta > 0 $, we denote by $B_\delta(x)$ the real interval $(x-\delta,x+\delta)$ centered at $x$.
\end{itemize}

The  next result  sharply describes 
the  spectrum of $ \cL_{\mu,\e} (\tth) $
near $\im \omega_*^{(\tp)}(\tth)$ 
for any $(\mu, \e)$  close to $(\umu,0)$.

\begin{teo}\label{thm:main2} 
{\bf (Spectral Bloch-Floquet 
bands)} For any integer $\tp \in \bN $, $\tp  \geq 2$, for any $ \tth >0 $ let $\umu
= \uphi(\tp,\tth) > 0 $ (given by Lemma \ref{collemma}) such that the operator $ \cL_{\umu,0}(\tth)$ in \eqref{cLmu} has a 
double eigenvalue at $\im \omega^{(\tp)}_* (\tth) $.   
Then there exist 
\begin{itemize}
\item[i)] 
a non zero real analytic function 
$\beta_1^{(\tp)}(\tth) $ defined for any
$ \tth > 0 $,  satisfying 
\begin{equation}\label{beta1plimit}
\lim_{\tth\to 0^+}
\beta_1^{(\tp)}
(\tth) = - \infty \, , 
\qquad
 \quad  
 \lim_{\tth\to+ \infty}
\beta_1^{(\tp)}
(\tth) = 0 \, ;
\end{equation}
\item[ii)]
constants  
$\e_1^{(\tp)}, \delta_0^{(\tp)}
>0$ and  real  analytic functions $\mu_0^{(\tp)},\mu_\vee^{(\tp)},\mu_\wedge^{(\tp)} \colon [0, \e_1^{(\tp)}) \to \bR $ 
satisfying 
$$
\mu_0^{(\tp)} (0) =  \mu_{\wedge}^{(\tp)}(0) = \mu_\vee^{(\tp)}(0) =\umu
$$ 
and, for any 
$ 0 < \e < \e_1^{(\tp)}   $, 
 \begin{equation}\label{mupm}
  \mu_\wedge^{(\tp)}(\e) < \mu_0^{(\tp)}(\e) < \mu_\vee^{(\tp)}(\e)\, ,\quad  |\mu_{\wedge,\vee}^{(\tp)}(\e)- \mu_0^{(\tp)}(\e) |   =   \frac{2 |\beta_1^{(\tp)}(\tth) |}{T_1^{(\tp)} (\tth) } \e^\tp+ r(\e^{\tp+{2}})\, ,
 \end{equation}
(see Figure \ref{fig:instareg}) where   $T_1^{(\tp)}(\tth) > 0 $ 
is an 
analytic function for any
$ \tth > 0 $; 
\end{itemize}
such that
\begin{itemize}
\item {\bf (Unstable eigenvalues)}
for any $(\mu, \e) \in B_{\delta_0^{(\tp)}}(\umu) \times B_{\e_1^{(\tp)}}(0)$ 
the operator $ \cL_{\mu,\e} (\tth) $ possesses a pair of eigenvalues  
\begin{align}\label{final.eig}
&\lambda^\pm_\tp \big(\mu, \e \big) 
=\begin{cases} \im \omega_*^{(\tp)}+ \im s^{(\tp)}(\mu,\e)  \pm \dfrac12 \sqrt{ D^{(\tp)}\big(\mu,\e\big)}  &\mbox{if }  \mu \in \big( \mu_\wedge^{(\tp)}(\e) , \mu_\vee^{(\tp)} (\e)\big) \, , \\[3mm]
\im \omega_*^{(\tp)}+ \im  s^{(\tp)}(\mu,\e) 
 \pm \im\sqrt{ \big|D^{(\tp)}\big(\mu,\e\big)\big| } &\mbox{if } \mu \notin \big( \mu_\wedge^{(\tp)} (\e) , \mu_\vee^{(\tp)} (\e)\big)\, ,
\end{cases} 
\end{align} 
 where $s^{(\tp)}(\mu,\e)$, $D^{(\tp)}(\mu,\e)$ are  real-analytic functions of the form 
\begin{equation}\label{degenerateexpD0}
\begin{aligned}
& s^{(\tp)}(\mu,\e) = r(\e^2,\nu) \qquad \text{where} \qquad  
\nu := \mu - \mu_0^{(\tp)} (\e)  \, , 
\\
& D^{(\tp)}(\mu,\e) =  4(\beta_1^{(\tp)}(\tth))^2 \e^{2\tp} -  (T_1^{(\tp)}(\tth))^2\nu^2+r(\e^{2\tp+{2}},\nu\e^{2\tp},\nu^2\e^{{2}},\nu^3)\, .
\end{aligned}
\end{equation}
 For any $ \tth > 0 $ such that 
 $ \beta_1^{(\tp)} (\tth) \neq 0 $  the function 
$$ 
D^{(\tp)}(\mu,\e) 
\  \text{is} \ 
\begin{cases}
> 0 \qquad \ \forall \mu \in 
\big(\mu_\wedge^{(\tp)} (\e), \mu_\vee^{(\tp)} (\e)\big) \, \\
= 0 \qquad \ \forall \mu \in \{ \mu_{\wedge}^{(\tp)} (\e) \, , 
\mu_{\vee}^{(\tp)} (\e)\}   \\
< 0 \qquad   \  \forall \mu \notin 
\big(\mu_\wedge^{(\tp)} (\e), \mu_\vee^{(\tp)} (\e)\big) \, . 
\end{cases}
$$
\item 
{\bf (Unstable isola)} If $\beta_1^{(\tp)}(\tth) \neq 0$ then, for any  fixed $\e \in (0,\e_1^{(\tp)})$, as $\mu$ varies in $\big(\mu_\wedge^{(\tp)}(\e),\mu_\vee^{(\tp)}(\e)\big)$
 the pair of unstable eigenvalues $\lambda^\pm_\tp(\mu,\e)$ in \eqref{final.eig} describes 
 a closed analytic  curve 
 in the complex  plane 
 $ \lambda = x + \im y $
 that
  intersects orthogonally the imaginary axis,  
  encircles a convex region, and it is symmetric 
  with respect to $y $-axis.
  Such isola is
 approximated by an  ellipse
 \begin{equation}\label{isolaintro}
x^2  + 
(E^{(\tp)}(\tth))^2(1+r(\e^2)) \Big( y-y_0^{(\tp)}(\e) \Big)^2 =   (\beta_1^{(\tp)}(\tth))^2 \e^{2\tp} (1+r(\e^{{2}})) 
\end{equation}
where $ E^{(\tp)}(\tth) 
\in (0,1) $ is
a real analytic function  for any  
$ \tth > 0 $
and $ y_0^{(\tp)} (\e) $
is $ \cO(\e^2) $-close to $ \omega_*^{(\tp)} (\tth) $. 
\item 
{\bf (Degenerate depths)} If $\beta_1^{(\tp)}(\tth) = 0 $ the real part of the unstable eigenvalues $\lambda^{\pm}_\tp(\mu,\e)$, if present,  is of size $\cO(\e^{\tp+2}) $. 
\end{itemize}
 \end{teo}

Theorem 
\ref{thm:main2} follows by 
Theorem 
\ref{thm:finalmat},  
the abstract Theorem  \ref{primeisola}
and Lemma \ref{lem:appell}.

Let us make some comments.  

 \begin{figure}[h!!]
\centering
\includegraphics[width=7cm]{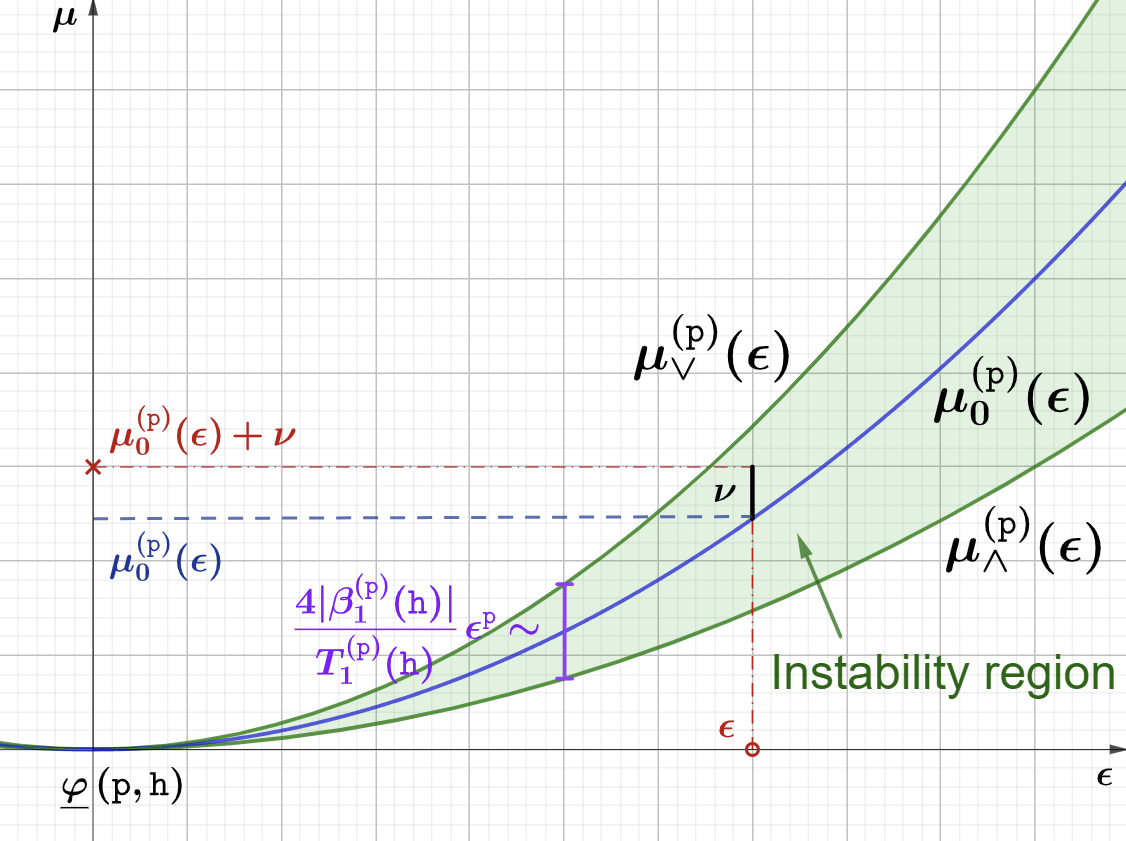}
\caption{ \label{fig:instareg} The instability region around the curve $\mu_0^{(\tp)}(\e)$  delimited by  the curves 
$\mu_\wedge^{(\tp)}(\e)$ and $\mu_\vee^{(\tp)}(\e)$.}
\end{figure}

\begin{enumerate}
\item {\bf Weak conjecture, upper bounds:}  \label{upperbounds}
Theorem 
\ref{thm:main2} proves the weak conjecture \eqref{weakconj} for all depths.
Indeed, in view of \eqref{degenerateexpD0} and \eqref{mupm}, for any $\tp \geq 2$ and  any depth $\tth>0$,
the real part of the eigenvalues
\eqref{final.eig}
 is at most of size $ \cO(\e^\tp) $
implying  that the unstable isolas,
 if ever exist,   shrink exponentially fast as  $ \tp  \to + \infty $.    
\item 
{\bf Strong conjecture,  
lower bounds:} 
Theorem 
\ref{thm:main2} proves  the strong conjecture \eqref{strongconj} for all depths such that $\beta_1^{(\tp)}(\tth) \neq 0$.
In this case
the real part of the eigenvalues
\eqref{final.eig}
 is  of size $  |\beta_1^{(\tp)} (\tth)| \e^\tp+ \cO(\e^{\tp+2} )$
 and by \eqref{isolaintro}
 the $ \tp $-th  unstable isola  has   
elliptical shape.  
 The most unstable eigenvalue is reached at $\nu \propto\!\e^{2\tp}$, see  \eqref{nuRemaxnondeg}. 
Note also 
that by \eqref{isolaintro}  each isola drifts
of $\cO(\e^2)$ from its known zeroth-order center 
$ \im  \omega_*^{(\tp)} (\tth)  $, thus rather quickly relative to its  $  \e^\tp $-size. 
 \item
 {\bf Unstable Floquet exponents:} 
 By \eqref{mupm},  the portion of the 
 unstable spectral $\tp$-th isola 
 is parametrized by Floquet exponents $ \mu $ 
in  the very narrow interval
 $ \big( \mu_{\wedge}^{(\tp)}(\e),\mu_{\vee}^{(\tp)}(\e)\big) $ which has exponentially small
 width 
 $ \sim  \frac{4|\beta_1^{(\tp)}(\tth) |}{T_1^{(\tp)}(\tth)} 
 \e^{\tp} $. 
\item
{\bf 
Non degeneracy of the function $\beta_1^{(\tp)}(\tth) $:}
we 
analytically {\it prove} 
that, for {\it any} $\tp\geq 2 $, the map 
$\tth \mapsto \beta^{(\tp)}_1(\tth)$  is   real analytic and  
satisfies \eqref{beta1plimit}. 
In the next section we explain how we achieve this challenging result.  
For $\tp= 2,3,4 $ the graphs of 
$ \beta_1^{(\tp)}
(\tth) $ are  numerically plotted   in Figures 
\ref{plotb1p2}.
\begin{figure}[h!]
 \centering
\includegraphics[width=16cm]{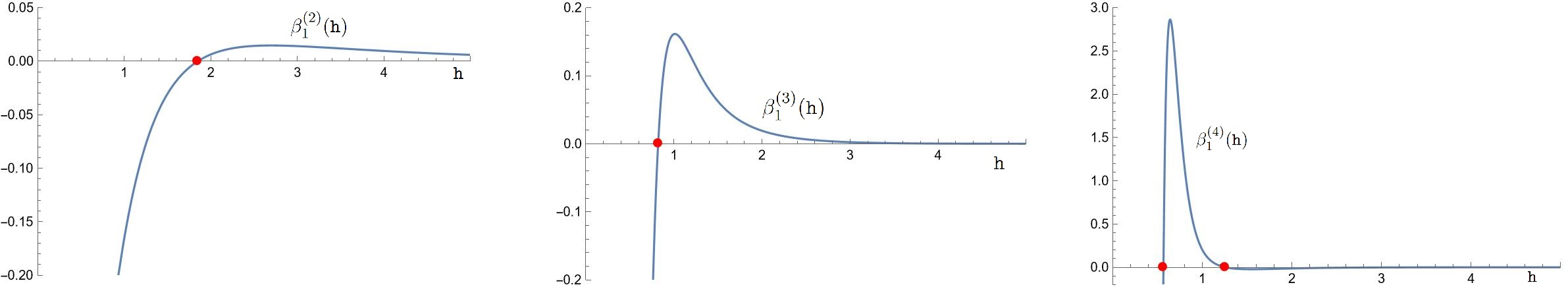}
\caption{ \label{plotb1p2} Cases $\tp= 2,3,4 $. 
The function $\beta_1^{(2)}(\tth)$ vanishes 
only at the red point 
$\tth_*^{(2)}\approx 1.84940 $ coherently with  \cite{CDT,HY}. The function $\beta_1^{(3)}(\tth)$ vanishes only at the red point  $\tth_*^{(3)}\approx 0.82064 $ coherently with  \cite{CDT}. the function $\beta_1^{(4)}(\tth)$ vanishes only at the red points $\tth_*^{(4)}\approx 0.566633$ and $\tth_{**}^{(4)}\approx 1.255969$.}
\end{figure}
\item
{\bf Improved weak conjecture:
} 
 Each function $\beta_1^{(\tp)}(\tth) $  vanishes only at a closed set of isolated depths. 
The set ${\cal S}^{(\tp)}$ 
in Theorem \ref{thm:main} is the union 
$$
{\cal S}^{(\tp)}:= \bigcup_{\ell =2}^\tp \big( \beta_1^{(\ell)} \big)^{-1}(0) \, . 
$$
When $ \beta_1^{(\tp)} (\tth) = 0 $ 
it is necessary to further  expand the discriminant $D^{(\tp)}(\mu,\e)$
in \eqref{degenerateexpD0}
to determine its sign. The latter degeneracy occurs
at  
$ \tth = + \infty $ for {\em every} $\tp \geq 2 $ since  $ \lim_{\tth \to + \infty}
\beta_1^{(\tp)} (\tth) = 0 $,
as we prove in Proposition \ref{deeplimit}.
 Such degeneration ultimately descends from a 
structural 
property of the Stokes waves in deep water at any order proved in  Proposition 
\ref{degenstokesinfinity}. 

The finding looks as a further interesting  aspect of the integrability properties of the deep water wave equations,  discussed in \cite{ZakD,BFP,Wu,DIP}. 

The last item of Theorem \ref{thm:main2} 
actually 
proves the 
improved weak conjecture: 
if 
$ \beta_1^{(\tp)} (\tth) = 0 $
(degenerate depths), 
the $\tp$-th unstable isola, if present,  has size $\cO(\e^{\tp+2})$, 
for any 
$ \tp \geq 2 $.

For  $\tp = 2 $  the 
computations 
required to detect the unstable isola of size $ \sim \e^4 $ at 
the degenerate depth 
$ \tth = + \infty $ 
have been  performed 
in \cite{BMV4}. 
\item {\bf Conjectures: }In  light of Theorem \ref{thm:main2} we conjecture that 
\begin{enumerate}
\item The analytic function $\tth \mapsto\beta_1^{(\tp)}(\tth)$ 
has only a finite number of roots for any $\tp \geq 2$.
\item For any $\tp \geq 2$ the $\tp$-th isola exists also for degenerate depths and
its size is 
$\sim \e^{\tp+2}$.
\end{enumerate}
\end{enumerate} 

The following section comments on the main difficulties and key 
novelties of the proof.

\subsection{ Ideas of proof}\label{sec:res2}

The overall goal is to compute the eigenvalues of the Hamiltonian and reversible 
operator
\begin{equation}\label{WWinit}
\cL_{\mu,\e}    = \begin{bmatrix} (\pa_x+\im\mu)\circ (\ch+p_\e(x)) & 
 |D+\mu| \tanh\big((\tth + \ttf_\e) |D+\mu| \big) \\ -(1+a_\e(x)) & (\ch+p_\e(x))(\pa_x+\im \mu) \end{bmatrix}
\end{equation}
that branch off from the $\tp$-th double eigenvalue $\im \omega_*^{(\tp)} (\tth)  $ of $\cL_{\umu,0} (\tth) $ for $(\mu,\e)$ close to $(\umu,0)$. The functions $a_\e(x)$, $p_\e(x)$ are real-analytic $2\pi$-periodic even functions that depend 
on the Stokes wave $(\eta_\e(x), \psi_\e(x))$ along which we linearize the water waves equations \eqref{WWeq}. In \eqref{WWinit} the constant $\ch = \sqrt{\tanh(\tth)}$ is the linear speed of the Stokes wave.

In Section \ref{Katoapp}, 
following the general approach in  \cite{BMV1,BMV3,BMV_ed,BMV4}, we  reduce the spectral problem to study the eigenvalues of a 
$2\times 2$ symplectic and reversible matrix, i.e.\  of the form (cfr.  \eqref{tocomputematrix})
\begin{equation}\label{L.intro}
\tL^{(\tp)}(\mu,\e) = 
\begin{pmatrix} -\im\alpha^{(\tp)} (\mu,\e) & \beta^{(\tp)}(\mu,\e) \\ \beta^{(\tp)} (\mu,\e) & \im \gamma^{(\tp)} (\mu,\e) \end{pmatrix} \, \end{equation}
where $\alpha^{(\tp)}(\mu,\e)$, $\gamma^{(\tp)}(\mu,\e)$ 
and $\beta^{(\tp)}(\mu,\e)$
are  real  analytic functions depending on 
the Stokes wave. 
In particular the function
$\beta^{(\tp)}(\mu,\e)$ is given by  the scalar product 
\begin{equation}
\label{betaph}
\beta^{(\tp)}(\mu,\e) = \frac{1}{\im}
 (\mathfrak{B}(\mu,\e) f_0^-, f_\tp^+)  
\end{equation} 
where 
$\mathfrak{B}(\mu,\e)$  
is the operator \eqref{Bgotico} 
obtained by the Kato similarity reduction 
process and 
\begin{equation}\label{autovettorikernel0}
f_{0}^- = 
\frac{1}{\sqrt{2\Omega(\uphi(\tp,\tth),\tth)}} \vet{- \im \,\Omega(\uphi(\tp,\tth),\tth)}{1}   \, , 
\  f_{\tp}^+  =
\frac{1}{\sqrt{2\Omega(\uphi(\tp,\tth)+\tp,\tth)}} \vet{- \Omega(\uphi(\tp,\tth)+\tp,\tth)}{\im}
e^{\im \tp x}   \, , 
\end{equation}
are the unperturbed eigenvectors 
of ${\cal L}_{\umu,0}$ in \eqref{autovettorikernel} associated to the double eigenvalue $\im \omega^{(\tp)}_* (\tth)  $. 
Note that   $ \tp $   is the ``spectral gap" between the 
harmonics of the eigenvectors 
$ f_{0}^- $ and $ f_{\tp}^+  $ in \eqref{autovettorikernel0}.

 The crucial step in proving Theorem \ref{thm:main2} is to show  that
\begin{equation}\label{betaexpansion}
\beta^{(\tp)} (\umu+\delta,\e) = 
\beta_1^{(\tp)}
(\tth)\e^\tp + \text{h.o.t}
\qquad \text{at} \qquad 
\umu = \uphi(\tp,\tth)\, ,
\end{equation}
and prove that 
$\beta_1^{(\tp)} (\tth) $ 
is not zero.  
We now list 
various obstacles encountered, as discussed in items (1)
and (2) on page \pageref{challenge},  and the corresponding 
ideas for addressing them. 
\\[1mm]
{\bf (1) The exact expression of the function $\beta_1^{(\tp)} (\tth)$}. 
In view of \eqref{betaph},
the first step to  prove \eqref{betaexpansion} is to compute the Taylor expansion of the  operator $\mathfrak{B}(\mu,\e)$  at $(\umu, 0)$ up to order $\tp$.
This is achieved in Proposition \ref{lem:expBgot}, which provides
an explicit formula for its  Taylor coefficients  
involving those of the operator $\cB(\mu,\e) $ defined in \eqref{WW} 
and  of the spectral projector
$P(\mu, \e) = - \frac{1}{2\pi \im} \oint_\Gamma (\cL_{\mu,\e} - \lambda)^{-1} \de \lambda$.  
This expression depends on all Taylor coefficients of the Stokes wave expansion up to order $ \tp $. Computing 
all such coefficients at an arbitrary order $ \tp $ is prohibitively complex and lengthy, even with computer-assistance.
However, we prove this is unnecessary: 
$ \beta_1^{(\tp)}(\tth) $
depends solely on a select subset of 
Taylor-Fourier coefficients that we identify,  
drastically reducing computational complexity.

To prove this, our new strategy  fully leverages a structural property of the operator $\cL_{\mu,\e}$ that  propagates throughout Kato's reduction scheme. 
 This property, put simply, is that the matrix representing the action of each ``jet" 
$$
\cL_{i,n}:= \frac{ \delta^i \e^j}{i! n!} (\pa_\mu^i \pa_\e^n \cL_{\umu,0})
$$
of the operator $\cL_{\mu,\e}$ exhibits 
only ``finitely many bands":  
precisely, for any $i \in \bN_0 $,  for any $v_1, v_2 \in \bC^2$
\begin{equation}\label{LijF.intro}
( \cL_{i,n} \,  v_1 e^{\im j_1 x} , \ 
 v_2 e^{\im j_2 x} ) = 0 \ \ \mbox{ if } |j_1 - j_2| > n \mbox{ or } j_1 - j_2 \not\equiv n \mbox{ mod } 2 \ . 
\end{equation}
We say that an operator of this form belongs to the class $\mathfrak{F}_n $, and a sum of $\ell$-homogeneous terms as 
$\sum_{i+n = \ell} \cL_{i,n}$ to the class $\mathtt{F}_\ell$, see Definition \ref{defFell}.
For $ A \in \mathfrak{F}_n $ and $\kappa \in \bZ$, 
its $\kappa$-th``band" operator $ A^{[\kappa]} $ 
is defined as the operator 
with  matrix coefficients $ A^{j_1}_{j_2} $
supported on the band 
$ j_2 -j_1 = \kappa $ only, see definitions \eqref{def.matrix.elements} and \eqref{band.def}.
A crucial fact is that the graded vector space $\bigoplus_{\ell \in \bN_0}\mathtt{F}_\ell$ is closed under composition, 
if  $A \in \mathtt{F}_\ell$  and $B \in \mathtt{F}_{\ell'}$ then  $A\circ B \in  \mathtt{F}_{\ell +\ell'}$.
This allows us to propagate this structure along Kato's reduction scheme, and prove that the jets 
$\mathfrak{B}_{i,n}$ of the operator $\mathfrak{B}(\mu, \e)$ belong to $\mathfrak{F}_n$.

Let us explain how this structure originates. 
The crucial step is to observe that the Stokes wave has itself a very particular structure. 
Precisely, expanding the Stokes waves in power series of $\e$ as
$$
 \eta_\e(x) = \e \cos(x) + \sum_{\ell \geq 2} \e^\ell \eta_\ell (x)\, , \quad \psi_\e(x) =  \e\ch^{-1}
 \sin(x) + \sum_{\ell \geq 2} \e^\ell \psi_\ell(x)\,
 $$
we prove  in Theorem \ref{LeviCivita} 
(see also  Section \ref{sec:App31}), 
that, 
 at {\em any } order $\ell$, the functions  $\eta_\ell(x)$ and $\psi_\ell(x)$ are given by
\begin{equation}\label{Stokes.intro}
 \eta_\ell(x) = \sum_{\kappa=0 \atop \kappa \equiv \ell \, \textup{mod}\,  2}^\ell\eta_\ell^{[\kappa]}\cos(\kappa x) \, , \quad \psi_\ell(x) = \sum_{\kappa=1 \atop \kappa \equiv \ell \, \textup{mod}\,  2}^\ell\psi_\ell^{[\kappa]}\sin(\kappa x)\, ,
\end{equation}
with {\em   harmonics $\kappa$
not larger than $\ell$, and 
with the same parity of $ \ell $}. 
Consequently, the functions $a_\e(x)$, $p_\e(x)$,  entering in the definition \eqref{WWinit} of $\cL_{\mu,\e}$ expand as
$a_\e(x) = \sum_{\ell \geq 1} 
\e^\ell a_\ell(x) $ , $p_\e(x) = \sum_{\ell \geq 1} 
\e^\ell p_\ell(x) $, where 
each $a_\ell(x), p_\ell(x) $ has 
the form  \eqref{Stokes.intro} as well.
This structure  in turn implies 
 \eqref{LijF.intro}.

\smallskip

The fact that the jets $\mathfrak{B}_{i,n}$ belong to $\mathfrak{F}_n $, for any $ i$,  i.e.\ fulfill \eqref{LijF.intro}, 
implies immediately the expansion \eqref{betaexpansion}, since 
$ ( \mathfrak{B}_{i,n} f_0^-,f_\tp^+) = 0 $
whenever
$ n  < \tp$.
In Section \ref{sec:up}, this property is 
exploited 
to establish 
an upper bound 
of $ \lesssim \e^\tp $ 
for the real part of the eigenvalues on the unstable 
spectral bands
(if they exist). 
Summarizing

\begin{itemize}
\item
the ``weak conjecture" 
\eqref{weakconj}, as well as the 
``improved weak conjecture"  
\eqref{impweakconj}, 
is proved by the algebraic properties  of the spaces  $ \mathfrak{F}_n $, inherited by properties of the Stokes wave. 
\end{itemize}

In order to prove also the  strong conjecture \eqref{strongconj} 
we need an explicit expression of $\beta_1^{(\tp)}(\tth)$.
Property \eqref{LijF.intro} plays a crucial role also in
proving that $\beta_1^{(\tp)}(\tth)$ depends only on the {\it maximal}  Fourier-Taylor  coefficients $a_\ell^{[\ell]}, p_\ell^{[\ell]}$ with $ |\ell | \leq \tp $, 
see Theorem \ref{lem:expansionL}. Let us explain why  with an example: a term of order 
$\e^\tp$ in the Taylor expansion in \eqref{betaph} has the form
\begin{equation}\label{1.17}
    ( A_{0, n_1}\circ \ldots \circ A_{0, n_q} f_0^-, f_\tp^+) \quad \mbox{ with } n_1 + \ldots + n_q = \tp \  \textup{and } \ 
    f_0^- \,,\; f_\tp^+ \textup{ in }\eqref{autovettorikernel0}\, ,
\end{equation}
for some operators 
 $A_{0, n_a} \in \mathfrak{F}_{n_a}$.
In the scalar product \eqref{1.17} 
only the band $(A_{0, n_1}\circ \ldots \circ A_{0, n_q})^{[\tp]}$ is relevant. 
But since each $A_{0, n_a}$ has bands $|\kappa_a| \leq n_a$, the only possibility to reach band $\tp$ is to take the highest band of each operator, namely 
$$
 ( A_{0, n_1}\circ \ldots \circ A_{0, n_q} f_0^-, f_\tp^+)  =  ( A_{0, n_1}^{[n_1]}\circ \ldots \circ A_{0, n_q}^{[n_q]} f_0^-, f_\tp^+) \, . 
 $$
 This principle is used in Lemma \ref{lem:tBp}. 
Furthermore, by the properties \eqref{Stokes.intro} of the Stokes wave, each  $A_{0,n}^{[n]}$ depends only on the highest maximal Fourier-Taylor  coefficients $a_n^{[n]}, p_n^{[n]}$.
Summarizing 
 \begin{itemize}
\item 
the fact that the expansions 
\eqref{beta1exp}-\eqref{entanglementfdj} of $ \beta_1^{(\tp)} (\tth) $ depend  of only the maximal Taylor-Fourier coefficients $a_\ell^{[\ell]}, p_\ell^{[\ell]}$ for 
$ |\ell| \leq \tp $ of the Stokes waves, deeply exploits this   ``maximal harmonics" principle of the spaces $ \mathfrak{F}_n $.
\end{itemize}
  
\noindent 
{\bf (2) Proof that $\beta_1^{(\tp)} (\tth) \neq 0 $ for any $ \tp \geq 2 $.} 
The function 
$\tth \mapsto \beta^{(\tp)}(\tth)$ is analytic by direct inspection
of the formulas 
\eqref{beta1exp}-\eqref{entanglementfdj}
since each $a_\ell^{[\ell]} (\tth), p_\ell^{[\ell]} (\tth) $ are analytic in $ \tth $, see Lemma \ref{pastructure}.
Such formulas  
remain quite complex, involving $3^{\tp-1}$ terms.
Furthermore, explicit expressions for the
Taylor-Fourier coefficients $ \eta_\ell^{[\ell]} (\tth) $ and $  \psi_\ell^{[\ell]} (\tth) $
of the Stokes waves are unknown for any 
$ \ell $, and consequently, the same holds for 
$ a_\ell^{[\ell]} (\tth) $ and
$ p_\ell^{[\ell]} (\tth) $. 
Due to analyticity, proving Theorems 
\ref{thm:main}-\ref{thm:main2} clearly 
reduce to show that 
$\beta_1^{(\tp)} (\tth) \neq 0 $ at some depth. 
The terms in the sum for
$\beta_1^{(\tp)} (\tth)  $  could cancel each other out, resulting in an identically zero function. 
Actually we demonstrate in Section \ref{infinitelim} 
that this occurs, for {\it any} $\tp \geq 2 $,  
in  the deep water limit $ \tth =  +\infty $. 

Thus we focus on the shallow water limit $ \tth \to 0^+ $. 
Surprisingly
in Section \ref{sec:limit0} 
we find that 
\begin{itemize}
\item   the limits
of the maximal 
Fourier-Taylor coefficients  
$ \eta_\ell^{[\ell]} (\tth) $, 
$ \psi_\ell^{[\ell]} (\tth) $ of the Stokes wave as $ \tth \to 0^+ $  can be computed using an inductive procedure, based on the iterative bifurcation structure of the Stokes wave, resulting in 
 hypergeometric quantities, 
see  \eqref{expetajjpsijj}, \eqref{indHP2}.
\end{itemize}
Thanks to these limits we deduce  the 
Laurent asymptotic expansion for {\it any} $ \ell $ of 
\begin{equation}
\label{pall}
\begin{aligned}
&p_\ell^{[\ell]} (\tth) =  -2\ell \Big(\frac38 \Big)^{\ell-1} \tth^{3-3\ell-\frac12} - \ell \Big(\frac38 \Big)^{\ell-1}\,  \frac{11\ell^2-9\ell+1}{9} \tth^{5-3\ell-\frac12}+O( \tth^{7-3\ell-\frac12})\, ,\\ 
&a_\ell^{[\ell]} (\tth) = - \ell \Big(\frac38 \Big)^{\ell-1} \tth^{2-3\ell} - \ell\Big(\frac38 \Big)^{\ell-1} \, \frac{31\ell^2-9\ell+2}{18}\tth^{4-3\ell} +O(\tth^{6-3\ell}) \, .
\end{aligned} 
\end{equation}
In the shallow water limit the formula 
\eqref{beta1exp}-\eqref{entanglementfdj}
for $ \beta_1^{(\tp)} (\tth )$ 
has remarkable simplifications since 
many addends are negligible, see Lemma 
\ref{ordinivincenti},
and, using \eqref{pall}, we finally deduce  
in  Section \ref{asymptotics}, 
the 
Laurent 
asymptotic expansion 
of
$ \beta_1^{(\tp)} (\tth) $ as $ \tth \to 0^+ $, 
\begin{equation}\label{beta.asy.intro}
\beta_1^{(\tp)} (\tth) =  
\underbrace{{\widetilde A}^{(\tp)}}_{= 0} \tth^{\tfrac72 - 3 \tp} + 
\underbrace{{\widetilde C}^{(\tp)}}_{<0} \tth^{\tfrac{11}{2} - 3 \tp} +  O( \tth^{\tfrac{15}{2} - 3 \tp} ) \quad 
\text{as} \quad  \tth \to 0^+
 \, .
\end{equation}
As we prove in 
Lemma \ref{lem:cozero} 
\begin{itemize}
\item 
the first order term
$ \widetilde A^{(\tp)} $ of such  expansion   {\it vanishes} 
for every $\tp$, see 
Appendix \ref{sec:canc}. 
\end{itemize}
That is why we  compute 
explicitly also the second order Laurent expansion term in \eqref{beta.asy.intro}. 
In this way, the initial problem  
of proving the 
non-degeneracy of the analytic functions
$ \beta_1^{(\tp) } (\tth) $  for any $ \tp \geq 2  $,
has been reduced to determining 
whether $  {\widetilde C}^{(\tp)} $,  
which is proportional to the 
combinatorial sum \eqref{Cp}, in non-zero. In conclusion

\begin{itemize}
\item 
the first terms of 
$ \widetilde C^{(\tp)}$ are easily proved to be proportional 
to $ \frac{\tp(\tp+1)^2}{3}$.
Computer assisted computations 
confirm this calculus up to, say, the first 
$ \tp \leq 20 $, 
suggesting that formula \eqref{primavera2024} 
actually  holds for any  $\tp$. 
This is proved rigorously by the   computer algebra methods in Zeilberger-van Hoeij-Koutschan \cite{vHZ}.  
We provide in  \cite{BCMVadd} 
a self-contained exposition of this approach.
\end{itemize}

The paper is organized as follows. In Section \ref{sec:HFI} 
we first present the water waves equations and in 
Theorem \ref{LeviCivita} the  structural properties
of the Taylor-Fourier 
expansion of the Stokes waves at {\it any} order. Then we state  
Theorem \ref{thm:finalmat}
that implies, with the results of Section \ref{Katoapp},   
Theorems 
\ref{thm:main}
and 
\ref{thm:main2}.
In Section \ref{sec:App31}
 we  prove  
Theorem \ref{LeviCivita}, 
and  the asymptotic limits  of 
the Stokes  waves 
as $ \tth \to 0^+ $ and 
$ \tth \to + \infty $. 
Section \ref{Katoapp} describes 
the 
splitting of the eigenvalues of
$ \cL_{\mu,\e} $. 
In Section  
\ref{matrixcomputation} we find  the expression of the function $ \beta_1^{(\tp)} (\tth) $ in terms of the Taylor-Fourier 
Stokes waves coefficients
and we prove the weak
conjecture \eqref{weakconj}. 
Finally in Section \ref{combinatoricsincoming} we 
show the limits \eqref{beta1plimit}
and prove the strong conjecture \eqref{strongconj}. 

\medskip

\noindent {\bf Notation}.
The symbol
$ a_\e \sim b_\e $ 
means that
$ a_\e b_{\e}^{-1} \to 1 $ as 
$ \e \to 0 $, whereas 
$ a_\e \propto b_\e $ means that $ a_\e \sim C b_\e   $ for some constant
$ C  $. 
 

\medskip
\noindent {\bf Acknowledgments.}
We thank W. Strauss, B. Deconinck and V. Hur for several insightful conversations about modulational instability. We thank D. Zeilberger, M. van Hoeij and C. Koutschan for the combinatorics of Lemma \ref{primasommma}.

\section{High frequency instabilities of Stokes wave}\label{sec:HFI}

We consider the Euler equations for a 2-dimensional  
incompressible 
and irrotational fluid under the action of  gravity which fills the time dependent region 
$$
{ \mathcal D}_\eta := \big\{ (x,y)\in \bT\times \bR\;:\; -\tth < y< \eta(t,x) \big\} \, , 
\quad  \bT :=\bR/2\pi\bZ  \, , 
$$ 
with  depth $\tth >0 $ 
and  space periodic boundary conditions. 
The irrotational velocity field is the gradient  
of the  scalar potential $\Phi(t,x,y) $  
determined 
as the unique harmonic solution of 
\begin{equation*}
    \Delta \Phi = 0 \  \text{ in }\, {\mathcal D}_\eta \, , \quad
     \Phi(t,x,\eta(t,x)) = \psi(t,x) \, ,  \quad
   \Phi_y(t,x, - \tth ) = 0  \, . 
\end{equation*}
The time evolution of the fluid is determined by two boundary conditions at the free surface. 
The first is that the fluid particles  remain, along the evolution, on the free surface   
(kinematic boundary condition), 
and the second one is that the pressure of the fluid  
is equal, at the free surface, to the constant atmospheric pressure 
 (dynamic boundary condition). 
Then, as shown by Zakharov \cite{Zak1} and Craig-Sulem \cite{CS}, 
the time evolution of the fluid is determined by the 
following equations for the unknowns $ (\eta (t,x), \psi (t,x)) $,  
\begin{equation}\label{WWeq}
 \eta_t  = G(\eta)\psi \, , \quad 
  \psi_t  =  
- g \eta - \dfrac{\psi_x^2}{2} + \dfrac{1}{2(1+\eta_x^2)} \big( G(\eta) \psi + \eta_x \psi_x \big)^2 \, , 
\end{equation}
where $g > 0 $ is the gravity constant and $G(\eta):=G(\eta,\tth)$ denotes 
the Dirichlet-Neumann operator 
$$
 [G(\eta)\psi](x) := \Phi_y(x,\eta(x)) -  \Phi_x(x,\eta(x)) \eta _x(x) \, . 
 $$ 
 With no loss of generality we set the gravity constant $g=1$.
The equations \eqref{WWeq} are the Hamiltonian system
$$
 \pa_t \vet{\eta}{\psi} = \cJ \vet{\nabla_\eta \mathscr{H}}{\nabla_\psi \mathscr{H}}, \quad \quad \cJ:=\begin{bmatrix} 0 & \uno \\ -\uno & 0 \end{bmatrix},
$$ 
 where $ \nabla $ denote the $ L^2$-gradient, and the Hamiltonian
$  \mathscr{H}(\eta,\psi) :=  \frac12 \int_{\mathbb{T}} \left( \psi \,G(\eta)\psi +\eta^2 \right) \de x
$
is the sum of the kinetic and potential energy of the fluid. 
 
In addition of being Hamiltonian, the water waves 
system \eqref{WWeq}  is time reversible with respect to the involution 
\begin{equation}\label{revrho}
\rho\vet{\eta(x)}{\psi(x)} := \vet{\eta(-x)}{-\psi(-x)}, \quad \text{i.e. }
\mathscr{H} \circ \rho = \mathscr{H} \, ,
\end{equation}
and it  is invariant under space translations. 
\\[1mm]{\bf Stokes waves.}
The nonlinear water waves equations \eqref{WWeq} 
admit 
periodic traveling  solutions --called Stokes waves-- 
$$
\eta(t,x)=\breve \eta(x-ct) \, , 
\quad \psi(t,x)= \breve \psi(x-ct) \,  , 
$$ 
 for some speed  $c$ (which depends on the amplitude of the wave) 
 and  $2\pi$-periodic profiles  $\breve \eta (x), \breve \psi (x) $. 
 Bifurcation of small amplitude Stokes waves 
  has been rigorously proved by Levi-Civita \cite{LC} and  
Nekrasov \cite{Nek} in infinite depth, 
and Struik \cite{Struik} for finite depth.
Currently, this can be derived through a direct  application
of the Crandall-Rabinowitz bifurcation theorem from a  simple eigenvalue as in  \cite{BMV2}. 
 The speed of  Stokes waves  of small amplitude  is close to the one predicted  by the linear theory,  
  $$
 \ch := \sqrt{\tanh(\tth)} \,  . 
$$  
To establish the existence of all instability isolas of Stokes waves, we require structural insights into their Taylor expansion at every order. The knowledge of merely a finite number of Taylor coefficients is clearly insufficient for this purpose.
We introduce the following definitions. 

\begin{sia}\label{def:even}
Let $\ell \in \bN$. We denote by $ \mathtt{Evn}_\ell $ 
the vector space of real trigonometric polynomials of the form
 \begin{equation}\label{evenodd}
f(x) =  
\begin{cases}
f^{[0]}+ f^{[2]}\cos(2x)+\dots+ f^{[\ell]}\cos(\ell\, x) & \text{if }\ell \text{ is even} \, , \\
f^{[1]}\cos(x)+ f^{[3]}\cos(3x)+\dots+ f^{[\ell]}\cos(\ell\,x)  & \text{if }\ell\text{ is odd} \, ,  \end{cases} 
\end{equation}
with real coefficients $ f^{[i]}$, $i = 0 , \ldots, \ell $ and by 
$ \mathtt{Odd}_\ell$ the vector space of real trigonometric polynomials of the form
\begin{equation}\label{evenodd2}
 g(x) =  \begin{cases} g^{[2]}\sin(2x) + g^{[4]}\sin(4x)+\dots+ 
 g^{[\ell]}\sin(\ell\,x) & \text{if }\ell\text{ is even} \, ,  \\
g^{[1]}\sin(x)+ g^{[3]}\sin(3x)+\dots+ g^{[\ell]}\sin(\ell\,x)  & \text{if }\ell\text{ is odd} \, ,  
 \end{cases}
 \end{equation}
with real coefficients $ g^{[i]}$, $i = 1, \ldots, \ell $. 
\end{sia}
Note that a function $f\in\mathtt{Evn}_\ell$, resp. $ \mathtt{Odd}_\ell$,  
is not just an even, resp. odd, 
trigonometric real  polynomial of degree less or equal to $ \ell $, 
 but  it possesses only  harmonics of the  {\it same}  parity of  $\ell$. 
 
The coefficients of the Stokes wave are found to be analytic functions of the depth  $ \tth >0 $, exhibiting a structure captured by the following definition.

 \begin{sia}\label{rationalstructure}
An analytic function $g:(0,+\infty)\to \bR$ belongs to
$ {\cal Q}(\ch^2) $ if there exist polynomials $p(x)$
and $ q(x)$  with {\it integer} coefficients  and without common factors, such that $q(x)\neq 0$ for any $0<x\leq 1$,  and
\begin{equation}\label{rationalfunctions}
g(\tth)  = \frac{p(\ch^2)}{q(\ch^2)} \,,\  \forall \tth >0 \, 
 ,\qquad \ch^2  = \tanh(\tth)\, . 
\end{equation}
Note that $\lim\limits_{\tth \to +\infty} g(\tth) = \frac{p(1)}{q(1)}\in \bQ $. The 
set ${\cal Q}(\ch^2)$ is closed under the product of functions. 
\end{sia}

The subsequent result provides a detailed  description
of the Stokes waves at {\it any} order in $ \e $.

 \begin{teo}{\bf (Stokes waves)}\label{LeviCivita} 
For any $\tth_* > 0 $  there exist $\e_* :=\e_*(\tth_* )  >0 $ and for any $\tth \geq \tth_*$ 
 a unique family  of real analytic solutions $(\eta_\e(x), \psi_\e(x), c_\e)$, 
defined for any  $|\e|<\e_* $, of 
\begin{equation}\label{travelingWW}
\begin{cases}
 c \, \eta_x+G(\eta,\tth)\psi = 0 \, , \\
 c \, \psi_x -  \eta - \dfrac{\psi_x^2}{2} + \dfrac{1}{2(1+\eta_x^2)} \big( G(\eta, \tth) \psi + \eta_x \psi_x \big)^2  = 0 \, , 
\end{cases}
\end{equation}
such that
 $\eta_\e (x) $ and $\psi_\e (x) $ are $2\pi$-periodic,  $\eta_\e (x) $ is even and $\psi_\e (x) $ is odd, of the form
\begin{equation}\label{Stokespwseries}
\begin{aligned} 
&  \eta_\e(x) = \e \cos(x) + 
{\mathop \sum}_{\ell \geq 2} \e^\ell \eta_\ell (x)\, , \\
& \psi_\e(x) =  \e\ch^{-1}
 \sin(x) + {\mathop \sum}_{\ell \geq 2} \e^\ell \psi_\ell(x)\, , \\ 
& c_\e = \ch+
{\mathop \sum}_{\ell \geq 2, \ell \textup{ even}} 
\!\!\!\! \e^\ell c_\ell \quad \text{with} \ \ch \ \text{defined in} \ \eqref{rationalfunctions} \, . 
\end{aligned}
\end{equation}
 The functions
$ \eta_\e (x), \psi_\e (x)  $ and the speed
$ c_\e $ 
depend analytically w.r.t. $ \tth > 0 $.  
Furthermore the following properties hold: for any  $ \ell\in\bN $, the Taylor coefficients $  \eta_\ell(x), \psi_\ell(x) $ are trigonometric polinomials
\begin{equation}
\begin{aligned} \label{leadingStokes}
&\text{1. $  \eta_\ell(x) \in \mathtt{Evn}_\ell $
and $ \psi_\ell(x) \in \mathtt{Odd}_\ell $
according to Def. \ref{def:even}; }\\ 
& \text{2. the coefficients 
 $  \eta_\ell^{[0]}\, ,\dots, \eta_\ell^{[\ell]} $,  
 $ \ch \psi_\ell^{[1]}\,,\dots \ch \psi_\ell^{[\ell]} $ 
 and 
 $ \ch c_\ell $  belong to
 $ {\cal Q}(\ch^2) $, see Def. \ref{rationalstructure}.} 
  \end{aligned}
  \end{equation}
 The  $ \eta_\ell^{[i]}, \psi_\ell^{[i]} $ are referred
 as the Fourier-Taylor coefficients of the Stokes wave. 
\end{teo}

The proof of the structural properties \eqref{leadingStokes} of the Stokes waves is given in  Section  
\ref{sec:App31}.


Proposition \ref{lem:asympetapsi} will provide
further properties  
of  Stokes waves at {\it any} order of their Taylor expansion.
Specifically, we will demonstrate inductively the
second-order asymptotic expansion in the shallow-water limit 
$ \tth \to 0^+ $ of the Taylor-Fourier 
coefficients $ \eta_\ell^{[\ell]}, \psi_\ell^{[\ell]} $ 
valid for any $\ell \in \bN$. 
This latter result is crucial in proving that $ \beta_1^{(\tp)}(\tth)
\to - \infty $ as $ \tth \to 0^+ $. 

\begin{rmk}
The local branches of Stokes wave solutions 
of Theorem \ref{LeviCivita} have  been  
extended 
to  global branches encompassing  extreme 
waves, see e.g. \cite{KN,To},  
through the application of analytic and topological bifurcation theory. More recently,  
the existence of quasi-periodic  traveling Stokes waves --which 
represent the nonlinear superposition of periodic Stokes waves
with rationally independent speeds--
 has also been demonstrated 
 in \cite{BFM1,FG,BFM2}  using  KAM methods. 
\end{rmk}

\noindent{\bf Linearization at the Stokes waves.}
In a reference frame moving with the speed 
$ c_\epsilon $,  
the linearized water waves equations 
at the Stokes waves turn out to be
(after conjugating with the ``good unknown of Alinhac" and the ``Levi-Civita" transformations as in \cite{NS,BMV1,BMV2,BMV_ed}) 
the linear system 
\begin{equation}\label{linWW}
h_t = \cL_\e h   
\end{equation}
where 
$\cL_\e:  H^1(\bT,\bR^2) 
\to L^2(\bT, \bR^2) $ 
is the Hamiltonian and reversible real operator
\begin{equation}
\begin{aligned}
\label{cLepsilon}
\cL_\e := \cL_\e(\tth)   
& :  =  
\begin{bmatrix} \pa_x \circ (\ch+p_\e(x)) &  |D|\tanh((\tth+\mathtt{f}_\e) |D|) \\ - (1+a_\e(x)) &   (\ch+p_\e(x))\pa_x \end{bmatrix}   \\
& = \cJ  \begin{bmatrix}   1+a_\e(x) &   -(\ch+p_\e(x)) \pa_x \\ 
\pa_x \circ (\ch+p_\e(x)) &  |D|\tanh((\tth+\mathtt{f}_\e) |D|)  \end{bmatrix} 
\end{aligned}
\end{equation} 
where  $\ttf_\e$ is a real number, analytic in $ \e $,  and
$p_\e (x), a_\e (x) $ are real  even analytic functions.
As a corollary of the properties 
\eqref{leadingStokes}
of the Stokes waves we prove in Section \ref{sec:App31}
the following
lemma.

\begin{lem}\label{pastructure}
The functions $p_\e(x)$,  $a_\e(x)$  and the constant $\ttf_\e$ in \eqref{cLepsilon} admit the Taylor expansions
 \begin{equation}\label{leadingLin}
 \begin{aligned}
& p_\e(x) = \sum_{\ell \geq 1} \e^\ell p_\ell(x)\, , \quad a_\e(x) = \sum_{\ell \geq 1} \e^\ell a_\ell(x)\, ,\quad
 \ttf_\e = \sum_{\substack{\ell \geq 2\,,\; \ell\textup{ even}}} \e^\ell \ttf_\ell \, ,
\end{aligned}
\end{equation}
they  are analytic with respect to $ \tth > 0 $ and,   for any  $  \ell\in\bN $, 
   \begin{enumerate}
 \item 
\label{leadingStokesp1}
 the functions $ p_\ell(x) $,
$  a_\ell(x) $ are trigonometric polynomials in
$  \mathtt{Evn}_\ell $ (see Definition \ref{def:even}); 
  \item the coefficients 
$\ch p_\ell^{[0]}, \ldots, \ch p_\ell^{[\ell]}  $,   
  $ a_\ell^{[0]}, \ldots, a_\ell^{[\ell]} $ and 
$\ttf_\ell  $ belong to 
 $ {\cal Q}(\ch^2) $  (see Definition \ref{rationalstructure}).
   \label{leadingStokesa1}
  \end{enumerate}
\end{lem}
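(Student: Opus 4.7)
The plan is to read off $a_\e$, $p_\e$ and $\ttf_\e$ from the explicit formulas produced by the ``good unknown of Alinhac'' and Levi-Civita transformations already carried out in \cite{BMV1,BMV2,BMV_ed}. In those coordinates $a_\e(x)$ and $p_\e(x)$ are polynomial/rational combinations of $\eta_\e(x)$, $\psi_\e(x)$, $c_\e$, their $x$-derivatives and of $G(\eta_\e)\psi_\e$, while $\ttf_\e\in\bR$ is determined by the Levi-Civita normalization (the vanishing-mean condition on the conformal parameterization), so it depends only on the spatial averages of the Stokes wave. Once these formulas are written down, the lemma becomes a bookkeeping exercise: check that the operations in question preserve the parity classes $\mathtt{Evn}_\ell,\mathtt{Odd}_\ell$ of Definition~\ref{def:even} and the ring ${\cal Q}(\ch^2)$ of Definition~\ref{rationalstructure}, then expand in $\e$ and invoke Theorem~\ref{LeviCivita}.

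Concretely, I would first record three elementary algebraic facts. (a) Pointwise products obey $\mathtt{Evn}_m\cdot\mathtt{Evn}_n\subset\mathtt{Evn}_{m+n}$, $\mathtt{Odd}_m\cdot\mathtt{Odd}_n\subset\mathtt{Evn}_{m+n}$ and $\mathtt{Evn}_m\cdot\mathtt{Odd}_n\subset\mathtt{Odd}_{m+n}$, since the harmonic supports add and the parities add modulo $2$. (b) $\partial_x$ swaps the two classes at the same index $\ell$, while $|D|\tanh(\tth|D|)$ preserves each class and multiplies the $\kappa$-th Fourier coefficient by $|\kappa|\tanh(\tth|\kappa|)$. (c) For every integer $\kappa$ the classical multiple-angle identity $\tanh(\kappa\tth)=P_\kappa(\tanh\tth)/Q_\kappa(\tanh\tth)$ with $P_\kappa, Q_\kappa$ integer-coefficient polynomials and $Q_\kappa(x)>0$ on $(0,1]$ shows that $|\kappa|\tanh(\tth|\kappa|)\in{\cal Q}(\ch^2)$. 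Since ${\cal Q}(\ch^2)$ is a ring which is stable under inverting elements nonvanishing on $(0,1]$, the Neumann expansions of $(1+\eta_{\e,x}^2)^{-1}$, $(\ch+p_\e(x))^{-1}$ and the like remain inside ${\cal Q}(\ch^2)$ order by order in $\e$.

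Next I would unfold $G(\eta_\e)\psi_\e$ through its Craig-Sulem Taylor expansion $G(\eta_\e)=\sum_{j\ge 0}G_j(\eta_\e)$, where each $G_j$ is a $j$-homogeneous finite sum of compositions of $|D|\tanh(\tth|D|)$ with multiplications by $\eta_\e$. Substituting the series of Theorem~\ref{LeviCivita} and collecting the $\e^\ell$-coefficient yields a finite sum, indexed by compositions $\ell=\ell_1+\cdots+\ell_r$, of products of factors drawn from $\mathtt{Evn}_{\ell_j}$ or $\mathtt{Odd}_{\ell_j}$ whose total parity is that of $\ell$. The three facts above then force each contribution to lie in the claimed parity class with Fourier coefficients in ${\cal Q}(\ch^2)$. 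The extra $\ch$ attached to $p_\ell^{[\kappa]}$ and $\ttf_\ell$ in the statement tracks the single $\ch$ appearing in front of $\psi_\ell^{[\kappa]}$ in Theorem~\ref{LeviCivita} and propagates mechanically through the algebra. Finally $\ttf_\e$ selects only the mean ($\kappa=0$) of $\eta_\e$, which is nonzero only at even $\ell$ since $\mathtt{Evn}_\ell$ contains the constant mode only for $\ell$ even; this gives the parity statement in $\e$, and rationality follows from $\eta_\ell^{[0]}\in{\cal Q}(\ch^2)$.

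The main and essentially only obstacle I foresee is the combinatorial bookkeeping: one must keep the Alinhac change of variables, the Levi-Civita conformal straightening and the Craig-Sulem expansion organized so that the parity and rationality structures propagate cleanly at each order in $\e$. No new analytic ingredient is needed beyond Theorem~\ref{LeviCivita} together with the rationality of $\tanh(\kappa\tth)$ in $\tanh\tth$; the lemma is then obtained by a straightforward induction on the order $\ell$.
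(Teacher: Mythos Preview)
Your overall strategy---reduce to the structural properties of the Stokes wave from Theorem~\ref{LeviCivita} together with closure of the classes $\mathtt{Evn}_\ell$, $\mathtt{Odd}_\ell$ and ${\cal Q}(\ch^2)$ under products, $\partial_x$, and the relevant Fourier multipliers---is exactly the paper's approach, and your facts (a)--(c) are correct and are also the building blocks the paper records as Lemmata~\ref{product}, \ref{FourierMultipliers}, \ref{naroba}, \ref{lem:DN}.

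There is, however, a genuine gap. The functions $p_\e$, $a_\e$ are \emph{not} rational combinations of $\eta_\e(x)$, $\psi_\e(x)$, $c_\e$ and their derivatives alone: after the Levi-Civita straightening (see \eqref{def:pa}--\eqref{def:ttf}) they involve the velocity components $V,B$ evaluated at the \emph{shifted} argument $x+\mathfrak{p}_\e(x)$, where the diffeomorphism $\mathfrak{p}_\e$ is itself determined by the implicit fixed-point equation
\[
\mathfrak{p}_\e(x)=\cH\coth\!\big((\tth+\ttf_\e)|D|\big)\big[\eta_\e(x+\mathfrak{p}_\e(x))\big],\qquad
\ttf_\e=\tfrac{1}{2\pi}\!\int_\bT\eta_\e\big(x+\mathfrak{p}_\e(x)\big)\,\de x.
\]
Two ingredients are therefore missing from your proposal. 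First, one needs a composition lemma (Lemma~\ref{composition}) showing that $f_\e(x+g_\e(x))\in\mathtt{Evn}(\e)$ when $f_\e\in\mathtt{Evn}(\e)$ and $g_\e\in\mathtt{Odd}(\e)$, together with the analogous statement for $\cH\coth((\tth+\ttf_\e)|D|)$ (Lemma~\ref{lem:coth}). Second, because $\mathfrak{p}_\e$ and $\ttf_\e$ appear on both sides of their defining equation, one must run an \emph{induction on the $\e$-order}: assuming $\mathfrak{p}_{\ell'}\in\mathtt{Odd}_{\ell'}$ and $\ttf_{\ell'}\in{\cal Q}(\ch^2)$ for $\ell'<\ell$, solve for $\mathfrak{p}_\ell$ and $\ttf_\ell$ from the $\e^\ell$-coefficient of the fixed-point equation. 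Your last paragraph, where you assert that ``$\ttf_\e$ selects only the mean of $\eta_\e$'' so that rationality follows from $\eta_\ell^{[0]}\in{\cal Q}(\ch^2)$, is incorrect as stated: $\ttf_\e$ is the mean of the \emph{composed} function $\eta_\e(x+\mathfrak{p}_\e(x))$, not of $\eta_\e$, and its $\ell$-th coefficient mixes all $\eta_{\ell'}^{[\kappa]}$ and $\mathfrak{p}_{\ell''}^{[\kappa']}$ with $\ell'+\ell''\leq\ell$. The parity-in-$\e$ of $\ttf_\e$ then comes from the fact that $[\eta_\e(x+\mathfrak{p}_\e(x))]_\ell\in\mathtt{Evn}_\ell$, which has zero mean for $\ell$ odd---but this requires having first established $\mathfrak{p}_\e\in\mathtt{Odd}(\e)$ inductively.
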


\noindent{\bf Bloch-Floquet analysis.}\label{Bloch-Floquet}
In view of  \eqref{BFtheory}  
we have to determine the $ L^2 (\bT) $-spectrum of the  Floquet operator $ \cL_{\mu,\e} $. It turns out to be  the complex  \emph{Hamiltonian} and \emph{reversible} operator
\begin{align}\notag 
 \cL_{\mu,\e}   & := \begin{bmatrix} (\pa_x+\im\mu)\circ (\ch+p_\e(x)) & 
 |D+\mu| \tanh\big((\tth + \ttf_\e) |D+\mu| \big) \\ -(1+a_\e(x)) & (\ch+p_\e(x))(\pa_x+\im \mu) \end{bmatrix} \\ 
 &= \underbrace{\begin{bmatrix} 0 & \uno\\ -\uno & 0 \end{bmatrix}}_{\displaystyle{=\cJ}} \underbrace{\begin{bmatrix} 1+a_\e(x) & -(\ch+p_\e(x))(\pa_x+\im \mu) \\ (\pa_x+\im\mu)\circ (\ch+p_\e(x)) & |D+\mu| \tanh\big((\tth + \ttf_\e) |D+\mu| \big) 
 \end{bmatrix}}_{\displaystyle{=:
 \cB(\mu,\e)}=\cB (\mu,\e)^*  }   \, ,   
 \label{WW}
\end{align} 
that we regard  as an operator with 
domain $H^1(\bT):= H^1(\mathbb{T},\bC^2)$ and range $L^2(\bT):=L^2(\mathbb{T},\bC^2)$, 
equipped with  
the complex scalar product 
\begin{equation}\label{scalar}
(f,g) := \frac{1}{2\pi} \int_{0}^{2\pi} \left( f_1 \bar{g_1} + f_2 \bar{g_2} \right) \, \text{d} x  \, , 
\quad
\forall f= \vet{f_1}{f_2}, \ \  g= \vet{g_1}{g_2} \in  L^2(\bT, \bC^2) \, .
\end{equation} 
We also  denote $ \| f \|^2 = (f,f) $. The  
complex space $ L^2 (\bT, \bC^2) $ is equipped 
with the sesquilinear,  skew-Hermitian and non-degenerate 
{\it complex symplectic form}  
\begin{equation}\label{ses}
{\cal W}_c  \, \colon L^2 (\bT, \bC^2) \times L^2 (\bT, \bC^2) \to \bC \, , \quad  {\cal W}_c(f, g) := (\cJ f,g) \, .
\end{equation}
The complex operator 
$\cL_{\mu,\e} $ 
is also reversible, namely  
  \begin{equation}\label{Reversible}
 \cL_{\mu,\e} \circ \bro = \bro \circ \cL_{\mu,\e} \, , \qquad \text{equivalently}
 \qquad 
 \cB (\mu, \e) \circ \bro =- \bro \circ \cB (\mu,\e) \, , 
\end{equation} 
where $\bro$ is the complex involution (cfr. \eqref{revrho})
\begin{equation}\label{reversibilityappears}
 \bro \vet{\eta(x)}{\psi(x)} := \vet{\bar\eta(-x)}{-\bar\psi(-x)} \, .
\end{equation}
Since the functions $\e \mapsto a_\e$, $p_\e$ 
 are analytic as maps from $B(\e_0) \to H^1(\bT)$ 
and $\e \mapsto {\mathtt f}_\e$ 
is analytic as well,
the operator $ \cL_{\mu,\e}$ in  \eqref{WW} is analytic in $ (\mu,\e) $. 

\begin{rmk}
\label{analytic}
The operator $ \cL_{\mu,\e}$
is analytic also w.r. to $ \tth > \tth_*  $
for any $ \tth_* >0$ 
provided $\e $ is small 
depending on $ \tth_* $. 
The key claim is that 
the map 
$ \tth \mapsto \tanh (\tth |D|) \in {\cal L} (H^s_0, H^s_0)  
$ 
is analytic, for any $ s \in \bR $. 
Since 
$ \| 
\tanh (\tth |D|) 
\|_{{\cal L}(H^s_0, H^s_0)} = \sup_{j \in \bZ}
|\tanh (\tth |j|)| $ 
this amounts to prove the 
analyticity of the map 
$$ 
\varphi: U_\delta \to \ell^\infty (\bZ)  \, , \quad 
\tth \mapsto \varphi (\tth) := 
\big( \tanh (\tth |j|)\big)_{j \in \bZ}  \in \ell^\infty (\bZ) \, , 
$$
in a complex neighborhood 
$ U_\delta := \{ z = x+ \im y \, , |x-\tth_0| < \delta, |y| < \delta \} $ of any $ \tth_0 > 0 $. 
By  Theorem A.3 in \cite{KAMKdV}
 it is sufficient to show that 
 $ \varphi $ 
is bounded 
and each component  
$  
 \tanh (\tth |j|) $
 is analytic in $ U_\delta $ for some
$ \delta > 0 $ small enough. This holds true because, for any  
$ x > \tth_0/ 2 $ we have  
$$
|\tanh ( z |j|)| = 
 \frac{|1-e^{-2(x+\im y)|j|}|}{|1+e^{-2(x+\im y)|j|}|}
 \leq 
 \frac{1+e^{-2x|j|}}{
 1-e^{-2x|j|}
 } \leq \frac{2}{
 1-e^{-\tth_0}} \, ,
 \quad \forall j \in \bZ \, .
$$
\end{rmk}

Our objective is to demonstrate the existence of eigenvalues of $  \cL_{\mu,\e}  $ 
with non zero real part. 
Due to the Hamiltonian structure of $\cL_{\mu,\e}$, such eigenvalues can only emerge  from multiple
  eigenvalues of $\cL_{\mu,0}$, since   if $\lambda$ is an eigenvalue of $\cL_{\mu,\e}$ then  also $-\bar \lambda$ is. 
Consequently, simple purely imaginary eigenvalues of $\cL_{\mu,0}$ persist on the imaginary axis under perturbation.
\\[1mm]
\noindent{\bf The spectrum of $\cL_{\mu,0}$.}\label{initialspectrum} 
The spectrum of  the  Fourier multiplier matrix operator 
\begin{equation}\label{cLmu}
 \cL_{\mu,0} = 
 \begin{bmatrix} 
 \ch (\pa_x+\im\mu)  & |D+\mu|\tanh(\tth |D+\mu|) \\ -1 & \ch(\pa_x+\im\mu) \end{bmatrix} 
\end{equation}
on $L^2(\bT,\bC^2)$ is given by the purely imaginary numbers
\begin{equation}\label{omeghino}
\lambda_j^\sigma(\mu,\tth):=
\im \omega^\sigma(j+\mu,\tth)=  \im \Big( \ch(j+\mu) -\sigma \Omega(j+\mu, \tth)  \Big)  \, ,
\  \forall \sigma   = \pm \, , \,  j\in \bZ \, , 
\end{equation}
where  
\begin{equation}\label{Oomegino}
\omega^\sigma(\varphi,\tth) :=  \ch \varphi -\sigma \Omega(\varphi,\tth) \, ,  \qquad 
 \Omega(\varphi,\tth) :=\sqrt{\varphi\tanh(\tth \varphi)}  \, , 
\qquad   \varphi \in \bR\,  ,\ \ \sigma=\pm\, .
\end{equation}
Note that $\omega^+(-\varphi,\tth) = -\omega^-(\varphi,\tth) $. Sometimes   
$ \sigma $ is called the Krein signature of the eigenvalue $ \lambda_j^\sigma(\mu,\tth) $.

Restricted to  $\varphi > 0 $, the function $\Omega(\cdot,\tth) : (0,+\infty) \to (0,+\infty) $  is,
for any $ \tth>0 $,     
  increasing and concave:  
\begin{equation}\label{boomerang}
\begin{aligned}(\pa_\varphi\Omega)(\varphi,\tth) &= \dfrac{\tanh(\tth \varphi) +\tth \varphi \big(1-\tanh^2(\tth \varphi) \big)}{2\sqrt{\varphi \tanh(\tth \varphi)}} >0 \, , \\
(\pa_ {\varphi\varphi}\Omega) (\varphi,\tth) &= -\dfrac{\tanh ^2(\tth \varphi) \big[4 \tth^2 \varphi^2 \big(1-\tanh^2(\tth \varphi)\big)+\big(1- \tth \varphi \frac{1-\tanh^2( \tth \varphi)}{\tanh( \tth \varphi)}\big)^2 \big]}{4 (\varphi \tanh (\tth \varphi))^{3/2}} < 0\, .
\end{aligned}
\end{equation}
For any $ j \in \bZ $, $ \mu \in \bR $ such that $ j + \mu \neq 0 $
we have $ \Omega (j+ \mu, \tth)  \neq 0 $  and we associate 
to  the eigenvalue $\lambda_j^\sigma(\mu,\tth)$ the eigenvector
\begin{equation}\label{def:fsigmaj}
\begin{aligned}
& f^\sigma_j:= f^\sigma_j(\mu,\tth) := \frac{1}{\sqrt{2\Omega(j+\mu,\tth)}} \vet{-\sqrt{\sigma}\,\Omega(j+\mu,\tth)}{\sqrt{-\sigma}}e^{\im j x}\, , \\
&  \qquad \qquad \cL_{\mu,0} f_j^\sigma(\mu,\tth) = \lambda_j^\sigma(\mu,\tth)  f_j^\sigma(\mu,\tth) \, , 
\end{aligned}
\end{equation} 
which satisfy, recalling  \eqref{reversibilityappears},
 the reversibility property   
\begin{equation}\label{baserev}
\bar \rho f_j^+ (\mu,\tth) = f_j^+ (\mu,\tth)  \, , \quad 
\bar \rho f_j^- (\mu,\tth) = - f_j^- (\mu,\tth)  \, . 
\end{equation}
For any $\mu \notin  \bZ $ we have $\Omega(j+\mu,\tth)\neq 0$ for any  $j \in \bZ $ 
and the family of eigenvectors $ \{f_j^{\sigma} (\mu )\}_{j \in \bZ}$ in \eqref{def:fsigmaj} forms a {\it complex symplectic basis} of  $L^2(\bT,\bC^2)$, 
 with respect to the complex symplectic form ${\cal W}_c$ in \eqref{ses}, namely its elements are linearly independent, span densely $L^2(\bT,\bC^2) $ and satisfy, 
for any  pairs of integers $ j ,j' \in \bZ $, and any pair of signs $ \sigma, \sigma' 
\in \{ \pm \}$, 
\begin{equation}\label{sympbas}
{\cal W}_c \big( f_j^\sigma, f_{j'}^{\sigma'} \big) = \begin{cases}  -\im & \text{if } j=j'\text{ and }\sigma=\sigma'=+ \, , \\ 
\ \; \im & \text{if } j=j'\text{ and }\sigma=\sigma'=- \, , \\
\ \, 0 & \text{otherwise} \, .  \end{cases} 
\end{equation} 
The choice of the normalization constant in \eqref{def:fsigmaj} 
implies \eqref{sympbas}.

To describe  the multiple 
eigenvalues of $\cL_{\mu,0} $ we need the following result
proved in Appendix \ref{spectralcollisions}.

\begin{lem}[{\bf Spectral collisions}]\label{collemma}
For any depth $ \tth > 0 $ and any integer $\tp\geq 2 $,  the following holds.
\begin{itemize}
\item[i)] For any $ \sigma \in \{\pm  \} $ 
the  equations 
$ \omega^\sigma \big( \varphi ,\tth \big) = \omega^\sigma \big( \varphi+\tp,\tth \big) $
have  no solutions, see Figure \ref{fig.nocollision}.
\item[ii)] The equation 
$\omega^- \big( \varphi ,\tth \big) = \omega^+ \big( \varphi +\tp,\tth \big)    $
has a  unique positive solution $ \uphi(\tp,\tth) > 0 $,  
\begin{equation}\label{Kreincollision}
\omega^- \big( \uphi(\tp,\tth),\tth \big) = \omega^+ \big( \uphi(\tp,\tth)+\tp,\tth \big) 
=  \ch  \uphi(\tp,\tth) + \Omega( \uphi(\tp,\tth))  
=:  \omega_*^{(\tp)}(\tth) 
> 0 \, , 
\end{equation} 
see Figure \ref{fig.Kreincollision}, 
and the negative solution $ - \uphi(\tp,\tth) -\tp$, 
\begin{equation}\label{negativesolution}
 \omega^-\big(-\uphi(\tp,\tth)-\tp,\tth\big) =  \omega^+\big(-\uphi(\tp,\tth),\tth\big) =
- \omega_*^{(\tp)}(\tth) < 0 \, ,
 \end{equation}
 which is unique for any $\tp\geq 3 $.  For $\tp= 2 $ the equation 
 $\omega^- \big( \varphi ,\tth \big) = \omega^+ \big( \varphi +2,\tth \big)  $ 
 possesses also 
 the negative solution $ \varphi = - 1 $ and 
 $\omega^- \big( -1 ,\tth \big) = \omega^+ \big(1,\tth \big)  = 0  $.
 \item[iii)]
 The equation $ \omega^+(\varphi,\tth) = \omega^-(\varphi+\tp,\tth) $  has no solutions. \end{itemize} 
  The positive solution  $  \uphi(\tp,\tth)  $ of \eqref{Kreincollision}
  satisfies the following properties: 
\begin{enumerate}
\item  \label{it1}
 The function $ \uphi(\tp, \cdot ) : (0, + \infty ) \to  (0, + \infty )  $, 
 $ \tth \mapsto \uphi(\tp,\tth) $,  is analytic. 
\item  \label{it2}
For any $ \tth > 0 $ the sequence $\tp\mapsto \uphi(\tp,\tth) $ is increasing 
\begin{equation}\label{uallinfi}
0 < \uphi(2,\tth) < \ldots < \uphi(\tp,\tth) < \ldots  \qquad
\text{and} \qquad \lim_{\tp\to +\infty} \,  \uphi(\tp,\tth) =  +\infty \, . 
\end{equation}
\item   \label{it3}
For any $ \tth > 0 $ it results 
$$
\begin{cases}
\uphi(2,\tth) \in (0,1) 
\\
 \uphi(\tp,\tth) \in \big( \uphi(\tp-1,\tth)  , \frac{(\tp-1)^2}{4} \big) \, , \ \forall\tp\geq 3 \, . 
\end{cases}
$$
 Furthermore there is $ \tth_* > 0 $ such that
  $ \uphi(2,\tth) > 1/ 4 $ for any $ \tth > \tth_*   $.  
\item  \label{it4}
It results 
 \begin{equation}\label{expuphinuova}
\lim_{\tth \to 0^+} \uphi(\tp,\tth) = 0 \, , \qquad 
\lim_{\tth \to + \infty} \uphi(\tp, \tth ) = \tfrac14 (\tp-1)^2\, ,
\end{equation}
more precisely 
 \begin{equation}\label{expuphinuova1}
\uphi(\tp,\tth) = \tfrac{1}{12}\tp (\tp^2-1) \tth^2 + \frac{19 -15 \tp^2-4 \tp^4}{720} \tp\tth^4+O(\tth^6)
\qquad \text{as} \  \tth \to 0 ^+ \, , 
\end{equation}
and 
\begin{equation}\label{expaphip}
\uphi(\tp,\tth) = \frac{(\tp-1)^2}{4} + 
\begin{cases}  
\frac38 e^{-\frac{\tth}{2}} + o\big(e^{-\frac{\tth}{2}}\big) & \text{if} \ \tp= 2 \, ,  \\
- \frac83 e^{- 2 \tth} + o\big(e^{- 2 \tth}\big) & \text{if} \ 
\tp = 3 \, ,  \\
 - \frac{\tp^2-1}{2} e^{- 2 \tth } + o\big(e^{-2\tth}\big)  &\;  \forall\,\tp\geq 4 \, ,
\end{cases}
\qquad\text{as} \ \tth\to+\infty \, .
\end{equation}
\end{enumerate}
\end{lem}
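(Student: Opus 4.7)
The plan is to turn every collision condition into a scalar equation in the single function $\Omega(\varphi,\tth) = \sqrt{\varphi\tanh(\tth\varphi)}$ and to exploit three structural properties of $\Omega$ (evenness in $\varphi$, strict monotonicity, strict concavity on $[0,\infty)$, cf.\ \eqref{boomerang}) together with the one numerical inequality
\[
\tanh(\tp\tth) < \tp\tanh\tth\,,\qquad \tp > 1,\ \tth > 0,
\]
which is just the fact that $y\mapsto\tanh(y)/y$ is strictly decreasing (a consequence of strict concavity of $\tanh$ with $\tanh(0)=0$). This inequality, applied at different scales, is what repeatedly separates $\ch\tp$ from $\Omega(\tp)$ and from $2\Omega(\tp/2)$.

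For part (i) the equation reads $\sigma(\Omega(\varphi+\tp) - \Omega(\varphi)) = \ch\tp$. The function $f(\varphi) := \Omega(\varphi+\tp) - \Omega(\varphi)$ is real-analytic, tends to $0$ at $\pm\infty$, satisfies the symmetry $f(-\tp-\varphi) = -f(\varphi)$ and, by concavity of $\Omega$ on $[0,\infty)$, attains its global extrema $\pm\Omega(\tp)$ at $\varphi = 0$ and $\varphi = -\tp$; since $\Omega(\tp)^2 = \tp\tanh(\tp\tth) < \tp^2\tanh\tth = (\ch\tp)^2$, we have $|f|<\ch\tp$ everywhere, so there is no solution. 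Part (iii) is immediate: $\omega^+(\varphi)-\omega^-(\varphi+\tp) = -(\Omega(\varphi)+\Omega(\varphi+\tp)+\ch\tp) < 0$. For part (ii), set $g(\varphi) := \Omega(\varphi)+\Omega(\varphi+\tp)$; the equation becomes $g(\varphi) = \ch\tp$. By evenness $g(\varphi) = g(-\tp-\varphi)$, while $g(0) = g(-\tp) = \Omega(\tp) < \ch\tp$; concavity of $\Omega$ on half-lines yields $g$ strictly increasing on $[0,\infty)$ (with $g\to\infty$) and strictly decreasing on $(-\infty,-\tp]$ (with $g\to\infty$), producing the unique positive root $\uphi(\tp,\tth)$ and its mirror image $-\uphi(\tp,\tth)-\tp$. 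On the compact interval $[-\tp, 0]$, substituting $a := -\varphi \in [0,\tp]$, $g = \Omega(a)+\Omega(\tp - a)$, whose maximum is $2\Omega(\tp/2)$ at $a = \tp/2$ by strict concavity of $\Omega$; a second use of the $\tanh$-inequality with $\tp/2$ in place of $\tp$ gives $4\Omega(\tp/2)^2 = 2\tp\tanh(\tp\tth/2) \leq \tp^2\tanh\tth = (\ch\tp)^2$, with \emph{equality iff} $\tp = 2$. This produces no extra solution for $\tp \geq 3$ and the additional root $\varphi = -1$ for $\tp = 2$, at which $\omega^-(-1,\tth) = \omega^+(1,\tth) = 0$.

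The four properties of $\uphi$ then follow in order. Item 1 (analyticity) is the implicit function theorem applied to $G(\varphi,\tth) := g(\varphi,\tth) - \ch\tp$ at the positive root, using $\pa_\varphi G(\uphi,\tth) = \pa_\varphi\Omega(\uphi)+\pa_\varphi\Omega(\uphi+\tp) > 0$ and joint real-analyticity of $G$ on $\{\varphi > 0, \tth > 0\}$. Item 2 (monotonicity in $\tp$) follows from implicit differentiation: $\uphi'(\tp) = (\ch - \pa_\varphi\Omega(\uphi+\tp))/(\pa_\varphi\Omega(\uphi)+\pa_\varphi\Omega(\uphi+\tp))$, and the numerator is positive because $\omega^+(\varphi) = \ch\varphi - \Omega(\varphi)$ has a unique minimum $\varphi_\wedge \in (0,1)$ and is strictly increasing thereafter (since $\pa_\varphi\Omega$ is strictly decreasing by concavity), forcing $\pa_\varphi\Omega < \ch$ for $\varphi > \varphi_\wedge$, in particular at $\uphi+\tp\geq 2$; the divergence $\uphi\to\infty$ as $\tp\to\infty$ comes from $\ch\tp = g(\uphi) \leq \sqrt\uphi + \sqrt{\uphi+\tp}$, which cannot hold if $\uphi$ remains bounded. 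For item 3 (bounds), $g(1) = \ch + \Omega(3) > 2\ch$ (since $\Omega(3)^2 = 3\tanh(3\tth) > \tanh\tth = \ch^2$) combined with monotonicity of $g$ on $[0,\infty)$ gives $\uphi(2,\tth) < 1$; for $\tp \geq 3$, assuming $\uphi \geq (\tp-1)^2/4 \geq 1$ forces $\tanh(\tth\uphi) \geq \tanh\tth$ and $\tanh(\tth(\uphi+\tp))>\tanh\tth$ strictly, so $g(\uphi) > \ch(\sqrt\uphi+\sqrt{\uphi+\tp}) \geq \ch\tp$ (the last inequality being the algebraic identity $\sqrt a+\sqrt{a+\tp}\geq\tp \iff a\geq(\tp-1)^2/4$), contradicting $g(\uphi) = \ch\tp$. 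The lower bound $\uphi(\tp) > \uphi(\tp-1)$ is item 2, and $\uphi(2,\tth) > 1/4$ for large $\tth$ is a consequence of item 4.

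Item 4 (asymptotic expansions) is obtained by direct substitution of the standard expansions of $\tanh$ into $g(\uphi,\tth) = \ch\tp$. For $\tth\to 0^+$, using $\tanh y = y - y^3/3 + O(y^5)$ forces $\uphi = O(\tth^2)$ by inspection of leading orders; balancing the $\tth^2$-terms yields $\uphi = \tfrac{1}{12}\tp(\tp^2-1)\tth^2 + O(\tth^4)$, and one further iteration produces the $\tth^4$-correction in \eqref{expuphinuova1}. For $\tth\to\infty$, writing $\uphi = (\tp-1)^2/4 + \epsilon$ and using $\tanh y = 1 - 2e^{-2y} + O(e^{-4y})$ linearises the equation around $\uphi_\infty = (\tp-1)^2/4$; matching the coefficient $\tfrac{2\tp}{\tp^2-1}$ of $\epsilon$ against the dominant exponential on the right --- which is $\tfrac{\tp-1}{2}e^{-\tth(\tp-1)^2/2}$ for $\tp = 2$, the resonant combination of $\sqrt{\uphi_\infty}e^{-2\tth\uphi_\infty}$ and $-\tp e^{-2\tth}$ at rate $e^{-2\tth}$ for $\tp = 3$, and $-\tp e^{-2\tth}$ for $\tp \geq 4$ --- produces the three cases of \eqref{expaphip}. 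The main obstacle in the whole argument is the fine case analysis on $[-\tp, 0]$ in part (ii), where the numerical coincidence $2\Omega(\tp/2) = \ch\tp$ precisely at $\tp = 2$ creates the exceptional zero-eigenvalue collision $\omega^-(-1,\tth) = \omega^+(1,\tth) = 0$ that must be carved out of the statement for $\tp \geq 3$, and this boundary case must be tracked with equality while keeping the concavity inequalities strict uniformly in $\tth > 0$.
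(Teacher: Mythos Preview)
Your proof is correct and follows essentially the same strategy as the paper: reduce each collision to a scalar equation in $\Omega$, then exploit evenness, monotonicity, strict concavity on $(0,\infty)$, and the inequality $\tanh(\tp\tth)<\tp\tanh\tth$ (equivalently subadditivity of $\Omega$). Two steps are executed a bit differently from the paper, both legitimately. For part~(i) you give a clean global argument by identifying the extrema $\pm\Omega(\tp)$ of $f(\varphi)=\Omega(\varphi+\tp)-\Omega(\varphi)$, whereas the paper does a case analysis on the sign of $\varphi$ and $\varphi+\tp$; your version is shorter once the extremum claim is justified (which it is, by the piecewise monotonicity you describe). For Item~2 you differentiate implicitly in $\tp$ and use $\partial_\varphi\Omega(\uphi+\tp)<\ch$ at $\uphi+\tp\geq 2$; the paper instead shows the discrete inequality $\mathcal F_{\tp+1}(\uphi(\tp,\tth),\tth)<0$ directly via subadditivity. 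Your approach works, but the assertion ``$\varphi_\wedge\in(0,1)$'' deserves one line of justification: by concavity and $\Omega(0)=0$ one has $\partial_\varphi\Omega(1)<\Omega(1)/1=\ch$, while $\partial_\varphi\Omega(0^+)=\sqrt{\tth}>\sqrt{\tanh\tth}=\ch$.
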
  

\begin{rmk}\label{rem:BF} For $\tp=0,1$, the equations 
$\omega^\sigma \big( \varphi ,\tth \big) = \omega^{\sigma'} \big( \varphi +\tp,\tth \big)    $ 
 admit other solutions {\it all} at the common 
 value $ 0 $  (follow the proof of Lemma \ref{collemma}). 
 This gives rise to  the quadruple zero eigenvalue   of $ \cL_{\mu,0} $
from which the  Benjamin-Feir instability originates,  analyzed in \cite{BMV1,BMV3,BMV_ed}. 
\end{rmk}

As a corollary of Lemma \ref{collemma} we deduce 
in Appendix \ref{spectralcollisions} 
the complete description of {\it all} 
the  multiple {\it nonzero} eigenvalues of $\cL_{\mu,0} $. 

\begin{lem}[{\bf Multiple eigenvalues of $\cL_{\mu,0} $ away from $0$}]\label{thm:unpert.coll}   For any depth $ \tth > 0 $ and any $\mu \in \bR $, the spectrum of $\cL_{\mu,0}$ away from zero contains only simple or double eigenvalues: for 
any  integer $\tp\geq 2 $, 
\begin{enumerate}
        \item \label{iteml1} for any 
        $ \mu \neq \uphi(\tp,\tth) $, \text{mod} 1,   all the eigenvalues of 
        the operator $\cL_{\mu,0}$ in \eqref{cLmu} are simple;
    \item  \label{iteml2} for any 
   $  \mu = \uphi(\tp,\tth) $, \text{mod} $1$,
the operator  $\cL_{\mu,0}$ 
has the non-zero double eigenvalue
\begin{equation}\label{intcollision}
\im\omega_*^{(\tp)}(\tth)
= \im \omega^-\big(\uphi(\tp,\tth),\tth\big) = \im \omega^+\big(\uphi(\tp,\tth)+\tp,\tth\big)  
 \, ,
\end{equation}
and
the operator  $\cL_{-\mu,0}$  has the complex conjugated non-zero double eigenvalue
$$
 -\im \omega_*^{(\tp)}(\tth) = \im \omega^-\big(-\uphi(\tp,\tth)-\tp,\tth\big) = \im \omega^+\big(-\uphi(\tp,\tth),\tth\big)\, .
$$
The sequence $\tp\mapsto \omega_*^{(\tp)}(\tth) $ is increasing 
\begin{equation}\label{intcollisioncre}
0 < \omega_*^{(2)}(\tth) < \ldots <  \omega_*^{(\tp)}(\tth) < \ldots \qquad 
\text{and} \qquad \lim_{\tp \to + \infty} \omega_*^{(\tp)}(\tth) = + \infty \, . 
\end{equation}
The eigenspace of $ \cL_{\mu,0} $ associated with the 
double eigenvalue $ \im\omega_*^{(\tp)}(\tth) $ in \eqref{intcollision} is spanned 
by the two 
eigenvectors 
\begin{equation}\label{autovettorikernel}
\begin{aligned}
& 
f_{k}^-(\mu,\tth) = 
\frac{1}{\sqrt{2\Omega(\uphi(\tp,\tth),\tth)}} \vet{- \im \,\Omega(\uphi(\tp,\tth),\tth)}{1} e^{\im k x} \, , \\
& f_{k'}^+(\mu,\tth) =
\frac{1}{\sqrt{2\Omega(\uphi(\tp,\tth)+\tp,\tth)}} \vet{- \Omega(\uphi(\tp,\tth)+\tp,\tth)}{\im}
e^{\im k' x}   \, , 
\end{aligned}
\end{equation}
where $k:= \uphi(\tp,\tth)-\mu \in \bZ $ and $k':=k+\tp $.
\end{enumerate}
\end{lem}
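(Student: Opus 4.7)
The plan is to enumerate the eigenvalues of the Fourier-multiplier operator $\cL_{\mu,0}$ in \eqref{cLmu} through their explicit form $\im\omega^\sigma(j+\mu,\tth)$, $j\in\bZ$, $\sigma\in\{\pm\}$, and to control multiplicities by reducing any coincidence to one of the three shift equations already settled in Lemma~\ref{collemma}. A multiple eigenvalue corresponds to two distinct labels $(j_1,\sigma_1)\neq(j_2,\sigma_2)$ with $\omega^{\sigma_1}(j_1+\mu,\tth)=\omega^{\sigma_2}(j_2+\mu,\tth)$; setting $\varphi:=j_1+\mu$ and $\tp:=j_2-j_1$ this is exactly $\omega^{\sigma_1}(\varphi,\tth)=\omega^{\sigma_2}(\varphi+\tp,\tth)$, and after possibly swapping the labels I may assume $\tp\geq 0$ (and $\tp\geq 1$ if $\sigma_1=\sigma_2$).

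I would first rule out multiplicities $\geq 3$ away from $0$. If three distinct pairs $(j_i,\sigma_i)$ share a common nonzero eigenvalue, pigeonhole on $\sigma\in\{\pm\}$ forces $\sigma_1=\sigma_2$ for some pair with $j_1\neq j_2$. Putting $\tp:=|j_2-j_1|\geq 1$ produces a solution of $\omega^{\sigma_1}(\varphi,\tth)=\omega^{\sigma_1}(\varphi+\tp,\tth)$: for $\tp\geq 2$ this is excluded by Lemma~\ref{collemma}(i), while for $\tp=1$ Remark~\ref{rem:BF} confines every solution to the common value $0$, contradicting the nonzero hypothesis. Hence away from the origin only simple or double eigenvalues occur, proving item~\ref{iteml1}.

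Next I would classify the double eigenvalues. The previous step forces the two labels to carry opposite Krein signatures $\sigma_1\neq\sigma_2$. The configuration $\sigma_1=+$, $\sigma_2=-$ with $\tp\geq 2$ has no solutions by Lemma~\ref{collemma}(iii), and again $\tp\in\{0,1\}$ only produces the zero value by Remark~\ref{rem:BF}. Thus the only surviving case is $\sigma_1=-$, $\sigma_2=+$, $\tp\geq 2$, and Lemma~\ref{collemma}(ii) supplies exactly the two solutions $\varphi=\uphi(\tp,\tth)$, of value $\omega_*^{(\tp)}(\tth)>0$, and $\varphi=-\uphi(\tp,\tth)-\tp$, of value $-\omega_*^{(\tp)}(\tth)<0$ (the extra solution $\varphi=-1$ at $\tp=2$ has value $0$, hence is excluded). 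The condition $\varphi\in\mu+\bZ$ translates into $\uphi(\tp,\tth)-\mu\in\bZ$, the two contributing Fourier indices are $k:=\uphi(\tp,\tth)-\mu$ (sign $-$) and $k':=k+\tp$ (sign $+$), and the eigenvectors \eqref{autovettorikernel} are just the specialization of \eqref{def:fsigmaj} to these labels. The twin statement for $\cL_{-\mu,0}$ at $-\im\omega_*^{(\tp)}(\tth)$ follows either from the negative solution $\varphi=-\uphi(\tp,\tth)-\tp$, or more symmetrically from the complex-conjugation identity $\sigma_{L^2(\bT)}(\cL_{-\mu,0})=\overline{\sigma_{L^2(\bT)}(\cL_{\mu,0})}$ induced by the reality of $\cL_\e$.

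Finally, \eqref{intcollisioncre} is inherited from Lemma~\ref{collemma}\ref{it2}: by \eqref{Kreincollision} one has $\omega_*^{(\tp)}(\tth)=\ch\uphi(\tp,\tth)+\Omega(\uphi(\tp,\tth),\tth)$, and both $\ch\varphi$ and $\Omega(\varphi,\tth)$ are strictly positive and increasing on $(0,+\infty)$, so the monotonicity and divergence of $\tp\mapsto\uphi(\tp,\tth)$ in \eqref{uallinfi} transfer verbatim to $\omega_*^{(\tp)}$. The whole argument is essentially combinatorial once Lemma~\ref{collemma} is in hand; the only delicate points I expect are the careful exclusion of the borderline shifts $\tp\in\{0,1\}$ via Remark~\ref{rem:BF} and of the spurious solution $\varphi=-1$ at $\tp=2$, both disposed of by the standing hypothesis that the eigenvalue is bounded away from the origin.
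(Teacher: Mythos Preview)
Your argument is correct and follows essentially the same route as the paper: reduce any coincidence $\omega^{\sigma_1}(j_1+\mu,\tth)=\omega^{\sigma_2}(j_2+\mu,\tth)$ to the shift equations of Lemma~\ref{collemma}, invoke Remark~\ref{rem:BF} for $\tp\in\{0,1\}$, and read off the monotonicity of $\omega_*^{(\tp)}$ from \eqref{Kreincollision} and \eqref{uallinfi}. One organizational remark: the sentence ``proving item~\ref{iteml1}'' at the end of your first paragraph is premature, since that paragraph only establishes the preamble (multiplicity at most two away from zero); item~\ref{iteml1} itself---simplicity when $\uphi(\tp,\tth)-\mu\notin\bZ$ for every $\tp\geq 2$---is completed only after your second paragraph shows that any nonzero double forces $\uphi(\tp,\tth)-\mu\in\bZ$ for some $\tp$.
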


\begin{figure}[h!]\centering \subcaptionbox*{}[.4\textwidth]{\includegraphics[width=5.5cm]{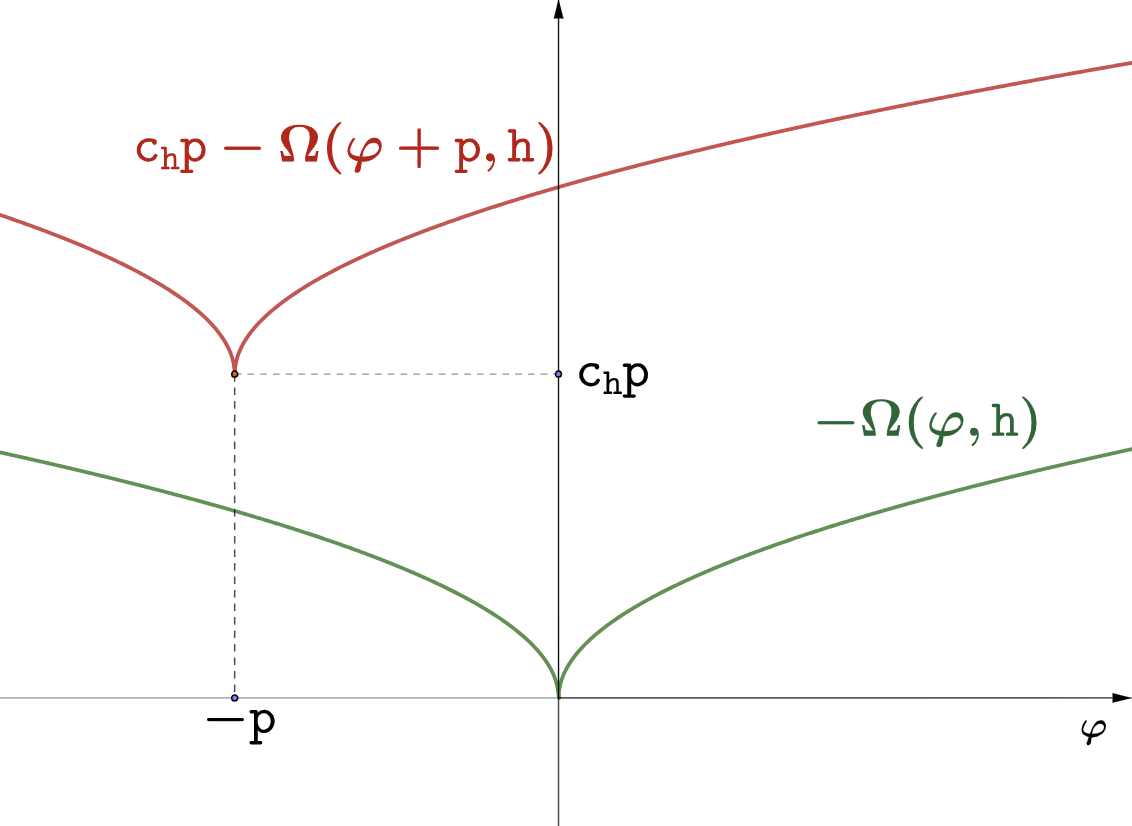}}\hspace{1cm}
\subcaptionbox*{}[.4\textwidth]{
\includegraphics[width=5.5cm]{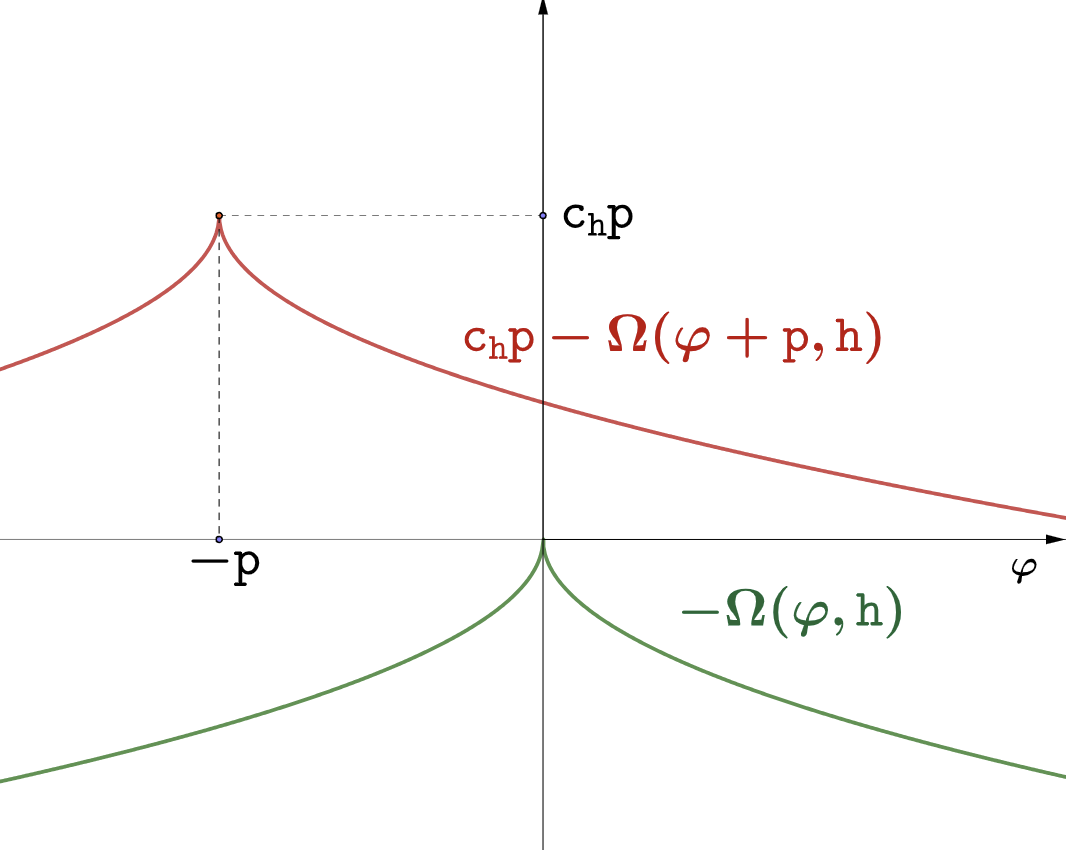} }
 \vspace{-0.8cm}
\caption{ \label{fig.nocollision} Absence of collisions for eigenvalues with same Krein signature for any $\tp\geq 2 $.}
\end{figure}
\begin{figure}[h]\centering
\includegraphics[width=7.5cm]{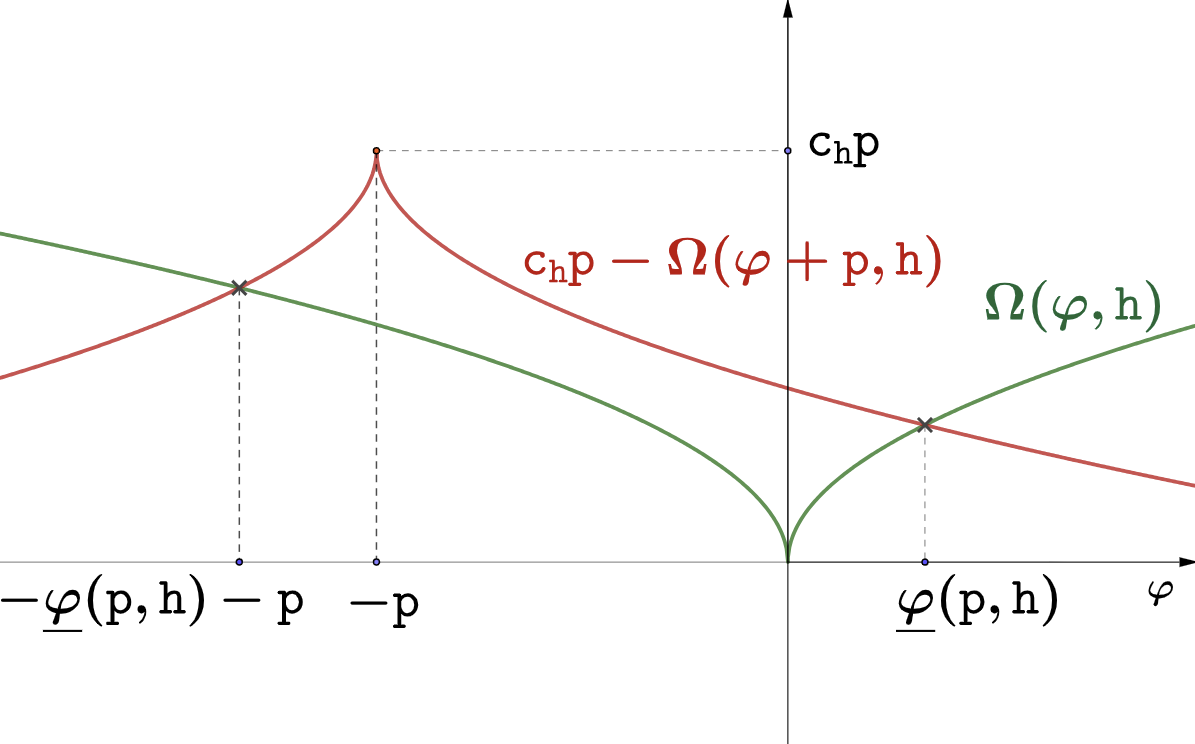} 
\caption{ \label{fig.Kreincollision} Collision of eigenvalues with opposite Krein signature for
$\tp\geq 2 $. 
}
\end{figure}

In view of  Lemma \ref{thm:unpert.coll}, {\bf given an  integer $\tp\geq 2$, we  fix} 
\begin{equation}\label{constants}
\umu := \uphi(\tp,\tth)   \, , \quad  k := 0 \, ,  \quad k':= \tp \, ,\quad 
\omega_*^{(\tp)}(\tth)  := \omega^-(\uphi(\tp,\tth),\tth)=\omega^+(\tp+\uphi(\tp,\tth),\tth)\, .
\end{equation}

\begin{rmk}
\label{rmk:depth}
We will consider depths $ \tth > 0 $ 
such that $ \umu = \uphi(\tp,\tth) \notin \bZ $,
so that 
the family of eigenvectors 
$ \{f_j^{\sigma} (\umu )\}_{j \in \bZ}$ in 
\eqref{def:fsigmaj}  form a 
complex symplectic basis of $L^2(\bT,\bC^2)$.
This is  convenient for the computations in Section 
\ref{Sec:entcoeff}, as we outline at the beginning of that section, without introducing any restrictions.
\end{rmk}

The spectrum of $\cL_{\umu,0}$ decomposes into two disjoint parts
\begin{equation}
\label{spettrodiviso0}
\sigma (\cL_{\underline{\mu},0}) = \sigma_\tp' (\cL_{\underline{\mu},0}) \cup \sigma_\tp'' (\cL_{\underline{\mu},0})  \qquad \text{where }  \quad \sigma_\tp'(\cL_{\underline{\mu},0}):= \big\{\im\omega_*^{(\tp)}(\tth) \big\} \, . 
\quad  \end{equation}
By Kato's perturbation theory (see  Lemma \ref{lem:Kato1} below)
for any $ (\mu, \e) $  sufficiently close to $ (\umu, 0) $, the perturbed spectrum
$\sigma\left(\cL_{\mu,\e}\right) $ still 
admits a disjoint decomposition  
$$
\sigma\left(\cL_{\mu,\e}\right) = \sigma_\tp'\left(\cL_{\mu,\e}\right) \cup \sigma_\tp''\left(\cL_{\mu,\e}\right) \, , \quad \forall\tp\geq 2 \, , 
$$
where $ \sigma_\tp'\left(\cL_{\mu,\e}\right) 
$  consists of 2 eigenvalues close to the double eigenvalue  $\im\omega_*^{(\tp)} (\tth)$ 
 of $\cL_{\umu,0} $. 
We denote by $\cV_{\mu, \e}^{(\tp)}$   the spectral subspace associated to  $\sigma_\tp'\left(\cL_{\mu,\e}\right) $, which   has  dimension 2 and it is  invariant under $\cL_{\mu, \e}$.

\begin{teo}\label{thm:finalmat} For any integer  $\tp\in \bN$, $\tp\geq 2$, for any depth $\tth>0$,
there exist $\e_0^{(\tp)} >  0 $, $ \delta_0^{(\tp)} >0$ such that
the operator $\cL_{\mu,\e} \colon \cV_{\mu, \e} \to \cV_{\mu, \e}$, defined  for any  $(\mu,\e) = (\umu+\delta, \e) \in B_{\delta_0^{(\tp)}}(\umu)
\times B_{\e_0^{(\tp)}}(0)$ with
$\umu := \uphi (\tp, \tth ) > 0 $  in \eqref{constants},  can be  represented 
by a $2\times 2$ 
 matrix with identical real off-diagonal entries and purely imaginary diagonal entries  of the form
$$
 \begin{pmatrix} \im\omega^+(\umu+\tp+\delta,\tth)  
 +\im r_1(\e^2) & 
\beta_1^{(\tp)}(\tth)\, \e^\tp+ r_2 (\e^{\tp+2}, \delta\e^\tp)  \\[2mm]
\beta_1^{(\tp)}(\tth)\, \e^\tp+r_2 (\e^{\tp+2}, \delta\e^\tp)  & \im\omega^-(\umu+\delta,\tth)  + 
  \im r_3 (\e^2) 
\end{pmatrix}\, ,
$$
where $\omega^\pm(\varphi,\tth)$ is given in \eqref{Oomegino} and $\beta_1^{(\tp)}
(\tth) $ is  a real analytic function satisfying
\eqref{beta1plimit}.
\end{teo} 
\noindent Theorem \ref{thm:finalmat}  implies Theorem \ref{thm:main2},  by using Theorem  \ref{primeisola}. 
The form \eqref{isolaintro} of the approximated ellipse  follows by 
Lemma 
\ref{lem:appell}
with 
$ 
E^{(\tp)}(\tth) :=
\frac{\gamma_1^{(\tp)}(\tth) - \alpha_1^{(\tp)}(\tth) }{\gamma_1^{(\tp)}(\tth) + \alpha_1^{(\tp)}(\tth) }
$ 
and 
$ \alpha_1^{(\tp)}(\tth), \gamma_1^{(\tp)}(\tth) $ in  \eqref{omegappomega0m}-\eqref{a1g1}. 


\section{Expansion of the Stokes wave at any order}\label{sec:App31}

Section \ref{sec31} presents the proofs for the structural properties \eqref{leadingStokes} of Stokes waves, as stated in
Theorem \ref{LeviCivita}
and Lemma \ref{pastructure}. 
Furthermore, we prove Propositions \ref{lem:asympetapsi} and
\ref{degenstokesinfinity}, which offer key information about the Stokes wave's Fourier-Taylor expansion at any order in both shallow and deep water.

\subsection{
Structure of the Stokes wave}\label{sec31}

We demonstrate that the Taylor expansion of the Stokes wave maintains the structure outlined in \eqref{leadingStokes} at every order. 
This is achieved by iteratively constructing the solution of system \eqref{travelingWW}.
It is advantageous to introduce formal power series of trigonometric polynomials.

\begin{sia}\label{spaziEvnOdd}
We denote $ \mathtt{Evn}(\e)$, respectively $ \mathtt{Odd}(\e)$, the module over the ring ${\cal Q}(\ch^2)$ (introduced in Definition \ref{rationalstructure}) of formal power series  
\begin{equation}\label{fexgex}
\begin{aligned}
& f_\e(x) = \sum_{\ell\geq 1} \e^\ell f_\ell (x) 
\quad \text{with} 
\quad f_\ell (x) \in \mathtt{Evn}_\ell   \quad \text{with coefficients }      \quad  
f_\ell^{[0]},\dots,f_\ell^{[\ell]} \in {\cal Q}(\ch^2),  \\
& g_\e(x) = \sum_{\ell\geq 1} \e^\ell  g_\ell (x) 
\quad \text{with} \quad 
g_\ell (x) \in \mathtt{Odd}_\ell
\quad \text{with coefficients }   \quad 
g_\ell^{[1]},\dots, g_\ell^{[\ell]}  \in {\cal Q}(\ch^2)  \, ,
\end{aligned}
\end{equation}  
where 
$\mathtt{Evn}_\ell $
and $\mathtt{Odd}_\ell $
are the spaces of trigonometric polynomials of degree $ \ell $  introduced 
in Definition \ref{def:even}.
We denote by $ \bR \cap 
\mathtt{Evn}(\e) $ the one-dimensional submodule of constants  in
$ \mathtt{Evn}(\e) $.
Given a function $ f_\e (x) $ we denote $ [f_\e(x)]_\ell := f_{\ell} (x) $
its Taylor coefficient of order
$ \ell $ in \eqref{fexgex} and, for any
$ \kappa \in \bN $ we denote 
 $$ 
f_{\ell}^{[\kappa]} := [f_\e(x)]_\ell^{[\kappa]} =
[f_\ell(x)]^{[\kappa]} = 
 \begin{cases}
 \frac{1}{\pi} \int_{\bT} f_\ell(x) \cos (\kappa x) \, dx
 \quad \text{if} \ \ell \ \text{is even,} 
 \\ 
 \frac{1}{\pi} \int_{\bT} f_\ell(x) \sin (\kappa x) \,d x 
  \quad \text{if} \ \ell \ \text{is  odd,} 
 \end{cases} \quad f_\ell^{[0]} := \frac{1}{2\pi} \int_\bT f_\ell(x) \de x\, ,
 $$
 the $ \kappa $-harmonic coefficient 
 of $ f_{\ell} (x) $. 
\end{sia}

We shall also consider the ``shifted'' modules over ${\cal Q}(\ch^2)$ given by $\ch \mathtt{Evn}(\e)$ and $\ch \mathtt{Odd}(\e)$. Note that $\ch^2 \mathtt{Evn}(\e) = \mathtt{Evn}(\e)$ and $\ch^2 \mathtt{Odd}(\e) = \mathtt{Odd}(\e)$.

We now list some properties of the formal power series in 
 $\mathtt{Evn}(\e)$ and $\mathtt{Odd}(\e)$.
 Similar results hold, mutatis mutandis, by replacing  $ \mathtt{Evn}(\e)$ with $\ch \mathtt{Evn}(\e)$ or $ \mathtt{Odd}(\e)$ with  $\ch \mathtt{Odd}(\e)$. 
 
 The first result follows directly from Definitions \ref{def:even}-\ref{rationalstructure} and \ref{spaziEvnOdd}. 
\begin{lem}[{\bf Product}]\label{product}
For any  $f_\e (x), \tilde f_\e (x) \in \mathtt{Evn}(\e)$ and $g_\e (x), \tilde g_\e (x) \in \mathtt{Odd}(\e)$, one has  
\begin{equation}
f_\e (x) \tilde f_\e (x) \, ,\; g_\e (x) \tilde g_\e (x) \in \mathtt{Evn}(\e)\, ,\qquad f_\e (x) \tilde g_\e (x) \, ,\; g_\e (x) \tilde f_\e (x) \in \mathtt{Odd}(\e)\, .
\end{equation}
 For any $\ell_1, \ell_2\in \bN$ and $f_{\ell_1} (x)  \in \mathtt{Evn}_{\ell_1}$, $f_{\ell_2} (x) \in \mathtt{Evn}_{\ell_2}$, $g_{\ell_1}(x) \in \mathtt{Odd}_{\ell_1}$ and $g_{\ell_2}(x)\in \mathtt{Odd}_{\ell_2}$ one has
\begin{align}\label{productruleevnodd}
f_{\ell_1} (x) f_{\ell_2} (x) \, ,\; g_{\ell_1}(x) g_{\ell_2}(x) \in \mathtt{Evn}_{\ell_1+\ell_2}\,, \qquad f_{\ell_1} 
(x)g_{\ell_2}(x) \, ,\; g_{\ell_1}(x)  f_{\ell_2}(x) \in \mathtt{Odd}_{\ell_1+\ell_2}\, .
\end{align}
\end{lem}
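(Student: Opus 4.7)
The plan is to reduce the formal power series statement to the finite-dimensional claim \eqref{productruleevnodd} about single trigonometric polynomials in $\mathtt{Evn}_{\ell_1}$, $\mathtt{Odd}_{\ell_1}$, and then extract the statement for power series by collecting Taylor coefficients of fixed total degree.

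First I would prove \eqref{productruleevnodd}. By bilinearity it suffices to verify the four claims on pure harmonics $\cos(\kappa_1 x)$, $\cos(\kappa_2 x)$, $\sin(\kappa_1 x)$, $\sin(\kappa_2 x)$ with $\kappa_1 \le \ell_1$, $\kappa_2 \le \ell_2$ and with $\kappa_i \equiv \ell_i \pmod 2$ (this last condition is exactly the defining feature of $\mathtt{Evn}_{\ell_i}$ and $\mathtt{Odd}_{\ell_i}$ from Definition \ref{def:even}). The product-to-sum identities
\begin{align*}
\cos(\kappa_1 x)\cos(\kappa_2 x) &= \tfrac12\big[\cos((\kappa_1+\kappa_2)x)+\cos((\kappa_1-\kappa_2)x)\big], \\
\sin(\kappa_1 x)\sin(\kappa_2 x) &= \tfrac12\big[\cos((\kappa_1-\kappa_2)x)-\cos((\kappa_1+\kappa_2)x)\big], \\
\sin(\kappa_1 x)\cos(\kappa_2 x) &= \tfrac12\big[\sin((\kappa_1+\kappa_2)x)+\sin((\kappa_1-\kappa_2)x)\big],
\end{align*}
then do all the work: both output harmonics $\kappa_1\pm\kappa_2$ are $\le \ell_1+\ell_2$ and share the parity of $\kappa_1+\kappa_2 \equiv \ell_1+\ell_2 \pmod 2$. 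Thus cosine$\times$cosine and sine$\times$sine land in $\mathtt{Evn}_{\ell_1+\ell_2}$, while sine$\times$cosine lands in $\mathtt{Odd}_{\ell_1+\ell_2}$, establishing \eqref{productruleevnodd}.

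Next I would deduce the statement for formal power series. Given $f_\e = \sum_{\ell\ge 1}\e^\ell f_\ell \in \mathtt{Evn}(\e)$ and $\tilde f_\e = \sum_{\ell \ge 1} \e^\ell \tilde f_\ell \in \mathtt{Evn}(\e)$, the Cauchy product has coefficient $[f_\e \tilde f_\e]_\ell = \sum_{\ell_1+\ell_2=\ell} f_{\ell_1}\tilde f_{\ell_2}$. By \eqref{productruleevnodd} each summand lies in $\mathtt{Evn}_\ell$, and since $\mathtt{Evn}_\ell$ is a vector space the sum does too. The coefficients of $f_{\ell_1}\tilde f_{\ell_2}$ produced by the product-to-sum identities above are rational combinations (in fact, half-sums) of products of elements of $\mathcal{Q}(\ch^2)$; since $\mathcal{Q}(\ch^2)$ is a ring (Definition \ref{rationalstructure}), these remain in $\mathcal{Q}(\ch^2)$. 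Hence $f_\e \tilde f_\e \in \mathtt{Evn}(\e)$. The three remaining cases $g_\e \tilde g_\e$, $f_\e \tilde g_\e$, $g_\e \tilde f_\e$ are treated identically using the corresponding product-to-sum identity.

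There is no real obstacle here: the lemma is essentially a bookkeeping statement, and the only point that needs care is that the parity constraint in Definition \ref{def:even} (\emph{not} merely evenness/oddness of the trigonometric polynomial, but that all harmonics share the parity of $\ell$) is preserved by the product-to-sum formulas. This is the structural fact that makes the $\mathtt{Evn}_\ell, \mathtt{Odd}_\ell$ filtration multiplicative and underlies the subsequent use of the lemma in building the iterative Stokes-wave expansion order by order.
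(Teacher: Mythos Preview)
Your proof is correct and is precisely the unpacking the paper has in mind: the paper records no argument beyond the sentence ``follows directly from Definitions \ref{def:even}--\ref{rationalstructure} and \ref{spaziEvnOdd},'' and your product-to-sum verification on pure harmonics, together with the Cauchy-product lift to formal power series, is exactly that direct verification.
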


We now consider the action of
Fourier multipliers. 

\begin{lem}
\label{FourierMultipliers}
{\bf (Fourier multipliers)}
The differential operator $ \pa_x $,
the flat-sea Dirichlet-Neumann operator
$ G_0 = |D| \tanh{(\tth |D|)} $, the
Hilbert transform 
$ \cH:= -\im \sgn(D)$ and $|D|$ satisfy, for any 
$ \ell \in \bN $,  
\begin{equation}\label{paacts}
\begin{aligned}
& \pa_x: \mathtt{Evn}_{\ell}\to \mathtt{Odd}_{\ell} \, ,\   \ \pa_x: \mathtt{Odd}_{\ell}  \to \mathtt{Evn}_{\ell}\, , \\
& G_0: \mathtt{Evn}_{\ell} \to  \mathtt{Evn}_{\ell}  \, ,\  G_0: \mathtt{Odd}_{\ell} \to \mathtt{Odd}_{\ell}\, , \\
& \cH: \mathtt{Evn}_\ell \to \mathtt{Odd}_\ell\, ,\ \  \cH: \mathtt{Odd}_\ell \to \mathtt{Evn}_\ell\, ,\\
& |D|: \mathtt{Evn}_{\ell} \to  \mathtt{Evn}_{\ell}  \, ,\  |D|: \mathtt{Odd}_{\ell} \to \mathtt{Odd}_{\ell}\, , 
\end{aligned}
\quad\
\begin{aligned}
& \pa_x: \mathtt{Evn}(\e) \to \mathtt{Odd}(\e)\, ,\ \  \pa_x: \mathtt{Odd}(\e)\to  \mathtt{Evn}(\e)\, , \\
& G_0: \mathtt{Evn}(\e) \to  \mathtt{Evn}(\e)\, ,\ G_0: \mathtt{Odd}(\e) \to  \mathtt{Odd}(\e)\, , \\
& \cH: \mathtt{Evn}(\e) \to \mathtt{Odd}(\e) \, ,\ 
\ \cH: \mathtt{Odd}(\e) \to \mathtt{Evn}(\e) \, ,\\
& |D|: \mathtt{Evn}(\e) \to  \mathtt{Evn}(\e)\, ,\ |D|: \mathtt{Odd}(\e) \to  \mathtt{Odd}(\e)\,  \, .
\end{aligned}
\end{equation}
\end{lem}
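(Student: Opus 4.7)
The plan is a direct verification on the monomial basis $\{1, \cos(kx), \sin(kx)\}_{k\in\bN}$. All four operators are Fourier multipliers: $\pa_x$ with symbol $\im k$, $|D|$ with symbol $|k|$, $\cH = -\im\,\sgn(D)$ with symbol $-\im\,\sgn(k)$, and $G_0$ with symbol $|k|\tanh(\tth|k|)$. Consequently each of them acts \emph{diagonally on individual Fourier harmonics}, so the parity of the harmonic indices — the feature that distinguishes $\mathtt{Evn}_\ell$ from functions with mixed harmonics, cf.\ Definition \ref{def:even} — is automatically preserved. Hence it only remains to track whether each operator exchanges cosines and sines or leaves them in place, and whether it keeps the coefficients inside $\mathcal{Q}(\ch^2)$.

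The elementary identities $\pa_x\cos(kx) = -k\sin(kx)$, $\pa_x\sin(kx) = k\cos(kx)$ show that $\pa_x$ swaps cosines and sines while multiplying coefficients by the integer $k$, giving $\pa_x:\mathtt{Evn}_\ell\to\mathtt{Odd}_\ell$ and $\pa_x:\mathtt{Odd}_\ell\to\mathtt{Evn}_\ell$. The same bookkeeping, with $\cH\cos(kx)=\sgn(k)\sin(kx)$ and $\cH\sin(kx)=-\sgn(k)\cos(kx)$, handles the Hilbert transform. In contrast $|D|\cos(kx)=|k|\cos(kx)$ and $|D|\sin(kx)=|k|\sin(kx)$, so $|D|$ preserves the cos/sin nature and hence $\mathtt{Evn}_\ell$ and $\mathtt{Odd}_\ell$ separately; the coefficients are multiplied by an integer, so they remain in $\mathcal{Q}(\ch^2)$. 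The same holds for $G_0$, whose action multiplies each harmonic by $|k|\tanh(k\tth)$.

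The only substantive point is therefore that $\tanh(k\tth)\in\mathcal{Q}(\ch^2)$ for every $k\in\bN$. Setting $t=\tanh(\tth)=\ch^2$, the identity $e^{2\tth}=(1+t)/(1-t)$ yields the closed form
\begin{equation*}
\tanh(k\tth)=\frac{(1+t)^k-(1-t)^k}{(1+t)^k+(1-t)^k},
\end{equation*}
a ratio of polynomials with integer coefficients. The denominator is manifestly strictly positive for $t\in[0,1]$ since each of $(1+t)^k,(1-t)^k$ is non-negative and they cannot vanish simultaneously; after cancelling any common polynomial factor of numerator and denominator we obtain an expression fitting Definition \ref{rationalstructure}. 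Hence $|k|\tanh(k\tth)\in\mathcal{Q}(\ch^2)$, and $G_0$ preserves the $\mathcal{Q}(\ch^2)$-rationality of coefficients.

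Finally, the module-level statements in the second column of \eqref{paacts} follow coefficient-wise in $\e$ from the per-level statements just established, because each operator acts on a formal power series $f_\e(x)=\sum_\ell \e^\ell f_\ell(x)$ simply by acting on each $f_\ell(x)$ separately. I do not foresee any real obstacle: the argument is essentially a Fourier-basis computation together with the rational expressibility of $\tanh(k\tth)$ in $\tanh(\tth)$, which in turn is nothing more than the iterated hyperbolic addition formula.
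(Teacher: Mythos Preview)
Your proof is correct and follows essentially the same route as the paper: verify the action on the trigonometric monomials and show $\tanh(k\tth)\in\mathcal{Q}(\ch^2)$. The only cosmetic difference is that the paper derives the rational expression for $\tanh(k\tth)$ inductively from the addition formula $\tanh(a+b)=\frac{\tanh a+\tanh b}{1+\tanh a\tanh b}$, whereas you obtain the equivalent closed form $\frac{(1+t)^k-(1-t)^k}{(1+t)^k+(1-t)^k}$ directly via $e^{2\tth}=(1+t)/(1-t)$; expanding by the binomial theorem recovers exactly the paper's formula.
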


\begin{proof} 
We prove the statement for  $G_0$.  Since
the symbol 
$ |\xi| \tanh (\tth |\xi |)  $ is real and even
in $ \xi $ we have that
$ G_0 $ preserves the parity and, 
for any  $f_\e (x) $ in $ \mathtt{Evn}(\e)$ or $\mathtt{Odd}(\e)$, for any $ \kappa \in \bN_0  $, 
\begin{equation}\label{tanhk}
[G_0 f_\e]_\ell^{[\kappa]} = \kappa \tanh(\tth \kappa) f_\ell^{[\kappa]}\, ,\quad \tanh(\tth \kappa) =\Big( \sum\limits_{\substack{j=1\\ j \, \textup{odd}}}^{\kappa} {\kappa\choose j} \ch^{2j}\Big)\, \Big/\,\Big(\sum\limits_{\substack{j=0\\ j \, \textup{even}}}^{\kappa} {\kappa \choose j} \ch^{2j}\Big) \in {\cal Q}(\ch^2) \, ,
\end{equation}
where the latter formula comes by induction using $$
\tanh(a+b) = \frac{\tanh(a)+\tanh(b)}{1+\tanh(a)\tanh(b)}  \qquad \text{and}
\qquad 
{k \choose j} + {k \choose j-1} = {k+1 \choose j} \, . 
$$
Property \eqref{paacts} 
 for $ \pa_x $, $ \cH  $ 
 and $ |D | $
is similar.
\end{proof}

A key property used repeatedly hereafter is that the leading Fourier coefficients of a product of functions within
$ \mathtt{Evn}(\e) $ or $ \mathtt{Odd}(\e) $
depend solely on the leading Fourier coefficients of each factor, according to the following lemma.  

\begin{lem}\label{lem:prod}
For any $\ell \in \bN $ it results 
$$
\big[ 
f_\e (x) g_\e (x)
\big]_\ell^{[\ell]}
= 
\begin{cases}
\frac12 \sum_{\ell_1+\ell_2= \ell}
f_{\ell_1}^{[\ell_1]}
g_{\ell_2}^{[\ell_2]} 
\qquad 
\text{if}
\ f_\e (x) \in \mathtt{Evn}(\e) \, , \ 
 g_\e (x) \in \mathtt{Evn}(\e) \, \\
 \qquad \qquad 
 \qquad \qquad \qquad \, \text{or} 
\ f_\e (x) \in \mathtt{Evn}(\e) \, , \
 g_\e (x) \in \mathtt{Odd}(\e) \, , 
\\
- \frac12 \sum_{\ell_1+\ell_2= \ell}
f_{\ell_1}^{[\ell_1]}
g_{\ell_2}^{[\ell_2]}\, 
\quad \text{if}
\ f_\e (x) \in \mathtt{Odd}(\e) \, ,  \
 g_\e (x) \in \mathtt{Odd}(\e) \, . 
\end{cases}
$$
\end{lem}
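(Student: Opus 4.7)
The plan is to extract the coefficient of $\e^\ell$ in the Cauchy product and then isolate the $\cos(\ell x)$ or $\sin(\ell x)$ Fourier mode, exploiting the fact that the Fourier spectrum of $f_{\ell_1}(x)$ is contained in $\{0,1,\dots,\ell_1\}$ and analogously for $g_{\ell_2}(x)$. The central observation is purely arithmetic: if we write
$$
\big[f_\e(x)g_\e(x)\big]_\ell = \sum_{\ell_1+\ell_2=\ell} f_{\ell_1}(x)\,g_{\ell_2}(x)
$$
and expand each factor in its Fourier series supported on harmonics $\kappa_1\le \ell_1$ and $\kappa_2\le\ell_2$, then a product-to-sum identity produces modes $\kappa_1\pm\kappa_2$. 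Since $|\kappa_1\pm\kappa_2|\le \ell_1+\ell_2=\ell$, the harmonic $\ell$ is attained \emph{only} in the single configuration $\kappa_1=\ell_1$, $\kappa_2=\ell_2$, and always through the sum $\kappa_1+\kappa_2$, never the difference. This justifies the claim that only the leading Fourier coefficients $f_{\ell_1}^{[\ell_1]}$ and $g_{\ell_2}^{[\ell_2]}$ contribute.

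The execution reduces to applying, in each of the three symmetry cases, one of the elementary identities
$$
\cos(\alpha x)\cos(\beta x)=\tfrac12\bigl[\cos((\alpha+\beta)x)+\cos((\alpha-\beta)x)\bigr],\quad
\sin(\alpha x)\sin(\beta x)=\tfrac12\bigl[\cos((\alpha-\beta)x)-\cos((\alpha+\beta)x)\bigr],
$$
$$
\cos(\alpha x)\sin(\beta x)=\tfrac12\bigl[\sin((\alpha+\beta)x)+\sin((\beta-\alpha)x)\bigr],
$$
with $\alpha=\ell_1$, $\beta=\ell_2$. In the first case ($f_\e,g_\e\in \mathtt{Evn}(\e)$) and in the mixed case ($f_\e\in\mathtt{Evn}(\e)$, $g_\e\in\mathtt{Odd}(\e)$), the coefficient of $\cos(\ell x)$, respectively $\sin(\ell x)$, coming from the pair $(\ell_1,\ell_2)$ equals $\tfrac12 f_{\ell_1}^{[\ell_1]}g_{\ell_2}^{[\ell_2]}$; in the third case ($f_\e,g_\e\in\mathtt{Odd}(\e)$) one picks up a minus sign from $-\cos((\ell_1+\ell_2)x)/2$, producing $-\tfrac12 f_{\ell_1}^{[\ell_1]}g_{\ell_2}^{[\ell_2]}$. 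Summing over $\ell_1+\ell_2=\ell$ gives exactly the three formulas of the statement.

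The only mild subtlety is a bookkeeping check: the formulas implicitly use that the lower modes $|\kappa_1-\kappa_2|$ never pollute the $\ell$-harmonic (immediate from $|\kappa_1-\kappa_2|<\ell_1+\ell_2=\ell$ whenever $(\kappa_1,\kappa_2)\ne(\ell_1,\ell_2)$, and equal to $\ell_1+\ell_2$ only if one of them vanishes, which is compatible with the above identity). Moreover, the parity constraints built into Definition \ref{def:even} guarantee that $\ell_1$ and $\kappa_1$ share parity (similarly for $\ell_2$, $\kappa_2$), so the harmonic $\ell=\ell_1+\ell_2$ landed upon is of the correct parity, in agreement with Lemma \ref{product}. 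I do not foresee a genuine obstacle: the proof is a one-line pigeonhole argument on Fourier support, followed by three short trigonometric computations.
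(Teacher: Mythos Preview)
Your proposal is correct and follows essentially the same approach as the paper: extract the $\e^\ell$-coefficient via the Cauchy product, expand each factor in its Fourier series, and use the product-to-sum identities together with the pigeonhole observation that $\kappa_1+\kappa_2=\ell$ forces $\kappa_1=\ell_1$, $\kappa_2=\ell_2$. The paper's proof is essentially a one-line version of yours for the $\mathtt{Evn}\times\mathtt{Evn}$ case, declaring the other cases ``similar''.
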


\begin{proof}
We make the proof for
$ f_\e (x), g_\e (x)  \in \mathtt{Evn} (\e) $. For any $ \ell \in \bN $ we have 
$$
\big[ f_\e (x) g_\e (x) \big]_\ell^{[\ell]}
= \sum_{\ell_1+\ell_2 = \ell, \atop j_1 \leq \ell_1, j_2 \leq \ell_2} \big[ f_{\ell_1}^{[j_1]} 
g_{\ell_2}^{[j_2]}  
\cos (j_1 x)
\cos (j_2 x) \big]^{[\ell]}
= \frac12 \sum_{\ell_1+\ell_2= \ell}
f_{\ell_1}^{[\ell_1]}
g_{\ell_2}^{[\ell_2]} 
$$ 
because $ \cos ( (j_1+j_2) x) =  \cos ( \ell x) $
just for $ j_1 = \ell_1 $
and $ j_2 = \ell_2 $. The other cases are similar. 
\end{proof}

We now consider the composition of
functions belonging to the spaces in Definition \ref{spaziEvnOdd}. 

\begin{lem}[{\bf Composition}]
\label{composition}
Let  $f_\e (x) $ be a function  in $ \mathtt{Evn}(\e)$ and $g_\e (x) $ in $\mathtt{Odd}(\e)$. 
Then the composite
 function 
$  f_\e\big(x+g_\e(x)\big)$  belongs to 
$\mathtt{Evn}(\e) $.
In addition, if $ \tilde{f}_\e (x)  \in \mathtt{Evn}(\e)$ satisfies 
$\tilde{f}_\ell^{[\ell]}= f_\ell^{[\ell]}$
for any $\ell\in \bN$, then 
\begin{equation}\label{composition2}
\big[f_\e(x+g_\e(x))\big]_\ell^{[\ell]} = \big[\tilde f_\e(x+g_\e(x))\big]_\ell^{[\ell]} \, , \quad \forall \ell \in \bN\, .
\end{equation}
\end{lem}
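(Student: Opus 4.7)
The plan is to use the Taylor expansion
$$ f_\e(x + g_\e(x)) = \sum_{n \geq 0} \frac{1}{n!} (\pa_x^n f_\e)(x) \, g_\e(x)^n \, ,$$
which makes sense as a formal power series in $\e$ because both $f_\e$ and $g_\e$ start at order $\e^1$: at each fixed $\e$-order $\ell$, only finitely many summands contribute (those with $n \leq \ell - 1$, since $g_\e^n$ begins at order $\e^n$). One then has to verify two things: each summand belongs to $\mathtt{Evn}(\e)$, and the leading Fourier harmonic $[\,\cdot\,]_\ell^{[\ell]}$ depends only on $\{f_m^{[m]}\}$.

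For the first claim, I would argue by parity of $n$. By Lemma \ref{FourierMultipliers}, $\pa_x^n f_\e$ lies in $\mathtt{Evn}(\e)$ when $n$ is even and in $\mathtt{Odd}(\e)$ when $n$ is odd. Iterating Lemma \ref{product}, $g_\e(x)^n$ belongs to $\mathtt{Evn}(\e)$ if $n$ is even and to $\mathtt{Odd}(\e)$ if $n$ is odd. Either way the product $(\pa_x^n f_\e)\, g_\e^n$ is a product of two elements of the same module, so it lies in $\mathtt{Evn}(\e)\cdot \mathtt{Evn}(\e) = \mathtt{Evn}(\e)$ or $\mathtt{Odd}(\e)\cdot \mathtt{Odd}(\e) = \mathtt{Evn}(\e)$. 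The whole sum therefore lies in $\mathtt{Evn}(\e)$.

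For the second claim, I would write the order-$\ell$ coefficient as
$$ [f_\e(x + g_\e(x))]_\ell = \sum_{\substack{m + k = \ell \\ n \geq 0}} \frac{1}{n!}\, \pa_x^n f_m(x) \cdot [g_\e^n]_k(x)\, .$$
In each summand, $\pa_x^n f_m$ is a trigonometric polynomial of degree $m$ with harmonics of the parity of $m$, and $[g_\e^n]_k$ is a trigonometric polynomial of degree $k$ with harmonics of the parity of $k$; hence the product is a trigonometric polynomial of degree at most $m + k = \ell$. The key point, which is exactly Lemma \ref{lem:prod} applied to these two factors, is that the coefficient of $\cos(\ell x)$ in the product depends \emph{only} on the top coefficients $[\pa_x^n f_m]^{[m]}$ and $[g_\e^n]_k^{[k]}$. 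Finally, a direct computation shows that $[\pa_x^n f_m]^{[m]} = \pm m^n f_m^{[m]}$ (the sign being determined by the parities of $m$ and $n$), so this coefficient depends on the single number $f_m^{[m]}$ and not on the lower harmonics of $f_m$. Since $\tilde f_m^{[m]} = f_m^{[m]}$ for every $m$, replacing $f_\e$ by $\tilde f_\e$ leaves $[\,\cdot\,]_\ell^{[\ell]}$ invariant, proving \eqref{composition2}.

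The only subtle step is the parity/sign bookkeeping needed to legitimately invoke Lemma \ref{lem:prod} in all the cases (according to whether $n$ is even or odd, and whether $f_m$ and $g_\e$ have the even or the odd structure at a given order), but this is a routine case distinction and does not add genuine content to the proof.
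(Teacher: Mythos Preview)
Your proposal is correct and follows essentially the same route as the paper: both write out the Taylor expansion $f_\e(x+g_\e(x)) = \sum_n \frac{1}{n!}(\pa_x^n f_\e)(x)\,g_\e(x)^n$, use Lemmata \ref{product} and \ref{FourierMultipliers} to place each term in $\mathtt{Evn}(\e)$, and then invoke Lemma \ref{lem:prod} to see that the leading harmonic at order $\ell$ depends only on the leading coefficients $f_m^{[m]}$. The paper's version is slightly terser (it writes the expansion directly at each $\e$-order $\ell$ and skips the explicit parity case split), but the content is the same.
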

\begin{proof}
For any $ \ell \in \bN $, we have  
\begin{equation}\label{compogf}
\big[ f_\e\big(x+g_\e(x)\big) \big]_\ell = \sum_{k=1}^{\ell} \sum_{\ell_1+\dots+\ell_{k}=\ell \atop 
\ell_1, \ldots, \ell_k \geq 1 } \frac{1}{(k-1)!}
(\pa_x^{k-1} f_{\ell_1}) (x) g_{\ell_2}(x)\ldots g_{\ell_{k}}(x)  
\end{equation}
where we set  $ \pa_x^0 := \text{Id} $. By Lemmata \ref{product} and
\ref{FourierMultipliers} 
we deduce that $\big[ f_\e\big(x+g_\e(x)\big) \big]_\ell $ belongs to $ \mathtt{Evn}_\ell$. In view of the product structure \eqref{compogf} and using Lemma \ref{lem:prod}, 
the leading harmonic  
$ \big[ f_\e\big(x+g_\e(x)\big) \big]_\ell^{[\ell]} 
$ depends on
$ f_\e (x) $ only through  the 
leading coefficients 
$ f_{\ell_1}^{[\ell_1]} $.
This proves  \eqref{composition2}. 
\end{proof}

\begin{lem}
\label{naroba} Let $f_\e (x), \tilde{f}_\e (x) 
\in \mathtt{Evn}(\e)$ and $g_\e (x)  \in \mathtt{Odd}(\e)$. 
Then $
\dfrac{f_\e}{1+ \tilde{f}_\e}  \in \mathtt{Evn}(\e)$ and $\dfrac{g_\e}{1+ \tilde{f}_\e}  \in \mathtt{Odd}(\e)$. 
\end{lem}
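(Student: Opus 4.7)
The plan is to invert $1+\tilde f_\e$ by a formal geometric series and then multiply by $f_\e$ or $g_\e$, using only the closure properties already established in Lemmata \ref{product} and \ref{FourierMultipliers}.

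First I would observe that, by Definition \ref{spaziEvnOdd}, every element of $\mathtt{Evn}(\e)$ has no constant term: $\tilde f_\e(x) = \sum_{\ell\geq 1}\e^\ell \tilde f_\ell(x)$. Hence the formal power series
\begin{equation*}
h_\e(x) := \sum_{k\geq 1} (-1)^k\, \tilde f_\e(x)^k
\end{equation*}
is well-defined, because for each order $\ell\in\bN$ only the finitely many indices $k=1,\dots,\ell$ contribute to the coefficient $[h_\e]_\ell(x)$. By construction $(1+\tilde f_\e)(1+h_\e)=1$ as formal power series, so $1/(1+\tilde f_\e) = 1+h_\e$.

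Next I would verify that $h_\e\in\mathtt{Evn}(\e)$. By Lemma \ref{product}, iterated products of elements in $\mathtt{Evn}(\e)$ remain in $\mathtt{Evn}(\e)$, so each $\tilde f_\e^{\,k}\in\mathtt{Evn}(\e)$; in particular the $\ell$-th Taylor coefficient of $\tilde f_\e^{\,k}$ lies in $\mathtt{Evn}_\ell$, and its harmonics have coefficients in $\mathcal{Q}(\ch^2)$ since $\mathcal{Q}(\ch^2)$ is a ring (Definition \ref{rationalstructure}). Summing over $k=1,\dots,\ell$ keeps the coefficients in $\mathcal{Q}(\ch^2)$ and the result in $\mathtt{Evn}_\ell$, so $h_\e\in\mathtt{Evn}(\e)$.

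Finally I would conclude by writing
\begin{equation*}
\frac{f_\e}{1+\tilde f_\e} = f_\e + f_\e\, h_\e, \qquad \frac{g_\e}{1+\tilde f_\e} = g_\e + g_\e\, h_\e,
\end{equation*}
and applying Lemma \ref{product} once more: $f_\e\cdot h_\e\in\mathtt{Evn}(\e)$ since both factors are in $\mathtt{Evn}(\e)$, and $g_\e\cdot h_\e\in\mathtt{Odd}(\e)$ since $g_\e\in\mathtt{Odd}(\e)$ and $h_\e\in\mathtt{Evn}(\e)$. This gives the claim. There is no genuine obstacle here; the only point that deserves a line of care is the well-definedness of the geometric series at each fixed order in $\e$, which follows immediately from the absence of a constant term in $\tilde f_\e$.
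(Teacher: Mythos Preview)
Your argument is correct and follows essentially the same route as the paper: expand $(1+\tilde f_\e)^{-1}$ as a formal geometric series (well-defined order by order since $\tilde f_\e$ has no $\e^0$ term), and then invoke the product rules of Lemma~\ref{product} to conclude. The paper writes the expansion directly for $f_\e/(1+\tilde f_\e)$ and reads off the $\ell$-th coefficient, whereas you first isolate $h_\e=\sum_{k\geq1}(-1)^k\tilde f_\e^{\,k}\in\mathtt{Evn}(\e)$ and then multiply, but this is only a cosmetic reorganization.
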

\begin{proof} We have  $$\dfrac{f_\e}{1+ \tilde{f}_\e}  = f_\e + \sum\limits_{k\geq 1} (-1)^k f_\e \tilde{f}_\e^k = \sum\limits_{\ell \geq 1}\e^\ell \Big(f_\ell(x) + \sum_{k=1}^{\ell-1} (-1)^k \sum\limits_{\ell_1+\dots+\ell_{k+1}=\ell} f_{\ell_1}(x) \tilde{f}_{\ell_2}(x)\dots \tilde{f}_{\ell_{k+1}}(x)  \Big) $$
and we deduce that $\Big[ \dfrac{f_\e}{1+ \tilde{f}_\e}\Big]_\ell $ belongs to $ \mathtt{Evn}_\ell $ by \eqref{productruleevnodd}. Similarly one proves that $\Big[ \dfrac{g_\e}{1+ \tilde{f}_\e}\Big]_\ell \in \mathtt{Odd}_\ell $.
\end{proof}
We now consider the  Dirichlet-Neumann operator $G(\eta)$ 
that can be represented as 
$$
G(\eta) = G_0 + \sum_{\ell \geq 1} G_\ell(\eta) 
\quad\text{where}
\quad
G_0 = |D| \tanh{(\tth |D|)} \, . 
$$

\begin{lem}[{\bf Dirichlet-Neumann}]\label{lem:DN}
Let  $\eta_\e \in \mathtt{Evn} (\e) $.
The Dirichlet-Neumann operator
$ G(\eta_\e) $  and its Taylor components 
$ G_\ell (\eta_\e ) $   
satisfy  
$$
G(\eta_\e) \, , 
G_\ell (\eta_\e ) \, : \mathtt{Odd} (\e) \to \mathtt{Odd} ( \e) \, \, ,
\qquad \forall \ell \in \bN \, .
$$ 
\end{lem}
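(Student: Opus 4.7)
The plan is to establish the claim by induction on $\ell$, using the Craig--Sulem recursive representation of $G_\ell(\eta)$ together with Lemmas \ref{product} and \ref{FourierMultipliers}. The base case is immediate: $G_0 = |D|\tanh(\tth|D|)$ is a Fourier multiplier whose symbol is real and even in $\xi$, and formula \eqref{tanhk} shows that its eigenvalues on modes $e^{\im \kappa x}$ lie in $\mathcal{Q}(\ch^2)$; thus by Lemma \ref{FourierMultipliers} it maps $\mathtt{Odd}(\e)$ into itself.

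For the inductive step I would invoke the Craig--Sulem recursion, which expresses $G_\ell(\eta)$, for $\ell \geq 1$, as a finite linear combination of compositions of three kinds of basic operators: (i) multiplication by a power $\eta^j$, which belongs to $\mathtt{Evn}(\e)$ by Lemma \ref{product}; (ii) the Fourier multipliers $G_0$ and $|D|$, which preserve $\mathtt{Odd}(\e)$ by Lemma \ref{FourierMultipliers}; (iii) the derivative $\partial_x$, which exchanges $\mathtt{Evn}(\e)$ and $\mathtt{Odd}(\e)$. The structural property I would check is that in every summand the operator $\partial_x$ appears an \emph{even} number of times, so that the composition is overall parity-preserving. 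Granted this, each summand, applied to some $\psi_\e \in \mathtt{Odd}(\e)$, returns an element of $\mathtt{Odd}(\e)$: the inner factor $G_k(\eta_\e)\psi_\e$ lies in $\mathtt{Odd}(\e)$ by the inductive hypothesis, the multiplications by $\eta_\e^j \in \mathtt{Evn}(\e)$ preserve $\mathtt{Odd}(\e)$ by Lemma \ref{product}, and the Fourier multipliers preserve $\mathtt{Odd}(\e)$ by Lemma \ref{FourierMultipliers}. Summing over $\ell$ would then give $G(\eta_\e) : \mathtt{Odd}(\e) \to \mathtt{Odd}(\e)$.

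The main obstacle is the precise bookkeeping in the Craig--Sulem recursion in \emph{finite} depth, where $G_0 = |D|\tanh(\tth|D|)$ plays the role of $|D|$ in the deep-water version, together with verifying that the $\partial_x$'s do come in pairs in every term. A more conceptual shortcut is the symmetry argument: the Laplace problem on $\mathcal{D}_{\eta_\e}$ is invariant under $x\mapsto -x$ when $\eta_\e$ is even, so if the Dirichlet datum $\psi_\e$ is odd the harmonic extension $\Phi$ is odd in $x$ by uniqueness, hence $G(\eta_\e)\psi_\e = \Phi_y - (\eta_\e)_x \Phi_x$ is odd. However this yields only parity of the function and not the finer module structure of Definition \ref{spaziEvnOdd}---namely, that at each order $\e^\ell$ only harmonics of the same parity as $\ell$ appear and that the coefficients remain in $\mathcal{Q}(\ch^2)$. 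To capture these two extra features one must still appeal to the explicit recursion, the graded multiplication rule \eqref{productruleevnodd}, and the rationality statement in \eqref{tanhk}, which together upgrade the symmetry argument to the required claim.
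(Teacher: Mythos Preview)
Your approach is essentially the same as the paper's: the paper displays the Craig--Sulem recursion \eqref{Craig} and says ``By induction and exploiting Lemmata \ref{product}--\ref{FourierMultipliers} one shows that $G_\ell(\eta_\e):\mathtt{Odd}(\e)\to\mathtt{Odd}(\e)$.'' Your bookkeeping is correct---in \eqref{Craig} each term is either of the form $(\text{Fourier multiplier})\,\partial_x\,\eta^k\,\partial_x$ (two $\partial_x$'s) or $(\text{Fourier multiplier})\,\eta^k\,G_j(\eta)$ (handled by induction), so there is no obstacle.
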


\begin{proof}
The Taylor coefficient 
$ G_\ell (\eta) $
are recursively given by   \cite[formulae (39)-(40)]{CS} 
\begin{equation}\label{Craig}
\begin{aligned}
G_{2r}(\eta) &= -\frac1{(2r)!}G_0 |D|^{2r-2} \pa_x \eta^{2r} \pa_x - \sum_{s=0}^{r-1}  \frac1{(2r-2s)!} |D|^{2r-2s} \eta^{2r-2s} G_{2s}(\eta)  \\ 
&\quad - \sum_{s=0}^{r-1}  \frac1{(2r-2s-1)!} G_0 |D|^{2r-2s-2} \eta^{2r-2s-1} G_{2s+1}(\eta) \, ,   \\ 
G_{2r-1}(\eta) &= -\frac1{(2r-1)!} |D|^{2r-2} \pa_x \eta^{2r-1} \pa_x - \sum_{s=0}^{r-1}  \frac1{(2r-2s-1)!} G_0 |D|^{2r-2s-2} \eta^{2r-2s-1} G_{2s}(\eta)\\  
&\quad - \sum_{s=0}^{r-2}  \frac1{(2r-2s-2)!} |D|^{2r-2s-2} \eta^{2r-2s-2} G_{2s+1}(\eta) \, . 
\end{aligned}
\end{equation}
By induction and exploiting Lemmata \ref{product}-\ref{FourierMultipliers} one shows that $G_\ell(\eta_\e): \mathtt{Odd}(\e)\to \mathtt{Odd}(\e)$. 
\end{proof}

We also use the following lemma.
We remind that  $ \ttf_\e \in 
\bR \cap \mathtt{Evn} (\e) $. 

\begin{lem}\label{lem:coth}
$
\cH \coth ( (\tth + \ttf_\e)|D|): \mathtt{Evn} (\e) \to \mathtt{Odd} ( \e) 
$
as well as 
$ [ \cH \coth ( (\tth + \ttf_\e)|D|)]_\ell  $ for any $\ell \in \bN $. 
\end{lem}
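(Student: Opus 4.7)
The natural approach is to exploit that $\cH$ and $\coth((\tth + \ttf_\e)|D|)$ are both Fourier multipliers, hence they commute and act diagonally on pure trigonometric modes. A direct computation gives
$$ \cH\, \coth((\tth+\ttf_\e)|D|)[\cos(\kappa x)] = \coth((\tth+\ttf_\e)\kappa)\, \sin(\kappa x), \qquad \kappa \geq 1, $$
and the constant mode $\kappa = 0$ is annihilated by $\cH$ (which makes the composition well-defined in spite of the singularity of $\coth$ at $0$). This reduces the entire claim to an analysis of the \emph{scalar} function $\e \mapsto \coth((\tth + \ttf_\e)\kappa)$: I must show that its $\e$-Taylor expansion contains only \emph{even} powers of $\e$ with coefficients in the ring $\mathcal{Q}(\ch^2)$.

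Next I would Taylor expand around $y = \tth\kappa$,
$$ \coth(\tth\kappa + \ttf_\e\kappa) = \sum_{j \geq 0} \frac{(\ttf_\e \kappa)^j}{j!}\, \coth^{(j)}(\tth\kappa), $$
and assemble three ingredients. First, $\coth'(y) = 1 - \coth^2(y)$ implies inductively that each $\coth^{(j)}(y)$ is a polynomial in $\coth(y)$ with integer coefficients. Second, formula \eqref{tanhk} combined with the positivity of the numerator polynomial $\sum_{j\ \textup{odd}} \binom{\kappa}{j}\ch^{2j}$ on $(0,1]$ gives $\coth(\tth\kappa) = 1/\tanh(\tth\kappa) \in \mathcal{Q}(\ch^2)$. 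Third, by Lemma \ref{pastructure}, $\ttf_\e = \sum_{m \geq 2,\, m\, \textup{even}} \e^m \ttf_m$ contains only even powers of $\e$ with $\ttf_m \in \mathcal{Q}(\ch^2)$; hence $(\ttf_\e)^j$ is a formal power series in $\e^2$ starting at $\e^{2j}$ with coefficients in $\mathcal{Q}(\ch^2)$. Combining these yields
$$ \coth((\tth + \ttf_\e)\kappa) = \sum_{n \geq 0} \e^{2n}\, T_{n,\kappa}, \qquad T_{n,\kappa} \in \mathcal{Q}(\ch^2). $$

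Finally, I would write $f_\e \in \mathtt{Evn}(\e)$ as $\sum_{\ell \geq 1} \e^\ell \sum_{0 \leq \kappa \leq \ell,\, \kappa \equiv \ell\,(2)} f_\ell^{[\kappa]} \cos(\kappa x)$ with $f_\ell^{[\kappa]} \in \mathcal{Q}(\ch^2)$, apply the operator term by term, and collect the contributions to the Taylor coefficient of order $L$:
$$ \bigl[\cH\, \coth((\tth+\ttf_\e)|D|)\, f_\e\bigr]_L(x) = \sum_{\substack{\ell + 2n = L \\ \ell \geq 1}}\ \sum_{\substack{1 \leq \kappa \leq \ell \\ \kappa \equiv \ell\,(2)}} f_\ell^{[\kappa]}\, T_{n,\kappa}\, \sin(\kappa x). $$
In every term $\kappa \leq \ell \leq L$ and $\kappa \equiv \ell \equiv L \pmod 2$, so only sines of harmonics $\leq L$ of the same parity as $L$ appear, and the Fourier coefficients lie in $\mathcal{Q}(\ch^2)$ by the ring property. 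This is exactly the membership $[\cH \coth((\tth+\ttf_\e)|D|) f_\e]_L \in \mathtt{Odd}_L$ for every $L \in \bN$, which proves the lemma. The one mildly delicate point in the plan is the positivity check in the second ingredient above, guaranteeing that $\coth(\tth\kappa)$ belongs to the \emph{restricted} ring $\mathcal{Q}(\ch^2)$ (denominator non-vanishing on $(0,1]$) rather than merely being a rational function of $\ch^2$.
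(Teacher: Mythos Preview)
Your argument is correct and follows essentially the same route as the paper: Taylor-expand $\coth$ around $\tth\kappa$, observe that $\coth^{(j)}(\tth\kappa)$ lies in $\mathcal{Q}(\ch^2)$ (the paper phrases this at the operator level via the analog of \eqref{compogf}, while you freeze the harmonic $\kappa$ first, but the content is identical).

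One point to repair: you invoke Lemma~\ref{pastructure} to conclude that $\ttf_\e$ contains only even powers of $\e$ with coefficients in $\mathcal{Q}(\ch^2)$, but in the paper Lemma~\ref{pastructure} is proved \emph{using} the present lemma (through Lemma~\ref{lacunary} and formula \eqref{pgote}), so this is circular. The fix is immediate: the needed property of $\ttf_\e$ is already the \emph{hypothesis} $\ttf_\e \in \bR\cap\mathtt{Evn}(\e)$. Indeed, by Definitions~\ref{def:even} and~\ref{spaziEvnOdd}, a constant series in $\mathtt{Evn}(\e)$ has $\ttf_\ell \in \mathtt{Evn}_\ell\cap\bR$, which forces $\ttf_\ell = 0$ for $\ell$ odd (there is no $\kappa=0$ harmonic in $\mathtt{Evn}_\ell$ when $\ell$ is odd) and $\ttf_\ell = \ttf_\ell^{[0]} \in \mathcal{Q}(\ch^2)$ for $\ell$ even. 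Replace the citation of Lemma~\ref{pastructure} by this observation and the argument stands.
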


\begin{proof}
Let $ g(z) := \coth (z) $.
By \eqref{compogf} we have 
\begin{equation}
\big[ g \big( \tth |D| + \ttf_\e |D| \big) \eta_\e \big]_\ell = \sum_{k=1}^{\ell} \sum_{\ell_1+\dots+\ell_{k}=\ell} \frac{1}{(k-1)!}
|D|^{k-1} (\pa_z^{k-1} g)  ( \tth |D|) \eta_{\ell_1}(x)  
\tf_{\ell_2}\ldots \tf_{\ell_{k}} \, . 
\end{equation}
It results that 
$ (\pa_z^{k-1} g) ( \tth |j|) $ belong to
$ {\cal Q}(\ch^2)  $.
This follows by \eqref{tanhk} for 
$ k = 1 $ and inductively for the higher order derivatives.  
\end{proof}


\noindent{\bf Proof of Theorem \ref{LeviCivita}}.
The existence of a 
solution of \eqref{travelingWW}
follows as in \cite{BMV2}
by the Crandall-Rabinowitz
bifurcation theorem from a simple eigenvalue.  The fact that $\e_* > 0 $ is uniform with respect to $\tth >\tth_*$ follows by Remark \ref{uniformity}. 
 We now prove the structural properties \eqref{leadingStokes}.

We are going to prove inductively the following statements 
about the coefficients of the Stokes waves in \eqref{Stokespwseries}: 
for any  $ \ell\in\bN $ 
 \begin{itemize}
 \item 
[$(S \eta )_\ell$]   the functions $  \eta_{\ell'}(x) $, $ \ell' = 1, \ldots, \ell  $, 
 belong to
$ \mathtt{Evn}_{\ell'}$ with coefficients 
$  \eta_{\ell'}^{[0]}\, ,\ldots, \eta_{\ell'}^{[\ell']} \in{\cal Q}(\ch^2)  $; 
 \item[$(S \psi)_\ell$]  
the functions $ \psi_{\ell'}(x) $, $ \ell' = 1, \ldots, \ell  $, 
belong to
$ \mathtt{Odd}_{\ell'}$ with coefficients 
$\ch  \psi_1^{[1]}\,,\dots,  \ch\psi_{\ell'}^{[\ell']} \in {\cal Q}(\ch^2) $;
\item[$(S c )_\ell$]  the coefficients 
   $\ch c_{\ell'} $, $ \ell' = 1, \ldots, \ell - 1 $
   belong to
 $ {\cal Q}(\ch^2) $ and  vanish for any $ \ell' $  odd. 
  \end{itemize}

{\sc Initialization.}
  For $ \ell = 1 $ the statements $(S \eta )_1$,  $(S \psi)_1 $ 
   hold since $ \eta_1 (x) =  \cos (x)  $,
  $ \psi_1 (x) = \ch^{-1} \sin (x)  $, and    $(S c )_1 $ is empty. 
\\[1mm]
{\sc Inductive step.} Assuming  the statements  $(S \eta )_{\ell -1} $,  $(S \psi)_{\ell -1} $, $(S c )_{\ell-1} $
for some $ \ell \geq 2 $  we now  prove 
$(S \eta )_{\ell} $,  $(S \psi)_{\ell} $, $(S c )_{\ell} $. 
We write the water waves system \eqref{travelingWW} as 
\begin{equation}\label{travelingsys}
\begin{cases}
\eta_\e-\ch(\psi_\e)_x  = \widetilde c_\e(\psi_\e)_x - \dfrac12 (\psi_\e)_x^2 + \dfrac12 \dfrac{(\eta_\e)_x^2 [(\psi_\e)_x -c_\e]^2}{1+(\eta_\e)_x^2}  \\
\ch(\eta_\e)_x + G_0 \psi_\e = -\widetilde c_\e (\eta_\e)_x  - \widetilde G(\eta_\e) \psi_\e  
\end{cases}
\end{equation}
where 
\begin{equation}\label{G0}
\begin{aligned}
& \widetilde c_\e := c_\e- \ch\,, \quad \widetilde G(\eta) := G(\eta) - G_0 = \sum_{j\geq 1} G_j(\eta) \, 
\end{aligned}
\end{equation}
and $G_j(\eta)$ is defined in \eqref{Craig}.
In view of \eqref{travelingsys} the Taylor coefficients 
$\eta_\ell(x), \psi_\ell(x), {c_{\ell-1}} $ of the Stokes wave in \eqref{Stokespwseries} are determined recursively 
for any $ \ell \geq 2 $ by the relation  
\begin{equation}\label{cell-1}
\cB_0  
\vet{\eta_\ell}{\psi_\ell} =
c_{\ell-1}  
\vet{(\psi_1)_x}{-(\eta_1)_x} +
\vet{F_\ell}{G_\ell} \, , \quad
\vet{F_\ell}{G_\ell}  := 
\sum_{\substack{ \ell_1+\ell_2 = \ell \atop  \ell_1 \text{even},  \  
2 \leq \ell_1 \leq \ell-2}} c_{\ell_1} 
\vet{(\psi_{\ell_2})_x}{-(\eta_{\ell_2})_x} +
\vet{f_\ell}{g_\ell}  
\end{equation}
(the second sum is empty for $ \ell = 2, 3 $)
where $\cB_0   $ is the self-adjoint operator 
$$
\cB_0  := \begin{bmatrix}
1 & -\ch \pa_x \\ 
 \ch \pa_x & G_0
\end{bmatrix}  \, , \qquad  G_0 = |D| \tanh (\tth |D|)  \, , 
$$
and $ f_\ell(x) $, resp. $ g_\ell(x) $, are the $ \e^\ell $ Taylor coefficients of the functions
\begin{equation}\label{fgapp}
 f_\e :=- \dfrac12 (\psi_\e^{< \ell})_x^2 + 
 \dfrac12 \dfrac{(\eta_\e^{< \ell})_x^2 
 \big[ (\psi_\e^{< \ell})_x -c_\e^{< \ell} \big]^2}{1+(\eta_\e^{< \ell})_x^2}  \, , 
\qquad g_\e := - \big( G(\eta_\e^{< \ell})  - G_0 \big) \psi_\e^{< \ell} \, , 
\end{equation}
with 
\begin{equation}
\begin{aligned} \label{Stokespwseries1}
 &\eta_\e^{<\ell}(x) := \e \cos(x) + \sum_{2 \leq \ell' < \ell} \e^{\ell'} \eta_{\ell'} (x),  \ \ \ \ 
 \psi_\e^{<\ell}(x) :=  \e\ch^{-1}
 \sin(x) + \sum_{2 \leq \ell' <\ell} \e^{\ell'} \psi_{\ell'}(x), \\
 & c_\e^{<\ell} := \ch+\sum_{2 \leq \ell' < \ell,  \,  \ell' \textup{ even}} \e^{\ell'} c_{\ell'}.
\end{aligned} 
\end{equation}
The next  lemma follows by 
 the inductive assumptions $(S \eta )_\ell $,  $(S \psi)_\ell $, 
  $(S c )_\ell $ 
and Lemmata
\ref{product}-\ref{lem:DN}. 
\begin{lem} \label{FellGell}  
The Taylor coefficients 
$ F_\ell (x), G_\ell (x) $
in \eqref{cell-1}
satisfy,
for any $ \ell \geq 2 $,  
\begin{enumerate}
\item 
$ F_\ell (x)  \in \mathtt{Evn}_{\ell}$ with  coefficients 
$  F_\ell^{[0]}\, ,\ldots, F_\ell^{[\ell]}\in {\cal Q}(\ch^2)$;
\item 
 $ G_\ell (x)  \in \mathtt{Odd}_{\ell}$ with
 $  \ch G_\ell^{[1]}\, ,\ldots, \ch G_\ell^{[\ell]}\in  {\cal Q}(\ch^2)$.
\end{enumerate}
\end{lem}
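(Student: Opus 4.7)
The plan is to verify, summand by summand, that each term entering the right-hand side of \eqref{cell-1}-\eqref{fgapp} lies in the claimed submodule and then appeal to the closure of $\mathtt{Evn}(\e)$ and $\mathtt{Odd}(\e)$ under sums. By the inductive hypotheses $(S\eta)_{\ell-1}$, $(S\psi)_{\ell-1}$, $(Sc)_{\ell-1}$, the truncations in \eqref{Stokespwseries1} satisfy $\eta_\e^{<\ell}\in\mathtt{Evn}(\e)$, $\ch\psi_\e^{<\ell}\in\mathtt{Odd}(\e)$ and $c_\e^{<\ell}-\ch\in\ch^{-1}(\bR\cap\mathtt{Evn}(\e))$, with only even $\e$-powers appearing in $c_\e^{<\ell}-\ch$. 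Throughout one exploits the elementary fact that $\ch^{-2}=1/\ch^{2}\in\mathcal{Q}(\ch^{2})$, since the denominator $\ch^{2}=\tanh(\tth)$ never vanishes on $(0,1]$, so all $\ch^{\pm 2}$ rearrangements stay inside the ring.

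First I would dispose of the ``convolution'' part $\sum_{\ell_1+\ell_2=\ell,\ \ell_1\text{ even}}c_{\ell_1}\bigl((\psi_{\ell_2})_x,-(\eta_{\ell_2})_x\bigr)$. Lemma~\ref{FourierMultipliers} gives $(\psi_{\ell_2})_x\in\mathtt{Evn}_{\ell_2}$ and $(\eta_{\ell_2})_x\in\mathtt{Odd}_{\ell_2}$, with $\ch(\psi_{\ell_2})_x^{[\kappa]}$ and $(\eta_{\ell_2})_x^{[\kappa]}$ in $\mathcal{Q}(\ch^{2})$. Multiplying by the scalar $c_{\ell_1}$ (whose $\ch$-multiple is in $\mathcal{Q}(\ch^{2})$) and collecting the $\ch$-factors via $\ch^{-2}\in\mathcal{Q}(\ch^{2})$, the first entry lands in $\mathtt{Evn}_\ell$ with $\mathcal{Q}(\ch^{2})$ coefficients, while the second entry lands in $\mathtt{Odd}_\ell$ with coefficients whose $\ch$-multiple is in $\mathcal{Q}(\ch^{2})$. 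This is exactly the structure claimed for this part of $F_\ell, G_\ell$.

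Next I would handle $f_\e$ in \eqref{fgapp}. The squares $(\psi_\e^{<\ell})_x^{2}=\ch^{-2}(\ch(\psi_\e^{<\ell})_x)^{2}$, $(\eta_\e^{<\ell})_x^{2}$, and $[(\psi_\e^{<\ell})_x-c_\e^{<\ell}]^{2}$ all belong to $\mathtt{Evn}(\e)$ by Lemmata~\ref{product}-\ref{FourierMultipliers} (after absorbing the $\ch^{-2}$), and the denominator $1+(\eta_\e^{<\ell})_x^{2}$ has constant term $1$, so by Lemma~\ref{naroba} the quotient remains in $\mathtt{Evn}(\e)$. Hence $f_\ell\in\mathtt{Evn}_\ell$ with Fourier coefficients in $\mathcal{Q}(\ch^{2})$. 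For $g_\e$ I rewrite $g_\e=-\ch^{-1}\bigl(G(\eta_\e^{<\ell})-G_0\bigr)(\ch\psi_\e^{<\ell})$; since $\ch\psi_\e^{<\ell}\in\mathtt{Odd}(\e)$ and $\eta_\e^{<\ell}\in\mathtt{Evn}(\e)$, Lemma~\ref{lem:DN} gives $\bigl(G(\eta_\e^{<\ell})-G_0\bigr)(\ch\psi_\e^{<\ell})\in\mathtt{Odd}(\e)$, so $\ch g_\ell^{[\kappa]}\in\mathcal{Q}(\ch^{2})$.

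The main obstacle is nothing more than careful bookkeeping of the $\ch$-factors, since $\psi$ and $c$ naturally live in the shifted modules $\ch^{-1}\mathtt{Odd}(\e)$ and $\ch^{-1}(\bR\cap\mathtt{Evn}(\e))+\ch$ rather than in $\mathtt{Odd}(\e)$ and $\bR\cap\mathtt{Evn}(\e)$ themselves; every time one multiplies two quantities of this mixed type an extra $\ch^{\pm 2}$ appears, which must be reabsorbed into $\mathcal{Q}(\ch^{2})$. No new idea beyond the four algebraic Lemmata~\ref{product}, \ref{FourierMultipliers}, \ref{naroba}, \ref{lem:DN} is required. As a by-product, Lemma~\ref{product} automatically forces the harmonics $F_\ell^{[\kappa]}$ and $G_\ell^{[\kappa]}$ to vanish unless $\kappa$ has the same parity as $\ell$, completing the verification of the two claims and closing the inductive step for $(S\eta)_\ell$, $(S\psi)_\ell$, $(Sc)_\ell$ once one solves \eqref{cell-1} for $(\eta_\ell,\psi_\ell,c_{\ell-1})$.
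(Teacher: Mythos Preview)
Your proposal is correct and follows exactly the same route as the paper, which disposes of the lemma in one line by invoking the inductive hypotheses together with Lemmata~\ref{product}--\ref{lem:DN}. You have simply unpacked that line, with the extra (and welcome) care of tracking the $\ch$-shifts coming from $\psi_\e^{<\ell}\in\ch^{-1}\mathtt{Odd}(\e)$ and $c_\e^{<\ell}\in\ch+\ch^{-1}(\bR\cap\mathtt{Evn}(\e))$.
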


In order to solve the linear system \eqref{cell-1} we report the following lemma. 
\begin{lem}\label{kernelB0}
The Kernel of $ \cB_0 $ is
$$
\text{Ker} \, \cB_0 = \text{span} \vet{\cos(x)}{\ch^{-1} \sin (x)} \, .  
$$
The Range of $ \cB_0 $   is the $ L^2 $-orthogonal to 
$ \text{Ker} \, \cB_0 $ and it is given by 
$$
R  =R_0 \oplus R_1\oplus R_{\geq 2} 
\quad \text{where} \quad 
 \quad R_0 := \text{span}\,\Big\{\vet{1}{0} \Big\}\, ,\ \ 
 R_1 := \text{span}\,\Big\{\vet{-\cos(x)}{\ch\sin(x)} \Big\}\, 
$$ 
and 
$ R_{\geq 2} := \overline{\bigoplus_{k=2}^\infty R_k} $ with 
$ R_k := \text{span}\,\Big\{\vet{\cos(kx)}{0}\, ,\;\vet{0}{\sin(kx)} \Big\} $.
The  linear operator $\cB_0^{-1}:R\to R $ defined by 
\begin{align} 
& \label{cB0inv} 
\cB_0^{-1} \vet{1}{0} = \vet{1}{0}\, ,\quad \cB_0^{-1} \vet{-\cos(x)}{\ch \sin(x)} = \frac{1}{1+\ch^2}\vet{-\cos(x)}{\ch \sin(x)} \quad  \text{and} \quad \forall k \geq 2  \\
&\cB_0^{-1} \vet{f^{[k]} \cos (k x)}{g^{[k]} \sin (k x)} = 
\vet{\eta^{[k]} \cos (k x)}{\psi^{[k]} \sin (k x)}  , \ 
\vet{\eta^{[k]}}{\psi^{[k]}} :=  
\frac{1}{k \tanh(\tth k) - \tanh (\tth) k^2 }
\begin{bmatrix} k \tanh(\tth k) &  \ch k \\ \ch k & 1 \end{bmatrix} 
\vet{f^{[k]}}{g^{[k]}}   ,  \notag 
\end{align}
solves $\cB_0\cB_0^{-1}=\cB_0^{-1}\restr{\cB_0}{R}=\uno_R $.
For any $ k \geq 2 $ 
the function $( k \tanh(\tth k) -  k^2 \tanh (\tth)  )^{-1}$   in \eqref{cB0inv} 
belongs to $ {\cal Q} (\ch^2 ) $ and,
for any $ \tth_* > 0 $, 
\begin{equation}\label{khpos}
\begin{aligned}
\inf_{\tth \geq \tth_*}\inf_{k \geq 2} k \big( k \tanh (\tth)  -  \tanh (\tth k)  \big) & = 
\inf_{\tth \geq \tth_*}  2 \big( 2 \tanh (\tth)  -  \tanh (\tth 2)  \big) \\
& = 
2 \big( 2 \tanh (\tth_*)  -  \tanh (\tth_* 2)  \big) > 0 \, . 
\end{aligned}
\end{equation}
\end{lem}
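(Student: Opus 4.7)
Since $\pa_x^* = -\pa_x$ and $G_0^* = G_0$, the operator $\cB_0$ is self-adjoint, so $R = \text{Range}(\cB_0) = (\text{Ker}\,\cB_0)^\perp$. The plan is to diagonalize $\cB_0$ in the Fourier basis: on pairs $\vet{\eta^{[k]}\cos(kx)}{\psi^{[k]}\sin(kx)}$ with $k\geq 1$ the operator acts as the $2\times 2$ matrix
$$
M_k := \begin{pmatrix} 1 & -\ch k \\ -\ch k & k\tanh(\tth k) \end{pmatrix}, \qquad \det M_k = k\tanh(\tth k) - k^2\tanh(\tth),
$$
while on the $k=0$ mode it acts as the scalar $1$ on $\vet{c}{0}$. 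Strict concavity of $\tanh$ on $[0,\infty)$ (with $\tanh(0)=0$) yields $\tanh(\tth k) < k\tanh(\tth)$ for $k\geq 2$ and every $\tth>0$, so $\det M_k < 0$ and $M_k$ is invertible for $k\geq 2$. For $k=1$ the identity $\tanh(\tth)=\ch^2$ forces $\det M_1 = 0$ with $\text{Ker}\,M_1 = \text{span}(\ch,1)$, giving $\text{Ker}\,\cB_0 = \text{span}\vet{\cos(x)}{\ch^{-1}\sin(x)}$.

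\textbf{Range and inverse.} The range then splits as $R_0 \oplus R_1 \oplus R_{\geq 2}$, where $R_0$ is the $k=0$ image, $R_1$ is the one-dimensional image of $M_1$ spanned by $\vet{-\cos(x)}{\ch\sin(x)}$, and each $R_k$ for $k\geq 2$ is the full Fourier subspace at mode $k$. On $R_1$ I would solve $M_1 v = (-1,\ch)^\top$ with $v \perp \text{Ker}\,M_1$, whose unique solution is $v = (1+\ch^2)^{-1}(-1,\ch)^\top$; on $R_k$ with $k\geq 2$ the classical cofactor formula directly gives $M_k^{-1}$ as in \eqref{cB0inv}. Rationality of the prefactor follows from \eqref{tanhk}: $k\tanh(\tth k) - k^2\tanh(\tth) \in {\cal Q}(\ch^2)$ and is nonvanishing on $\ch^2 \in (0,1]$ by the concavity argument, so its reciprocal belongs to ${\cal Q}(\ch^2)$ after removing common factors of the defining polynomials.

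\textbf{Lower bound \eqref{khpos}.} Positivity of $k(k\tanh(\tth) - \tanh(\tth k))$ for $k\geq 2$ is the same concavity statement. The main obstacle, which I expect to be the delicate point, is monotonicity in $k$ of $f(k) := k^2\tanh(\tth) - k\tanh(\tth k)$, which I plan to establish via the addition formula $\tanh(\tth(k+1)) = (\tanh(\tth k) + \tanh(\tth))/(1 + \tanh(\tth k)\tanh(\tth))$: multiplying $f(k+1) - f(k)$ by the positive quantity $1 + \tanh(\tth k)\tanh(\tth)$, a direct algebraic manipulation reduces positivity to
$$
\big(k\tanh(\tth) - \tanh(\tth k)\big) + (2k+1)\tanh(\tth k)\tanh^2(\tth) + k\tanh^2(\tth k)\tanh(\tth) \geq 0,
$$
which holds for all $k\geq 1$ since $\tanh(\tth k) \leq k\tanh(\tth)$ by concavity and the last two terms are strictly positive. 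Hence $\inf_{k\geq 2} f(k) = f(2) = 2(2\tanh(\tth) - \tanh(2\tth))$; this function is strictly increasing in $\tth$ (its derivative equals $4(\tanh^2(2\tth) - \tanh^2(\tth)) > 0$), so the outer infimum is attained at $\tth = \tth_*$, yielding the claimed strictly positive lower bound.
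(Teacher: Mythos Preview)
Your proposal is correct and follows essentially the same route as the paper: self-adjointness plus Fourier block-diagonalization for the kernel, range and inverse (which the paper simply cites from \cite{BMV_ed}), concavity of $\tanh$ for positivity, and monotonicity in $k$ and in $\tth$ for the lower bound \eqref{khpos}. Your addition-formula computation for monotonicity in $k$ is a more explicit variant of the paper's one-line concavity claim, and your derivative check for monotonicity in $\tth$ is the expected argument; both are fine and arguably cleaner than the paper's terse proof (which in fact contains sign slips in the stated monotonicity direction).
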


\begin{proof}
The first part is proved in 
\cite[Lemma A.3]{BMV_ed}. 
Let  us prove \eqref{khpos} and the preservation of the rational structure ${\cal Q}(\ch^2)$.
By the strict convexity of $ z \mapsto \tanh (\tth z) $ for $ z > 0 $ and $ \tanh (0) = 0 $
we deduce that, for any $ k \geq 2 $, the function  
$ \tth \mapsto \tanh (\tth k) - k \tanh (\tth)  $ is strictly increasing and positive. 
Furthermore, 
for any $ \tth > 0 $,  the sequence 
$ k \mapsto \tanh (\tth k) - k \tanh (\tth) $ is increasing.  
This proves \eqref{khpos}. Finally, by \eqref{tanhk}, the function 
$$ 
k \tanh(\tth k) -  k^2 \tanh (\tth)   =
\frac{p(\ch^2)}{q(\ch^2)}
$$
with polynomials
$ p(x), q(x)$ with integer coefficients  
and  $p(x), q(x) \neq 0$ for any $0<x\leq 1$. 
\end{proof}
\begin{rmk}\label{uniformity}
 The lower bound in \eqref{khpos}, which is uniform for all depths $\tth>\tth_*>0$, provides the existence of the Stokes wave for any amplitude  $|\e|<\e_*(\tth_*)$.  Clearly if $\tth_* \to 0$ then $\e_*(\tth_*) 
 \to 0 $.
\end{rmk}

We now solve  \eqref{cell-1}.   
We first determine $ c_{\ell-1} $. 
The right hand side in \eqref{cell-1} has to be orthogonal to 
the Kernel of $ \cB_0 $,
\begin{equation}\label{celletapsi}
c_{\ell-1}
\Big( \vet{(\psi_1)_x}{-(\eta_1)_x} , \vet{\cos(x)}{\ch^{-1} \sin (x)} \Big)  
+  
\Big( \vet{F_\ell (x)}{G_\ell (x)} , \vet{\cos(x)}{\ch^{-1} \sin (x)} \Big) = 0 \, . 
\end{equation}
It results 
\begin{equation}\label{scap1e1}
\Big( \vet{(\psi_1)_x}{-(\eta_1)_x} , \vet{\cos(x)}{\ch^{-1} \sin (x)} \Big) = \ch^{-1} \,.    
\end{equation}
By Lemma \ref{FellGell}, if $ \ell $ is even then 
\begin{equation}\label{ortogl}
\Big( \vet{F_\ell (x)}{G_\ell(x)} , \vet{\cos(x)}{\ch^{-1} \sin (x)} \Big) = 0  
\end{equation}
and therefore $ c_{\ell-1} = 0 $ by
\eqref{celletapsi}. If $ \ell $ is odd then
$ \Big( \vet{F_\ell(x) }{G_\ell(x)} , \vet{\cos(x)}{\ch^{-1} \sin (x)} \Big) \in  {\cal Q}(\ch^2) $ 
and so, by \eqref{scap1e1} and 
\eqref{celletapsi}, we deduce that 
$\ch  c_{\ell-1} \in  {\cal Q}(\ch^2) $. This proves   $(S c )_{\ell} $. 

Then we determine $ \eta_\ell(x) $ and $ \psi_\ell(x) $.  
By Lemma \ref{FellGell}
and Definition \ref{def:even}
if $ \ell $ is even then,
since $ c_{\ell-1} = 0 $,
the coefficients 
$ \eta_\ell^{[j]}$, $ \psi_\ell^{[j]} $ 
are determined 
as the unique solutions of 
\begin{equation}\label{etajpsij}
\begin{bmatrix}
1 & -\ch j \\ 
- \ch j & j \tanh (\tth j)  
\end{bmatrix} 
\vet{\eta_\ell^{[j]}}{\psi_\ell^{[j]}} = \vet{F_\ell^{[j]}}{G_\ell^{[j]}} \, . 
\end{equation}
For any $ j $ odd 
then 
$ F_\ell^{[j]} = G_\ell^{[j]} = 0 $ by 
Lemma \ref{FellGell} and  
so for  $ j \neq 1 $ we have
$ \eta_\ell^{[j]} = \psi_\ell^{[j]} = 0 $.
For $ j = 1 $ 
we choose
$ \eta_\ell^{[1]} = \psi_\ell^{[1]} = 0 $. For $ j $ even 
system \eqref{etajpsij} 
is solved, in view of Lemmata 
 \ref{kernelB0} and  \ref{FellGell}, by  
\begin{equation}\label{lainvj}
\vet{\eta_\ell^{[0]}}{\psi_\ell^{[0]}} = F_\ell^{[0]} \vet{1}{0} \, , \quad  
\vet{\eta_\ell^{[j]}}{\psi_\ell^{[j]}} = \frac{1}{j \tanh (\tth j)   - j^2 \tanh (\tth )}
\begin{bmatrix}
 j \tanh (\tth j)   & \ch j \\ 
 \ch j & 1
\end{bmatrix} 
\vet{F_\ell^{[j]}}{G_\ell^{[j]}} 
\end{equation}
for any $ j \geq 2 $.
If  $ j $ is odd 
then 
$ F_\ell^{[j]} = G_\ell^{[j]} = 0 $ 
and so 
$ \eta_\ell^{[j]} = \psi_\ell^{[j]} = 0 $.
Note that $ j \neq 1 $ since $ j $ is even. By Lemmata 
\ref{FellGell} and \ref{kernelB0}
we deduce 
 $(S \eta )_{\ell} $, $(S \psi )_{\ell} $ for any $ \ell $ even. 
 
 If $ \ell $ is odd  then,  similarly,  we have to determine 
only the $ \eta_\ell^{[j]}$, $ \psi_\ell^{[j]} $ 
for  $ j $ odd as solutions of \eqref{etajpsij}, which, 
for any $ j \neq 1 $, are given by the right hand side in \eqref{lainvj}.  
On the other hand, for $ j  = 1 $,
 the first Fourier component of  
the right hand side in \eqref{cell-1},
$ \vet{c_{\ell-1}(\psi_1)_x+F_\ell (x)}{-c_{\ell-1}(\eta_1)_x+ G_\ell (x)} $,  is equal to 
$ \frac12 (\ch G_\ell^{[1]} - F_\ell^{[1]})  
\vet{-  \cos (x) }{ \sin (x) } $ by \eqref{celletapsi}. Therefore, 
by  \eqref{cB0inv} we have 
$$
\vet{\eta_\ell^{[1]}}{\psi_\ell^{[1]}} = 
   \frac{1}{2(1+ \ch^2)}  \vet{- (\ch G_\ell^{[1]} - F_\ell^{[1]})}{ \ch (\ch G_\ell^{[1]} - F_\ell^{[1]})}\, .
$$
By Lemmata 
\ref{FellGell} and \ref{kernelB0}
we deduce that 
 $(S \eta )_{\ell} $, $(S \psi )_{\ell} $ also for any $ \ell $ odd. This concludes the proof of Theorem \ref{LeviCivita}.

\begin{rmk}
There is no loss of generality in selecting the solution 
 \eqref{cB0inv} on the mode $ 1 $.  
Adding an element of the kernel $ \vet{ \cos (x)}{ \ch^{-1} \sin (x) } $ just amounts 
to a reparametrization of the $ \e $ of the Stokes wave. 
\end{rmk}

\noindent{\bf Proof of Lemma \ref{pastructure}}.
We first recall \cite[Section 2]{BMV3} that
the functions 
$p_\e (x) $,  $a_\e (x) $ 
are given by
\begin{equation}\label{def:pa}
\ch+p_\e(x) :=  \displaystyle{\frac{ c_\e-V(x+\mathfrak{p}_\e (x))}{ 1+\mathfrak{(p_\e)}_x(x)}} \, , \quad 1+a_\e(x):=   \displaystyle{\frac{1+ (V(x + \mathfrak{p}_\e (x)) - c_\e)
 B_x(x + \mathfrak{p}_\e(x))  }{1+\mathfrak{(p_\e)}_x(x)}} \, ,
\end{equation}
where 
$(V,B) :=\restr{(\Phi_x,\Phi_y)}{y=\eta_\e(x)}$ are the horizontal and vertical components of the velocity field  evaluated at the Stokes wave, given explicitly by
\begin{equation}\label{espVB}
V :=  V_\e = -B (\eta_\e)_x + (\psi_\e)_x  = \frac{ (\psi_\e)_x + c_\e(\eta_\e)_x^2}{1+(\eta_\e)_x^2} \, , \qquad
 B := B_\e =\frac{ (\psi_\e)_x- c_\e}{1+(\eta_\e)_x^2}(\eta_\e)_x
  \, ,
\end{equation}
and 
$ \mathfrak{p}_\e(x)$ is the solution of the fixed-point equation
\begin{equation} \label{def:ttf}
\mathfrak{p}_\e(x) =  \frac{\cH }{\tanh \big((\tth + \ttf_\e)|D| \big)}[\eta_\e ( x + \mathfrak{p}_\e(x))] \, , 
 \qquad 
 \ttf_\e:= \frac{1}{2\pi} \int_\bT \eta_\e (x +  \mathfrak{p}_\e(x)) \de x \, ,
 \end{equation}
 where $ \cH =  -\im \sgn(D) $
is the Hilbert transform. 
Note that 
\begin{equation}\label{hilbert}
 \cH \cos(kx) = \sin(kx) \, ,   \, \quad  \cH \sin(kx) = -\cos(kx)  \, ,
 \ \  \forall k \in \bN  \, , \quad   
 \cH (1) = 0 \,  . 
 \end{equation}
Lemma \ref{pastructure} directly descends from the following one. 
\begin{lem}
\label{lacunary}
The  $ 2 \pi $-periodic functions $V_\e(x)$, $B_\e(x)$, $\mathfrak{p}_\e(x)$, $p_\e(x)$ and $a_\e(x)$
are real analytic w.r.t. 
$x$ and $ \e $
and 
$V_\e \in \ch^{-1} \mathtt{Evn}(\e) $, $B_\e
\in \ch \mathtt{Odd}(\e)$,
$\mathfrak{p}_\e \in \mathtt{Odd}(\e)$, 
$\ttf_\e
\in \bR \cap  \mathtt{Evn}(\e)  $,
$ p_\e \in\ch \mathtt{Evn}(\e) $ and 
$ a_\e \in\mathtt{Evn}(\e) $.
\end{lem}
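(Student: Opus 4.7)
The plan is to derive each of the claimed structural properties from Theorem \ref{LeviCivita} combined with the module calculus of Lemmas \ref{product}, \ref{FourierMultipliers}, \ref{composition}, \ref{naroba}, and \ref{lem:coth}. A useful preliminary observation is that since $\ch^{\pm 2} \in {\cal Q}(\ch^2)$, the shifted modules $\ch^{n}\mathtt{Evn}(\e)$ depend only on the parity of $n$; it is thus enough to track whether an expression carries a residual odd power of $\ch$. I would handle $V_\e$ and $B_\e$ first: from Theorem \ref{LeviCivita} we have $\eta_\e\in\mathtt{Evn}(\e)$, $\psi_\e\in\ch\,\mathtt{Odd}(\e)$ and $c_\e-\ch\in\ch\,(\bR\cap\mathtt{Evn}(\e))$; differentiating via Lemma \ref{FourierMultipliers} and combining by Lemma \ref{product} shows that the numerators in \eqref{espVB} for $V_\e$ and $B_\e$ each carry one residual odd power of $\ch$, so dividing by $1+(\eta_\e)_x^2\in 1+\mathtt{Evn}(\e)$ via Lemma \ref{naroba} produces $V_\e\in\ch^{-1}\mathtt{Evn}(\e)$ and $B_\e\in\ch\,\mathtt{Odd}(\e)$.

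Next, for $\mathfrak{p}_\e$ and $\ttf_\e$ I would solve the implicit fixed-point equation \eqref{def:ttf} by strong induction on the order in $\e$. Assuming the pair $(\mathfrak{p}_\e,\ttf_\e)$ is determined through order $\e^{\ell-1}$ with the truncations in $\mathtt{Odd}(\e)$ and $\bR\cap\mathtt{Evn}(\e)$ respectively, Lemma \ref{composition} gives $\eta_\e(x+\mathfrak{p}_\e(x))\in\mathtt{Evn}(\e)$ up through order $\e^\ell$; its zero-harmonic lies in ${\cal Q}(\ch^2)$ and determines $\ttf_\ell$, which vanishes automatically for $\ell$ odd since $\mathtt{Evn}_\ell$ has a constant term only for $\ell$ even. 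Lemma \ref{lem:coth} then maps the composition back into $\mathtt{Odd}(\e)$ to yield $\mathfrak{p}_\ell\in\mathtt{Odd}_\ell$, closing the induction.

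Finally, rewriting \eqref{def:pa} as
\[
p_\e=\frac{(c_\e-\ch)-V_\e(x+\mathfrak{p}_\e)-\ch(\mathfrak{p}_\e)_x}{1+(\mathfrak{p}_\e)_x}\,,\qquad
a_\e=\frac{(V_\e(x+\mathfrak{p}_\e)-c_\e)(B_\e)_x(x+\mathfrak{p}_\e)-(\mathfrak{p}_\e)_x}{1+(\mathfrak{p}_\e)_x}\,,
\]
I would apply Lemma \ref{composition} (trivially extended to shifted modules by pulling out the scalar factor $\ch^{\pm 1}$) to transfer the structure of $V_\e$ and $B_\e$ through the shift $y\mapsto x+\mathfrak{p}_\e(x)$. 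For $p_\e$ every numerator term lies in $\ch\,\mathtt{Evn}(\e)$, and Lemma \ref{naroba} disposes of the denominator. The subtle point, and the only real obstacle, concerns $a_\e$: the combination $V_\e(x+\mathfrak{p}_\e)-c_\e$ carries the nonzero $\e^0$-value $-\ch$, which does not belong to $\mathtt{Evn}(\e)$; however, when paired against $(B_\e)_x(x+\mathfrak{p}_\e)\in\ch\,\mathtt{Evn}(\e)$, the two residual odd powers of $\ch$ collapse into $\ch^2\in{\cal Q}(\ch^2)$, forcing the product into $\mathtt{Evn}(\e)$. Matching this with $-(\mathfrak{p}_\e)_x\in\mathtt{Evn}(\e)$ and applying Lemma \ref{naroba} to the denominator $1+(\mathfrak{p}_\e)_x$ gives $a_\e\in\mathtt{Evn}(\e)$, completing the proof.
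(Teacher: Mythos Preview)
Your proof is correct and follows essentially the same approach as the paper: first deducing the module membership of $V_\e$ and $B_\e$ from \eqref{espVB} via Lemmata \ref{product}--\ref{naroba}, then establishing $\mathfrak{p}_\e\in\mathtt{Odd}(\e)$ and $\ttf_\e\in\bR\cap\mathtt{Evn}(\e)$ by induction on the order in $\e$ using \eqref{def:ttf} together with Lemmata \ref{composition} and \ref{lem:coth}, and finally concluding for $p_\e$ and $a_\e$ from \eqref{def:pa}. Your explicit rewriting of $p_\e$ and $a_\e$ and your discussion of the constant term $-\ch$ in $V_\e(x+\mathfrak{p}_\e)-c_\e$ make the last step more transparent than the paper's one-line reference to the lemmata, but the argument is the same.
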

We shall denote
$$
 B_\e(x) = \sum_{\ell \geq 1} \e^\ell B_\ell (x)\, , 
\quad V_\e(x) =  
 \sum_{\ell \geq 1} \e^\ell V_\ell(x)\, , \quad 
\mathfrak{p}_\e(x) = \sum_{\ell \geq 1} \e^\ell \mathfrak{p}_\ell(x)\, . 
$$
\begin{proof}
Using that 
$ \eta_\e \in 
\mathtt{Evn}(\e) $, 
$ \psi_\e \in \ch^{-1} \mathtt{Odd}(\e) $
and $ c_\e \in \bR 
\cap \ch \mathtt{Evn}(\e)  $, 
we deduce, by  Lemmata \ref{product}-\ref{naroba},  that 
the functions $V_\e$ and $B_\e$ in \eqref{espVB} belong to $\ch \mathtt{Evn}(\e)$ and $\ch \mathtt{Odd}(\e)$ respectively.

We deduce that 
$\mathfrak{p}_\e$, resp.  
$\ttf_\e $ in \eqref{def:ttf},  belong to $\mathtt{Odd}(\e)$, resp. 
 $\bR \cap  \mathtt{Evn}(\e)  $, by 
the following inductive statements: for any  $ \ell\in\bN $ 
 \begin{itemize}
 \item 
[$(S \mathfrak{p} )_\ell $]   the functions $  \mathfrak{p}_{\ell'}(x) $, $ \ell' = 1, \ldots, \ell  $, 
 belong to
$ \mathtt{Odd}_{\ell'}$ with coefficients 
$  \mathfrak{p}_{\ell'}^{[1]}\, ,\ldots, \mathfrak{p}_{\ell'}^{[\ell']} \in {\cal Q}(\ch^2) $;
\item 
[$(S \ttf )_\ell$]  the coefficients 
   $ \ttf_{\ell'} $, $ \ell' = 1, \ldots, \ell $, belong to  $ {\cal Q}(\ch^2) $ and vanish for any $ \ell' $  odd. 
  \end{itemize}
The statements 
$(S \mathfrak{p} )_1$ 
and $(S \ttf )_1$ follow 
by \eqref{def:ttf}, \eqref{hilbert}
with $ \mathfrak{p}_1 (x) = 
\ch^{-2} \sin (x) $ and
$\ttf_1 = 0 $. 
Then supposing
$(S \mathfrak{p} )_{\ell-1} $ 
and $(S \ttf )_{\ell-1} $ hold  
for some $ \ell \geq 2 $
we now prove 
$(S \mathfrak{p} )_{\ell} $ 
and $(S \ttf )_\ell $. 
By \eqref{def:ttf}  we have 
$$
\mathfrak{p}_\ell (x)  =  
\Big[ \frac{\cH }{\tanh \big((\tth+
\ttf_\e^{< \ell} ) |D|)}[\eta_\e^{\leq \ell} ( x + \mathfrak{p}_\e^{<\ell} (x))]  \Big]_\ell  
$$
where $ \eta_\e^{\leq \ell}(x) $ is defined as in \eqref{Stokespwseries1}
summing up to $ \ell' \leq \ell $
and 
$$
\mathfrak{p}_\e^{<\ell}(x) := 
 \sum_{1 \leq \ell' < \ell} \e^{\ell'} \mathfrak{p}_{\ell'} (x),  \quad 
 \ttf_\e^{<\ell} := 
 \sum_{2 \leq \ell' < \ell, \atop  \ell' \textup{ even}} \e^{\ell'} \ttf_{\ell'} \, .
$$
Moreover
$$
\ttf_\ell  =
\frac{1}{2\pi} \int_\bT \big[ 
\eta_\e^{\leq \ell} (x +  \mathfrak{p}_\e^{<\ell} (x))\big]_\ell \de x \, . 
$$
By the inductive assumption 
and Lemma \ref{composition}
the function 
$ \eta_\e^{ \leq \ell} (x +  \mathfrak{p}_\e^{<\ell} (x)) $
belongs to 
$   \mathtt{Evn}(\e) $. 
Then if $ \ell  $ is odd then  
$ \ttf_\ell = 0 $ whereas if
$ \ell  $ is even then  
$ \ttf_\ell  $ belongs to 
$ \bR \cap \mathtt{Evn}(\e) $.
Moreover 
by Lemma \ref{lem:coth} the function
$  \mathfrak{p}_{\ell}(x)  $ 
is in
$ \mathtt{Odd}_{\ell}$ with coefficients 
$  \mathfrak{p}_{\ell}^{[1]}\, ,\ldots, \mathfrak{p}_{\ell}^{[\ell]} $  in  $ {\cal Q}(\ch^2) $.

Finally we deduce that the functions $p_\e (x), a_\e (x) $ in \eqref{def:pa} are respectively in $\ch \mathtt{Evn}(\e)$ and $\mathtt{Evn}(\e)$ by Lemmata \ref{product}, \ref{FourierMultipliers},  \ref{composition} and \ref{naroba}. 
\end{proof}

\subsection{Asymptotics of the Stokes wave as $\tth\to 0^+$}\label{sec:limit0}

We now prove a second-order  asymptotic expansion of the leading Fourier coefficients
$ \eta_\ell^{[\ell]}, \psi_\ell^{[\ell]} $  
of the Stokes waves $(\eta_\e,\psi_\e)$ in \eqref{Stokespwseries}
as $ \tth \to 0^+ $, at {\it any} order $\ell \in \bN$. 

\begin{prop}\label{lem:asympetapsi}
For any $\ell \in \bN$, the $\ell$-th Fourier coefficients 
$ \eta_\ell^{[\ell]}, \psi_\ell^{[\ell]}  $ of the $ 2 \pi $-periodic 
coefficients $\eta_\ell(x)$, $\psi_\ell(x)$ of the Stokes wave 
in \eqref{Stokespwseries} have the asymptotic expansion as $ \tth\to 0^+ $ 
\begin{equation}\label{expetajjpsijj}
\begin{aligned}
&  \eta_\ell^{[\ell]} =  \ell \, x_\ell \tth^{3-3\ell} + z_\ell\tth^{5-3\ell} + O(\tth^{7-3\ell}) \, , \\
& \psi_\ell^{[\ell]} = x_\ell \tth^{3-3\ell-\frac12} + y_\ell \tth^{5-3\ell-\frac12} + 
O(\tth^{7-3\ell-\frac12})  \, , 
\end{aligned}
\end{equation}
where
\begin{align}\label{indHP2}
x_\ell := \Big(\frac38\Big)^{\ell-1}\, , \quad y_\ell := \frac{5\ell^2+3\ell-5}{18}  \Big(\frac38\Big)^{\ell-1} \, ,\quad z_\ell:=  \frac{\ell (\ell-1)(
\ell+2)}{6}\Big(\frac38\Big)^{\ell-1} \, .
\end{align}
As a consequence 
 for any $\ell_1, \ell_2 \in \bN$, setting $\ell:=\ell_1+\ell_2 $, as 
 $ \tth\to 0^+ $,  
\begin{equation}\label{multeffect}
\eta_{\ell_1}^{[\ell_1]}\eta_{\ell_2}^{[\ell_2]} = O(\tth^{6-3\ell}) \, ,\quad \eta_{\ell_1}^{[\ell_1]}\psi_{\ell_2}^{[\ell_2]} = O(\tth^{6-3\ell-\frac12}) \, ,\quad  \psi_{\ell_1}^{[\ell_1]}\psi_{\ell_2}^{[\ell_2]} = x_{\ell_1} x_{\ell_2} \tth^{5-3\ell} + O(\tth^{7-3\ell})\, .
\end{equation}
\end{prop}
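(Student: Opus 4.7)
The plan is to argue by strong induction on $\ell\in\bN$. The base case $\ell=1$ follows by direct Taylor expansion: from $\eta_1(x)=\cos x$ one reads off $\eta_1^{[1]}=1$, matching $\ell x_1\tth^{3-3\ell}=1$ and $z_1=\frac{1\cdot 0\cdot 3}{6}=0$, while $\psi_1^{[1]}=\ch^{-1}=(\tanh\tth)^{-1/2}=\tth^{-1/2}+\tfrac{1}{6}\tth^{3/2}+O(\tth^{7/2})$ gives $x_1=1, y_1=1/6$, consistent with \eqref{indHP2}. The consequences \eqref{multeffect} are then immediate by multiplying the expansions \eqref{expetajjpsijj} and using $(3/8)^{\ell_1-1}(3/8)^{\ell_2-1}=(3/8)^{\ell-2}$.

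For the inductive step, I would use the explicit formula \eqref{lainvj} with $j=\ell$,
\[
\vet{\eta_\ell^{[\ell]}}{\psi_\ell^{[\ell]}} \;=\; \frac{1}{\ell\tanh(\tth\ell)-\ell^2\tanh\tth}\begin{bmatrix}\ell\tanh(\tth\ell) & \ch\ell\\ \ch\ell & 1\end{bmatrix}\vet{F_\ell^{[\ell]}}{G_\ell^{[\ell]}},
\]
together with the crucial reduction that in \eqref{cell-1} the pieces $c_{\ell_1}\vet{(\psi_{\ell_2})_x}{-(\eta_{\ell_2})_x}$ (including the $c_{\ell-1}$ term) have highest harmonic $\le\ell_2<\ell$, so they contribute zero to the $[\ell]$-component; hence $F_\ell^{[\ell]}=f_\ell^{[\ell]}$, $G_\ell^{[\ell]}=g_\ell^{[\ell]}$ with $f_\e,g_\e$ the nonlinearities in \eqref{fgapp}. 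Taylor expansion of $\tanh$ yields $\ell\tanh(\tth\ell)-\ell^2\tanh\tth = -\tfrac{\ell^2(\ell^2-1)}{3}\tth^3\bigl(1-\tfrac{2(\ell^2+1)}{5}\tth^2+O(\tth^4)\bigr)$. At leading order in $\tth$, only $-\tfrac12(\psi_\e^{<\ell})_x^2$ contributes to $f_\ell^{[\ell]}$, and inside $g_\ell^{[\ell]}=[-G_1(\eta_\e)\psi_\e-\sum_{j\geq 2}G_j(\eta_\e)\psi_\e]_\ell^{[\ell]}$ only the $-\pa_x(\eta\pa_x\psi)$ piece of $G_1$ survives: using $G_0\sim \tth D^2$ as $\tth\to 0^+$, the alternative pieces $\tfrac12(\eta_\e)_x^2 c_\e^2$, $-G_0(\eta G_0\psi)$, and $G_j$ with $j\ge 2$ are strictly subleading. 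Applying Lemma \ref{lem:prod} to extract the top harmonic and substituting the inductive expansions gives
\[
F_\ell^{[\ell]} = -\tfrac14\!\!\sum_{\ell_1+\ell_2=\ell}\!\!\ell_1\ell_2\,x_{\ell_1}x_{\ell_2}\,\tth^{5-3\ell}+\cdots,\qquad G_\ell^{[\ell]} = -\tfrac{\ell}{2}\!\!\sum_{k+m=\ell}\!\!km\,x_k x_m\,\tth^{11/2-3\ell}+\cdots,
\]
and dividing by the leading denominator, combined with the elementary identity $\sum_{k=1}^{\ell-1}k(\ell-k)=\ell(\ell^2-1)/6$, closes the recursion at $x_\ell=(3/8)^{\ell-1}$ for both components.

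For the subleading coefficients $y_\ell, z_\ell$, I would carry the same expansion two orders further: this requires combining (i) the $O(\tth^2)$ correction in the prefactor $1/(\ell\tanh(\tth\ell)-\ell^2\tanh\tth)$, (ii) the inductive subleading terms $y_{\ell_i}, z_{\ell_i}$ inserted into the leading products, (iii) the previously-negligible contributions $\tfrac12(\eta_\e)_x^2\ch^2$ in $f_\e$ and $-G_0(\eta G_0\psi)$ in $G_1(\eta)\psi$, all of which now enter at the relevant order, and (iv) the next Taylor coefficient of $G_0=|D|\tanh(\tth|D|)$. Each source is reduced via Lemma \ref{lem:prod} to sums over compositions of $\ell$, and the verification that the aggregate equals $y_\ell=\tfrac{5\ell^2+3\ell-5}{18}(3/8)^{\ell-1}$ and $z_\ell=\tfrac{\ell(\ell-1)(\ell+2)}{6}(3/8)^{\ell-1}$ reduces to elementary polynomial identities in $\ell$ of the same flavor as $\sum k(\ell-k)=\ell(\ell^2-1)/6$. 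The main obstacle is precisely the bookkeeping at subleading order: there are four independent mechanisms contributing to $\tth^{9/2-3\ell}$ in $\psi_\ell^{[\ell]}$ (and $\tth^{5-3\ell}$ in $\eta_\ell^{[\ell]}$), each requiring careful harmonic extraction from the Craig–Sulem recursion \eqref{Craig}, and collapsing the resulting sums into the specific quadratic-in-$\ell$ coefficients of the ansatz is the combinatorial heart of the proof.
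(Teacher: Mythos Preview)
Your proposal is correct and follows essentially the same route as the paper: strong induction on $\ell$, reduction to the top harmonic via \eqref{lainvj} after observing that the speed-correction terms in \eqref{cell-1} carry no $\ell$-th harmonic, Taylor expansion of the $\tanh$ denominator, isolation of the dominant pieces $-\tfrac12(\psi_\e)_x^2$ in $f_\ell^{[\ell]}$ and $-\partial_x(\eta\,\partial_x\psi)$ in $g_\ell^{[\ell]}$, extraction of top Fourier coefficients by Lemma~\ref{lem:prod}, and closure of the recursion through $\sum_{k=1}^{\ell-1}k(\ell-k)=\ell(\ell^2-1)/6$. The paper does exactly this and then carries out in full the subleading bookkeeping you only sketch, using in addition the identities in \eqref{sommenonDoron} and \eqref{sommenonDoron2}; your list of subleading contributors matches theirs.
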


\begin{proof}
 The case $\ell=1$ holds by \eqref{Stokespwseries} and since
 \begin{equation}\label{expch} 
 \ch = \sqrt{\tanh(\tth)} \, , 
 \quad \tanh(\tth)=\tth\big(1-\tfrac13 \tth^2 + r(\tth^4)\big) \quad  \text{as} \quad  \tth \to 0^+ \, .
 \end{equation}
We now suppose  
by induction
that 
\eqref{expetajjpsijj}, \eqref{indHP2} hold for some $\ell \geq 2$ and any  
$\ell'=1,\dots, \ell-1$. Consequently formula \eqref{multeffect} holds for any $\ell_1+\ell_2=\ell$, since $1\leq \ell_1, \ell_2 <\ell $.

Recall that the pair $\big(\eta_\ell(x),\psi_{\ell}(x)\big)$  fulfills \eqref{cell-1}. We now compute the equations fulfilled by the $\ell$-th harmonics 
$\big( \eta_\ell^{[\ell]}\cos(\ell x), \psi_{\ell}^{[\ell]}\sin(\ell x) \big)$. 
Since each  vector   $c_{\ell_1} 
\vet{(\psi_{\ell_2})_x}{-(\eta_{\ell_2})_x} $ do not possess $\ell$-harmonics (recall that $\ell_1 + \ell_2 = \ell$, $\ell_1 \geq 1$ and that  $  \eta_\ell(x) $, resp. $ \psi_\ell(x) $, 
   is a trigonometric polynomial in
$ \mathtt{Evn}_\ell $, resp. 
in $ \mathtt{Odd}_\ell $), we get  the linear system
\begin{equation}\label{rangeeqell}
\mathcal{B}_0 \vet{\eta_\ell^{[\ell]}\cos(\ell x)}{\psi_\ell^{[\ell]}\sin(\ell x)} = \vet{f_\ell^{[\ell]}\cos(\ell x)}{g_\ell^{[\ell]}\sin(\ell x)}\, ,\quad  \mathcal{B}_0 = \begin{bmatrix}1 & -\ch\pa_x \\ \ch\pa_x & G_0 \end{bmatrix} \, ,
\end{equation}
where
\begin{equation}\label{fellgellprima}
 f_\ell^{[\ell]}:= \Big[
 - \dfrac12 (\psi_\e^{< \ell})_x^2 + 
 \dfrac12 \dfrac{(\eta_\e^{< \ell})_x^2 
 \big[ (\psi_\e^{< \ell})_x -c_\e^{< \ell} \big]^2}{1+(\eta_\e^{< \ell})_x^2}
 \Big]_\ell^{[\ell]} \, , \quad
 g_\ell^{[\ell]} :=  
 \Big[
 - \big( G(\eta_\e^{< \ell})  - G_0 \big) \psi_\e^{< \ell}
 \Big]_\ell^{[\ell]} 	\, .
\end{equation}
Its solution, given by  \eqref{cB0inv}, is 
\begin{equation}\label{etalpsiell}
    \vet{\eta_\ell^{[\ell]}}{\psi_\ell^{[\ell]}} :=  
\frac{1}{\ell \tanh(\tth \ell) - \tanh (\tth) \ell^2 }
\begin{bmatrix} \ell \tanh(\tth \ell) &  \ch \ell \\ \ch \ell & 1 \end{bmatrix} 
\vet{f_\ell^{[\ell]}}{g_\ell^{[\ell]}}  \, .
\end{equation}
We are left to compute the expansion when $\tth \to 0^+$ of the terms in the right hand side.
We will use systematically Lemma \ref{lem:prod}.  

\noindent\underline{Expansion of $f_\ell^{[\ell]}$.}
By \eqref{G0} we have $[ c_\e^{<\ell}]_j^{[\ell']} = 0 $ for any $\ell'\neq 0$ and $j\in \bN$, and by exploiting \eqref{Stokespwseries}-\eqref{leadingStokes} and the multiplicative structure of the spaces $\mathtt{Evn}_j $ and $\mathtt{Odd}_j$ in \eqref{evenodd}-\eqref{evenodd2}, we obtain
\begin{equation}\label{fellgell}
\begin{aligned}
 f_\ell^{[\ell]} &= \Big[ - \frac12 (\psi_\e^{<\ell})_x^2 + \dfrac12 \dfrac{(\eta_\e^{<\ell})_x^2 [\ch-(\psi_\e^{<\ell})_x]^2}{1+(\eta_\e^{<\ell})_x^2}\Big]_\ell^{[\ell]} \\
 &=\Big[ - \frac12 (\psi_\e^{<\ell})_x^2 + \frac12 {\ch^2}   (\eta_\e^{<\ell})_x^2 +  \frac12 \dfrac{  (\psi_\e^{<\ell})_x^2 (\eta_\e^{<\ell})_x^2-2\ch (\psi_\e^{<\ell})_x (\eta_\e^{<\ell})_x^2 - \ch^2 (\eta_\e^{<\ell})_x^4}{1+(\eta_\e^{<\ell})_x^2}\Big]_\ell^{[\ell]} \, . 
 \end{aligned}
\end{equation}
 By 
 \eqref{multeffect} and 
 \eqref{expch} we have, for any $\ell'=1,\dots,\ell$,
 $$
 \begin{aligned}
 &[(\eta_\e^{<\ell'})_x^2 (\psi_\e^{<\ell'})_x^2]_{\ell'}^{[\ell']}=  O(\tth^{11-3\ell'})\,, \quad [\ch (\eta_\e^{<\ell'})_x^2 (\psi_\e^{<\ell'})_x]_{\ell'}^{[\ell']} =  O(\tth^{9-3\ell'})\, , \\ & [ \ch^2(\eta_\e^{<\ell'})_x^4 ]_{\ell'}^{[\ell']} =  O(\tth^{9-3\ell'})\, ,\quad \Big[\frac{1}{1+(\eta_\e^{<\ell'})_x^2}\Big]_{\ell'}^{[\ell']} = O(\tth^{6-3\ell'}) \, , 
 \end{aligned}
 $$
and we deduce from \eqref{fellgell} that 
\begin{equation}
f_\ell^{[\ell]} = \Big[ - \frac12 (\psi_\e^{<\ell})_x^2 + \frac12 \ch^2 (\eta_\e^{<\ell})_x^2 \Big]_\ell^{[\ell]} + O(\tth^{9-3\ell})\, .
\end{equation}
Thus, by the inductive assumption \eqref{expetajjpsijj}-\eqref{indHP2}
and Lemma \ref{lem:prod}, we have 
\begin{align}
\notag
f_\ell^{[\ell]}  &= -\frac12 \sum_{\ell_1 +\ell_2 =\ell} \ell_1 \ell_2\, \psi_{\ell_1}^{[\ell_1]} \psi_{\ell_2}^{[\ell_2]} [\cos(\ell_1 x) \cos(\ell_2 x)]^{[\ell]} + \frac\tth 2 \sum_{\ell_1 +\ell_2 =\ell} \ell_1 \ell_2\, \eta_{\ell_1}^{[\ell_1]} \eta_{\ell_2}^{[\ell_2]} [\sin(\ell_1 x) \sin(\ell_2 x)]^{[\ell]} + O(\tth^{9-3\ell})  \\ \notag 
&= -\frac{\tth^{5-3\ell}}4 \Big(\frac38\Big)^{\ell-2}\sum_{\ell_1 +\ell_2 =\ell}\ell_1 \ell_2  - \frac{\tth^{7-3\ell}}2 \sum_{\ell_1 +\ell_2 =\ell} \ell_1 \ell_2\, x_{\ell_1} y_{\ell_2}  - \frac{\tth^{7-3\ell}}4 \Big(\frac38\Big)^{\ell-2}\sum_{\ell_1 +\ell_2 =\ell}\ell_1^2 \ell_2^2  +  O(\tth^{9-3\ell})
\\ \label{fellellgellell}
 &  = - \frac{3^{\ell-3}}{8^{\ell-1}} (\ell^2-1)\ell\, \tth^{5-3\ell} - \frac{3^{\ell-4}}{8^{\ell-1}}\frac{(\ell^2-1) \ell (11\ell^2+5\ell-14)}{10} \tth^{7-3\ell}  + O(\tth^{9-3\ell}) \, , 
\end{align}
using in the last step the  algebraic identities
\begin{equation}\label{sommenonDoron}
{\begin{matrix}
\sum\limits_{\ell_1 + \ell_2 = \ell \atop 
1 \leq \ell_1, \ell_2  \leq \ell-1 } \ell_1 \ell_2 =  \frac{\ell (\ell^2 -1)}{6} \ , 
\sum\limits_{\ell_1 + \ell_2 = \ell \atop 
1 \leq \ell_1, \ell_2  \leq \ell-1}   \ell_1 \ell_2  (5\ell_2^2 + 3\ell_2 -5 ) = \frac{ (\ell^2-1) \ell  (\ell^2+\ell-4)}{4}\, , \sum\limits_{\ell_1 + \ell_2 = \ell \atop 
1 \leq \ell_1, \ell_2  \leq \ell-1 } \ell_1^2 \ell_2^2  =\frac{(\ell^2-1) \ell (\ell^2+1) }{30}\end{matrix}} \ . 
\end{equation}
\noindent{\underline{Expansion of $g_\ell^{[\ell]}$.}}
The  operators $G_j(\eta) $ fulfill, for $\tth \to 0^+$,
\begin{equation}\label{Gj}
[G_{j}(\eta_\e^{< \ell})\psi_\e^{< \ell}]_\ell^{[\ell]} = O(  \tth^{3j+4-3\ell- \frac12})
\ \mbox{ if }j\mbox{ is even}\, ,\quad [G_{j}(\eta_\e^{< \ell})\psi_\e^{< \ell}]_\ell^{[\ell]} = O( \tth^{3j+3-3\ell- \frac12}) \ \mbox{ if }j\mbox{ is odd}\, ,
\end{equation}
as one can prove by induction using  \eqref{Craig}, 
\eqref{expetajjpsijj}
and  Lemma 
\ref{lem:prod}.  
Thus  by
 \eqref{fellgellprima}
 and \eqref{G0}
\begin{equation}
g_\ell^{[\ell]} = \big[ - \big( G(\eta_\e^{< \ell})  - G_0 \big) \psi_\e^{< \ell} \big]_\ell^{[\ell]} =  \Big[-  G_1(\eta_\e^{< \ell}) \psi_\e^{< \ell} \Big]_\ell^{[\ell]} + O(\tth^{10-3\ell-\frac12})\, .
\end{equation}
Recall that
$$
G_1(\eta)\psi = - \pa_x (\eta \psi_x) - G_0[\eta G_0 \psi)] \ . 
$$
By the inductive assumption \eqref{expetajjpsijj}-\eqref{indHP2} and exploiting that, in view of \eqref{G0}, for any $ \ell_1+\ell_2 =\ell $, 
$$
\begin{aligned}
\big[\pa_x ( \cos(\ell_1 x) \pa_x \sin(\ell_2 x) )\big]^{[\ell]} = -\ell \frac{\ell_2}{2} \sin(\ell x) \, , 
\ \  
\big[G_0 (\cos(\ell_1 x) G_0 \sin(\ell_2 x))  \big]^{[\ell]} =\tth^2 \ell^2  \frac{\ell_2^2}{2}  \sin( \ell x) \big( 1+O(\tth^2)\big) \, ,
\end{aligned}
$$
 we have
\begin{align} \notag
g_\ell^{[\ell]} &= \sum_{\ell_1 +\ell_2 =\ell} \big[\pa_x \eta_{\ell_1}^{[\ell_1]} \cos(\ell_1 x) \pa_x \psi_{\ell_2}^{[\ell_2]} \sin(\ell_2 x)  \big]^{[\ell]} +  \sum_{\ell_1 +\ell_2 =\ell} \big[G_0 \eta_{\ell_1}^{[\ell_1]} \cos(\ell_1 x) G_0 \psi_{\ell_2}^{[\ell_2]} \sin(\ell_2 x)  \big]^{[\ell]} + O(\tth^{10-3\ell-\frac12})  \\ \notag
&=-\ell \sum_{\ell_1 +\ell_2 =\ell} \eta_{\ell_1}^{[\ell_1]} \psi_{\ell_1}^{[\ell_1]} \frac{\ell_2}{2}  +   \tth^2 \ell^2\sum_{\ell_1 +\ell_2 =\ell} \eta_{\ell_1}^{[\ell_1]} \psi_{\ell_1}^{[\ell_1]} \frac{\ell_2^2}{2}  + O(\tth^{10-3\ell-\frac12})  \\ \notag
&= -\ell \sum_{\ell_1 +\ell_2 =\ell} \frac{\ell_2}{2} \big(\ell_1 x_{\ell_1} \tth^{3-3\ell_1} + z_{\ell_1} \tth^{5-3\ell_1} + O(\tth^{7-3\ell_1}) \big)\big(x_{\ell_2} \tth^{3-3\ell_2-\frac12} + y_{\ell_2} \tth^{5-3\ell_2-\frac12} + O(\tth^{7-3\ell_2-\frac12}) \big)   \\ \notag
&\quad +\tth^2 \ell^2 \sum_{\ell_1 +\ell_2 =\ell} \frac{\ell_2^2}{2} \big(\ell_1 x_{\ell_1} \tth^{3-3\ell_1}  + O(\tth^{5-3\ell_1}) \big)\big(x_{\ell_2} \tth^{3-3\ell_2-\frac12} + O(\tth^{5-3\ell_2-\frac12}) \big) + O(\tth^{10-3\ell-\frac12})  \\ \notag
&= -\ell \sum_{\ell_1 +\ell_2 =\ell}  \frac{\ell_1 \ell_2}2 x_{\ell_1} x_{\ell_2} \tth^{6-3\ell -\frac12 } + \Big( \ell^2 \sum_{\ell_1 +\ell_2 =\ell}  \frac{\ell_1\ell_2^2}{2} x_{\ell_1} x_{\ell_2}-\ell \sum_{\ell_1 +\ell_2 =\ell}  \frac{\ell_2}{2} (\ell_1 x_{\ell_1} y_{\ell_2} + z_{\ell_1} x_{\ell_2} ) \Big)  \tth^{8-3\ell-\frac12 } \\
& \quad + O(\tth^{10-3\ell-\frac12}) \notag \\ \notag
&= -2 \frac{3^{\ell-3}}{8^{\ell-1}} (\ell^2-1)\ell^2  \tth^{6-3\ell-\frac12} + \Big[\frac{3^{\ell-3}}{8^{\ell-1}}  \ell^4 (\ell^2-1) - \frac{3^{\ell-4}}{8^{\ell-1}} \frac{(\ell^2-1)\ell^2 (\ell^2+\ell-4)}{2}  \\ \notag &\quad - \frac{3^{\ell-4}}{8^{\ell-1}}\frac{(\ell-2)(\ell^2-1)\ell^2 (3\ell+11)}{10}  \Big] \tth^{8-3\ell-\frac12 } + O(\tth^{10-3\ell-\frac12})  \\ \label{fellellgellell2}
&= -2 \frac{3^{\ell-3}}{8^{\ell-1}} (\ell^2-1)\ell^2  \tth^{6-3\ell-\frac12} + \frac{3^{\ell-4}}{8^{\ell-1}} \frac{(\ell^2-1)\ell^2(11\ell^2 -5\ell + 21)}{5}\tth^{8-3\ell-\frac12 } + O(\tth^{10-3\ell-\frac12})
\end{align}
where in the final step we used \eqref{sommenonDoron} together with the following identities
\begin{equation}\label{sommenonDoron2}
\sum_{\ell_1 + \ell_2 = \ell \atop 1 \leq \ell_1, \ell_2 \leq \ell -1}
\ell_2 \ell_1 (\ell_1 -1) (\ell_1 +2)  = \frac{(\ell^2-1) \ell (\ell-2)(3\ell+11)}{60}
 \ ,
\quad 
\sum_{\ell_1 + \ell_2 = \ell \atop 1 \leq \ell_1, \ell_2 \leq \ell -1} \ell_1 \ell_2^2 = \frac{(\ell^2-1) \ell^2 }{12} \ . 
\end{equation}
\noindent\underline{Expansion of the matrix elements in \eqref{etalpsiell}.}
As $\tth \to 0^+$, the matrix elements in \eqref{etalpsiell} expand as 
\begin{align}\label{exp.matrix}
  &  \frac{1}{\ell \tanh(\tth \ell) - \tanh (\tth) \ell^2 }
\begin{bmatrix} \ell \tanh(\tth \ell) &  \ch \ell \\ \ch \ell & 1 \end{bmatrix} \\
\notag
& = 
\frac{3}{\ell^2 (1-\ell^2) \tth^3 }  \big(1+\tfrac25 \tth^2 (1+\ell^2) + O(\tth^4) \big) \begin{pmatrix}
  \ell^2\,\tth \big(1-\frac{\ell^2 \tth^2}{3} + O(\tth^4) \big) & \ell\,\tth^{\frac12} \big(1-\frac{\tth^2}{6} +O(\tth^4) \big) \\[2mm] \ell\, \tth^{\frac12}\big(1-\frac{\tth^2}{6} +O(\tth^4) \big)  & 1 \end{pmatrix} \, . 
\end{align}
In view of \eqref{etalpsiell} and using the expansion in \eqref{exp.matrix}, we obtain
\begin{equation}\label{grazieMathematica}
 \begin{pmatrix} \eta_\ell^{[\ell]} \\[2mm] \psi_\ell^{[\ell]} \end{pmatrix} 
=  {\footnotesize  \begin{matrix}\begin{pmatrix}  \dfrac{3\big(\ell\, \tth^{\frac12} f_\ell^{[\ell]} + g_\ell^{[\ell]} \big)}{\ell (1-\ell^2) \tth^{\frac52} }  \\[10pt]
 \dfrac{3 \big(\ell\, \tth^{\frac12} f_\ell^{[\ell]} + g_\ell^{[\ell]} \big)}{\ell^2 (1-\ell^2) \tth^3 }
\end{pmatrix} -
\begin{pmatrix}  \dfrac{2\ell^3\, \tth^{\frac12} f_\ell^{[\ell]} + g_\ell^{[\ell]}}{2\ell (1-\ell^2) \tth^{\frac12} } \\[10pt]
 \dfrac{f_\ell^{[\ell]} }{2\ell (1-\ell^2) \tth^{\frac12} }  
\end{pmatrix} +\begin{pmatrix}  \dfrac{6 (1+\ell^2 )\big(\ell\, \tth^{\frac12} f_\ell^{[\ell]} + g_\ell^{[\ell]} \big)}{5\ell (1-\ell^2) \tth^{\frac12} }  \\[10pt]
 \dfrac{6 (1+\ell^2) \big(\ell\, \tth^{\frac12} f_\ell^{[\ell]} + g_\ell^{[\ell]} \big)}{5\ell^2 (1-\ell^2) \tth } 
\end{pmatrix} +  \begin{pmatrix} O( \tth^2 f_\ell^{[\ell]} + \tth^{\frac32} g_\ell^{[\ell]} ) \\[5pt] O(\tth^{\frac32} f_\ell^{[\ell]} + \tth g_\ell^{[\ell]} ) \end{pmatrix}\end{matrix}} \, .
\, 
\end{equation}
By \eqref{fellellgellell}, \eqref{fellellgellell2} and \eqref{grazieMathematica} we obtain the asymptotic expansions  \eqref{expetajjpsijj} for $\psi_\ell^{[\ell]}$ and $\eta_\ell^{[\ell]}$. 
\end{proof}

We deduce a similar property for the coefficients $ p_\e, a_\e  $
of the linearized operator $\cL_\e$ in \eqref{cLepsilon}. 

\begin{lem}\label{paasymashto0+}
For any $\ell \in \bN$ the $ \ell$-th Fourier 
coefficients $p_\ell^{[\ell]}$, $a_\ell^{[\ell]}$ of the $ 2 \pi $-periodic 
functions 
$ p_\ell (x) $, $a_\ell (x)  $
in \eqref{leadingLin} have the asymptotic expansion, as $ \tth\to 0^+ $, 
\begin{equation}\label{exppjjajj}
\begin{aligned}
&p_\ell^{[\ell]}=  -2\ell \Big(\frac38 \Big)^{\ell-1} \tth^{3-3\ell-\frac12} - \ell \Big(\frac38 \Big)^{\ell-1}\,  \frac{11\ell^2-9\ell+1}{9} \tth^{5-3\ell-\frac12}+O( \tth^{7-3\ell-\frac12})\, ,\\ 
&a_\ell^{[\ell]} = - \ell \Big(\frac38 \Big)^{\ell-1} \tth^{2-3\ell} - \ell\Big(\frac38 \Big)^{\ell-1} \, \frac{31\ell^2-9\ell+2}{18}\tth^{4-3\ell} +O(\tth^{6-3\ell}) \, .  
\end{aligned} 
\end{equation}
\end{lem}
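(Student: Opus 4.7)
The strategy is to derive \eqref{exppjjajj} from the defining formulas \eqref{def:pa}-\eqref{def:ttf} by extracting the $[\ell]$-th Fourier coefficient of the $\e^\ell$-th Taylor coefficient, using Proposition \ref{lem:asympetapsi} for the $\tth \to 0^+$ asymptotics of $\eta_\ell^{[\ell]}, \psi_\ell^{[\ell]}$, together with the product/composition toolbox (Lemmata \ref{product}-\ref{naroba}). Rewriting \eqref{def:pa} as
\begin{align*}
p_\e + p_\e (\mathfrak{p}_\e)_x &= (c_\e - \ch) - \ch (\mathfrak{p}_\e)_x - V_\e(x+\mathfrak{p}_\e(x))\, , \\
a_\e + a_\e (\mathfrak{p}_\e)_x &= -(\mathfrak{p}_\e)_x + \big(V_\e(x+\mathfrak{p}_\e(x)) - c_\e\big) B_x(x+\mathfrak{p}_\e(x))\, ,
\end{align*}
and extracting the $[\ell]$-th leading Fourier coefficient at order $\e^\ell$ (so that $c_\e - \ch$, being a constant, drops out for $\ell \geq 1$), one obtains recursive formulas for $p_\ell^{[\ell]}, a_\ell^{[\ell]}$ in terms of $\mathfrak{p}_{\ell'}^{[\ell']}, V_{\ell'}^{[\ell']}, B_{\ell'}^{[\ell']}$ and lower-order $p_{\ell'}^{[\ell']}, a_{\ell'}^{[\ell']}$.

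As an intermediate step, I would establish by induction on $\ell$ the asymptotic expansions, as $\tth \to 0^+$, of the auxiliary quantities $\mathfrak{p}_\ell^{[\ell]}, V_\ell^{[\ell]}, B_\ell^{[\ell]}$. For $\mathfrak{p}_\ell^{[\ell]}$, the recursion \eqref{pgote} together with the action of $\cH/\tanh((\tth+\ttf_\e^{<\ell})|D|)$ on mode $\ell$ (multiplication by $1/\tanh(\ell(\tth+\ttf_\e^{<\ell}))$ after applying $\cH$), the expansion \eqref{compogf} of $\eta_\e(x+\mathfrak{p}_\e(x))$, and Proposition \ref{lem:asympetapsi} yield $\mathfrak{p}_\ell^{[\ell]} = x_\ell\,\tth^{2-3\ell} + O(\tth^{4-3\ell})$ with an explicit subleading coefficient. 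For $V_\ell^{[\ell]}$ and $B_\ell^{[\ell]}$, I would exploit $V_\e = (\psi_\e)_x - B_\e (\eta_\e)_x$ and $(1+(\eta_\e)_x^2)B_\e = ((\psi_\e)_x - c_\e)(\eta_\e)_x$, taking leading Fourier coefficients via Lemma \ref{lem:prod} and combining with \eqref{expetajjpsijj}-\eqref{multeffect}.

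At leading order, the dominant contribution to $p_\ell^{[\ell]}$ is $-\ch \ell \mathfrak{p}_\ell^{[\ell]} - V_\ell^{[\ell]}$, each of scale $\tth^{3-3\ell-1/2}$, summing to $-2\ell\, x_\ell\,\tth^{3-3\ell-1/2}$ in agreement with \eqref{exppjjajj}; for $a_\ell^{[\ell]}$, the leading order is $-\ell\mathfrak{p}_\ell^{[\ell]} = -\ell\, x_\ell\,\tth^{2-3\ell}$. At second order several contributions must be tracked simultaneously: the subleading corrections to $\mathfrak{p}_\ell^{[\ell]}, V_\ell^{[\ell]}, B_\ell^{[\ell]}$; the expansion $\ch = \tth^{1/2}(1 - \tth^2/6 + O(\tth^4))$ coming from \eqref{expch}; the bilinear convolutions $[p_\e(\mathfrak{p}_\e)_x]_\ell^{[\ell]}$ and $[a_\e(\mathfrak{p}_\e)_x]_\ell^{[\ell]}$ evaluated via Lemma \ref{lem:prod} using the inductive hypothesis; the $k=2$ composition term $\pa_x V_{\ell_1}(x)\,\mathfrak{p}_{\ell_2}(x)$ from \eqref{compogf}; and the products $(V_\e - c_\e)B_x$ at the same order.

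The main obstacle is purely computational: combining all these second-order contributions requires evaluating convolution-type sums of the form $\sum_{\ell_1+\ell_2=\ell}\ell_1 \ell_2\, x_{\ell_1} x_{\ell_2}$, $\sum \ell_1^2 \ell_2\, x_{\ell_1}x_{\ell_2}$, $\sum \ell_1 \ell_2\, x_{\ell_1} y_{\ell_2}$, etc., using algebraic identities analogous to \eqref{sommenonDoron}-\eqref{sommenonDoron2} and the explicit values of $x_\ell, y_\ell, z_\ell$ in \eqref{indHP2}. The delicate point will be verifying that, after the many cross-terms cancel, the remainders collapse to the closed-form coefficients $\frac{11\ell^2-9\ell+1}{9}$ for $p_\ell^{[\ell]}$ and $\frac{31\ell^2-9\ell+2}{18}$ for $a_\ell^{[\ell]}$, rather than to a more complicated $\ell$-dependent expression.
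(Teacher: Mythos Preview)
Your proposal is correct and follows essentially the same approach as the paper: derive the asymptotics of the auxiliary quantities $V_\ell^{[\ell]}, B_\ell^{[\ell]}, \mathfrak{p}_\ell^{[\ell]}$ from Proposition \ref{lem:asympetapsi} and the defining relations \eqref{espVB}, \eqref{def:ttf}, then plug into \eqref{def:pa} and collect terms via Lemma \ref{lem:prod} and the algebraic identities of type \eqref{sommenonDoron}. The only cosmetic difference is that you multiply through by $1+(\mathfrak{p}_\e)_x$ and carry $[p_\e(\mathfrak{p}_\e)_x]_\ell^{[\ell]}$ as a recursive term, whereas the paper expands $1/(1+(\mathfrak{p}_\e)_x)$ as a geometric series and writes $p_\ell^{[\ell]}$ directly in terms of $\mathfrak{p},V$ alone; both routes lead to the same second-order bookkeeping.
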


\begin{proof}
We shall use systematically Lemma 
\ref{lem:prod}.
We first deduce the following expansions, as $\tth\to 0^+$, 
 \begin{equation}\label{Basymp}
 \begin{alignedat}{2}
 &B_\ell^{[\ell]} \stackrel{\eqref{espVB}}{=}  &&\Big[ \frac{ (\psi_\e)_x- c_\e }{1+(\eta_\e)_x^2}(\eta_\e)_x \Big]_\ell^{[\ell]} \stackrel{\eqref{multeffect}}{=}  [-\ch ( \eta_\e)_x +( \psi_\e)_x ( \eta_\e)_x  ]_\ell^{[\ell]} + O(\tth^{8-3\ell-\frac12})\\
 &\quad\ \ \stackrel{ \eqref{expch}}{=} && \tth^{\frac12} \Big(1-\frac{\tth^2}{6}+O(\tth^4)\Big) \ell \eta_\ell^{[\ell]} - \frac12 \sum_{\ell_1+\ell_2=\ell}\ell_1 \ell_2 \psi_{\ell_1}^{[\ell_1]} \eta_{\ell_2}^{[\ell_2]} + O(\tth^{8-3\ell-\frac12}) \\
  &\quad\ \ \, \stackrel{\eqref{expetajjpsijj}}{=} && \ell^2 \Big(\frac38 \Big)^{\ell-1} \tth^{4-3\ell-\frac12} + \ell^2  \Big(\frac38 \Big)^{\ell-1} \  \frac{\ell^2+3\ell-7}{18} \tth^{6-3\ell-\frac12} + O(\tth^{8-3\ell-\frac12})  
 \end{alignedat}
 \end{equation}
 using also the second identity in \eqref{sommenonDoron2}. Moreover
 \begin{equation}\label{Vasymp}
 \begin{aligned}
  V_\ell^{[\ell]} &\stackrel{\eqref{espVB}}{=}\big[- B (\eta_\e)_x + (\psi_\e)_x \big]_\ell^{[\ell]} \stackrel{\eqref{expetajjpsijj},\eqref{Basymp}}{=} [(\psi_\e)_x]_\ell^{[\ell]} + O(\tth^{7-3\ell-\frac12}) \\  & \; \stackrel{\eqref{expetajjpsijj}}{=}  \ell \Big(\frac38 \Big)^{\ell-1}  \tth^{3-3\ell-\frac12} + \ell \Big(\frac38 \Big)^{\ell-1} \frac{5\ell^2+3\ell-5}{18} \tth^{5-3\ell-\frac12} + O(\tth^{7-3\ell-\frac12})  \, .
 \end{aligned}
 \end{equation}
We now claim that 
for any $ \ell \in \bN  $ the $\ell$-th Fourier coefficient of the function 
$\mathfrak{p}_\ell(x)$  defined by  \eqref{def:ttf} has the asymptotic expansion as $\tth\to 0^+$
\begin{equation}\label{claimpgot}
\mathfrak{p}_{\ell}^{[\ell]} = \Big(\frac38 \Big)^{\ell-1} \tth^{2-3\ell}+\Big(\frac38 \Big)^{\ell-1} \frac{17\ell^2-9\ell-2}{18} \tth^{4-3\ell} + O(\tth^{6-3\ell})\, .
\end{equation}
For $\ell = 1$ we have,  by  \eqref{def:ttf}, 
$$
\mathfrak{p}_1(x) 
\stackrel{\eqref{Stokespwseries}}{=} \frac{\sin(x)}{\tanh(\tth)} \stackrel{\eqref{expch}}{=} \frac{\sin(x)}{\tth} \big(1+\frac{\tth^2}{3}+O(\tth^4)\big) 
$$
and \eqref{claimpgot}
for $ \ell  = 1 $ follows.  Then we argue by induction supposing that \eqref{claimpgot} holds for  some $\ell \geq 2$ and any 
$\ell' =1,\dots \ell-1$. 
Consequently
$\mathfrak{p}_{\ell_1}^{[\ell_1]}\mathfrak{p}_{\ell_2}^{[\ell_2]} = O(\tth^{4-3(\ell_1+\ell_2)}) $ for any $1\leq \ell_1,\ell_2<\ell$.
Recall that $\mathfrak{p}_\e$ is $\mathtt{Odd}(\e)$,  and $\eta_\e (x) $, $\eta_\e(x + \mathfrak{p}_\e(x))$ are in  $\mathtt{Evn}(\e)$ (cfr Lemma \ref{composition}), so 
 we get, by exploiting formula \eqref{compogf},
 \begin{align*}
 \mathfrak{p}_\ell^{[\ell]} &\quad\,\stackrel{\eqref{def:ttf}}{=}\quad\, \Big[  \frac{\cH }{\tanh \big( (\tth+\ttf_\e) |D| \big)}\big[\eta_\e^{\leq \ell} \big( x + \mathfrak{p}^{<\ell}_\e(x)\big)\big]  \Big]_\ell^{[\ell]}\\
 \notag
 & \quad\,\stackrel{\eqref{hilbert}}{=} \quad\, \frac{1}{\tanh \big( \tth \ell \big)}\big[\eta_\e^{\leq \ell} \big( x + \mathfrak{p}_\e^{<\ell}(x)\big)\big]_\ell^{[\ell]}\\
 &\stackrel{\eqref{expetajjpsijj}, \eqref{expch}}{=}  \frac{1+\frac{\ell^2\tth^2}{3}+O(\tth^4)}{\ell\tth}  \Big( \eta_\ell^{[\ell]}  +\frac12 \sum_{\ell_1+\ell_2= \ell }\ell_1 \eta_{\ell_1}^{[\ell_1]} \mathfrak{p}_{\ell_2}^{[\ell_2]} + O(\tth^{7-3\ell}) \Big) \\
&\quad\,\stackrel{\eqref{claimpgot}}{=} \quad\,\Big( \frac38\Big)^{\ell-1} \,\frac{1+\frac{\ell^2\tth^2}{3}+O(\tth^4)}{\tth} \Big( \tth^{3-3\ell} +  \frac{(\ell-1)(\ell+2)}{6}\tth^{5-3\ell}  +  \frac{4\ell^2-6\ell+2}{9} \tth^{5-3\ell}  + O(\tth^{7-3\ell} )\Big) 
 \end{align*}
using also  $\sum\limits_{\ell_1+\ell_2=\ell}\ell_1^2 = \tfrac16 \ell(\ell-1)(2\ell-1) $. 
 This proves the asymptotic expansion \eqref{claimpgot} for $\mathfrak{p}_\ell^{[\ell]} $.
 
Finally we have, denoting $\mathfrak{p}\equiv \mathfrak{p}_\e$ for brevity,
\begin{align*}
p_\ell^{[\ell]} &\quad \, \stackrel{\eqref{def:pa}}{=}\quad \,  \Big[ \displaystyle{\frac{ c_\e-V^{\leq \ell}(x+\mathfrak{p}^{\leq \ell}(x))}{ 1+\mathfrak{p}^{\leq \ell}_x(x)}} \Big]_\ell^{[\ell]} \\ 
&\stackrel{\eqref{Basymp}-\eqref{claimpgot}}{=}\Big[ -\ch \mathfrak{p}^{\leq \ell}_x(x) + \ch (\mathfrak{p}^{< \ell}_x)^2(x) - V^{\leq \ell}(x) - V_x^{< \ell}(x) \mathfrak{p}^{<\ell}(x) + V^{< \ell}(x)\mathfrak{p}^{< \ell}_x(x) \Big]_\ell^{[\ell]}+ O(\tth^{7-3\ell-\frac12}) \\
& = 
- \ch\, \ell\,  \mathfrak{p}_{\ell}^{[\ell]}
+ 
\frac{\ch}{2} \sum_{\ell_1 + \ell_2 = \ell} \ell_1 \ell_2 \mathfrak{p}_{\ell_1}^{[\ell_1]}\mathfrak{p}_{\ell_2}^{[\ell_2]}
-  V_{\ell}^{[\ell]}
+\frac{1}{2} \sum_{\ell_1 + \ell_2 = \ell} (\ell_2 -\ell_1) V_{\ell_1}^{[\ell_1]}\mathfrak{p}_{\ell_2}^{[\ell_2]}
+ O(\tth^{7-3\ell-\frac12})\\
& =
-\ell  \Big( \frac38\Big)^{\ell-1} \tth^{3-3 \ell-\frac12} -  \ell  \Big( \frac38\Big)^{\ell-1}\Big( \frac{17 \ell^2-9 \ell-2 }{18} - \frac16\Big)\tth^{5-3\ell  - \frac12} 
 +\frac29 \ell  \Big( \frac38\Big)^{\ell-1} (\ell^2-1)  \tth^{5-3\ell- \frac12} \\
& 
\quad -\ell \Big(\frac38 \Big)^{\ell-1}  \tth^{3-3\ell-\frac12} - \ell \Big(\frac38 \Big)^{\ell-1} \frac{5\ell^2+3\ell-5}{18} \tth^{5-3\ell-\frac12}  - \frac29  \ell \Big(\frac38 \Big)^{\ell-1}(\ell^2-3\ell+2)\tth^{5-3\ell-\frac12}+O(\tth^{7-3\ell - \frac12}) \, ,
\end{align*}
where in the last step we used \eqref{expch},\eqref{Basymp}-\eqref{claimpgot} and \eqref{sommenonDoron}.
Summing up we obtain the first line in \eqref{exppjjajj}.

Similarly,
\begin{align*}
 a_\ell^{[\ell]}  &\quad \, \stackrel{\eqref{def:pa}}{=}  \quad\, \Big[\displaystyle{\frac{1+ (V(x + \mathfrak{p}(x)) - c_\e)
 B_x(x + \mathfrak{p}(x))  }{1+\mathfrak{p}_x(x)}} \Big]_\ell^{[\ell]} \\&\quad \,\stackrel{\eqref{def:pa}}{=}\quad\, \Big[\displaystyle{\frac{1}{1+\mathfrak{p}_x^{\leq \ell}(x)}}- (\ch+ p_\e^{\leq \ell}(x)) B_x(x+\mathfrak{p}^{\leq \ell}(x))\Big]_\ell^{[\ell]} \\
 &\stackrel{\eqref{exppjjajj}-\eqref{claimpgot}}{=} \Big[- \mathfrak{p}_x^{\leq \ell}(x)+(\mathfrak{p}_x^{\leq \ell})^2(x)-\ch B_x^{\leq \ell}(x) \Big]_\ell^{[\ell]} +O(\tth^{6-3\ell})\,\\
 &\qquad  = \quad\,
-  \ell\,  \mathfrak{p}_{\ell}^{[\ell]}
+ 
\frac{1}{2} \sum_{\ell_1 + \ell_2 = \ell} \ell_1 \ell_2 \mathfrak{p}_{\ell_1}^{[\ell_1]}\mathfrak{p}_{\ell_2}^{[\ell_2]}
-  \ch \ell B_{\ell}^{[\ell]} +O(\tth^{6-3\ell}) \\
&\qquad = \quad\,
 -\ell \Big(\frac38 \Big)^{\ell-1}  \tth^{2-3\ell} - \ell
 \Big(\frac38 \Big)^{\ell-1} \frac{17 \ell^2 - 9\ell -2}{18}\tth^{4-3\ell}\\
 &\qquad \qquad+ \frac29 \ell \Big(\frac38 \Big)^{\ell-1}  (\ell^2-1) \tth^{4-3\ell}  - \ell^3 \Big(\frac38 \Big)^{\ell-1}   \tth^{4-3\ell}+O(\tth^{6-3\ell})
\end{align*}
where in the last step we used \eqref{Basymp}-\eqref{claimpgot} and \eqref{sommenonDoron}.
This gives the second line in \eqref{exppjjajj}.
\end{proof}

\subsection{Limit of the Stokes wave as $ \tth \to + \infty $}\label{lastokeswavesva}

We now prove another 
structural property of the 
leading Fourier coefficients
$ \eta_\ell^{[\ell]}, \psi_\ell^{[\ell]} $ of the Stokes waves at {\it any} order in the deep water limit.

\begin{prop} \label{degenstokesinfinity} For any $\ell \in \bN$, the $\ell$-th Fourier coefficients 
$   \eta_\ell^{[\ell]}, \psi_\ell^{[\ell]} $ of the $ 2 \pi $-periodic 
coefficients $\eta_\ell(x)$, $\psi_\ell(x)$ of the Stokes wave 
in \eqref{Stokespwseries} have a finite identical limit as $\tth\to +\infty$, 
\begin{equation}\label{etapsiugualiinf}
\lim_{\tth \to +\infty }\eta_1^{[1]} =  \lim_{\tth \to +\infty } \psi_1^{[1]} = 1\,,\qquad
\lim_{\tth \to +\infty }\eta_\ell^{[\ell]} =  \lim_{\tth \to +\infty } \psi_\ell^{[\ell]} = \lim_{\tth \to +\infty } -\frac{f_\ell^{[\ell]}}{\ell-1} \in \bQ \, ,\quad \forall  \ell\geq 2\, ,
\end{equation}
with $f_\ell^{[\ell]}$ in \eqref{fellgell}.
\end{prop}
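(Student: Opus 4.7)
The inversion formula \eqref{etalpsiell} gives, for every $\ell\geq 2$, an explicit expression of $\eta_\ell^{[\ell]}$ and $\psi_\ell^{[\ell]}$ in terms of $f_\ell^{[\ell]}$, $g_\ell^{[\ell]}$ and the depth $\tth$. Since $\tanh(\tth k)\to 1$ for every $k\geq 1$ and $\ch=\sqrt{\tanh(\tth)}\to 1$ as $\tth\to+\infty$, the scalar prefactor in \eqref{etalpsiell} tends to $-[\ell(\ell-1)]^{-1}$ while the $2\times 2$ matrix tends to $\begin{pmatrix}\ell & \ell\\ \ell & 1\end{pmatrix}$. Writing $\bar f_\ell:=\lim_{\tth\to\infty} f_\ell^{[\ell]}$ and $\bar g_\ell:=\lim_{\tth\to\infty} g_\ell^{[\ell]}$, this yields
\begin{equation*}
\lim_{\tth\to+\infty}\eta_\ell^{[\ell]}=-\frac{\bar f_\ell+\bar g_\ell}{\ell-1},\qquad \lim_{\tth\to+\infty}\psi_\ell^{[\ell]}=-\frac{\ell\,\bar f_\ell+\bar g_\ell}{\ell(\ell-1)}.
\end{equation*}
Hence \eqref{etapsiugualiinf} reduces to the single claim $\bar g_\ell=0$ for every $\ell\geq 2$: when this holds, both limits coincide with $-\bar f_\ell/(\ell-1)$. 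Rationality of this common limit is then automatic from Theorem \ref{LeviCivita}, since $f_\ell^{[\ell]},\eta_\ell^{[\ell]},\psi_\ell^{[\ell]}\in{\cal Q}(\ch^2)$ and $\ch^2\to 1$.

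I would prove $\bar g_\ell=0$ by strong induction on $\ell\geq 2$. For the base case $\ell=2$, a direct calculation starting from $\eta_\e^{<2}=\e\cos x$, $\psi_\e^{<2}=\e\ch^{-1}\sin x$ and the Craig--Sulem formula $G_1(\eta)\psi=-\pa_x(\eta\psi_x)-G_0(\eta G_0\psi)$ yields $g_2^{[2]}=\ch\tanh(2\tth)-\ch^{-1}\to 0$. For the inductive step, the hypothesis $\bar g_{\ell'}=0$ for every $\ell'<\ell$ gives $\lim\eta_{\ell'}^{[\ell']}=\lim\psi_{\ell'}^{[\ell']}=:\xi_{\ell'}$, and by Lemma \ref{lem:prod} applied to each summand of the Craig--Sulem expansion \eqref{Craig}, the highest harmonic $g_\ell^{[\ell]}$ depends in the limit only on these common values $\xi_{\ell'}$.

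The heart of the argument is a Levi-Civita-type cancellation. On any fixed Fourier band $\{|k|\leq\ell\}$, $G_0=|D|\tanh(\tth|D|)\to|D|$ as $\tth\to+\infty$, and $|D|=\cH\pa_x$ by \eqref{hilbert}. The condition $\lim\eta_{\ell'}^{[\ell']}=\lim\psi_{\ell'}^{[\ell']}$ at every preceding order $\ell'<\ell$ is precisely the statement that the ``highest-harmonic'' formal series $\sum_{\ell'\geq 1}\e^{\ell'}\xi_{\ell'}\big(\cos(\ell' x)+\im\sin(\ell' x)\big)=\sum_{\ell'\geq 1}\e^{\ell'}\xi_{\ell'}e^{\im\ell' x}$ has only non-negative Fourier modes, i.e.\ extends holomorphically to the upper half-plane --- the classical Levi-Civita parametrization of deep-water Stokes waves. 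Replacing $G_0$ by $|D|=\cH\pa_x$ in \eqref{Craig}, the deep-water Dirichlet--Neumann series $\sum_{j\geq 1}G_j(\eta)\psi$ preserves this Hardy-space property, so its sine-projection onto $\sin(\ell x)$ vanishes, which is the content of $\bar g_\ell=0$.

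The main obstacle is making this cancellation explicit through the combinatorics of \eqref{Craig}: for each multi-index $(\ell_1,\ldots,\ell_{j+1})$ with $\sum\ell_i=\ell$, one must track how the $\pa_x$-type terms of the form $|D|^k\pa_x\eta^j\pa_x\psi$ pair with the $G_0$-type terms of the form $G_0|D|^k\eta^j G_i\psi$ --- via the deep-water identity $|D|=\cH\pa_x$ --- so that the resulting $\sin(\ell x)$-coefficient vanishes whenever $\xi_{\ell_i}$ plays both roles $\lim\eta^{[\ell_i]}_{\ell_i}$ and $\lim\psi^{[\ell_i]}_{\ell_i}$. This is the computational heart of the Levi-Civita parametrization restricted to highest-harmonic coefficients at every order, and supplying it rigorously is the only nontrivial step of the proof.
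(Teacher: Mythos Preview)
Your reduction to $\bar g_\ell:=\lim_{\tth\to+\infty}g_\ell^{[\ell]}=0$ via the inversion formula \eqref{etalpsiell} is correct and matches the paper. The difference lies in how $\bar g_\ell=0$ is proved, and here your proposal is both incomplete and more complicated than necessary.

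You set up an induction on $\ell$, feeding the hypothesis $\lim\eta_{\ell'}^{[\ell']}=\lim\psi_{\ell'}^{[\ell']}$ back into the Craig--Sulem expansion and invoking a Hardy-space/Levi-Civita argument that you yourself flag as ``the only nontrivial step'' and leave unproved. This is a genuine gap: the holomorphic-extension heuristic does not by itself show that the $\sin(\ell x)$-coefficient of $\sum_{j\geq 1}G_j(\eta)\psi$ vanishes, and tracking the combinatorics of \eqref{Craig} along these lines would be laborious. More importantly, the inductive hypothesis on the Stokes wave is \emph{not needed}. The paper proves the stronger statement (Lemma~\ref{lemchiave}) that for \emph{any} $f_\e\in\mathtt{Evn}(\e)$ and $h_\e\in\mathtt{Odd}(\e)$ one has $\lim_{\tth\to+\infty}[G_j(f_\e)h_\e]_\ell^{[\ell]}=0$ for every $j\geq 1$. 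The mechanism is an elementary parity cancellation: since $G_0\to|D|$ on each fixed harmonic, Lemma~\ref{lem:prod} gives
\[
\lim_{\tth\to+\infty}\big[\pa_x f_\e\,\pa_x h_\e + G_0 f_\e\, G_0 h_\e\big]_\ell^{[\ell]}=0,
\]
because the highest-harmonic product formula carries a $+\tfrac12$ sign for $\mathtt{Evn}\times\mathtt{Evn}$ and a $-\tfrac12$ sign for $\mathtt{Odd}\times\mathtt{Odd}$. This handles $G_1=-\pa_x\eta\pa_x-G_0\eta G_0$ immediately, and then an induction on $j$ (not on $\ell$) through the recursion \eqref{Craig} finishes the job in a few lines. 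Replacing your Hardy-space sketch with this observation closes the gap and simplifies the argument considerably.
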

In order to prove Proposition \ref{degenstokesinfinity} the key result is 
Lemma \ref{lemchiave},  
where for any function $f_\e\in \mathtt{Evn}(\e)$ and $h_\e \in \mathtt{Odd}(\e)$ we denote for brevity 
 $$
  f_\ell^{[\ell]}\piuinf := \lim_{\tth\to \infty} f_\ell^{[\ell]} \in \bQ \, ,\quad  g_\ell^{[\ell]}\piuinf := \lim_{\tth\to \infty} g_\ell^{[\ell]} \in \bQ\, .
 $$

\begin{lem}\label{lemchiave}
Let $f_\e\in \mathtt{Evn}(\e)$ and $h_\e \in \mathtt{Odd}(\e)$ according to  Definition \ref{spaziEvnOdd}. Then, for any  
$j,\ell \in \bN $,  
\begin{equation}\label{3identitiesonG}
\lim_{\tth \to +\infty}[G_0 h_\e ]_\ell^{[\ell]} = \ell\, h_\ell^{[\ell]}\piuinf \,,\quad\lim_{\tth \to +\infty}[\pa_x f_\e \pa_x h_\e + G_0  f_\e G_0 h_\e]_\ell^{[\ell]} = 0 \, ,\quad
\lim_{\tth \to +\infty}[G_j(f_\e) h_\e ]_\ell^{[\ell]} = 0 \, .
\end{equation}  
\end{lem}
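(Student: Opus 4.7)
The first identity is immediate from the Fourier symbol of $G_0=|D|\tanh(\tth|D|)$: on the $\ell$-th harmonic of $h_\e$ one has $[G_0 h_\e]_\ell^{[\ell]}=\ell\tanh(\tth\ell)\,h_\ell^{[\ell]}$, and $\tanh(\tth\ell)\to 1$ as $\tth\to+\infty$ while the coefficient $h_\ell^{[\ell]}$ admits a finite rational limit $h_\ell^{[\ell]\piuinf}\in\bQ$ thanks to Theorem \ref{LeviCivita}. The second identity I would prove by applying Lemma \ref{lem:prod} to each summand, using that $\pa_x$ acts on $\cos(\ell x)$ and $\sin(\ell x)$ by a factor $\pm\ell$ while $G_0$ multiplies them by $\ell\tanh(\tth\ell)$. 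A short leading-harmonic calculation then yields
\begin{equation*}
\big[\pa_x f_\e\,\pa_x h_\e+G_0 f_\e\cdot G_0 h_\e\big]_\ell^{[\ell]} = \tfrac12\!\!\!\sum_{\ell_1+\ell_2=\ell}\!\!\ell_1\ell_2\bigl[\tanh(\tth\ell_1)\tanh(\tth\ell_2)-1\bigr]f_{\ell_1}^{[\ell_1]}h_{\ell_2}^{[\ell_2]},
\end{equation*}
each summand of which vanishes in the limit since all the $\tanh(\tth\ell_i)\to 1$ for $\ell_i\geq 1$.

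For the third identity I would argue by induction on $j\geq 1$, with the inductive statement being $\lim_{\tth\to+\infty}[G_j(f_\e)h_\e]_m^{[m]}=0$ for \emph{every} $m\in\bN$ and every $f_\e\in\mathtt{Evn}(\e)$, $h_\e\in\mathtt{Odd}(\e)$. The base case $j=1$ follows from the explicit expression $G_1(\eta)\psi=-\pa_x(\eta\psi_x)-G_0(\eta\,G_0\psi)$ and the same type of leading-harmonic calculation as in identity (2), which produces a sum weighted by $1-\tanh(\tth\ell)\tanh(\tth\ell_2)$. For the inductive step I would substitute the Craig--Sulem recursion \eqref{Craig} and partition the resulting finite sum of terms into three groups: (a) the ``all-derivatives'' top-order term $\tfrac{-1}{j!}\,M_j\,\pa_x(f_\e^j\,(h_\e)_x)$, where $M_j$ equals $G_0|D|^{j-2}$ or $|D|^{j-2}$ according to the parity of $j$; (b) its $s=0$ companion $\tfrac{-1}{j!}\,N_j(f_\e^j\,G_0 h_\e)$, with $N_j$ the matching product of $G_0$ and $|D|$; (c) all remaining terms $T_s\bigl(f_\e^{a_s}\,G_s(f_\e)h_\e\bigr)$ with $0<s<j$ and $T_s$ a Fourier multiplier built from $G_0$ and $|D|$. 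Terms in group (c) vanish because, by the inductive hypothesis, $[G_s(f_\e)h_\e]_m^{[m]}\to 0$ for every $m$; iterated Lemma \ref{lem:prod}, combined with the finite limits $f_m^{[m]\piuinf}\in\bQ$ supplied by Theorem \ref{LeviCivita}, propagates this vanishing through the multiplication by $f_\e^{a_s}$, while $T_s$ only rescales the $\ell$-th harmonic by the bounded factor $\ell^?\tanh(\tth\ell)^?$. Groups (a) and (b) combine, by the very same calculation as in the base case, into a sum weighted by a difference of the form $\tanh(\tth\ell)^{\delta_j}-\tanh(\tth\ell)^{\delta_j}\tanh(\tth\ell_0)$ with $\delta_j\in\{0,1\}$ dictated by the parity of $j$, which again tends to zero termwise.

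The main obstacle is the combinatorial bookkeeping in the inductive step: one must verify that the partition (a)-(b)-(c) exhausts the recursion, and, most importantly, that the leading ``all-derivatives'' term (a) is always accompanied by exactly the $s=0$ companion (b) with the right Fourier-multiplier decoration to produce the cancellation $\tanh(\tth\ell)^\delta\bigl[1-\tanh(\tth\ell_0)\bigr]$ (or $\tanh(\tth\ell)-\tanh(\tth\ell_0)$). Once this pairing in \eqref{Craig} is made explicit, the rest is routine: $|D|^m$ and $G_0$ act on the $\ell$-th harmonic by the bounded factors $\ell^m$ and $\ell\tanh(\tth\ell)\leq\ell$, and the Stokes coefficients $f_\ell^{[\ell]},h_\ell^{[\ell]}$ admit finite limits in $\bQ$ as $\tth\to+\infty$ thanks to the ring structure ${\cal Q}(\ch^2)$ of Definition \ref{rationalstructure}.
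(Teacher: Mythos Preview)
Your proposal is correct and follows essentially the same approach as the paper: the first two identities are handled by direct leading-harmonic computation via Lemma \ref{lem:prod}, and the third by induction on $j$ using the Craig--Sulem recursion \eqref{Craig}, isolating the two $s=0$ terms (your groups (a) and (b)) whose leading-harmonic contributions cancel in the limit, while the remaining terms (group (c)) vanish by the inductive hypothesis. The only minor imprecision is the appeal to Theorem \ref{LeviCivita} for the finite limits $f_\ell^{[\ell]\piuinf},h_\ell^{[\ell]\piuinf}\in\bQ$: for arbitrary $f_\e\in\mathtt{Evn}(\e)$, $h_\e\in\mathtt{Odd}(\e)$ this follows directly from Definitions \ref{rationalstructure} and \ref{spaziEvnOdd}, since elements of $\mathcal{Q}(\ch^2)$ have finite rational limits as $\ch^2\to 1$.
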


\begin{proof}
The first identity directly  follows by
$$
[G_0 h_\e ]_\ell^{[\ell]}  = \big[|D|\tanh(\tth |D|) h_\ell^{[\ell]} \sin(\ell x) \big]_\ell^{[\ell]} = \ell\tanh(\tth \ell) h_\ell^{[\ell]}   \overset{\tth \to +\infty}{\longrightarrow}\ell h_\ell^{[\ell]}\piuinf \, . 
$$
The second identity 
follows,
recalling that 
$ f_\ell \in \mathtt{Evn}_\ell $
and $ h_\ell \in \mathtt{Odd}_\ell $, see Definition \ref{def:even}, 
by  Lemma 
\ref{lem:prod}, 
$$
\lim_{\tth\to +\infty }[\pa_x f_\e \pa_x h_\e ]_\ell^{[\ell]}  
= -\frac{\ell}2  \sum_{\ell_1+\ell_2=\ell} \ell_2  f_{\ell_1}^{[\ell_1]}\piuinf h_{\ell_2}^{[\ell_2]}\piuinf \, ,
$$
and, similarly,
$$
    \lim_{\tth\to +\infty }[ G_0  f_\e G_0 h_\e]_\ell^{[\ell]}   = \frac{\ell}{2} \sum_{\ell_1+\ell_2=\ell} \ell_2 f_{\ell_1}^{[\ell_1]}\piuinf  h_{\ell_2}^{[\ell_2]}\piuinf\, . 
$$
The third identity follows by induction on $j\in \bN$. Indeed, the case $j=1$ is exactly the second identity  in 
\eqref{3identitiesonG}, 
since
$
G_1(\eta) = - \pa_x \eta \pa_x - G_0 \eta G_0 $, 
cfr.  \eqref{G0}. Let $j>1$ and suppose that \eqref{3identitiesonG} holds for $j'=1,\dots,j-1$. 
Then, in view of the recursive expression \eqref{Craig} of the 
Dirichlet-Neumann operator,
the fact that 
$ f_\e^j \in \mathtt{Evn}(\e)$, $ G_\ell (f_\e^j)
h_\e \in \mathtt{Odd}(\e) $, and inductive 
hypothesis, 
one has 
\begin{equation}\label{zeroinf}
\lim_{\tth \to +\infty}[G_j(f_\e) h_\e ]_\ell^{[\ell]} \stackrel{\eqref{Craig}}{=} - \frac{1}{j!} \ell^{j-1} \lim_{\tth \to +\infty}[\pa_x f_\e^j \pa_x h_\e+|D| f_\e^j G_0 h_\e  ]_\ell^{[\ell]}\, ,
\end{equation}
which comes from
$$
\begin{aligned}
& - \frac{1}{j!} 
G_0 |D|^{j-2} \pa_x h_\e
f_\e^{j} \pa_x   h_\e
- \frac{1}{j!} 
|D|^{j} 
f_\e^{j} G_0 h_\e \, \quad 
\text{if} \ j  \
\text{is even} \, ,  \\
& - \frac{1}{j!} 
 |D|^{j-1} \pa_x 
f_\e^{j} \pa_x   h_\e
- \frac{1}{j!} 
G_0 |D|^{j-1} 
f_\e^{j} G_0 h_\e \,  \quad 
\text{if}  \ j  \
\text{is odd} \, , 
\end{aligned}
$$
whereas the other
terms vanish. 
Finally, 
the limit in
\eqref{zeroinf} 
is zero since $ f_\e^j \in \mathtt{Evn}(\e)$, recalling that 
$ G_0 = |\tth|\tanh (\tth |D|)$
and using the second identity in \eqref{3identitiesonG}.
\end{proof}
\noindent{\it Proof of Proposition \ref{degenstokesinfinity}}.
 The case $\ell=1$ holds by \eqref{Stokespwseries}. 
 For any $ \ell \geq 2 $ 
the pair $(\eta_\ell^{[\ell]},\psi_\ell^{[\ell]})$ solves the linear system \eqref{rangeeqell} with $f_\ell^{[\ell]}$, $g_\ell^{[\ell]} $ in \eqref{fellgell} where, 
by Lemma \ref{lemchiave},  
\begin{equation}\label{claimMax} 
\lim_{\tth \to +\infty} g_\ell^{[\ell]}  = \lim_{\tth \to +\infty} \Big[- \widetilde G(\eta_\e) \psi_\e\Big]_\ell^{[\ell]} = - \sum_{j=1}^\ell \lim_{\tth \to +\infty} \Big[G_j(\eta_\e) \psi_\e\Big]_\ell^{[\ell]} =  0\, ,\qquad \forall \ell \in \bN\, .
\end{equation} 
By \eqref{claimMax} and \eqref{rangeeqell} we have
\begin{align*}
\lim_{\tth \to +\infty} \vet{\eta_\ell^{[\ell]}\cos(\ell x)}{\psi_\ell^{[\ell]}\sin(\ell x)} = \lim_{\tth \to +\infty} (G_0 + \ch^2 \pa_x^2)^{-1 }\begin{bmatrix}G_0 & \ch\pa_x \\ -\ch\pa_x & 1 \end{bmatrix}^{-1 }\vet{f_\ell^{[\ell]}\cos(\ell x)}{0} \,,
\end{align*}
namely, passing to the limit  $\tth\to +\infty$,
\begin{equation}\label{etafell}
\begin{pmatrix}\eta_\ell^{[\ell]}\piuinf
\\[1mm] \psi_\ell^{[\ell]}\piuinf\end{pmatrix} = -\frac{1}{\ell( \ell-1)} \begin{pmatrix}\ell & \ell \\ \ell & 1 \end{pmatrix} \begin{pmatrix}f_\ell^{[\ell]}\piuinf
\\[1mm] 0 \end{pmatrix} =-\frac{1}{ \ell-1}   \begin{pmatrix}f_\ell^{[\ell]}\piuinf 
\\[1mm] f_\ell^{[\ell]}\piuinf  \end{pmatrix} \, .
\end{equation}
We have  proved that $\eta_\ell^{[\ell]}\piuinf
= \psi_\ell^{[\ell]}\piuinf $ 
are equal in the deep-water limit.\hfill \qed \smallskip

We deduce a similar property for the coefficients $ p_\e, a_\e  $
of the linearized operator $\cL_\e$ in \eqref{cLepsilon}.
\begin{lem}\label{apinf}
For any $\ell \in \bN$ the $ \ell$-th Fourier 
coefficients $p_\ell^{[\ell]}$, $a_\ell^{[\ell]}$ of the $ 2 \pi $-periodic 
functions 
$ p_\ell (x) $, $a_\ell (x)  $
in \eqref{leadingpa} have a finite identical limit as $\tth\to +\infty$, i.e.
\begin{equation}\label{paugualiinf}
 \lim_{\tth\to +\infty} p_\ell^{[\ell]}=\lim_{\tth\to +\infty} a_\ell^{[\ell]} = :\varpi_\ell \in \bQ \, .
\end{equation}
\end{lem}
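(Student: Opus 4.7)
The proof will be a direct consequence of Proposition \ref{degenstokesinfinity}, obtained by tracing how the identity $\eta_\ell^{[\ell]}\piuinf = \psi_\ell^{[\ell]}\piuinf$ propagates through the chain of definitions \eqref{espVB}, \eqref{def:ttf}, \eqref{def:pa} that ultimately define $p_\e(x)$ and $a_\e(x)$. The scheme mirrors the inductive construction used in the proof of Lemma \ref{pastructure}, but now tracked at the level of the deep-water limits of the leading Fourier coefficients.

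The first step is to prove by strong induction on $\ell \in \bN$ the intermediate identity
\[
\lim_{\tth \to +\infty} V_\ell^{[\ell]} = \lim_{\tth \to +\infty} B_\ell^{[\ell]} =: \widetilde\varpi_\ell \in \bQ.
\]
For this I would expand the explicit expressions \eqref{espVB} for $V_\e$ and $B_\e$ as power series in $\e$ and extract the $\ell$-th Fourier coefficient via Lemma \ref{lem:prod}. This writes $V_\ell^{[\ell]}$ and $B_\ell^{[\ell]}$ as polynomial expressions in the leading coefficients $\eta_j^{[j]}, \psi_j^{[j]}$ ($j \leq \ell$) and in $c_\e, \ch$. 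Since $\ch \to 1$ as $\tth \to +\infty$, and since Proposition \ref{degenstokesinfinity} gives $\eta_j^{[j]}\piuinf = \psi_j^{[j]}\piuinf$ for every $j \leq \ell$, the two combinatorial sums reduce to the same rational number in the limit, where the role played by the factor $c_\e$ in $V_\e$ matches exactly that played by the factor $(\psi_\e)_x - c_\e$ multiplied by $(\eta_\e)_x$ in $B_\e$. In parallel, and again by induction on $\ell$, I would establish the existence of $\mathfrak{p}_\ell^{[\ell]}\piuinf \in \bQ$ and $\ttf_\ell\piuinf \in \bQ$ using the fixed-point relation \eqref{def:ttf}, Lemma \ref{lem:coth}, and the elementary fact that $\coth(\tth k) \to 1$ as $\tth \to +\infty$ for each $k \in \bN$, so that all Fourier modes of the operator $\cH \coth((\tth+\ttf_\e)|D|)$ converge.

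Once the limits $\widetilde\varpi_\ell$, $\mathfrak{p}_\ell^{[\ell]}\piuinf$ and $\ttf_\ell\piuinf$ are secured, I would extract the $\ell$-th Fourier coefficient at order $\e^\ell$ from the explicit definitions \eqref{def:pa}. Using Lemma \ref{lem:prod} together with the composition rule of Lemma \ref{composition} (to handle $V(\cdot + \mathfrak{p}_\e)$ and $B_x(\cdot + \mathfrak{p}_\e)$) and expanding $(1 + (\mathfrak{p}_\e)_x)^{-1}$ as a geometric series, both $p_\ell^{[\ell]}$ and $a_\ell^{[\ell]}$ become polynomial expressions in $\ch$, $V_j^{[j]}$, $B_j^{[j]}$, $\mathfrak{p}_j^{[j]}$ ($j \leq \ell$) and the coefficients of $c_\e$. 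Passing to the limit $\tth \to +\infty$ and substituting the identity $V_j^{[j]}\piuinf = B_j^{[j]}\piuinf$ from the previous step, together with $\ch \to 1$, the two polynomial expressions collapse to a common rational number $\varpi_\ell$.

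The main obstacle is the intermediate identity $V_\ell^{[\ell]}\piuinf = B_\ell^{[\ell]}\piuinf$. Heuristically, it reflects the structural fact that as $\tth \to +\infty$ the Dirichlet--Neumann operator reduces to $|D|$, so that horizontal and vertical velocity components at the free surface become harmonic conjugates of the same potential. However, the algebraic verification from \eqref{espVB} requires carefully tracking how the constant $c_\e$ interacts with the cross-terms $(\eta_\e)_x (\psi_\e)_x$ and $(\eta_\e)_x^2$ under the product rule of Lemma \ref{lem:prod}, as well as exploiting the lacunary structure of $\mathtt{Evn}(\e)$ and $\mathtt{Odd}(\e)$ to isolate the correct harmonic. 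Once this identity is established, the remaining cancellations leading to $p_\ell^{[\ell]}\piuinf = a_\ell^{[\ell]}\piuinf$ are purely bookkeeping.
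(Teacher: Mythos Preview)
Your overall architecture matches the paper's, but the final step contains a genuine gap. You assert that once $V_j^{[j]}\piuinf = B_j^{[j]}\piuinf$ is established, the cancellations leading to $p_\ell^{[\ell]}\piuinf = a_\ell^{[\ell]}\piuinf$ are ``purely bookkeeping''. This is not so. Tracing the definitions \eqref{def:pa} at leading order (with $\ch\to 1$) one finds that $p_\ell^{[\ell]}\piuinf = a_\ell^{[\ell]}\piuinf$ is equivalent to the identity
\[
[V]_\ell^{[\ell]}\piuinf \;=\; [(c_\e - V)\,B_x]_\ell^{[\ell]}\piuinf \;=\; \ell\,B_\ell^{[\ell]}\piuinf - \tfrac12\sum_{\ell_1+\ell_2=\ell}\ell_2\, V_{\ell_1}^{[\ell_1]}\piuinf B_{\ell_2}^{[\ell_2]}\piuinf\,,
\]
which, after substituting $V_j^{[j]}\piuinf=B_j^{[j]}\piuinf=:\widetilde\varpi_j$, becomes the nontrivial recursion $(\ell-1)\widetilde\varpi_\ell=\tfrac12\sum_{\ell_1+\ell_2=\ell}\ell_2\,\widetilde\varpi_{\ell_1}\widetilde\varpi_{\ell_2}$. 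This recursion does \emph{not} follow from $V_j^{[j]}\piuinf=B_j^{[j]}\piuinf$ alone; it is an independent constraint on the common value $\widetilde\varpi_j$.

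The paper closes this gap by proving the stronger four-fold identity $\eta_\ell^{[\ell]}\piuinf=\psi_\ell^{[\ell]}\piuinf=V_\ell^{[\ell]}\piuinf=B_\ell^{[\ell]}\piuinf$ (its Step~1, formula \eqref{allequalsl}). The extra equality $\widetilde\varpi_\ell=\eta_\ell^{[\ell]}\piuinf$, combined with the relation $V=(\psi_\e)_x - B(\eta_\e)_x$, delivers precisely the needed recursion. Establishing this stronger identity in turn requires the auxiliary relation \eqref{intemVB1}, which the paper proves by revisiting the explicit form of $f_\ell^{[\ell]}\piuinf$ from Proposition~\ref{degenstokesinfinity}. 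Your heuristic about harmonic conjugates is correct in spirit, but to make the argument rigorous you must upgrade your intermediate identity from $V=B$ to $\eta=\psi=V=B$ at leading order, and this is where the real work lies.
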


\begin{proof} {\bf Step 1}.
We now prove inductively that
for any $ \ell \in \bN $ we have 
\begin{equation}\label{allequalsl}
\eta_\ell^{[\ell]}\piuinf = \psi_\ell^{[\ell]} \piuinf=  B_\ell^{[\ell]}\piuinf = 
V_\ell^{[\ell]} \piuinf \, .  
\end{equation}
The case $ \ell = 1 $ follows by comparison of formulas \eqref{Stokespwseries} with \cite[(B.12)-(B.13)]{BMV3}. 
Let us assume that \eqref{allequalsl}
holds up to $ \ell  - 1 $. We have
\begin{equation}\label{lapriBV}
- c_\e (\eta_\e)_x \stackrel{\eqref{espVB}}{=} B - (\eta_\e)_x V \, , 
\quad (\psi_\e)_x \stackrel{\eqref{espVB}}{=}  V + 
(\eta_\e)_x B \, , 
\quad G(\eta_\e) \psi_\e \stackrel{\eqref{travelingWW}}{=}  - c_\e (\eta_\e)_x  \, .
\end{equation}
The first relation 
in \eqref{lapriBV} gives  
$ - [\eta_x]_\ell^{[\ell]}\piuinf = 
B_\ell^{[\ell]}\piuinf - [(\eta_\e)_x V]_\ell^{[\ell]}\piuinf $
and so,
by  Lemma 
\ref{lem:prod}, 
\begin{equation}\label{intemVB}
\ell \eta_\ell^{[\ell]}\piuinf = 
B_\ell^{[\ell]}\piuinf +  \frac12
\sum_{\ell_1+\ell_2= \ell}
\ell_1 \eta_{\ell_1}^{[\ell_1]}\piuinf
V_{\ell_2}^{[\ell_2]}\piuinf =
B_\ell^{[\ell]}\piuinf + 
\frac12\sum_{\ell_1+\ell_2= \ell}
\ell_1 \eta_{\ell_1}^{[\ell_1]}\piuinf
\eta_{\ell_2}^{[\ell_2]}\piuinf
\end{equation}
by the inductive hypothesis.
Similarly, 
the second relation 
in \eqref{lapriBV} gives  
\begin{equation}\label{intemVB2}
\ell \psi_\ell^{[\ell]}\piuinf = 
V_\ell^{[\ell]}\piuinf +  \frac12
\sum_{\ell_1+\ell_2= \ell}
\ell_1 \eta_{\ell_1}^{[\ell_1]}\piuinf
B_{\ell_2}^{[\ell_2]}\piuinf =
V_\ell^{[\ell]}\piuinf + 
\frac12\sum_{\ell_1+\ell_2= \ell}
\ell_1 \eta_{\ell_1}^{[\ell_1]}\piuinf
\eta_{\ell_2}^{[\ell_2]}\piuinf \, . 
\end{equation}
Subtracting 
\eqref{intemVB}-\eqref{intemVB2}
we deduce, since $ \eta_\ell^{[\ell]}\piuinf
= \psi_\ell^{[\ell]}\piuinf $,  
that 
$ V_{\ell}^{[\ell]}\piuinf = 
B_{\ell}^{[\ell]}\piuinf$. 
We now claim that 
\begin{equation}\label{intemVB1} 
(\ell -1) \eta_\ell^{[\ell]}\piuinf
=  
\frac12 \sum_{\ell_1+\ell_2= \ell}
\ell_1 \eta_{\ell_1}^{[\ell_1]}\piuinf
\eta_{\ell_2}^{[\ell_2]}\piuinf \, 
\end{equation}
which implies, via \eqref{intemVB},
that $ \eta_\ell^{[\ell]}\piuinf = B_\ell^{[\ell]}\piuinf $ and then proves \eqref{allequalsl} at order $ \ell $.\\
To prove claim \eqref{intemVB1}, we observe that, by  \eqref{etapsiugualiinf},
\begin{equation}\label{ilpassaggio}
(\ell-1) \eta_\ell^{[\ell]} \piuinf = 
- f_\ell^{[\ell]}\piuinf \, ,  
\end{equation}
where, using 
\eqref{fellgellprima}, \eqref{fellgell}, 
and Lemma 
\ref{lem:prod}, 
$$
\begin{aligned}
f_\ell^{[\ell]}\piuinf
&  =
\frac12 \Big[
- (\psi_\e)_x^2 + \frac{(\eta_\e)_x^2 (c_\e- (\psi_\e)_x)^2}{1 + (\eta_\e)_x^2} \Big]_\ell^{[\ell]}  \piuinf
\stackrel{\eqref{espVB}} = \frac12 \Big[
- (\psi_\e)_x^2 + B^2 (1+ (\eta_\e)_x^2) \Big]_\ell^{[\ell]}\piuinf \\
& \stackrel{\eqref{espVB}} = 
\frac12 \Big[
- (\psi_\e)_x^2 + B^2 + (V-(\psi_\e)_x)^2 \Big]_\ell^{[\ell]}\piuinf
= 
\frac12 \Big[
B^2 + V^2 - 2 V (\psi_\e)_x \Big]_\ell^{[\ell]}\piuinf \\
& = 
\frac14 
\sum_{\ell_1+\ell_2 = \ell}
- B_{\ell_1}^{[\ell_1]}\piuinf
B_{\ell_2}^{[\ell_2]}\piuinf +
V_{\ell_1}^{[\ell_1]}\piuinf
V_{\ell_2}^{[\ell_2]}\piuinf 
- 2 \ell_1 \psi_{\ell_1}^{[\ell_1]}\piuinf
V_{\ell_2}^{[\ell_2]}\piuinf \\
& = 
\frac14 
\sum_{\ell_1+\ell_2 = \ell}
- 2 \ell_1 \eta_{\ell_1}^{[\ell_1]}\piuinf
\eta_{\ell_2}^{[\ell_2]}\piuinf
 = 
- \frac12 \sum_{\ell_1+\ell_2 = \ell}
 \ell_1 \eta_{\ell_1}^{[\ell_1]}\piuinf
\eta_{\ell_2}^{[\ell_2]}\piuinf
\end{aligned}
$$
by the inductive assumption \eqref{allequalsl}. 
In view of \eqref{ilpassaggio} 
this proves 
\eqref{intemVB1} and concludes the proof of \eqref{allequalsl}. \\[1mm]
{\bf Step 2}. We finally prove \eqref{paugualiinf}.
 We have 
\begin{equation}
\begin{aligned}
V_\ell^{[\ell]}\piuinf &\stackrel{\eqref{espVB}}{=} \ell\psi_\ell^{[\ell]}\piuinf - \sum_{\ell_1+\ell_2=\ell}\ell_2 B_{\ell_1}^{[\ell_1]} \piuinf \eta_{\ell_2}^{[\ell_2]}\piuinf \\
&\stackrel{\eqref{allequalsl}}{=} \ell B_\ell^{[\ell]}\piuinf - \sum_{\ell_1+\ell_2=\ell}\ell_2 V_{\ell_1}^{[\ell_1]}\piuinf B_{\ell_2}^{[\ell_2]}\piuinf   =\big[ (c_\e -V)B_x  \big]_\ell^{[\ell]}\piuinf\, .
\end{aligned}\vspace{-3mm}
\end{equation}
By \eqref{composition2} we then have
$$
\big[V \big(x+\mathfrak{p}(x)\big)\big]_\ell^{[\ell]}\piuinf  =\big[ \big(c_\e -V \big(x+\mathfrak{p}(x)\big)\big)B_x\big(x+\mathfrak{p}(x)\big) \big]_\ell^{[\ell]}\piuinf\, ,
$$
which, in view of \eqref{def:pa},
proves \eqref{paugualiinf}.
\end{proof}
 
\section{Isolas of unstable eigenvalues}\label{Katoapp}

To analyize the splitting of the 
double eigenvalue $ \im\omega_*^{(\tp)}(\tth) $ of $ \cL_{\underline \mu,0} $ in \eqref{spettrodiviso0} for 
 $ (\mu, \epsilon) $ close to $ ( \underline \mu,0) $,
 we employ  Kato's similarity transformation 
theory  developed 
in \cite{BMV1,BMV3,BMV4} and summarized below.  
We recall that $ \cL_{\mu,\e}  : Y \subset X \to X $   
has domain $Y:=H^1(\mathbb{T}):=H^1(\mathbb{T},\bC^2)$ and range $X:=L^2(\mathbb{T}):=L^2(\mathbb{T},\bC^2)$.

\begin{lem}\label{lem:Kato1}
{\bf (Kato theory for separated eigenvalues of Hamiltonian operators)} 
Fix $\tp\in \bN $, $\tp\geq 2 $. 
Let $ \Gamma $ be a closed counter-clocked wise oriented curve winding once around
 the double eigenvalue 
 $ \im\omega_*^{(\tp)}(\tth) $  in \eqref{spettrodiviso0} of $ \cL_{\underline \mu,0} $, 
  separating $\sigma_\tp' (\cL_{\underline \mu, 0}) $ 
  from the other part of the spectrum $\sigma_\tp'' (\cL_{\underline \mu,0})$.
Then there exist $\e_0, \delta_0>0$  such that for any $(\mu, \e) \in B_{\delta_0}(\underline \mu)\times B_{\e_0}(0)$  the following hold.
\\[1mm]
1. 
The curve $\Gamma$ belongs to the resolvent set of 
the operator $\cL_{\mu,\e} : Y \subset X \to X $ defined in \eqref{WW}. 
The operators
\begin{equation}\label{Pproj} 
 P(\mu,\e) := -\frac{1}{2\pi\im}\oint_\Gamma (\cL_{\mu,\e}-\lambda)^{-1} \de\lambda : X \to Y 
\end{equation} 
are projectors commuting  with $\cL_{\mu,\e}$ on $Y$,  i.e.\ 
$ P(\mu,\e)^2 = P(\mu,\e) $ and 
\begin{equation}\label{commuPL}
P(\mu,\e)\cL_{\mu,\e}[y] = \cL_{\mu,\e} P(\mu,\e)  [y] \, ,
\quad \forall y\in Y \, . 
\end{equation}
The map  $(\mu, \epsilon)\mapsto P(\mu,\epsilon) $ is  
analytic from $B_{\delta_0}(\underline \mu)\times B_{\e_0}(0)$ to $ {\cal L}(X,Y)$. 
The projectors $P(\mu,\e) $ are 
 skew-Hamiltonian and reversibility preserving, i.e. 
 \begin{equation}\label{propPU}
  \cJ P(\mu,\e) =P(\mu,\e)^*\cJ \, , \quad 
\bro P(\mu,\e) = P(\mu,\e)  \bro \, .
\end{equation} 
2.  
The domain $ Y $ of  $\cL_{\mu,\e}$ decomposes as  the direct sum 
$ Y = \mathcal{V}_{\mu,\e} \oplus \mathcal{K}_{\mu,\e} $ of the closed 
subspaces 
$ \mathcal{V}_{\mu,\e} :=\text{Rg}(P(\mu,\e)) $, 
$ \mathcal{K}_{\mu,\e}:=\text{Ker}(P(\mu,\e)) \cap Y$, 
which are  invariant under $\cL_{\mu,\e} $, and  
$$
\begin{aligned}
&\sigma(\cL_{\mu,\e})\cap \{ z \in \bC \mbox{ inside } \Gamma \} = \sigma(\cL_{\mu,\e}\vert_{{\mathcal V}_{\mu,\e}} )  = \sigma'(\cL_{\mu, \e}) \, . 
\end{aligned}
$$
3.   The projectors $P(\mu,\e)$ 
are similar one to each other: the  transformation operators
\begin{equation} \label{OperatorU} 
U_{\mu,\e}   := 
\big( \uno-(P(\mu,\e) -P(\umu,0))^2 \big)^{-1/2} \big[ 
P(\mu,\e) P(\umu,0)  + (\uno - P(\mu,\e))(\uno-P(\umu,0) ) \big] 
\end{equation}
are bounded and  invertible in both $ Y $ and $ X $, 
 and 
$ U_{\mu,\e} P(\umu,0) U_{\mu,\e}^{-1} =  P(\mu,\e)  $.  
The map $(\mu, \epsilon)\mapsto  U_{\mu,\e}$ is analytic from 
$ B_{\delta_0}(\underline \mu)\times B_{\e_0}(0)$ to $\cL\big( Y \big) $.
The transformation operators $U_{\mu,\e}$  are symplectic and reversibility preserving, namely
\begin{equation}\label{Usire}
   U_{\mu,\e}^* \cJ U_{\mu,\e}= \cJ \, , \qquad 
\bro U_{\mu,\e} = U_{\mu,\e}  \bro \, .
\end{equation}
Finally 
$ \mathcal{V}_{\mu,\e}=  U_{\mu,\e}\mathcal{V}_{\umu,0}  $
and $\dim \mathcal{V}_{\mu,\e} = \dim \mathcal{V}_{\umu,0}= 2 $ for any 
 $(\mu, \e) \in B_{\delta_0}(\underline \mu)\times B_{\e_0}(0)$.
\end{lem}

We consider the basis  of the subspace 
$ {\mathcal V}_{\mu,\e} = \mathrm{Rg} (P(\mu,\e) )$, 
\begin{equation}\label{basisF}
{\cal F} := \{ 
f^+(\mu,\e),   f^- (\mu,\e)\} \, , \quad 
f^+(\mu,\e) := U_{\mu,\e} f_{\tp}^+ , 
\ \ 
f^-(\mu,\e) := U_{\mu,\e} f_{0}^- \, , 
\end{equation}
obtained applying  the transformation operators $ U_{\mu,\e} $ in \eqref{OperatorU} 
to the eigenvectors   $f_{0}^-:=f_{0}^-(\umu,\tth)$  and 
$f_{\tp}^+:=f_{\tp}^+(\umu,\tth)$
of $ \cL_{\umu,0}$  defined in 
\eqref{autovettorikernel}, which are a symplectic and reversible basis of 
$ \mathcal{V}_{\underline \mu,0} =\mathrm{Rg} (P(\underline \mu,0) )$. 
Thanks to \eqref{Usire} 
and \eqref{sympbas}
the basis 
$ {\cal F} $ is symplectic and reversible as well, namely 
$$
\begin{aligned}
& {\cal W}_c \big( f^+
(\mu,\e), 
f^+
(\mu,\e) \big)  
= - \im \, , 
\quad 
{\cal W}_c \big( f^-
(\mu,\e), 
f^-
(\mu,\e) \big) = \im \, ,
\quad 
{\cal W}_c \big( f^+
(\mu,\e), 
f^-
(\mu,\e) \big) = 0 \, , \\
\end{aligned}
$$
and $ \bar \rho f^\sigma 
(\mu,\e) = \sigma f^\sigma
(\mu,\e) $ for any $ \sigma = \pm $. 
This directly implies the  following lemma, cfr. \cite[Lemma 2.3]{BMV4}.

\begin{lem} {\bf (Matrix representation of $ \cL_{\mu,\e}$ on $ \mathcal{V}_{\mu,\e}$)}
\label{lem:katored} For any integer $\tp\geq 2 $,
the operator $\cL_{\mu,\e}: \mathcal{V}_{\mu,\e}\to\mathcal{V}_{\mu,\e} $ in 
\eqref{WW} 
 is  represented w.r.t. the basis ${\cal F}$ in \eqref{basisF} by the $2\times 2$ Hamiltonian and reversible matrix
\begin{equation} \label{tocomputematrix}
\tL^{(\tp)}(\mu,\e) =\tJ\tB^{(\tp)}(\mu,\e), \
\tJ:=\begin{pmatrix} -\im & 0 \\ 0 & \im \end{pmatrix} \, , \quad \forall
(\mu, \e) \in B_{\delta_0}(\underline \mu)\times B_{\e_0}(0) \, , 
\end{equation}
where
\begin{equation}\label{tocomputematrixB}
\tB^{(\tp)}(\mu,\e)= \begin{pmatrix}
( \mathfrak{B}
(\mu,\e) f^+_{\tp}, f^+_{\tp}) 
& 
(\mathfrak{B}(\mu,\e) f^-_0, f^+_{\tp}) \\
(\mathfrak{B}(\mu,\e) f^+_{\tp}, f^-_{0}) 
&
(\mathfrak{B}(\mu,\e) f^-_{0}, f^-_{0})
\end{pmatrix} =: \begin{pmatrix} \alpha^{(\tp)}(\mu,\e) & \im \beta^{(\tp)}(\mu,\e) \\ -\im \beta^{(\tp)}(\mu,\e) & \gamma^{(\tp)}(\mu,\e) \end{pmatrix}  \, ,
\end{equation}
with 
\begin{equation}\label{Bgotico}
\mathfrak{B} (\mu,\e) :=
\mathfrak{B}^{(\tp)} (\mu,\e) := [P_{\underline \mu,0}]^* \, 
[U_{\mu,\e}]^* \, {\cal B}(\mu,\e) \, U_{\mu,\e} \, P_{\underline \mu,0}
\, . \end{equation} 
It results 
\begin{equation}\label{unperturbed}
\tL^{(\tp)}(\mu,0)=\begin{pmatrix} \im \omega_{\tp}^+(\mu)  & 0 \\ 0 &   \im \omega_{0}^-(\mu) \end{pmatrix}, \quad \text{i.e.} \quad 
\tB^{(\tp)}(\mu,0)= \begin{pmatrix} -\omega_{\tp}^+(\mu)  & 0 \\ 0 &  \omega_{0}^-(\mu) \end{pmatrix} \, ,
\end{equation}
where 
$\omega_j^{\sigma}(\mu):=\omega^\sigma(j+\mu,\tth)$ is in \eqref{omeghino} and 
$\omega_{\tp}^+(\umu)=\omega_0^-(\umu)=\omega_*^{(\tp)}(\tth) $ in \eqref{constants}.
Finally the functions $ \alpha^{(\tp)},  \beta^{(\tp)},  \gamma^{(\tp)} $ 
are analytic w.r.t.  $(\mu,\e)\in B_{\delta_0}(\umu)\times B_{\e_0}(0) $.
\end{lem}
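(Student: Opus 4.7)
The statement amounts to unpacking the observation in the paragraph just preceding it: the basis $\mathcal{F}$ is symplectic and reversible, and $\cL_{\mu,\e}$ is Hamiltonian and reversible. My plan is to (i) translate these structural properties into matrix identities in the basis $\mathcal{F}$, (ii) rewrite the resulting scalar products via the Kato intertwining to obtain the $\mathfrak{B}$-matrix, and (iii) read off the $\e=0$ case and analyticity.

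For (i), I would expand $\cL_{\mu,\e} f^\sigma(\mu,\e) = \sum_{\sigma'} A_{\sigma'\sigma}\, f^{\sigma'}(\mu,\e)$ and extract the entries by pairing with $f^{\pm}(\mu,\e)$ via the symplectic form $\mathcal{W}_c$. Using $\mathcal{W}_c(u,v)=(\cJ u,v)$ together with $\cL = \cJ \cB$ gives $\mathcal{W}_c(\cL u, v) = -(\cB u, v)$, so each entry $A_{\sigma'\sigma}$ is determined, via the pairings in \eqref{sympbas}, by a single scalar product $(\cB f^\sigma(\mu,\e), f^{\sigma'}(\mu,\e))$ up to a factor of $\pm \im$. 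For (ii), the Kato similarity $U_{\mu,\e} P_{\umu,0} = P(\mu,\e) U_{\mu,\e}$, combined with $P_{\umu,0} f^+_\tp = f^+_\tp$ and $P_{\umu,0} f^-_0 = f^-_0$, lets me rewrite $(\cB f^\sigma(\mu,\e), f^{\sigma'}(\mu,\e)) = (\mathfrak{B}(\mu,\e) f^\sigma_\cdot, f^{\sigma'}_\cdot)$ via \eqref{Bgotico}, giving the factorization $\tL^{(\tp)} = \tJ\tB^{(\tp)}$. The structural form of $\tB^{(\tp)}$ --- diagonal entries real and off-diagonal of the form $\pm \im \beta^{(\tp)}$ with $\beta^{(\tp)}$ real --- would then follow by combining: (a) the Hermiticity $\mathfrak{B}^* = \mathfrak{B}$, inherited from $\cB = \cB^*$, which forces $\tB^{(\tp)}_{12} = \overline{\tB^{(\tp)}_{21}}$ and real diagonal; (b) the reversibility $\mathfrak{B}\bro = -\bro\mathfrak{B}$, derived from \eqref{Reversible} together with $\bro P = P\bro$ from \eqref{propPU} and $\bro U = U\bro$ from \eqref{Usire}; and (c) the anti-unitarity of $\bro$ as in \eqref{reversibilityappears}, applied using the parities $\bro f^+_\tp = f^+_\tp$ and $\bro f^-_0 = -f^-_0$ inherited from \eqref{baserev}.

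For (iii), at $\e=0$ the operator $\cL_{\mu,0}$ in \eqref{cLmu} is a Fourier multiplier diagonalized by $\{f^\sigma_j(\mu)\}$ with eigenvalues $\im\omega^\sigma_j(\mu)$; thus the two eigenvalues inside the contour $\Gamma$ are precisely $\im \omega^+_\tp(\mu)$ and $\im \omega^-_0(\mu)$, and $\mathcal{V}_{\mu,0} = \mathrm{span}\{f^+_\tp(\mu,\tth), f^-_0(\mu,\tth)\}$. Substituting into the computation from (i)-(ii) immediately yields \eqref{unperturbed}. Finally, analyticity of $\alpha^{(\tp)}, \beta^{(\tp)}, \gamma^{(\tp)}$ in $(\mu, \e)$ follows from the analyticity of $U_{\mu,\e}$ (Lemma \ref{lem:Kato1}) and of $\cB(\mu,\e)$ (Remark \ref{analytic}), since each matrix entry is a scalar product of fixed vectors $f^\pm_\cdot$ with the analytic operator $\mathfrak{B}(\mu,\e)$ defined in \eqref{Bgotico}. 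The most delicate step is the sign-tracking in items (b)-(c) above: the anti-linearity of $\bro$ means that Hermiticity alone would permit arbitrary complex off-diagonal entries, and it is only the combination of Hermiticity with the reversibility identity, applied to the two basis vectors of \emph{opposite} $\bro$-parity, that constrains $\tB^{(\tp)}_{12}$ to lie on the imaginary axis so that $\beta^{(\tp)}$ is real.
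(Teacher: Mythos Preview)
Your proposal is correct and follows exactly the approach the paper sketches: the paper simply notes that the basis $\mathcal{F}$ is symplectic and reversible (the displayed relations just before the lemma) and then says ``This directly implies the following lemma, cfr.\ \cite[Lemma 2.3]{BMV4}'', deferring the details to an earlier paper; your outline of steps (i)--(iii) is precisely that deferred computation. One small point worth making explicit in your step (iii): knowing that $\mathcal{V}_{\mu,0}=\mathrm{span}\{f^+_\tp(\mu),f^-_0(\mu)\}$ and that the eigenvalues are $\im\omega^+_\tp(\mu),\im\omega^-_0(\mu)$ does not by itself force $\tL^{(\tp)}(\mu,0)$ to be diagonal in the basis $\mathcal{F}$; you need that $f^\pm(\mu,0)=U_{\mu,0}f^\pm_\cdot(\umu)$ are themselves eigenvectors, which follows because $\cL_{\mu,0}$, hence $P(\mu,0)$ and $U_{\mu,0}$, are Fourier multipliers, so $U_{\mu,0}$ preserves the Fourier support and maps $f^+_\tp(\umu)$ (supported on mode $\tp$) to a multiple of $f^+_\tp(\mu)$.
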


\begin{rmk}
The functions $ \alpha^{(\tp)}(\mu, \e),  \beta^{(\tp)}(\mu, \e) $ and $  \gamma^{(\tp)}(\mu, \e) $ 
are also analytic  
w.r.t. $ \tth > \tth_*  $
for any $ \tth_* > 0 $,  
provided $\e $ is small 
(depending on $ \tth_* $).  This analyticity property 
is inherited  
from the projectors $ P(\mu,\e) $ 
and the similarity operators 
$ U_{\mu,\e} $ of Lemma \ref{lem:Kato1} which are 
are themselves analytic in $ \tth > 0 $ due to the properties of 
$ {\cal L}_{\mu,\e} 
(\tth)$, as detailed in  Remark \ref{analytic}. 
\end{rmk}

The eigenvalues of the matrix $\tL^{(\tp)}(\mu,\e)$ 
in \eqref{tocomputematrix} are 
\begin{equation}\label{eigenvalues}
\lambda^\pm (\mu,\e) = \tfrac\im2 S^{(\tp)}(\mu,\e)\pm\tfrac12\sqrt{
D^{(\tp)}(\mu,\e) 
}
\end{equation}
where
 \begin{align}\label{traceB}
 & S^{(\tp)}(\mu,\e):= -\im \text{Tr}\,\tL^{(\tp)}(\mu,\e) = \gamma^{(\tp)}(\mu,\e) - \alpha^{(\tp)}(\mu,\e)\, ,  \\
 & 
 D^{(\tp)}(\mu,\e) :=\text{Tr}^2 \tL^{(\tp)}(\mu,\e)  - 4 \det \tL^{(\tp)}(\mu,\e)= 4 (\beta^{(\tp)})^2 (\mu,\e) -(T^{(\tp)})^2 (\mu,\e)  \, , 
\label{discriminant} 
 \end{align}
 with  $$
 T^{(\tp)}(\mu,\e):= \text{Tr}\,\tB^{(\tp)}(\mu,\e) = \alpha^{(\tp)}(\mu,\e) + \gamma^{(\tp)}(\mu,\e) \,. 
 $$
In the following 
sections we will prove 
the existence of 
 eigenvalues of the matrix $\tL^{(\tp)}(\mu,\e)$ with nonzero real part when $(\mu,\e) $ is close to  $(\umu,0) $.
 We will apply the following abstract results.

\paragraph{Abstract instability criterion.}

Consider a $ 2 \times 2 $ Hamiltonian and reversible matrix 
\begin{equation} \label{tocomputematrix1}
\tL(\mu,\e):= \tJ\tB (\mu,\e), \quad
\tB (\mu,\e) :=\begin{pmatrix} \alpha (\mu,\e)  & \im \beta (\mu,\e) \\ 
- \im \beta (\mu,\e)  & \gamma (\mu,\e)  
\end{pmatrix} \, ,
\end{equation}
where $  \alpha (\mu,\e),  \beta (\mu,\e),  \gamma (\mu,\e) $ 
satisfy the following

\begin{ass} \label{assH}
The functions $\alpha(\mu,\e)$, $\beta(\mu,\e)$, $\gamma(\mu,\e)$ are  
 real analytic 
in the domain $ B_{\delta_0}(\underline \mu)\times B_{\e_0}(0) $
and admit the expansions, for some integer $\tp\geq 2 $ 

\begin{subequations}\label{matrixentries}
\begin{align}\label{expa}
 \alpha(\umu+\delta,\e) & = -\alpha_0(\delta)
 + \alpha_2 \e^2   +  r(\e^4,\delta\e^2)\, , \\
\label{expb}
  \beta (\umu+\delta,\e) & = \beta_1 \e^\tp+   \beta_2 \delta \e^\tp+ 
  r (\e^{\tp+2}, \delta^2\e^\tp)\, , \\
    \label{expc}
  \gamma (\umu+\delta,\e) & =\gamma_0(\delta)
  + \gamma_2 \e^2   +r(\e^4,\delta\e^2)\, ,
\end{align}
\end{subequations}
where
\begin{equation}\label{epsilonspento}
\alpha_0(\delta) = 
\omega_*^{(\tp)}-\alpha_1  \delta + r(\delta^2)\, , 	\qquad 
\gamma_0(\delta) = 
\omega_*^{(\tp)} + \gamma_1 \delta  + r(\delta^2) \, ,  
\end{equation}
and 
\begin{equation}\label{alpha1gamma1}
\alpha_1 + \gamma_1 > 0 \, . 
\end{equation}
\end{ass}
We shall prove in Section \ref{matrixcomputation} that
for any $\tp\geq 2 $ the matrix $\tL^{(\tp)}(\mu,\e)$ in \eqref{tocomputematrix}  
satisfies Assumption \ref{assH}.   

The matrix
$ \tL(\mu,\e) $ has eigenvalues with non zero real part if and only if the discriminant 
\begin{equation}\label{Dmuep}
\begin{aligned}
 D(\mu,\e) = 4 \beta^2 (\mu,\e) - T^2 (\mu,\e) & = - d_+(\mu,\e)d_-(\mu,\e)  \\
& \qquad  \text{where}
 \quad
 d_\pm  (\mu,\e) :=  T (\mu,\e) \pm 2\beta (\mu,\e)
 \end{aligned}
 \end{equation} 
is positive. This holds if
$ d_\pm (\mu,
\epsilon)$ have different signs and
$\beta(\mu,\e)$ is large enough with respect  to the
trace of $ T(\mu,\e):= \alpha (\mu,\e) + \gamma (\mu,\e) $.   
Let us analyze these quantities.

By Assumption \ref{assH}
 the trace 
 $ T(\mu,\e) = \alpha(\mu, \e) + \gamma(\mu, \e) $ admits the expansion
 \begin{equation}\label{expT}
\begin{aligned}
& T(\umu+\delta,\e)  = T(\umu+\delta,0) +T_2 \e^2 + r(\e^{4},\delta\e^2)  \quad \text{where} \quad 
T_{2}:=\alpha_2+\gamma_2 \quad \text{and}  \\
 & T(\umu+\delta,0)=  
 T_1 \delta + r(\delta^2) 
 \quad \text{where} \quad  T_{1} := \alpha_1+\gamma_1  > 0 \, .
 \end{aligned}
\end{equation}
By \eqref{expT} and the analytic implicit function theorem there exists $\e_1 \in (0, \e_0) $ such that, in 
 $ B_{\delta_0}(\umu)\times  B_{\e_1 }(0)$, 
the sets $\{ T(\mu,\e) = 0 \} $ 
and  
$ \{ d_\pm  (\mu,\e) = 0 \} $ 
 are
graphs of analytic functions 
\begin{equation}\label{impfunc}
\mu_0 \, , \mu_\pm : 
(-\e_1 ,\e_1 ) \mapsto \mu_0 (\e) \, , \ \mu_\pm  (\e) \, ,\ \text{i.e.}\ 
 T (\mu_0 (\e),\e) \equiv 0\ \text{ and }\ d_\pm  (\mu_\pm (\e),\e  ) \equiv 0 \, , 
 \end{equation}
admitting, by \eqref{expT}, 
 the Taylor expansions
\begin{equation}\label{muexpa}
 \mu_0 (\e) = \umu - \frac{T_2}{T_1} \e^2 + r(\e^{4}) \, , 
\quad 
\mu_\pm (0) =  \umu \, , \  \mu_\pm' (0) =  0 \, . 
\end{equation}
Lemma \ref{tangemu} proves that the functions $\mu_\pm (\e ) $ are 
actually $ \cO(\e^\tp) $-close to  $ \mu_0 (\e) $. 

Since $T_1 > 0 $ (cfr. \eqref{expT}), the functions $d_\pm(\mu,\e) $ are strictly positive (resp. negative) for $\mu>\mu_\pm(\e)$ (resp.  $\mu<\mu_\pm(\e)$). In addition, since $d_\pm(\mu_0(\e),\e ) =\pm 2\beta(\mu_0(\e),\e)   $  we deduce that for any $\e \in (-\e_1,\e_1) $
\begin{equation}\label{maxalternative}
\begin{cases}
\mbox{if }\beta(\mu_0(\e),\e) >0 & \mbox{then }\ \mu_+(\e) <\mu_0(\e) < \mu_-(\e)\,, \\
\mbox{if }\beta(\mu_0(\e),\e) <0 & \mbox{then }\ \mu_-(\e) <\mu_0(\e) < \mu_+(\e)\,, \\
\mbox{if }\beta(\mu_0(\e),\e) =0 & \mbox{then }\ \mu_0(\e)= \mu_+(\e) = \mu_-(\e)\,.
\end{cases}
\end{equation}
The graphs of these functions are represented  in Figure \ref{fig:instareg}. 
By \eqref{expb}
and  the expansion of $ \mu_0 (\e) $ in \eqref{muexpa},  
we have 
\begin{equation}\label{expabmuo}
\beta(\mu_0(\e),\e) = \beta_1 \e^\tp+ r(\e^{\tp+ 2})  \, .
\end{equation}
Thus, if  $\beta_1 \neq 0$ 
the function $\beta(\mu_0(\e),\e)$
is not identically zero
and only the first two cases 
in \eqref{maxalternative} occur so that,   for $ |\e| < \epsilon_1 $, we have  
$\mu_\wedge(\e) < \mu_0(\e) < \mu_\vee(\e) $ 
where
\begin{equation}\label{muminmumax}
\mu_\wedge (\e):= \min\big\{\mu_+ (\e),\mu_- (\e)\big\} \quad \text{and}\quad \mu_\vee (\e):= \max\big\{\mu_+ (\e),\mu_- (\e)\big\}\, . 
\end{equation}

\begin{teo} Assume that the  $ 2 \times 2 $ Hamiltonian and reversible 
 matrix $\tL(\mu,\e)= \tJ \tB(\mu,\e)$ in \eqref{tocomputematrix1} 
 satisfies Assumption \ref{assH}, and that the coefficient $\beta_1 $ in   \eqref{expb} is not zero.
 Then 
the spectrum of $ \, \tL(\mu,\e) $, for $(\mu,\e) $ close to $ ( \umu, 0 ) $, consists of two  eigenvalues $\lambda^\pm(\mu,\e) $ with 
non-zero real part 
if and only if $(\mu,\e)$ lies inside 
the open region 
\begin{equation}\label{regionR}
\begin{aligned}
R:&= 
\big\{ 
(\mu, \e) \in B_{\delta_0}(\umu)\times B_{\e_1}(0)  \ : \    \mu_\wedge(\e)<\mu< \mu_\vee(\e)\big\} \, , \end{aligned}
\end{equation} 
 whereas, for $(\mu,\e)\notin R $, the eigenvalues $\lambda^\pm(\mu,\e)$ are purely imaginary,
 see Figure \ref{fig:instareg}. 
 \end{teo}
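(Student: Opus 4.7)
The plan is to work directly with the spectral invariants \eqref{traceB}--\eqref{discriminant}. Because $\alpha^{(\tp)}, \beta^{(\tp)}, \gamma^{(\tp)}$ are real analytic, so are the trace $T(\mu,\e) = \alpha(\mu,\e) + \gamma(\mu,\e)$, the ``anti-trace'' $S(\mu,\e) = \gamma(\mu,\e) - \alpha(\mu,\e)$, and the discriminant $D(\mu,\e) = 4\beta(\mu,\e)^2 - T(\mu,\e)^2$. Since $S$ is real, the eigenvalues $\lambda^\pm(\mu,\e) = \tfrac{\im}{2}S(\mu,\e) \pm \tfrac12 \sqrt{D(\mu,\e)}$ have nonzero real part (and come in opposite pairs, by Hamiltonianity) if and only if $D(\mu,\e) > 0$, and are purely imaginary iff $D(\mu,\e) \leq 0$. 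So the task reduces to characterizing the sign of $D$ on the graph of $\mu$ as a function of $\e$.

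The crucial algebraic identity is the factorization
\begin{equation*}
D(\mu,\e) = 4\beta(\mu,\e)^2 - T(\mu,\e)^2 = -\bigl(T(\mu,\e) - 2\beta(\mu,\e)\bigr)\bigl(T(\mu,\e) + 2\beta(\mu,\e)\bigr) = -\,d_-(\mu,\e)\,d_+(\mu,\e),
\end{equation*}
with $d_\pm$ as in \eqref{def-discri}. Hence $D > 0$ precisely when $d_+$ and $d_-$ have opposite signs. By \eqref{expT}, we have $\pa_\mu T(\umu,0) = T_1 > 0$, and since $\beta$ vanishes at $(\umu,0)$ one gets $\pa_\mu d_\pm(\umu,0) = T_1 > 0$ as well. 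Therefore, shrinking $\e_1$ if needed, for each $\e$ with $|\e|<\e_1$ the function $\mu \mapsto d_\pm(\mu,\e)$ is strictly increasing on $B_{\delta_0}(\umu)$ and vanishes at the unique point $\mu_\pm(\e)$ defined in \eqref{impfunc}; in particular $d_\pm(\mu,\e)$ is negative for $\mu < \mu_\pm(\e)$ and positive for $\mu > \mu_\pm(\e)$.

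Assume now $\beta_1 > 0$ (the case $\beta_1 < 0$ is identical up to swapping the roles of $d_+$ and $d_-$). By \eqref{expb} and \eqref{muexpa}, $\beta(\mu_0(\e),\e) = \beta_1 \e^\tp + r(\e^{\tp+1}) > 0$ for $0<|\e|<\e_1$, so the first case of \eqref{maxalternative} occurs: $\mu_+(\e) < \mu_0(\e) < \mu_-(\e)$, that is $\mu_\wedge(\e) = \mu_+(\e)$ and $\mu_\vee(\e) = \mu_-(\e)$. For $\mu \in \bigl(\mu_\wedge(\e),\mu_\vee(\e)\bigr)$ we have $\mu > \mu_+(\e)$ and $\mu < \mu_-(\e)$, so $d_+(\mu,\e) > 0$ and $d_-(\mu,\e) < 0$, giving $D(\mu,\e) > 0$ and a pair of eigenvalues with opposite nonzero real part. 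For $\mu \notin [\mu_\wedge(\e),\mu_\vee(\e)]$ the two factors have the same sign, so $D(\mu,\e) < 0$ and the eigenvalues are purely imaginary and distinct; at the boundary $\mu \in \{\mu_\wedge(\e),\mu_\vee(\e)\}$ one has $D(\mu,\e) = 0$ and the two eigenvalues collide on the imaginary axis. Putting these together yields exactly the region $R$ of \eqref{regionR}.

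No genuine obstacle is expected: the hypothesis $\beta_1\neq 0$ guarantees that $\beta(\mu_0(\e),\e)$ does not vanish for $\e\neq 0$ (ruling out the degenerate third case in \eqref{maxalternative}), while $T_1>0$ from \eqref{alpha1gamma1} ensures the monotonicity of $d_\pm$ in $\mu$. The only mild technical point is to choose $\delta_0,\e_1$ small enough so that $\pa_\mu d_\pm$ stays uniformly positive on the relevant box, which follows from the continuity of the derivatives in $(\mu,\e)$ near $(\umu,0)$.
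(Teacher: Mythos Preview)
Your proof is correct and follows essentially the same approach as the paper: factor the discriminant as $D(\mu,\e)=-d_+(\mu,\e)\,d_-(\mu,\e)$ and read off the sign of $D$ from the monotonicity of $d_\pm$ in $\mu$ (which the paper records just before the theorem using $T_1>0$) together with the alternative \eqref{maxalternative}. The paper's proof is a two-line summary of exactly this argument; you have simply spelled out the details.
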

 
 \begin{proof}
The eigenvalues of 
$ \tL(\mu,\e )$ 
have the form  \eqref{eigenvalues}
and have nonzero real part if and only if  
$ D(\mu,\e) > 0 $, cfr.\eqref{Dmuep}, 
which  happens if and only if $(\mu,\e)\in R$. 
 \end{proof}

It  is  convenient to 
translate the Floquet exponent 
$ \mu $ around the value $ \mu_0 (\e) $ where  $ T(\mu_0(\e) ,\e)\equiv0 $, cfr. \eqref{impfunc},   introducing the new parameter $\nu$  such that
\begin{equation}\label{def:nu}
\mu = \mu_0(\e) + \nu\, , \quad\text{i.e.} \quad \nu := \delta+\umu-\mu_0(\e)\, .
\end{equation}
Accordingly we write the functions   $\mu_\pm(\e)$ in \eqref{muexpa} as
$\mu_{\pm} (\e) = \mu_0(\e) + \nu_\pm(\e) $.

\begin{lem}\label{tangemu}
The analytic functions $ \nu_\pm(\e) := \mu_\pm(\e)-\mu_0(\e) $ admit the expansion 
\begin{equation}\label{expnuendsofficial}
\nu_\pm(\e)  =  \mp  \frac{2 \beta_1}{T_1} \e^\tp+ r(\e^{\tp+2})\,  .
\end{equation} 
\end{lem}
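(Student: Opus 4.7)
\medskip

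\noindent\textbf{Proof proposal.} The plan is to linearize the defining equations $d_\pm(\mu_\pm(\e),\e) = 0$ around the curve $\mu=\mu_0(\e)$ on which the trace $T$ vanishes identically, and then apply the analytic implicit function theorem in the translated parameter $\nu$.

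\smallskip

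\noindent First I would introduce the translated functions
\[
\tilde T(\nu,\e) := T(\mu_0(\e)+\nu,\e)\, , \qquad \tilde\beta(\nu,\e) := \beta(\mu_0(\e)+\nu,\e)\, ,
\]
which are real analytic in a neighborhood of $(0,0)$, so that $\nu_\pm(\e)$ is characterized by $\tilde T(\nu_\pm,\e) \pm 2\tilde\beta(\nu_\pm,\e) = 0$. Since $T(\mu_0(\e),\e)\equiv 0$ by \eqref{impfunc}, one has $\tilde T(0,\e)\equiv 0$ and hence the analytic factorization
\[
\tilde T(\nu,\e) = \nu\, q(\nu,\e)\, , \qquad q(0,0) = (\partial_\mu T)(\umu,0) = T_1 > 0\, ,
\]
where the last identity is read off from \eqref{expT}. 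In particular $q$ is bounded away from zero on a small neighborhood of the origin.

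\smallskip

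\noindent Next I would expand $\tilde\beta$ using \eqref{expb} and the Taylor expansion \eqref{muexpa} of $\mu_0(\e)$. Writing $\delta = \mu_0(\e) - \umu + \nu = -\tfrac{T_2}{T_1}\e^2 + r(\e^3) + \nu$ and substituting into \eqref{expb}, every occurrence of $\delta^k\e^{\tp+1-k}$ with $k\ge 1$ contributes a term that is $O(\nu\,\e^\tp) + O(\e^{\tp+1})$, so that
\[
\tilde\beta(\nu,\e) = \beta_1 \e^\tp + r(\e^{\tp+1}) + \nu\, \cO(\e^\tp) + \cO(\nu^2 \e^{\tp-1})\, .
\]
Thus the equation $\tilde T(\nu,\e)\pm 2\tilde\beta(\nu,\e)=0$ takes the form
\[
G_\pm(\nu,\e) := \nu\, q(\nu,\e) \pm 2\beta_1\e^\tp \pm 2\, r(\e^{\tp+1}) \pm 2\nu\, \cO(\e^\tp) \pm \cO(\nu^2\e^{\tp-1}) \;=\; 0\, .
\]
Since $G_\pm$ is real analytic, $G_\pm(0,0)=0$ and $(\partial_\nu G_\pm)(0,0) = q(0,0) = T_1 \neq 0$, the analytic implicit function theorem yields unique real analytic solutions $\nu_\pm(\e)$ with $\nu_\pm(0)=0$, which must coincide with the ones constructed in \eqref{impfunc} after translation.

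\smallskip

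\noindent Finally I would extract the leading order by inserting the ansatz $\nu_\pm(\e) = c_\pm \e^\tp + r(\e^{\tp+1})$ into $G_\pm = 0$. The dominant contribution at order $\e^\tp$ reads $c_\pm T_1 \pm 2\beta_1 = 0$, whence
\[
\nu_\pm(\e) = \mp \frac{2\beta_1}{T_1}\,\e^\tp + r(\e^{\tp+1})\, ,
\]
which is \eqref{expnuendsofficial}. No step here is genuinely difficult: the only thing to watch is that the error terms in \eqref{expb} contain mixed monomials $\delta^k \e^{\tp+1-k}$, and one must verify that after substituting $\delta = \mu_0(\e)-\umu+\nu$ each such monomial is absorbed either into $r(\e^{\tp+1})$ or into the higher-order corrections in $\nu$ — a bookkeeping check rather than an analytical obstacle.
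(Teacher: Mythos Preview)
The proposal is correct and follows essentially the same route as the paper: both linearize the equation $d_\pm(\mu_0(\e)+\nu,\e)=0$ around $\nu=0$, exploit $T(\mu_0(\e),\e)\equiv 0$ so that the leading $\nu$-coefficient is $T_1\neq 0$, and read off $\nu_\pm=\mp 2\beta_1 T_1^{-1}\e^\tp+r(\e^{\tp+1})$ from $\beta(\mu_0(\e),\e)=\beta_1\e^\tp+r(\e^{\tp+1})$. The paper does this in one Taylor step, whereas you package it via the factorization $\tilde T=\nu\,q$ and a second application of the implicit function theorem; the only point to tidy is that your error bound $\cO(\nu^2\e^{\tp-1})$ for $\tilde\beta$ should really be stated after first using $\nu_\pm(\e)=O(\e)$ (immediate from analyticity and $\nu_\pm(0)=0$), so that all mixed monomials $\nu^k\e^{\tp+1-k}$ indeed collapse into $r(\e^{\tp+1})$.
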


\begin{proof}
The functions $ \nu_\pm (\e) $ solve 
$$
d_\pm ( \mu_0 (\e) + \nu_\pm (\e), \e ) = T( \mu_0 (\e) + \nu_\pm (\e), \e) \pm 
2 \beta  ( \mu_0 (\e) + \nu_\pm (\e), \e ) = 0 \, .
$$ 
Expanding this identity at $ \mu = \mu_0 (\e) $ we get
$$
\nu_\pm (\e) = \mp \frac{2 \beta (\mu_0 (\e), \e )}{ (\pa_\mu T)(\mu_0(\e))- 
2 (\pa_\mu \beta) (\mu_0 (\e))} + r (\nu_\pm^2) (1 + r(\e)) \, , 
$$
and since 
$ (\pa_\mu T)(\mu_0(\e))- 
2 (\pa_\mu \beta) (\mu_0 (\e)) = T_1 (1 + r(\e)) $ 
(cfr.  \eqref{expT}, \eqref{expb}) we deduce, by \eqref{expabmuo},  
\eqref{expnuendsofficial}. 
\end{proof}

We define
\begin{equation}\label{numpiu}
\nu_\wedge(\e) := \min \{ \nu_+(\e), \nu_-(\e) \} < 0 \, , \quad 
\nu_\vee (\e) := \max \{ \nu_+(\e), \nu_-(\e) \} > 0 \, .  
\end{equation}
Note that $\nu_\wedge(\e)$ and $\nu_\vee(\e) $ are the points where the discriminant $ D(\mu_0(\e)+\nu,\e) $ in \eqref{Dmuep}
vanishes, whereas for $\nu_\wedge(\e) <\nu <\nu_\vee(\e)$ it results 
$ D(\mu_0(\e)+\nu,\e) >0 $.

 \begin{teo}[\bf Unstable eigenvalues] \label{primeisola}
Under Assumption \ref{assH}
the matrix $\tL(\mu,\e)$ in \eqref{tocomputematrix1} has the two  
eigenvalues 
\begin{equation}\label{eigs}
\lambda^\pm(\mu_0(\e)+\nu,\e) = \begin{cases}   \frac\im2 S(\mu_0(\e)+\nu,\e) \pm  \frac\im2 \sqrt{ |D(\mu_0(\e)+\nu,\e)|} & \text{if }\nu \leq \nu_\wedge(\e)\textup{ or }\nu \geq \nu_\vee(\e)\, , \\[2mm]
  \;\frac\im2 S(\mu_0(\e)+\nu,\e) \pm \frac12 \sqrt{ D(\mu_0(\e)+\nu,\e)}&\text{if }\nu_\wedge(\e) <\nu <\nu_\vee(\e)\, ,
 \end{cases} 
\end{equation}
where $ \mu_0 (\e ) $ is defined in \eqref{impfunc} and 
\begin{align}\label{expD}
D(\mu_0(\e)+\nu,\e) 
& = 4\beta_1^2 \e^{2\tp}-T_1^2 \nu^2+8\beta_1\beta_2 \e^{2\tp}\nu +r({\beta_1\e^{2\tp+2},
\nu \e^{2\tp+2} }, \e^{{2\tp+{4}}},\nu^2\e^{{2}},\nu^3) \\
 S(\mu_0(\e)+\nu,\e) 
& = 
2\omega_* + (\gamma_1-\alpha_1)\nu +  \Big( \gamma_2-\alpha_2 - (\gamma_1-\alpha_1) \frac{T_2}{T_1} \Big) \e^2+ r(\e^{4}, \nu\e^2, \nu^2)  \, . \label{expS}
 \end{align}
The maximal absolute value of the real  part 
of the 
 eigenvalues in \eqref{eigs} is 
\begin{equation} \label{expnuRe}
\max \mathfrak{Re}\, \lambda^\pm(\mu_0(\e)+\nu,\e) = |\beta_1|\e^\tp + r(\e^{{\tp+2}})\, .
\end{equation}
{\bf \emph{(Isola)}.} 
If  the coefficients $ \beta_1, \alpha_1, \gamma_1 $ in \eqref{expa}-\eqref{expc}
satisfy $\beta_1\neq 0$ and $\alpha_1\neq \gamma_1$,
then, for   $ \e $ small enough, 
the pair of unstable eigenvalues $\lambda^\pm(\mu_0(\e)+\nu,\e)$ depicts in the complex $\lambda$-plane, as $\nu $ varies
in  $ (\nu_\wedge(\e), \nu_\vee(\e)) $ a closed analytic  curve that
  intersects orthogonally the imaginary axis  
  and encircles a convex region. 
 \end{teo}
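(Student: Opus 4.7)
The plan is to derive all four assertions of Theorem \ref{primeisola} from the elementary algebra of the $2\times 2$ matrix $\tL(\mu,\e)$ after changing variables to $\nu=\mu-\mu_0(\e)$. Formula \eqref{eigs} is an immediate consequence of \eqref{eigenvalues}: since $\alpha,\gamma$ are real, so are $S=\gamma-\alpha$ and $D=4\beta^2-T^2$; thus when $D\geq 0$ the eigenvalues form a conjugate pair with real part $\pm\tfrac12\sqrt{D}$ and imaginary part $\tfrac12 S$, while when $D<0$ they are purely imaginary with imaginary parts $\tfrac12 S\pm\tfrac12\sqrt{|D|}$.

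Next I derive the expansions \eqref{expD} and \eqref{expS}. By \eqref{muexpa} one has $\delta=\mu-\umu=\nu-(T_2/T_1)\e^2+r(\e^3)$. Since $T(\mu_0(\e),\e)\equiv 0$ by construction, a Taylor expansion at $\nu=0$ combined with \eqref{expT} gives $T(\mu_0(\e)+\nu,\e)=T_1\nu+r(\e^3\nu,\nu^2,\nu\e^2)$ and hence $T^2=T_1^2\nu^2+r(\nu^2\e,\nu^3)$. From \eqref{expb} one obtains $\beta(\mu_0(\e)+\nu,\e)=\beta_1\e^\tp+\beta_2\nu\e^\tp+r(\e^{\tp+1},\nu^2\e^{\tp-1},\dots)$ so that $4\beta^2=4\beta_1^2\e^{2\tp}+8\beta_1\beta_2\nu\e^{2\tp}+r(\e^{2\tp+1},\nu^2\e)$. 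Subtracting yields \eqref{expD}. Expanding $\gamma-\alpha=\gamma_0(\delta)+\alpha_0(\delta)+(\gamma_2-\alpha_2)\e^2+r$ using \eqref{epsilonspento} and substituting $\delta=\nu-(T_2/T_1)\e^2+r(\e^3)$ gives \eqref{expS}. For the maximal real part \eqref{expnuRe} I optimise $D$ on $(\nu_\wedge(\e),\nu_\vee(\e))$: the stationarity condition $\partial_\nu D=-2T_1^2\nu+8\beta_1\beta_2\e^{2\tp}+\dots=0$ gives $\nu_\star=(4\beta_1\beta_2/T_1^2)\e^{2\tp}+r(\e^{2\tp+1})$, which lies inside the interval since $|\nu_{\wedge,\vee}|\sim\e^\tp\gg\e^{2\tp}$; plugging back in gives $D_{\max}=4\beta_1^2\e^{2\tp}(1+r(\e))$ and hence $\max\mathfrak{Re}\,\lambda^\pm=|\beta_1|\e^\tp(1+r(\e))$.

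For the isola statement I parameterise the curve by $\nu\in[\nu_\wedge(\e),\nu_\vee(\e)]$: the branch $\lambda^+(\nu)$ traces the right half ($\mathfrak{Re}>0$) and $\lambda^-(\nu)$ the left half ($\mathfrak{Re}<0$), and both branches coalesce on the imaginary axis at the two endpoints where $D=0$. This yields a closed Jordan curve, analytic in the interior and, after the analytic square-root desingularisations $\nu-\nu_\vee=-\tau^2$ and $\nu-\nu_\wedge=\tau^2$ near the two endpoints, analytic everywhere. Orthogonality with the imaginary axis follows from the simple-zero structure of $D$ at $\nu_{\wedge,\vee}$ (visible in \eqref{expD} using $T_1\neq 0$ and $\beta_1\neq 0$): one has $\frac{d}{d\nu}\mathfrak{Re}\,\lambda^\pm\propto|\nu-\nu_{\wedge,\vee}|^{-1/2}\to\infty$ whereas $\frac{d}{d\nu}\mathfrak{Im}\,\lambda^\pm=\tfrac12 S'(\nu)$ stays bounded, so the tangent becomes horizontal at the intersection points. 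For the convex region, under the extra hypothesis $\alpha_1\neq\gamma_1$ one can eliminate $\nu$ from the leading-order expansions of $S$ and $\sqrt{D}$, obtaining (up to $r(\e)$-corrections) the reference ellipse
\[
x^2+\Big(\frac{T_1}{\gamma_1-\alpha_1}\Big)^{\!2}\bigl(y-\omega_*^{(\tp)}\bigr)^2=\beta_1^2\e^{2\tp},
\]
which is strictly convex; the actual isola is an analytic $O(\e)$-perturbation of this ellipse and convexity is preserved for $\e$ small.

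The main obstacle will be making the convexity statement fully rigorous uniformly up to the endpoints, where $\sqrt{D}$ is not smooth in $\nu$. The cleanest route is to work in the uniformizing analytic parameter $\tau$ near each endpoint: in that variable both components of the curve are analytic and the signed curvature depends analytically on $(\tau,\e)$, reducing to $\tau$. The leading-order (reference) ellipse has curvature bounded away from zero on the whole compact parameter range, and the full expression is an analytic $O(\e)$-perturbation of it, so signed curvature remains strictly positive for $\e$ small; the $\alpha_1\neq\gamma_1$ assumption is essential precisely because otherwise the $y$-semiaxis of the leading ellipse degenerates and the curve collapses to a segment on the imaginary axis, preventing any enclosed region.
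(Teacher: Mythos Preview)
Your derivation of \eqref{eigs}, \eqref{expD}, \eqref{expS} and \eqref{expnuRe} is correct and coincides with the paper's: both Taylor-expand $T$ and $\beta$ around $\mu_0(\e)$ (using $T(\mu_0(\e),\e)\equiv 0$), square, subtract, and optimise $D$ over $\nu$. Likewise the closure of the curve and the orthogonal intersection with the imaginary axis are argued identically.

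The genuine divergence is in the convexity argument. The paper does \emph{not} perturb from a reference ellipse. Instead it exploits $\alpha_1\neq\gamma_1$ to observe that the imaginary part $I(\nu,\e)=\tfrac12 S(\mu_0(\e)+\nu,\e)$ is strictly monotone in $\nu$ on $[\nu_\wedge,\nu_\vee]$ (since $\partial_\nu I=\tfrac12(\gamma_1-\alpha_1)+r(\e^2,\nu)$). It then inverts $\nu=\nu(y,\e)$, writes the right half of the isola as the graph $x=X(y,\e)=\tfrac12\sqrt{D(\mu(y,\e),\e)}$, and computes $\partial_{yy}X$ explicitly. Using the hard bounds $\partial_{\mu\mu}D\le -T_1^2$, $|\partial_\mu D|\lesssim \e^{\tp}$ (from \eqref{expD}) and elementary estimates on $\partial_y\mu,\partial_{yy}\mu$, it shows $\partial_{yy}X<0$ pointwise for $\e$ small, hence $X(y,\e)$ is concave and the enclosed region convex. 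This argument is entirely computational and never needs a uniformising variable at the endpoints: concavity of $X(y)$ on the open interval already forces the tangent to become horizontal there.

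Your route---rescale by $\e^{\tp}$, compare with the fixed ellipse $x^2+(T_1/(\gamma_1-\alpha_1))^2 y^2=\beta_1^2$, and invoke stability of positive signed curvature under analytic perturbation---is a legitimate alternative and conceptually cleaner. What it buys is that one never has to differentiate $\sqrt{D}$ by hand. What it costs is the extra bookkeeping you flag at the end: you must actually produce the global analytic parametrisation (patching the two half-graphs via $\nu-\nu_{\wedge,\vee}=\pm\tau^2$) and check that, in the rescaled variables, the resulting map from the parameter circle to $\mathbb{C}$ depends analytically on $\e$ down to $\e=0$, uniformly over the whole circle. This is true (the substitutions $\nu=\e^{\tp}\tau$ in \eqref{expD}--\eqref{expS} give $O(\e)$ corrections after rescaling, and the endpoint desingularisation is compatible with this), but it is work you have only sketched. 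The paper's direct concavity computation avoids this packaging entirely.
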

 
\begin{proof} 
  In view of \eqref{expb},  \eqref{muexpa} and \eqref{def:nu}, 
\begin{equation}\label{bnuexp}
\beta(\mu_0(\e)+\nu,\e) =  \beta_1 \e^\tp+ \beta_2 \nu\e^\tp+ r(\e^{\tp+{2}}, \nu^2 \epsilon^\tp) \, .
\end{equation} 
We now prove the expansion  \eqref{expD} of the discriminant, cfr. 
\eqref{Dmuep}, 
\begin{equation}\label{Dinte}
D(\mu_0(\e)+\nu,\e)= 4\beta^2(\mu_0(\e)+\nu,\e) - T^2(\mu_0(\e)+\nu,\e)\, .
\end{equation} 
By a Taylor expansion at $ \nu = 0 $, \eqref{impfunc}, \eqref{expT}, \eqref{muexpa} and  \eqref{def:nu}  we  get that 
\begin{equation}\label{Tnuexp}
T(\mu_0(\e)+\nu,\e) =(\pa_\mu T)(\mu_0(\e),\e)\nu + r(\nu^2) = T_1\nu + r(\e^{{2}}\nu,\nu^2) 
\end{equation}
and the expansion  \eqref{expD}  follows by inserting  
\eqref{Tnuexp} and \eqref{bnuexp} into \eqref{Dinte}. 
 By \eqref{eigenvalues} and \eqref{def:nu}, the eigenvalues of $ \tL(\mu,\e) $ 
 have the form \eqref{eigs}.
 The expansion  \eqref{expS} follows from \eqref{expa}-\eqref{expc}, \eqref{expT} and \eqref{muexpa}. 
 The (opposite) extremal real parts of the eigenvalues are attained at 
 $ \nu = \nu_{\mathfrak{Re}}$ where $
(\pa_\mu D)(\mu_0(\e)+\nu_{\mathfrak{Re}},\e) = 0$.
By \eqref{expD} we have the expansion
\begin{equation}\label{nuRemaxnondeg}
\nu_{\mathfrak{Re}}(\e) = 4\frac{\beta_1 \beta_2}{T_1^2} \e^{2 \tp}+r(\e^{2\tp+{2}})\, .
\end{equation}
By plugging \eqref{nuRemaxnondeg} into \eqref{expD}-\eqref{eigs} one obtains \eqref{expnuRe}, that holds if $ \beta_1 $ is different or equal to zero.

\medskip

\noindent
{\bf (Isola).} 
In view of \eqref{eigs},
for any $ \e $  small enough, 
the unstable eigenvalues  branch off from the imaginary axis at $\nu=\nu_\wedge(\e)$, evolve specularly  as $\nu$ increases and rejoin at $\nu=\nu_\vee(\e)$ thus forming a {\it closed} curve. If  $\alpha_1\neq \gamma_1$,   
the imaginary part of the eigenvalues
$$
 I(\nu,\e):= \mathfrak{Im}\,\lambda^\pm(\mu_0(\e)+\nu,\e)  
$$
 is monotone w.r.t. $\nu\in \big(\nu_\wedge(\e),\nu_\vee(\e)\big) $ for small $\e$, since 
 its derivative fulfills
 $$
  \pa_\nu I(\nu,\e)  \stackrel{\eqref{eigs}}{=} \frac{\gamma_1-\alpha_1}{2} + r(\e^2,\nu) \neq 0\, , \qquad \nu_\wedge(\e) \leq \nu \leq \nu_{\vee}(\e)\, . 
$$ 
Thus $\nu \mapsto I(\nu,\e)  $ is a diffeomorphism between the intervals $\big(\nu_\wedge(\e),\nu_\vee(\e)\big)$ and $\big(y_\wedge(\e),y_\vee(\e)\big)$ where 
$$y_\wedge(\e):=\min \big\{ I(\nu_\wedge(\e),\e)\, ,I(\nu_\vee(\e),\e)\big\}\, , \quad  y_\vee(\e):=\max \big\{ I(\nu_\wedge(\e),\e)\, ,I(\nu_\vee(\e),\e)\big\}\, .$$
Let $\nu(y,\e)$   denote  the inverse of $y=I(\nu,\e)$, with $y$ varying in $y_\wedge(\e) < y < y_\vee(\e)$. 
The curves depicted by the two unstable eigenvalues in the complex plane is hence covered by the two specular graphs 
$$
\Gamma_r := \big\{ \big( X(y,\e), y\big) \;:\; y_\wedge(\e) <y<y_\vee(\e) \big\}\, ,\quad 
\Gamma_l := \big\{ \big( -X(y,\e), y\big) \;:\; y_\wedge(\e) <y<y_\vee(\e) \big\}\, ,
$$
where 
$$
X(y,\e) := \frac12 \sqrt{D\big(\mu(y,\e), \e \big)}\, ,\quad \mu(y,\e):= \mu_0(\e) + \nu(y,\e)\, .
$$
It is easy to check 
that at the bottom and top of the isola, i.e.\ at $y=y_\wedge(\e)$ and $y=y_\vee(\e)$, the real parts
 $\pm X(y,\e) $
  of the unstable eigenvalues vanish with derivative that tends to infinity,  
and that 
the region encircled by the two graphs is convex. 
  \end{proof}

 An approximation  of 
 the  eigenvalues 
 $\lambda^\pm(\mu_0(\e)+\nu,\e)$ 
 in Theorem \ref{primeisola} is obtained  
neglecting the remainders $r(\nu^3)$ of $D(\mu_0(\e)+\nu,\e)$ in \eqref{expD} and $r(\nu^2)$ of $S(\mu_0(\e)+\nu,\e)$ in \eqref{expS}, namely by the complex numbers $\widetilde\lambda^\pm(\nu,\e) $ with 
\begin{equation}\label{truncation} 
\begin{cases}x:= \mathfrak{Re}\ \widetilde\lambda^\pm(\nu,\e) = \pm \frac12  \sqrt{4\beta_1^2 \e^{2\tp }(1+r(\e^{{2}})) -T_1^2 \nu^2(1+r(\e^{{2}}))+8\beta_1\beta_2\nu\e^{2\tp}
}\, ,\\ 
y:=  \mathfrak{Im}\ \widetilde\lambda^\pm(\nu,\e) =\omega_* + 
\Big(\dfrac{\gamma_2-\alpha_2}{2} - \dfrac{T_2(\gamma_1-\alpha_1)}{2T_1}\Big) 
\e^2  (1+r(\e^{{2}})) +\dfrac{\gamma_1-\alpha_1}{2}\nu (1+r(\e^2)) \, .
\end{cases} 
\end{equation} 
The functions $\widetilde\lambda^\pm(\nu,\e) $ are defined for $\nu$ in
 the interval $\widetilde\nu_\wedge(\e)\leq\nu\leq\widetilde\nu_\vee(\e) $ where the argument of the square root  in \eqref{truncation}  is non-negative. 
 These approximated  eigenvalues describe an ellipse  in the $(x,y)$-plane as stated in the following lemma which  directly 
 follows 
 from \eqref{truncation}. 

 \begin{lem}{\bf (Approximating ellipse)} 
 \label{lem:appell} 
 {Suppose the coefficients $ \beta_1, \alpha_1, \gamma_1 $ in \eqref{expa}-\eqref{expc}
satisfy $\beta_1\neq 0$ and $\alpha_1\neq \gamma_1$}. 
 Then, as $\nu$ varies between $\widetilde\nu_\wedge(\e)$ and $\widetilde\nu_\vee(\e)$
the approximated  eigenvalues $ \widetilde\lambda^\pm(\nu,\e)  $ in 
\eqref{truncation}  
span an ellipse of equation
$$
x^2  + \dfrac{T_1^2(1+r(\e^2))}{(\gamma_1-\alpha_1)^2}(y-y_0(\e))^2 =   \beta_1^2 \e^{2\tp} (1+r(\e^{{2}})) \, ,
$$
centered at $(0,y_0(\e))$ where 
$
y_0(\e) = \omega_*+\Big( \dfrac{\gamma_2-\alpha_2}{2}- 
 \dfrac{T_2(\gamma_1-\alpha_1)}{2T_1}\Big)\e^2+r(\e^{4}) $. 
\end{lem}

\section{Taylor expansion of $\mathtt{L}^{(\tp)}(\mu,\e)$}\label{matrixcomputation}

This section is dedicated to demonstrating that the entries of the matrix $\tL^{(\tp)}(\mu,\e)$ given by \eqref{tocomputematrix} 
admit a Taylor expansion as stated in  
Assumption \ref{assH} for any integer $\tp\geq 2 $.
This is proved in Theorem \ref{lem:expansionL}. 
To begin, we  introduce some notation.

\begin{ntt}\label{notazioneoperatori}
Given a family of linear 
operators 
$A=A(\mu,\e)$, 
analytic in $ (\mu, \e) $,  we define, for any 
$ i,n, \ell \in \bN_0 $, its jets $ A_{i,n}$ and its homogenous component $ A_\ell $ of degree $ \ell $,  
\begin{equation}\label{notazione}
A_{i,n} := \frac{1}{i!n!} \big(\pa^i_\mu\pa^n_\e A \big)(\umu,0) \delta^i \e^n \qquad 
\text{and} \qquad 
A_\ell := \sum_{\substack{i+n=\ell,
\ \atop  i,n\geq 0}} \!\!\! A_{i,n}  \, . 
\end{equation}

\noindent 
{\bf Matrix representation.}
In the sequel we identify a linear
(possibly unbounded) 
operator $A $  acting on $ C^\infty (\bT, \bC)$ as the infinite matrix $ \{ A^j_k \}_{j,k\in \bZ}$  with respect to the exponential basis, namely 
with  {\em matrix entries} 
\begin{equation}\label{def.matrix.elements}
A^{j}_{k} := ( A e^{\im j x} , \ e^{\im k x } )_{L^2(\bT)} \, , 
\quad (f,g) = \frac{1}{2\pi} \int_{\bT} f(x) \overline{g(x)} \, d x \, . 
\end{equation}
Thus, in purely algebraic terms, 
the action of $A$ 
is  given  by  
$$
 h(x) = \sum_{j \in \bZ} h_j e^{\im j x} \mapsto  
(A h)(x) = \sum_{k\in \bZ} 
\Big( \sum_{j \in \bZ} A^j_k \, h_j \Big) \, e^{\im k x} \, .
$$
We do not concern about issues of convergence
as we will solely deal with finitely many matrix entries.  


Given $\kappa \in \bZ$, we define its
``$\kappa$-band" 
 $A^{[\kappa]} $ as the operator 
$ A^{[\kappa]} \equiv \{A^{j}_{k} \}_{k - j = \kappa} $ 
with matrix coefficients  supported on the 
``band" $ k-j = \kappa $. In other words the action of 
$A^{[\kappa]} $  is  to ``shift 
the exponential $ e^{\im j x}$  of 
$ \kappa $ harmonics", namely 
\begin{equation}\label{band.def}
A^{[\kappa]} (e^{\im j x})  =
A^{j}_{ j + \kappa} e^{\im (j + \kappa) x} \, . 
\end{equation}
If $A = \begin{bmatrix}
    A_1 & A_2 \\
    A_3 & A_4
\end{bmatrix}$ is a $2\times 2$ matrix of operators acting on
$ C^\infty (\bT, \bC^2)$  
we define its $\kappa$-band as the
operator 
\begin{equation}\label{Am4}
A^{[\kappa]}:= 
\begin{bmatrix}
  A^{[\kappa]}_1 & A^{[\kappa]}_2 \\
  A^{[\kappa]}_3 & A^{[\kappa]}_4
\end{bmatrix}.
\end{equation}
We also  introduce the following ``entanglement coefficients" 
which are matrix entries of the jets 
$ {\cal B}_{0,\ell}^{[\kappa ]} $ of the operator 
$  \cB (\mu,\e) $  with respect to 
the basis of eigenvectors $ f_j^{\sigma} (\umu )$  of $ \cL_{\umu,0}  $.  

\begin{sia}[{\bf entanglement coefficients}] 
\label{entanglementcoefficients}
 For any  $ \ell\in \bN_0 $, integers $ \kappa,j,j'\in \bZ $ and signs $ \sigma,\sigma'=\pm $, 
 we define the entanglement coefficients 
\begin{equation}\label{entcoeff}
\ent{\ell}{\kappa }{j'}{j}{\sigma'}{\sigma}
:= \big({\cal B}_{0,\ell}^{[\kappa ]} f_j^\sigma, f_{j'}^{\sigma'}  \big)   
\end{equation}
where  $  \cB (\mu,\e) $ is the 
operator in \eqref{WW}
and 
$ f_j^{\sigma} := f_j^{\sigma} (\umu )$ are the  eigenvectors of $ \cL_{\umu,0}  $ in 
\eqref{def:fsigmaj}. 
\end{sia}
By \eqref{notazione}, \eqref{starij} and the self-adjointness of $\cB(\mu,\e)$, the entanglement coefficients 
fulfill
\begin{equation}\label{entprop}
\ent{\ell}{\kappa }{j'}{j}{\sigma'}{\sigma} = 0 
\quad 
\text{ if }j'\neq j+\kappa \, ,\quad\text{ and }\quad \bar{\ent{\ell}{\kappa }{j'}{j}{\sigma'}{\sigma}} =\ \ent{\ell}{-\kappa }{j}{j'}{\sigma}{\sigma'}\, . 
\end{equation} 
We further  denote
\begin{equation}\label{omegajsigma}
\omega_{j}^{\sigma}:=
\omega_{j}^{\sigma}(\tth)  := \omega^{\sigma}\big(j+\uphi(\tp,\tth),\tth\big)  \, ,\quad j \in \bZ\, ,\ \sigma=\pm\, , 
\end{equation}
where $ \omega^\sigma(\varphi,\tth)$ are defined in \eqref{Oomegino}
and 
$  \uphi(\tp,\tth) $ in Lemma  \ref{collemma}. We set 
\begin{equation}\label{tjp}
\begin{aligned}
& \Omega_j^{(\tp)} := \Omega_j^{(\tp)} (\tth) := \sqrt{(j+\uphi(\tp,\tth))\tanh(\tth(j+\uphi(\tp,\tth)))}
  \, , \\
&   t_j^{(\tp)} :=t_j^{(\tp)}(\tth) := \sqrt{\frac{j+\uphi(\tp,\tth)}{\tanh(\tth(j+\uphi(\tp,\tth)))}} \,  .
\end{aligned}
 \end{equation}

\begin{teo}{\bf (Taylor expansion of $ B^{(\tp)}(\mu,\e) $) }\label{lem:expansionL}  
 For any $\tp \geq 2$, the 
 $ 2 \times 2 $ Hermitian
 matrix 
$$
\tB^{(\tp)}(\mu,\e)= 
\begin{pmatrix} \alpha^{(\tp)}(\mu,\e) & \im \beta^{(\tp)}(\mu,\e) \\ -\im \beta^{(\tp)}(\mu,\e) & \gamma^{(\tp)}(\mu,\e) \end{pmatrix}  
$$
defined in \eqref{tocomputematrixB}  
is analytic 
in $ B_{\delta_0}(\underline \mu)\times B_{\e_0}(0) $,
its entries satisfy 
\begin{equation}\label{paritaentri}
    \e \mapsto \alpha^{(\tp)}(\mu,\e)\,,\;\gamma^{(\tp)}(\mu,\e) \text{ are even,}\qquad \e \mapsto \beta^{(\tp)}(\mu,\e)\text{ is }
    \begin{cases}
    \text{even} &\text{if }\tp\text{ is even} \, , \\
    \text{odd} &\text{if }\tp\text{ is odd} \, , 
    \end{cases}
\end{equation}
and the properties of 
Assumption \ref{assH}: \\[1mm]
{\bf \textit{I)} 
{\bf (Off-diagonal entry)}} The real function 
$\beta^{(\tp)} (\umu + \delta, \e) $  admits the Taylor expansion 
\begin{equation}\label{ciochevoglio}
 \beta^{(\tp)} (\umu+\delta,\e)  = 
 \beta_1^{(\tp)}(\tth) 
 \e^\tp 
 + 
  r ( \delta \e^\tp, \e^{\tp+2})  
\end{equation}
where 
\begin{equation}\label{beta1exp} 
\begin{aligned}
&\beta_1^{(\tp)}(\tth) \e^\tp= b_0^{(\tp)} \e^\tp+ \sum_{q=1}^{\tp-1} \sum_{0<j_1<\dots <j_q < \tp} \sum_{\sigma_1,\dots,\sigma_q = \pm }\sigma_1 \dots \sigma_q \betone{q}{j_1,\dots,j_q}{\sigma_1,\dots,\sigma_q} \e^\tp\, ,\\
&  b_0^{(\tp)} \e^\tp:=         \frac{1}{\im} \, 
\ent{\tp}{\tp}{\tp}{0}{+}{-}\, , \quad  \betonep{q}{j_1,\dots,j_q}{\sigma_1,\dots,\sigma_q}\e^\tp:= \frac{1}{\im} \, \dfrac{\ent{j_1}{j_1}{j_1}{0}{\sigma_1}{-}\ent{j_2-j_1}{j_2-j_1}{j_2}{j_1}{\sigma_2}{\sigma_1}\dots \ent{\tp-j_q}{\tp-j_q}{\tp}{j_q}{+}{\sigma_q}}{ (\omega_{j_1}^{\sigma_1}-\omega_*^{(\tp)})\dots(\omega_{j_q}^{\sigma_q}-\omega_*^{(\tp)})} \,,
\end{aligned}
\end{equation}

$ \bullet $ the $\omega_{j}^{\sigma}$ are defined in \eqref{omegajsigma} and
$\omega_*^{(\tp)}$ is defined in \eqref{intcollision};

$ \bullet $
the entanglement coefficients have the explicit expression
\begin{equation}\label{entanglementfdj}
\ent{\ell}{\ell}{j+\ell}{j}{\sigma'}{\sigma} =\frac{\e^\ell}4 \sqrt{\sigma}\bar{\sqrt{\sigma'}}\sqrt{\Omega_j^{(\tp)} \Omega_{j+\ell}^{(\tp)}} \big(a_\ell^{[\ell]} - p_\ell^{[\ell]} ( \sigma t_j^{(\tp)} +\sigma' t_{j+\ell}^{(\tp)}) \big)\, ,\quad j,\ell \in \bN_0 \, ,\ \ \sigma,\sigma'=\pm\, ,
\end{equation}
where 
 $ p_\ell^{[\ell]}$, $a_\ell^{[\ell]} \in \bR $ are the maximal Fourier coefficients of the functions $p_\ell (x) $, $a_\ell (x) $ in \eqref{leadingLin} and 
 $\Omega_j^{(\tp)}$, $t_j^{(\tp)}$ are in \eqref{tjp};

$ \bullet $ the function
$ \beta_1^{(\tp)} :
(0, + \infty) \to \bR $, $
\tth \mapsto  \beta_1^{(\tp)} (\tth) $,  
 is  analytic. 
 \\[1mm]
 {\bf \textit{II}) 
 (Diagonal entries)}. 
 The entries $\alpha^{(\tp)}(\umu + \delta ,\e)$ and $\gamma^{(\tp)}(\umu + \delta ,\e)$ admit 
 the Taylor  expansions
 \eqref{expa}, \eqref{expc}, 
 \begin{equation}
 \begin{aligned}\label{abc}
&\alpha^{(\tp)}(\umu+\delta, \e) = 
-\omega_\tp^+(\umu+\delta) 
+  \alpha_2^{(\tp)}
(\tth) \e^2  
+ r(\e^4,\delta\e^2) \,,\\
&  \gamma^{(\tp)}(\umu+\delta, \e) =  \omega_0^-(\umu+\delta) 
+ \gamma_2^{(\tp)}(\tth)  \e^2  
+   r(\e^4,\delta\e^2)\, ,
 \end{aligned}
 \end{equation} 
with   
\begin{equation}\label{omegappomega0m}
\omega_\tp^+(\umu+\delta) = \omega_*^{(\tp)}-\alpha_1^{(\tp)}(\tth)\delta + r(\delta^2) \, ,\quad 
\omega_0^-(\umu+\delta) = \omega_*^{(\tp)}+\gamma_1^{(\tp)}(\tth)\delta + r(\delta^2) \, ,
\end{equation}
and  
\begin{equation}
\label{a1g1}
\alpha_1^{(\tp)}(\tth)+\gamma_1^{(\tp)}(\tth)>0 \, , 
\quad
\gamma_1^{(\tp)}(\tth) - 
\alpha_1^{(\tp)}(\tth) > 0 \, , 
\quad 
\forall \tth > 0 \, . 
\end{equation}
The functions $\alpha_2^{(\tp)}(\tth)$, $\gamma_2^{(\tp)}(\tth)$ are analytic in  $\tth$. 
\end{teo}

The whole  section is devoted to the proof of Theorem \ref{lem:expansionL}. 

We first give simple properties of an 
 operator $ A $ as in \eqref{Am4}.  
  The adjoint of the $ \kappa $-band  operator $ A^{[\kappa]} $ is  
\begin{equation}\label{starij}
\big[A^{[\kappa]}\big]^* = (A^*)^{[-\kappa]} \, . 
\end{equation}
Formally each $ A$ is the sum of its $ \kappa $-bands
\begin{equation}\label{A.exp.band}
    A = \sum_{\kappa \in \bZ} A^{[\kappa]} \, \, , 
\end{equation} 
and the $ \kappa $-band of the composed operator is 
\begin{equation}\label{bandAB}
    (A\circ B)^{[\kappa]} = \sum_{\kappa_1 + \kappa_2 = \kappa} A^{[\kappa_1]} \circ B^{[\kappa_2]} \, . 
\end{equation}
The $ \kappa $-band operator 
 associated to 
 the multiplication operator 
  for a matrix of functions $ a(x) $ is the multiplication operator for the $\kappa $-harmonic  of $ a(x)$, namely 
\begin{equation}\label{karmonica}
a_{\kappa} e^{\im \kappa x } :\,  \  
h(x) \mapsto a_{\kappa} e^{\im \kappa x } h(x) 
\qquad \text{where} \qquad 
a_\kappa := \frac{1}{2 \pi}
\int_{\bT} a(x) e^{-\im \kappa x}
d x \, . 
\end{equation}
For any $ \kappa \in \bN $ the $ \kappa $-band operator 
$ [g(D)]^{[\kappa]} $ of a Fourier  multiplier $ g(D) $ is zero. 
  \end{ntt}
The following definition encodes an important  structure of the operators  $\cB(\mu,\e)$ in \eqref{WW}, $P(\mu,\e)$ in \eqref{Pproj} and $\mathfrak{B} (\mu,\e) $ in \eqref{Bgotico}. 
Roughly they are operators   whose jets $A_{i,j}$ have ``finite-range interactions" of order at most $j$ and with the same parity of $j$.

\begin{sia}[{\bf Spaces $\mathfrak{F}_n $ and  $\mathtt{F}_\ell$}]
\label{defFell} 
For any $ n \in \bN_0$ we define $\mathfrak{F}_n $ the space of $2\times 2$ matrices  of  operators $A$ fulfilling
\begin{equation}\label{TFj} 
 A^{[\kappa]} =  
0 \quad   \textup{ if }|\kappa| > n \quad 
 \textup{ or} \quad \kappa \nequiv n \mod{2}\, . 
\end{equation}
We denote by 
$\mathtt{F}_\ell$  the space of $2\times 2$  matrices  of operators  $A$
(cfr.  \eqref{notazione}) 
such that
$$ 
A =    A_\ell = 
\sum_{\substack{i+n=\ell \\ i,n\geq 0} } A_{i,n}  \quad
\text{such that} \quad 
A_{i,n} \in \mathfrak{F}_n \, . 
$$ 
In particular $ A_\ell^{[\kappa]}=  0 $ if  $ |\kappa|>\ell $. 
\end{sia}
The following properties of the spaces 
$ \mathtt{F}_\ell$ will be used repeatedly. 
\begin{lem}\label{TFjprop}
Let $ \ell, \ell' \in \bN_0 $
and $ A \in \mathtt{F}_{\ell}$, $ B \in \mathtt{F}_{\ell'}$.  Then 
\begin{itemize}
 \item[(i)] {\bf Highest interaction:} $A_\ell^{[\pm \ell]} = A_{0,\ell}^{[\pm \ell]}$;
    \item[(ii)] {\bf Composition:} $A \circ B \in \mathtt{F}_{\ell+\ell'}$ with 
    $    (A\circ B)^{[\pm(\ell+\ell')]}= A^{[\pm \ell]}_{0,\ell} \circ B^{[\pm \ell']}_{0,\ell'}$;
    \item[(iii)] {\bf Adjoint:} $A^* \in \mathtt{F}_{\ell}$; 
    \item[(iv)]  {\bf Finite range interaction:} For any $ v_1, v_2 \in \bC^2$,
    \begin{equation}\label{AFlele}
( A_{i,n} v_1 e^{\im j_1 x} , 
v_2 e^{\im j_2 x} ) = 0 \ \ \mbox{ if } |j_1 - j_2| > n \mbox{ or } j_1 - j_2 \not\equiv n \mod 2 \ . 
\end{equation}
\end{itemize}
\end{lem}

\begin{proof}
$(i)$ The only possibility to have harmonics $[\pm \ell]$ is that $ n = \ell$, namely $i = 0$.  
$(ii)$ By Definition 
\ref{defFell} of the space  $\mathtt{F}_\ell$ and using \eqref{A.exp.band}, we have
$$
\begin{aligned}
& A\circ B = \sum_{{i_1 + n_1 = \ell \, , \atop i_2 + n_2 = \ell' \, , } \atop \kappa_1, \kappa_2    \in \bZ } A_{i_1, n_1}^{[\kappa_1]}\circ B_{i_2, n_2}^{[\kappa_2]} 
= 
\sum_{i+n = \ell + \ell', 
\atop i,n \geq 0, \kappa \in \bZ}
C_{i,n}^{[\kappa]} \ , \\
& 
C_{i,n}^{[\kappa]}:=
\sum_{{i_1 + i_2 = i \atop n_1 + n_2 = n} \atop
\kappa_1 + \kappa_2 = \kappa} A_{i_1, n_1}^{[\kappa_1]}\circ B_{i_2, n_2}^{[\kappa_2]}  
\stackrel{\eqref{bandAB}}{=}
\sum_{{i_1 + i_2 = i \atop n_1 + n_2 = n} } \left(A_{i_1, n_1}\circ B_{i_2, n_2}\right)^{[\kappa]} 
\end{aligned}
$$
where, since $A_{i_1, n_1} \in \mathfrak{F}_{n_1}$ and  $B_{i_2, n_2} \in \mathfrak{F}_{n_2}$, 
the harmonic $\kappa_1$ have modulus not larger than $n_1$ and with the same parity as $n_1$, and $\kappa_2$ have modulus not larger than $n_2$ and 
with the same parity as $n_2$. 
Hence each $\kappa = \kappa_1 + \kappa_2$ has modulus not larger than $ n = n_1 + n_2$ and with the same parity of $ n_1 + n_2$.
Hence $A\circ B \in \mathtt{F}_{\ell + \ell'}$.
Moreover since $A$ has harmonics at most $\ell$ and $B$ at most $\ell'$, to reach harmonics  $\pm (\ell + \ell') $ the only possibility is via the terms $A_{i_1, n_1}^{[\pm\ell]} \circ B_{i_2, n_2}^{[\pm\ell']} $ which, again by the properties of the class $\mathtt{F}_\ell$,  are non-trivial only  
when $(i_1, n_1) = (0,\ell)$, $(i_2, n_2) = (0,\ell')$. 
$(iii)$ follows by  \eqref{starij}.
$(iv)$ follows since
$ ( A_{i,n} v_1 e^{\im j_1 x} , 
v_2 e^{\im j_2 x} ) = ( A_{i,n}^{[j_2 - j_1]} v_1 e^{\im j_1 x} , 
v_2 e^{\im j_2 x} )   $ 
which vanishes,
by the properties of the $\mathtt{F}_\ell$ matrices, if $|j_1 - j_2| > n $ or $ j_1 - j_2 \not\equiv n \mod{2}$. 
\end{proof}


\subsection{Taylor expansion of $\cB (\mu,\e) $ and $P(\mu,\e) $}

We first describe the Taylor expansion 
of the operator ${\cal B}(\umu+\delta,\e)$ defined in \eqref{WW}.
Such operator is the composition of multiplication operators for the analytic functions $a_\e(x)$, $p_\e(x)$ and  Fourier multipliers depending on $\mu, \e$.
We first prove that the jets $\cB_{\ell}$ of $\cB(\mu, \e)$ at $(\umu, 0)$ belong to the class $ \mathtt{F}_\ell $ for any $ \ell \in \bN_0$.
This is due to the fact that, 
by \eqref{leadingLin} and Definition \ref{def:even}, 
the Taylor coefficients $p_\ell (x)$, $a_\ell (x)$ of the functions 
$p_\e(x)$, $a_\e(x)$ have the form
 \begin{equation}\label{leadingpa}
 \begin{aligned}
 p_\ell (x) &=  \begin{cases} p_\ell^{[0]}+p_\ell^{[2]}\cos(2x)+\dots+p_\ell^{[\ell]}\cos(\ell x) & \text{if } \ell \text{ is even}\, , \\[2mm]
p_\ell^{[1]}\cos(x)+p_\ell^{[3]}\cos(3x)+\dots+p_\ell^{[\ell]}\cos(\ell x)  & \text{if }
\ell \text{ is odd} \, ,  \end{cases} \\ 
  a_\ell (x) &=  \begin{cases} a_\ell^{[0]}+a_\ell^{[2]}\cos(2x)+\dots+a_\ell^{[\ell]}\cos(\ell x) & \text{if } \ell \text{ is even} \, , \\[2mm]
a_\ell^{[1]}\cos(x)+a_\ell^{[3]}\cos(3x)+\dots+a_\ell^{[\ell]}\cos(\ell x)  & \text{if }
\ell \text{ is odd} \, ,  \end{cases}
 \end{aligned}
 \end{equation}
  with real  coefficients $a_\ell^{[i]}$, $p_\ell^{[i]}$, whereas the dependence in $\mu$ is only in Fourier multipliers.

 \begin{lem}[{\bf Expansion of $\cB(\mu,\e $}]
The jets of the Taylor expansion  at $(\mu,\epsilon) = (\umu,0) $ of the self-adjoint operator $ \cB (\mu,\e)  $ in \eqref{WW} at  $ (\umu,0) $, namely
\begin{equation}\label{def.Bj}
\cB (\umu+\delta,\e) = \sum_{\ell \geq 0}  \cB_\ell \, , 
\end{equation}
satisfy, 
for any $ \ell  \in \bN $, 
\begin{equation}\label{cBellell}
\cB_0\,,\; 
 \cB_{\ell} \in \mathtt{F}_\ell
\
 \text{ and } \quad 
\cB_{\ell}^{[\pm\ell]} = \cB_{0,\ell}^{[\pm\ell]} = \e^\ell \frac{e^{\pm \im \ell x}}2 \begin{bmatrix} a_\ell^{[\ell]} & -p_\ell^{[\ell]} (\pa_x +\im \umu) \\
 p_\ell^{[\ell]} (\pa_x +\im \umu \pm \im \ell )   & 0 
 \end{bmatrix} \, .
 \end{equation}
\end{lem}

\begin{proof}
Recall that the  operator $\cB_{i,n}^{[\kappa]}$
 is the  $\kappa$-band of the Taylor coefficient of order $\delta^i \e^n $ of the operator $\cB ( \umu + \delta, \e) $ in 
 \eqref{WW} at $\delta = \e = 0$, see \eqref{band.def} and the definition 
 \eqref{def.matrix.elements}
 of matrix elements.
The terms of order $\e^n$ come either from the terms of order $\e^n$ of $a_\e$ and $p_\e$ which, by \eqref{leadingpa} have only harmonics with the same parity of $n$ and not larger than $n$, or from the $\e^n$ Taylor coefficient of the Fourier multiplier $|D+\mu| \tanh\big((\tth + \ttf_\e) |D+\mu| \big)$,  which, in view of 
\eqref{leadingLin}, expand only in  even powers of $\e$. Then $\cB_{i,n}^{[\kappa]}$ satisfies  \eqref{TFj}, proving that 
  $\cB_{i,n} \in \mathfrak{F}_n$  for any $i,n \in \bN_0$, hence 
$\cB_\ell \in \mathtt{F}_\ell$. 
The  explicit  formula \eqref{cBellell} for  $\cB_{0,\ell}^{[\ell]} $ follows by \eqref{WW} and 
\eqref{karmonica}. 
\end{proof}
We now prove the important fact that the jets $P_\ell$ of the spectral 
 projector $P (\mu,\e)  $,  defined in \eqref{Pproj}, belong to the class $\mathtt{F}_\ell$. This is the property that allows us to propagate the structure $\mathtt{F}_\ell$ along the Kato's reduction scheme.
 \begin{lem}[{\bf 
 Expansion of $P (\mu,\e)$}]
 The Taylor expansion at $(\mu,\epsilon) = (\umu,0) $ of 
the projector $P (\mu,\e) $ in \eqref{Pproj}, is  
\begin{equation}\label{def.Pj}
P(\umu+\delta,\e) = \sum_{\ell \geq 0} P_\ell  \, ,\end{equation} 
where  
\begin{equation}\label{Psani1}
P_0 = P(\umu,0) \, , \qquad P_1 = \mathcal{P} \big[\textup{${\cal B}_1$}\big]\, , \qquad P_\ell = \sum_{q=1}^\ell \sum_{ \substack{ \ell_1+\dots+\ell_q=\ell \\
\ell_1, \ldots, \ell_q \in \bN}} \mathcal{P} [\cB_{\ell_q},\dots,\cB_{\ell_1}]\, , 
 \end{equation}
and,  for any    $ q\in \bN $,  
\begin{equation}\label{hP}
\mathcal{P} \big[A_1,\dots, A_q \big] := \frac{(-1)^{q+1} }{2\pi \im } \oint_\Gamma (\cL_{\umu,0}-\lambda)^{-1} \cJ A_1 (\cL_{\umu,0}-\lambda)^{-1} \dots \cJ A_q (\cL_{\umu,0}-\lambda)^{-1} \de\lambda \, .
\end{equation}
It results 
\begin{equation}\label{Psani}
 P_\ell \in \mathtt{F}_\ell \quad  \forall   \ell \in \bN_0 \,  .
 \end{equation}
Furthermore 
\begin{equation}\label{hPbis}
\mathcal{P} \big[A_1,\dots, A_q \big]_{i,n}^{[\kappa]} = \sum_{\substack{i_1+\dots+i_q =i\\ n_1+\dots+n_q =n \\ \kappa_1+\dots+\kappa_q =\kappa \\ \kappa_1,\dots, \kappa_q \in \bZ}}  \mathcal{P} \big[[A_1]_{i_1,n_1}^{[\kappa_1]},\dots,[A_q]_{i_q,n_q}^{[\kappa_q]} \big] \, .
\end{equation}
\end{lem}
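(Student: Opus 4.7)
The plan is to expand the Riesz projector \eqref{Pproj} in a Neumann series around $(\umu,0)$. Writing $\cL_{\mu,\e} = \cJ\,\cB(\mu,\e)$ and using the expansion \eqref{def.Bj}, I set $\Delta := \cJ(\cB_1+\cdots+\cB_\tp) + \cO_{\tp+1}$ so that $\cL_{\mu,\e} = \cL_{\umu,0} + \Delta$. For $\lambda \in \Gamma$ the unperturbed resolvent $R_0(\lambda) := (\cL_{\umu,0}-\lambda)^{-1}$ is bounded uniformly in $\lambda\in\Gamma$, since $\Gamma$ lies in the resolvent set of $\cL_{\umu,0}$, and for $(\delta,\e)$ small one has the convergent Neumann expansion
\[
(\cL_{\mu,\e}-\lambda)^{-1} = \sum_{q\geq 0} (-1)^q R_0(\lambda)\,\bigl[\Delta\,R_0(\lambda)\bigr]^q.
\]
Integrating termwise along $\Gamma$ and re-ordering yields formula \eqref{hP}: the $q=0$ term gives $P_0=P(\umu,0)$, and the $q$-th term contributes operators of the form $\mathcal{P}[\cB_{\ell_q},\dots,\cB_{\ell_1}]$. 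Collecting all contributions of total order $\ell_1+\cdots+\ell_q = \ell$ gives \eqref{Psani1}, with the tail absorbed into $\cO_{\tp+1}$ since any surviving term has at least $\tp+1$ factors of $(\delta,\e)$.

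Next, the identity \eqref{hPbis} follows by observing that $R_0(\lambda)$ is a Fourier-multiplier operator (as \eqref{cLmu} is diagonal in the Fourier basis), hence belongs to $\mathtt{F}_0$ and preserves each Fourier harmonic. Decomposing each $A_k$ into its Fourier-Taylor components $[A_k]_{i_k,j_k}^{[\kappa_k]}$ and distributing the sums through the compositions in \eqref{hP} (the contour integration in $\lambda$ commutes with Fourier projection since Fourier projectors are $\lambda$-independent) produces precisely \eqref{hPbis}.

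The stability statement $P_\ell \in \mathtt{F}_\ell$ then follows: each factor $\cB_{\ell_k}$ lies in $\mathtt{F}_{\ell_k}$ by \eqref{cBellell}, each resolvent $R_0(\lambda)$ and $\cJ$ lie in $\mathtt{F}_0$, so by iterated application of Lemma \ref{TFjprop}(i) the composition $R_0\,\cJ\cB_{\ell_q}\,R_0\cdots\cJ\cB_{\ell_1}\,R_0$ belongs to $\mathtt{F}_{\ell_1+\cdots+\ell_q}$ for every $\lambda\in\Gamma$. Since the harmonic indices $\kappa$ and the Taylor indices $(i,j)$ characterizing $\mathtt{F}_\ell$ are independent of $\lambda$, the $\lambda$-integration preserves the class, so each summand $\mathcal{P}[\cB_{\ell_q},\dots,\cB_{\ell_1}]$ lies in $\mathtt{F}_{\ell_1+\cdots+\ell_q}=\mathtt{F}_\ell$, and therefore so does $P_\ell$.

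The main obstacle I foresee is of a routine analytic nature: justifying the uniform convergence of the Neumann series on the compact contour $\Gamma$ and the interchange of the contour integral with the Taylor expansion, both of which rely on the joint analyticity of $(\mu,\e)\mapsto\cB(\mu,\e)$ as an $\cL(H^1(\bT),L^2(\bT))$-valued map (already noted after \eqref{WW}) and on the fact that $\Gamma$ is a fixed compact subset of the resolvent set of $\cL_{\umu,0}$. Once these standard points are in place, the remainder of the proof is bookkeeping: collecting the $\ell$-th Taylor coefficient and verifying the class property via Lemma \ref{TFjprop}.
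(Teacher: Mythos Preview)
Your proposal is correct and follows essentially the same approach as the paper: Neumann-expand the resolvent $(\cL_{\mu,\e}-\lambda)^{-1}$ around $(\umu,0)$, integrate along $\Gamma$, collect homogeneous terms of order $\ell$ to get \eqref{Psani1}, and deduce \eqref{Psani} and \eqref{hPbis} from the fact that $R_0(\lambda)$ and $\cJ$ are Fourier multipliers in $\mathtt{F}_0$ together with Lemma \ref{TFjprop}. Your discussion of the analytic justification (uniform convergence on the compact contour, interchange of integral and series) is slightly more explicit than the paper's, but the argument is the same.
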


\begin{proof}
For any $\lambda$ in the resolvent set of $\cL_{\umu,0}$, we Taylor expand the resolvent $(\cL_{\umu+\delta,\e} - \lambda)^{-1}$ at $\delta = \e = 0$ via Neumann series, getting by \eqref{def.Bj}
\begin{align*}
&(\cL_{\umu+\delta,\e} - \lambda)^{-1}  = 
\left( \uno + (\cL_{\umu,0} - \lambda)^{-1}  \sum_{\ell \geq 1}\cJ \cB_\ell  \right)^{-1} (\cL_{\umu,0} - \lambda)^{-1} \\
& =(\cL_{\umu,0}-\lambda)^{-1}+ \sum_{\ell \geq 1} \sum_{q = 1}^\ell (-1)^q \sum_{\ell_1 + \ldots + \ell_q = \ell \atop \ell_1, \ldots, \ell_q \geq 1}   (\cL_{\umu,0}-\lambda)^{-1} \cJ \cB_{\ell_1} (\cL_{\umu,0}-\lambda)^{-1} \dots \cJ \cB_{\ell_q} (\cL_{\umu,0}-\lambda)^{-1}  \, .
\end{align*}
Inserting such expansion in  \eqref{Pproj} yields \eqref{def.Pj}--\eqref{hP}.
Property \eqref{Psani} descends from the composition rule of  
Lemma  \ref{TFjprop} 
together with the fact that the operator $\cJ \cB_{\ell'} $ belongs to $ \mathtt{F}_{\ell'} $ 
(cfr. \eqref{cBellell}) and $\cL_{\umu,0} $ is  a Fourier multiplier in 
$ \mathtt{F}_{0} $.
\end{proof}

\subsection{Taylor expansion of $\mathfrak{B} (\mu,\e)$}

The next proposition
provides an efficient formula to 
compute 
the Taylor  expansion of the operator $\mathfrak{B} (\mu,\e) $ in \eqref{Bgotico} at any order in terms of the jets of $\cB(\mu,\e)$ and $P(\mu,\e)$. 

\begin{prop}[{\bf Expansion of $\mathfrak{B} ( \mu,\e) $}\null] \label{lem:expBgot} 
The Taylor expansion 
at $ (\mu, \epsilon ) =  (\umu,0) $ of the operator $\mathfrak{B} (\mu,\e) $ in \eqref{Bgotico}  is \begin{equation}\label{B.Tay.ex}
\mathfrak{B}(\umu+\delta,\e)  = \sum_{\ell \geq 0}\mathfrak{B}_\ell 
\end{equation}
where, for any $ \ell \in \bN_0 $,  
\begin{equation}\label{kBj} 
\mathfrak{B}_\ell 
= P_0^* \big( \cB_\ell + \cB_{\ell-1} P_1 + \dots+ \cB_1 P_{\ell-1}\big) P_0 + P_0^* {\cal N}_\ell P_0\, ,\quad \mathfrak{B}_\ell \in \mathtt{F}_\ell\, ,
\end{equation}
the operators ${\cal B}_1, \ldots, {\cal B}_\ell$ 
and $ P_1, \ldots,  P_{\ell-1} $ are the jets of the operators $\cB ( \mu,\e) $ in \eqref{WW}  and $P(\mu,\e) $ in \eqref{Pproj} at $ (\umu,0) $ respectively, and 
\begin{equation}\label{harmonic-gap}
{\cal N}_\ell \in \mathtt{F}_\ell \qquad \textup{satisfy}\qquad {\cal N}_0 = {\cal N}_1 = {\cal N}_2 = 0 \quad \textup{ and } \quad  {\cal N}_\ell^{[\pm \ell]} = 0\,, \quad \forall \ell =3,\dots, \tp \, .
\end{equation}
In particular 
\begin{equation}\label{Bijs}
\mathfrak{B}_{i,n} \in \mathfrak{F}_{n} \, 
, \quad \forall 
i, n \in \bN_0  \, . 
\end{equation}
\end{prop}

The rest of this subsection is devoted to the  proof of  Proposition \ref{lem:expBgot}.\\
We will use that the operators 
 $\cB(\mu,\e)$ and $P(\mu,\e) $ satisfy 
 \begin{align}\label{BP}
& (\cB(\mu,\e) P(\mu,\e))^* = 
P(\mu,\e)^* \cB (\mu,\e)  = \cB (\mu,\e)  P(\mu,\e)\, , \\
& Q (P(\mu,\e)-P_0)^2 = (P(\mu,\e)-P_0)^2 Q \quad \mbox{ with } \quad Q \in \{P(\mu,\e), P_0 \} \, ,
\label{BP1}
\end{align}
where $P_0=P(\umu,0) $.
Property
\eqref{BP} follows by \eqref{commuPL},
\eqref{propPU}, 
$ \cJ^2 = - \text{Id} $
and the fact that 
$\cB(\mu,\e)  $ is self-adjoint. 
Property \eqref{BP1}
is satisfied by any pair of projectors. 

As a first step to prove Proposition \ref{lem:expBgot} we prove the following lemma.
For any operator  $A$ 
we set $\textup{\textbf{Sym}}[A]:= \frac12 A+ \frac12 A^*$.

\begin{lem}[{\bf Splitting of $\mathfrak{B}(\mu,\e) $}]\label{expansionthm2}
The operator $\mathfrak{B}(\mu,\e) $ 
in \eqref{Bgotico} can be written as
\begin{equation}\label{kBfinalform}
\begin{aligned}
& \textup{$\mathfrak{B}$}(\mu,\e)  = P_0^* \textup{$\mathbf{Sym}$}\big[ \cB(\mu,\e)  + \mathring \cB(\mu,\e) \mathring P(\mu,\e) + {\cal R}(\mu,\e) \big] P_0  \\
& \text{where} \quad  
\mathring\cB (\mu,\e)  := \cB (\mu,\e) -\cB_0  \, , 
\  \mathring{P}(\mu,\e) := P(\mu,\e)  -P_0 \, , 
\end{aligned}
\end{equation}
 and 
\begin{equation}\label{Rgappa}
{\cal R}_\ell \in \mathtt{F}_\ell\qquad\textup{satisfy}\qquad {\cal R}_0 = {\cal R}_1 = {\cal R}_2= 0\, ,\quad 
{\cal R}_\ell^{[\pm \ell]} = 0\,, \quad \forall \ell =3,\dots , \tp \, .
\end{equation}
\end{lem}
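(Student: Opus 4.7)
The plan is to exploit Kato's intertwining $PU = UP_0$ (from \eqref{commuPL}) combined with the explicit form of $U = U_{\mu,\e}$ in \eqref{OperatorU} to reduce $\mathfrak{B}$ to a manageable form, and then invoke the Hamiltonian identity \eqref{BP} to isolate the claimed symmetrized leading terms from the remainder $\cR$.

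Concretely, because $PU = UP_0$ and $(I-P)(I-P_0)P_0 = 0$, the explicit expression \eqref{OperatorU} collapses to $UP_0 = W\cdot PP_0$ with $W := (I - (P-P_0)^2)^{-1/2}$; moreover, $W$ commutes with both $P$ and $P_0$ thanks to \eqref{BP1}. Writing
$$\mathfrak{B} = P_0^* P^* W^* \cB W P P_0,$$
I would use $P^* \cB = \cB P$ from \eqref{BP} together with $P^2 = P$ to collapse $P^* \cB P = \cB P$, then expand $P P_0 = P_0 + \mathring P P_0$. Combined with the identity $\cB_0 P_0 = P_0^* \cB_0$ (from \eqref{BP} at $(\underline{\mu},0)$), this yields the key manifestly-symmetric identity $P_0^* \cB \mathring P P_0 = P_0^* \mathring P^* \cB P_0$, hence the pre-$W$ contribution equals $P_0^* \textbf{Sym}[\cB + \mathring\cB \mathring P]P_0 + P_0^* \textbf{Sym}[\cB_0 \mathring P]P_0$. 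The first summand already matches the claimed leading form; the second, together with all corrections from $W - I = \tfrac12 \mathring P^2 + O(\mathring P^4)$, is absorbed into $P_0^* \textbf{Sym}[\cR]P_0$, thereby \emph{defining} $\cR$.

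The closure $\cR_\ell \in \mathtt{F}_\ell$ is then immediate from Lemma \ref{TFjprop}(i), since every summand in $\cR$ is a composition of jets $\cB_k \in \mathtt{F}_k$ (from \eqref{cBellell}) and $P_k \in \mathtt{F}_k$ (from \eqref{Psani}). The vanishing $\cR_\ell = 0$ for $\ell \leq 2$ combines three facts: $W - I$ starts at order $2$ in $\mathring P$; the algebraic identity $P_0 P_1 P_0 = 0$, obtained by differentiating $P^2 = P$ at order one and using that $\cL_{\underline\mu,0}$ is scalar on $\mathrm{Range}(P_0)$; and the order-two relation $P_0 P_2 P_0 = -P_0 P_1^2 P_0$ from the next differentiation. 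These conspire to force the order $\leq 2$ contributions of both the $\cB_0 \mathring P$ piece and the $W-I$ correction to collapse after sandwiching by $P_0^*$ and $P_0$.

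The most delicate step is $\cR_\ell^{[\pm\ell]} = 0$ for $3 \leq \ell \leq \tp$. By Lemma \ref{TFjprop}(ii), the maximal-harmonic component of a product $A_1 \cdots A_r$ with $A_j \in \mathtt{F}_{\ell_j}$ and $\sum \ell_j = \ell$ reduces to the product of pure-$\e$ jets $A_{j,0,\ell_j}^{[\pm\ell_j]}$ with matching signs. I would enumerate all such contributions to $\cR_\ell$ — originating both from $\cB_0 \mathring P$ and from the $W$-expansion — and show their identical cancellation using the Taylor identities for Kato's projector derived from $P^2 = P$ at all orders, which provide the necessary relations among $P_0 P_\ell P_0$ and lower-order products. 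This combinatorial bookkeeping, ensuring that the top Fourier harmonic is absent at every order $\ell \geq 3$, is the main technical obstacle; the underlying mechanism is that the pure-$\e$ top-harmonic content of the Kato projector is rigidly constrained by the idempotence relation, which precisely eliminates the ``extra'' terms beyond $\cB_\ell$ and $(\mathring\cB \mathring P)_\ell$.
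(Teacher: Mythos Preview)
Your decomposition strategy---writing $UP_0 = W\cdot PP_0$ with $W=(\uno-\mathring P^2)^{-1/2}$, collapsing $P^*\cB P$ to $\cB P$ via \eqref{BP}, and isolating the contribution $\mathring\cB\mathring P$---is essentially the paper's, and your arguments for $\cR_\ell\in\mathtt{F}_\ell$ and $\cR_0=\cR_1=\cR_2=0$ are sound. (The paper handles the latter in one stroke via the all-order projector identity $P_0\mathring P P_0=-\mathring P^2 P_0$, which subsumes your first- and second-order relations; the extra remark about $\cL_{\umu,0}$ being scalar on $\mathrm{Rg}(P_0)$ is unnecessary for $P_0P_1P_0=0$.)

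The genuine gap is the final step $\cR_\ell^{[\pm\ell]}=0$. Since $\cR=P_0^*\cR P_0$, harmonic counting reduces everything to the single nontrivial matrix element $(\cR_\tp^{[\tp]} f_0^-, f_\tp^+)$ and its conjugate. You propose to kill these by ``combinatorial bookkeeping'' using the relations from $P^2=P$, but those constrain only products of the $P_k$'s; they say nothing about the terms in $\cR$ that also involve $\cB_0$, $\mathring\cB$, and the specific coefficients of $g(x)=(1-x)^{-1/2}-1$, and there is no reason the required cancellations should follow from idempotence alone. The paper's mechanism is structural rather than combinatorial: after the low-order cancellation, every surviving term of $\cR$ carries a factor $g(\mathring P^2)=\breve g(\mathring P^2)\mathring P^2$ (or $\mathring P^4$) on the right, and since $[P_0,\mathring P^2]=0$ one obtains a factorization $\cR=P_0^*\,\cE\,P_0\,\mathring P^2\,P_0$ with $\cE_0=0$. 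The \emph{intermediate} $P_0$ now forces the vector $\phi:=P_0 P_{\ell_2}^{[\ell_2]} P_{\ell_3}^{[\ell_3]} f_0^-$ to lie in $\mathrm{span}\{f_0^-,f_\tp^+\}$; since $\ell_1+\ell_2+\ell_3=\tp$ with $\ell_2,\ell_3\geq 1$ and $\ell_1\geq 1$, the remaining factor $\cE_{\ell_1}^{[\ell_1]}$ shifts harmonics by at most $\tp-2$ and cannot send either harmonic $0$ or harmonic $\tp$ to harmonic $\tp$. This finite-dimensional range argument, not the idempotence recursion, is the idea your proposal is missing.
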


\begin{proof}
By  \eqref{OperatorU} 
and denoting
$g(x):= (1-x)^{-\frac12}-1  $, 
we first write 
$$ 
U_{\mu,\e} P_0 = \big(\uno+g(\mathring P(\mu,\e)^2 )\big)P(\mu,\e) P_0 \, . 
$$ 
Then, by \eqref{Bgotico} and using also   \eqref{BP1} with $Q = P(\mu,\e) $, we get 
\begin{equation}\label{kB1}
\begin{aligned}
{\textup{$\mathfrak{B}$}}(\mu,\e) 
& = P_0^* U_{\mu,\e}^* \cB(\mu,\e)  U_{\mu,\e} P_0 \\
& =  P_0^*  \big(\uno+g^*(\mathring P(\mu,\e)^2)\big) P(\mu,\e)^* \cB(\mu,\e) P(\mu,\e) \big(\uno+g(\mathring P(\mu,\e)^2 )\big) P_0\, .
\end{aligned}
\end{equation}
Next using 
\eqref{BP} we obtain
\begin{align}
{\textup{$\mathfrak{B}$}}(\mu,\e)  &=  P_0^*  \big(\uno+g^*(\mathring P(\mu,\e)^2)\big)  \cB(\mu,\e) P(\mu,\e) \big(\uno+g(\mathring P(\mu,\e)^2 )\big) P_0   \label{kB2}  \\
&= P_0^* \textup{$\mathbf{Sym}$}\big[
\cB(\mu,\e) P(\mu,\e) + 2\cB (\mu,\e) P(\mu,\e) g(\mathring P(\mu,\e)^2) +  g^*(\mathring P(\mu,\e)^2) \cB(\mu,\e) P(\mu,\e) g(\mathring P(\mu,\e)^2) \big] P_0\, . \notag 
\end{align}
Finally, since $P(\mu,\e) P_0 = (\uno + \mathring P(\mu,\e)) P_0 $ we write 
$$
\cB(\mu,\e) P(\mu,\e) P_0 =\cB(\mu,\e) P_{0}+ \mathring \cB(\mu,\e) \mathring P(\mu,\e) P_0 + \cB_0 \mathring P(\mu,\e)  P_0
$$
and, 
using 
 that $ g (\mathring P(\mu,\e)^2 )$
 commutes with $P_0 $ by  
 \eqref{BP1} with $Q = P_0$, 
we conclude that  
\begin{equation} \label{kB3}
\textup{$\mathfrak{B}$}(\mu,\e) = P_0^* \textup{$\mathbf{Sym}$}\big[
\cB (\mu,\e) + \mathring \cB (\mu,\e)  \mathring P (\mu,\e)   + \cR (\mu,\e)  \big] P_0  
\end{equation}
where
\begin{equation}\label{siacR}
\begin{aligned}
\cR(\mu,\e) & 
:=  P_0^* \big[ 
{\cB_0 \mathring P(\mu,\e) + 2\cB_0  g(\mathring P(\mu,\e)^2)}+ 2\mathring\cB(\mu,\e)  g(\mathring P(\mu,\e)^2) \\
& \ \ + 2\cB ( \mu,\e) \mathring P(\mu,\e) g(\mathring P(\mu,\e)^2)+  g^*(\mathring P(\mu,\e)^2) \cB(\mu,\e) P(\mu,\e) g(\mathring P(\mu,\e)^2) \big]P_0 \, .
\end{aligned}
\end{equation}
We now show that $\cR(\mu,\e) $ satisfies the properties claimed in 
\eqref{Rgappa}.
\\[1mm]
\underline{ ${\cal R}_\ell \in \mathtt{F}_\ell$.} 
It is a consequence of  Lemma  \ref{TFjprop}, \eqref{cBellell}
and the fact that
$\big[g(\mathring P(\mu,\e)^2)\big]_\ell \in \mathtt{F}_\ell$, which,
writing 
$$ 
\big[g(\mathring P(\mu,\e)^2)\big]_\ell = \sum_{k\geq 1} c_k 
\sum_{\ell_1 + \ldots + \ell_{2k} = \ell \atop \ell_1, \ldots, \ell_{2k} \geq 1} P_{\ell_1} \circ \cdots \circ P_{\ell_{2k}} 
$$ 
(the $c_k$'s are the Taylor coefficients of $ g(x) $ at $ x = 0 $), 
follows  by 
\eqref{Psani} and the composition property of
Lemma  \ref{TFjprop}. 
\\[1mm]
\noindent\underline{$\cR_0 = \cR_1 = \cR_2 =  0$.} 
We first exhibit a cancellation in the  first two terms in \eqref{siacR}. Precisely we claim that 
\begin{equation}\label{claimkB} 
 P_0^* \cB_0 \mathring P(\mu,\e) P_0 + 2 P_0^* \cB_0  g(\mathring P(\mu,\e)^2)P_0 = 2 P_0^* {\cal B}_0 {\cal Q}_{\mu,\e} \mathring P(\mu,\e) ^4 P_0  \, ,\quad {\cal Q}_{\mu,\e}  :=  \widetilde g(\mathring P(\mu,\e)^2) \, ,
\end{equation}
where $\widetilde g(x)$ is the analytic function such that $g(x) = \frac12 x + \widetilde{g}(x)x^2$.
Since the operator 
${\cal Q}_{\mu,\e} $ in \eqref{claimkB} is $\cO_4$
and all the other terms in \eqref{siacR} are at least $\cO_3$, we deduce that
$\cR_0 = \cR_1 = \cR_2 =  0$.
Moreover as above one has ${\cal Q}_\ell \in \mathtt{F}_\ell$ (argue as for ${\cal R}_\ell$).
Let us prove \eqref{claimkB}.
By definition of $\widetilde g(x)$ we have
$
g(\mathring P(\mu,\e)^2) =  \tfrac12 \mathring P(\mu,\e)^2 + {\cal Q}_{\mu,\e} \mathring P(\mu,\e)^4 \, ,
$
and, using also \eqref{BP},
\begin{equation}\label{claimkBaux}
\begin{aligned}
&  P_0^* \cB_0 \mathring P(\mu,\e) P_0 + 2 P_0^* \cB_0  g(\mathring P(\mu,\e)^2)P_0 = \\
& \underbrace{P_0^* \cB_0 P_0 \mathring P(\mu,\e) P_0 +P_0^* \cB_0     \mathring P(\mu,\e)^2  P_0 }_{=0}+ 2 P_0^* \cB_0 {\cal Q}_{\mu,\e} \mathring P(\mu,\e)^4 P_0 
\end{aligned}
\end{equation}
where the underbraced term vanishes because of the identity $P_0 \mathring P(\mu,\e) P_0 = -  \mathring P(\mu,\e)^2 P_0 $, which comes from the projector identity $P_0+\mathring P(\mu,\e) = (P_0+\mathring P(\mu,\e))^2 $.
This proves \eqref{claimkB}. 
\\[1mm]
\noindent\underline{$
{\cal R}_\ell^{[\pm \ell]} = 0$ for any $ \ell =3,\dots , \tp $.} 
We first note that, by \eqref{siacR}-\eqref{claimkB} and \eqref{BP1} with $Q = P_0$,  the operator  $\cR(\mu,\e) $ has the form
\begin{equation}\label{thisR}
\textup{${\cal R}(\mu,\e) $} = P_0^* \textup{$\mathcal{E}$}(\mu,\e) P_0 \mathring P(\mu,\e)^2 P_0 
\end{equation}
with
\begin{equation}\label{emuep}
\begin{aligned}
\mathcal{E}(\mu,\e) 
& := 2 \cB_0 \mathcal{Q}_{\mu,\e} \mathring{P}(\mu,\e)^2 + 2\mathring{\cB}(\mu,\e)\, \breve{g}(\mathring{P}(\mu,\e)^2) + 2\cB(\mu,\e) \mathring{P}(\mu,\e) \,  \breve{g}(\mathring{P}(\mu,\e)^2) \\
& \quad + g^*(\mathring{P}(\mu,\e)^2) \cB(\mu,\e) P(\mu,\e) \, \breve{g}(\mathring{P}(\mu,\e)^2) \, ,\quad \mathcal{E}_0 = 0 \, ,
\end{aligned}
\end{equation}
where $\breve{g}(x) $ is the analytic function such that $g(x) = \breve{g}(x)x $.  By Lemma \ref{TFjprop},  \eqref{def.Bj}, \eqref{def.Pj} we deduce that 
\begin{equation}\label{ecale}
\mathcal{E}_\ell \in \mathtt{F}_\ell \, , 
\qquad \ell \in \bN \, . 
\end{equation}
We now show that ${\cal R}_\ell^{[\pm \ell]} =0$,
for any 
$\ell=3,\dots, \tp$, or, equivalently,
$$
\big( {\cal R}_\ell^{[\pm \ell]} \phi_1,\phi_2\big) 
\stackrel{\eqref{thisR}} = \big(P_0^* {\cal R}_\ell^{[\pm \ell]}P_0 \phi_1,\phi_2\big) = \big( {\cal R}_\ell^{[\pm \ell]}P_0 \phi_1, P_0 \phi_2\big)  = 0 \, ,\quad \forall \phi_1,\phi_2 \in L^2(\bT)\, ,
$$
which boils down to prove that, for any  $\ell=3,\dots, \tp$,
\begin{equation}\label{claimRvanish}
({\cal R}_\ell^{[\pm\ell]} f_0^-, f_0^- )=0\, ,\quad ({\cal R}_\ell^{[\pm\ell]} f_{\tp}^+, f_{\tp}^+ )=0\,, \quad ({\cal R}_\ell^{[\pm\ell]} f_{\tp}^+, f_{0}^- )=0\,,  \quad ({\cal R}_\ell^{[\pm\ell]} f_{0}^-, f_{\tp}^+ )=0  \, .
\end{equation}  
Since ${\cal R}_\ell \in \mathtt{F}_\ell$, by directly 
 comparing the 
 harmonics in the left and right vectors
 of the scalar product, 
 all these identities are trivial 
except for $({\cal R}_\tp^{[\tp]} f_{0}^-, f_{\tp}^+ )=0 $ and $({\cal R}_\tp^{[-\tp]} f_{\tp}^+, f_{0}^- )=0 $. 
In view of \eqref{thisR}
and Lemma \ref{TFjprop}  
we have 
\begin{equation}
\label{Rellell}
\big( {\cal R}_\tp^{[\tp]}f_0^-,f_\tp^+\big)  = \sum_{\ell_1+\ell_2+\ell_3=\tp \atop \ell_1, \ell_2, \ell_3 \geq 1} \big( \mathcal{E}_{\ell_1}^{[\ell_1]} 
\underbrace{P_0 P_{\ell_2}^{[\ell_2]} P_{\ell_3}^{[\ell_3]} f_0^-}_{=: \phi },f_\tp^+  \big)\, .   
\end{equation}
Since $\phi = \alpha f_0^- + \beta f_\tp^+ \in {\cal V}_{\umu,0} $ and, by  \eqref{ecale}, 
$\mathcal{E}_{\ell_1}^{[\ell_1]} \in \mathtt{F}_{\ell_1} $ 
with $
1 \leq \ell_1 \leq\tp-1 $, the scalar product in \eqref{Rellell} vanishes.
Similarly $({\cal R}_\tp^{[-\tp]} f_{\tp}^+, f_{0}^- )=0 $. Thus \eqref{claimRvanish} and consequently  the last identity in \eqref{Rgappa} hold. 
\end{proof}

In the next lemma we 
simplify the first terms in \eqref{kBfinalform}. 
\begin{lem} One has
\begin{equation}\label{tuttosym}
P_0^* \textup{$\mathbf{Sym}$}\big[ \cB(\mu,\e) + \mathring \cB(\mu,\e) \mathring P(\mu,\e) \big] P_0 = P_0^*\big( \cB(\mu,\e) + \mathring \cB(\mu,\e) 
\mathring P(\mu,\e) \big) P_0 \, .
\end{equation}
\end{lem}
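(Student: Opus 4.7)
The asserted equality is equivalent to the vanishing of $P_0^*(A-A^*)P_0$, where $A:=\cB(\mu,\e)+\mathring{\cB}(\mu,\e)\mathring{P}(\mu,\e)$. Since $\cB$ and $\mathring{\cB}$ are self-adjoint, $A-A^*=\mathring{\cB}\mathring{P}-\mathring{P}^*\mathring{\cB}$, so the task reduces to showing that $P_0^*\mathring{\cB}\mathring{P}P_0$ is self-adjoint.

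To do so I would perform a purely algebraic reduction. Expanding $\mathring{\cB}\mathring{P}=(\cB-\cB_0)(P-P_0)$ and using \eqref{BP} both at $(\mu,\e)$ (hence $\cB P=P^*\cB$) and at $(\umu,0)$ (hence $\cB_0 P_0=P_0^*\cB_0$), together with the projector identity $P^2=P$ in the form $P_0 P P_0=P_0-\mathring{P}^2 P_0$ (equivalent to $P_0\mathring{P}P_0=-\mathring{P}^2 P_0$), one obtains the clean decomposition
$$P_0^*\mathring{\cB}\mathring{P}P_0 \;=\; P_0^*\cB P P_0 \;-\; P_0^*\cB P_0 \;+\; \cB_0\mathring{P}^2 P_0.$$
The first summand is self-adjoint thanks to $P^*\cB=\cB P$ and the second by the self-adjointness of $\cB$, so the problem collapses to the self-adjointness of $\cB_0\mathring{P}^2 P_0$. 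Using once more $\cB_0 P_0=P_0^*\cB_0$ and the commutation $\mathring{P}^2 P_0=P_0\mathring{P}^2$ from \eqref{BP1}, this translates into $(\cB_0\mathring{P}^2-\mathring{P}^{*2}\cB_0)P_0=0$. Switching to the Hamiltonian language via $\cB_0=-\cJ\cL_0$ and $\mathring{P}^{*2}=\cJ\mathring{P}^2\cJ^{-1}$ (the squared form of $\cJ P=P^*\cJ$ from \eqref{propPU}), this further collapses, since $\cJ$ is invertible, to the single identity
$$[\cL_0,\mathring{P}^2]\,P_0 \;=\; 0.$$

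This last identity is the only genuinely substantive step, and it is where I expect the main difficulty to lie. I would establish it from two inputs. First, by Lemma~\ref{thm:unpert.coll} the double eigenvalue $\im\omega_*^{(\tp)}$ is \emph{semisimple}: the plane $\cV_{\umu,0}$ is spanned by the two independent eigenvectors $f_0^-$, $f_\tp^+$ of $\cL_0$ sharing the same eigenvalue, so $\cL_0$ restricts to the scalar $\im\omega_*^{(\tp)}\,\mathrm{Id}$ on $\cV_{\umu,0}$. Second, expanding the projector identity $P^2=P$ as $\mathring{P}^2+\mathring{P}P_0+P_0\mathring{P}=\mathring{P}$ and evaluating on $w\in\cV_{\umu,0}$ (on which $P_0$ acts as the identity) yields $\mathring{P}^2 w=-P_0\mathring{P}w\in\cV_{\umu,0}$, so $\mathring{P}^2 P_0$ maps $X$ into $\cV_{\umu,0}$. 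Combining the two observations, for every $v\in X$ one has $\cL_0\mathring{P}^2 P_0 v=\im\omega_*^{(\tp)}\mathring{P}^2 P_0 v$ by the scalar action on $\cV_{\umu,0}$, and likewise $\mathring{P}^2\cL_0 P_0 v=\mathring{P}^2(\im\omega_*^{(\tp)} P_0 v)=\im\omega_*^{(\tp)}\mathring{P}^2 P_0 v$ by the scalar action on $\mathrm{Rg}(P_0)=\cV_{\umu,0}$. The two coincide, so $[\cL_0,\mathring{P}^2]P_0=0$, concluding the proof. The crucial spectral ingredient is the semisimplicity of $\im\omega_*^{(\tp)}$ provided by Lemma~\ref{thm:unpert.coll}; a Jordan block for $\cL_0$ on $\cV_{\umu,0}$ would, in general, obstruct the vanishing of the commutator.
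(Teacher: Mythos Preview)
Your proof is correct and rests on the same spectral ingredient as the paper's: the semisimplicity of the double eigenvalue, i.e.\ $\cL_{\umu,0}$ acts as the scalar $\im\omega_*^{(\tp)}$ on $\mathrm{Rg}(P_0)$. The paper packages this fact as the operator identity $P_0^*\cB_0=-\im\omega_*^{(\tp)}P_0^*\cJ$ and then, instead of reducing to a commutator, expands directly
\[
P_0^*\big(\cB+\mathring{\cB}\mathring{P}\big)P_0=P_0^*\big(\cB P+\cB_0 P_0-\cB_0 P\big)P_0,
\]
observing that the first two summands are self-adjoint by \eqref{BP} and that the third satisfies $P_0^*\cB_0 P P_0=P_0^*P^*\cB_0 P_0$ thanks to the scalar identity combined with $\cJ P=P^*\cJ$. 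Your route through $[\cL_0,\mathring{P}^2]P_0=0$ is an equivalent reformulation of this last step (indeed $P_0^*\cB_0 P_0-P_0^*\cB_0 P P_0=\cB_0\mathring{P}^2 P_0$ after the same projector manipulation you use), so the two arguments differ only in bookkeeping; the paper's is marginally shorter since it avoids unpacking $\cB_0$ back into $\cJ\cL_0$.
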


\begin{proof}
We begin by proving the  identities
\begin{equation}\label{newidentities}
P_0^* \textup{$\cB_0$} = -\im \omega_*^{(\tp)} P_0^* \textup{$\cJ$}\,, \qquad P_0^*\textup{$\cB_0$} P(\mu,\e) P_0 = P_0^* P(\mu,\e)^* \textup{$\cB_0$} P_0\, .
\end{equation}
The first identity descends from the fact that,
for any $ f,g $ in $ L^2(\bT,\bC^2)$,  
$$
\big(P_0^*\textup{$\cB_0$} f, g\big) =\big( f, \textup{$\cB_0$} P_0 g\big) = \big(\cJ  f,  \cL_{\umu,0} P_0 g\big) = - \im \omega_*^{(\tp)}   \big( \cJ  f,  P_0 g\big) = -\im \omega_*^{(\tp)}  \big( P_0^* \cJ  f, g\big)\,  . 
$$
The second identity descends from the first one, since 
$$ 
\begin{aligned}
P_0^* \cB_0 P(\mu,\e) P_0 
= -\im \omega_*^{(\tp)}  P_0^* \cJ P(\mu,\e) P_0
& \stackrel{\eqref{propPU}}{=} P_0^* P(\mu,\e)^*( -\im \omega_*^{(\tp)}  P_0^* \cJ   ) \\
& = P_0^* P(\mu,\e)^* P_0^* \cB_0 \stackrel{\eqref{BP}}{=}  P_0^* P(\mu,\e)^*\cB_0 P_0\, . 
\end{aligned}
$$
We use \eqref{newidentities} to deduce that $P_0^*\big( \cB(\mu,\e) + \mathring \cB(\mu,\e) \mathring 
P(\mu,\e) \big) P_0 $ 
is selfadjoint. Indeed
$$
\begin{aligned}
P_0^*\big( \cB(\mu,\e) + \mathring \cB(\mu,\e) \mathring P(\mu,\e) \big) P_0 
& = P_0^* \big(\cB(\mu,\e) + \cB(\mu,\e)  P(\mu,\e) +  
\cB_0 P_0 - \cB_0 P(\mu,\e) - \cB (\mu,\e)  P_0 \big) P_0 \\
& = P_0^* \big( \overbrace{\cB (\mu,\e)  P(\mu,\e)}^{(\mathrm{I})}+  \overbrace{\cB_0 P_0}^{(\mathrm{II})} - \overbrace{\cB_0 P(\mu,\e)}^{(\mathrm{III})} \big) P_0\, . 
\end{aligned}
$$
The terms (\textrm{I}) and (\textrm{II}) are self-adjoint by \eqref{BP}.
Also the operator $P_0^* (\mathrm{III})P_0$ is self-adjoint by the second identity in \eqref{newidentities}. This proves \eqref{tuttosym}.
\end{proof}

\smallskip
\noindent{\it Proof of  Proposition \ref{lem:expBgot}}.  By \eqref{kBfinalform} and \eqref{tuttosym} we have 
$$
\mathfrak{B}(\mu,\e) =  P_0^* \big(\cB(\mu,\e) + 
(\cB(\mu,\e) - \cB(\umu,0))(P(\mu,\e) - P(\umu,0)) +  {\cal N}(\mu,\e) \big) P_0 
$$
where 
$ {\cal N}(\mu,\e) := \textup{$\mathbf{Sym}$}[{\cal R}(\mu,\e)] $. 
Consequently the jets 
$ \mathfrak{B}_\ell $ of $ \mathfrak{B}(\mu,\e) $ have the form in \eqref{kBj} and, by  Lemma \ref{TFjprop},  the fact that  $\cB_{\ell'},P_{\ell'} \in \mathtt{F}_{\ell'} $
for any $ 1
\leq \ell' \leq \ell $
(cfr. \eqref{cBellell}, \eqref{Psani}), 
and \eqref{Rgappa} we conclude that  $\mathfrak{B}_\ell\in  \mathtt{F}_{\ell}$. 
By \eqref{Rgappa} the operator ${\cal N}(\mu,\e) $ fulfills \eqref{harmonic-gap}, using also Lemma \ref{TFjprop} for the last identity. \qed

\subsection{
Proof of the weak conjecture
 \eqref{weakconj}} 
 \label{sec:up}

We now start proving  that the entries of the matrix $\tL^{(\tp)}(\mu,\e)$ given by \eqref{tocomputematrix} 
satisfy Assumption \ref{assH} 
as stated in Theorem \ref{lem:expansionL}.

\begin{lem}\label{lem:parityofentries}
The entries of the matrix  $\tB^{(\tp)}(\mu,\e)$ in \eqref{tocomputematrixB} satisfy the parity properties 
\eqref{paritaentri}. 
\end{lem}

\begin{proof}
On $ B_{\delta_0} ( \umu)
\times B_{\epsilon_0} (0) $
we have the Taylor expansions
\begin{equation}\label{apinep}
\alpha^{(\tp)}(\mu,\e) =
\big(\mathfrak{B}(\mu,\e) f_\tp^+, f_\tp^+ \big) 
= \sum_{n \geq 0}
a^{(\tp)}_n(\mu) \e^n
= \sum_{i,n \geq 0}
a_{i,n}^{(\tp)} \delta^i \e^n 
\end{equation}
where, by \eqref{Bijs} and 
recalling Definition \ref{defFell},  
$$
a_{i,n}^{(\tp)} = 
\big(\mathfrak{B}_{i,n} f_\tp^+, f_\tp^+ \big)  
= 0 
\qquad\text{if } n \text{ is odd} \, ,  \quad \forall i \in \bN_0 \, . 
$$
This proves that $ a^{(\tp)}_n (\mu) \equiv 0 $ for any $ n $ odd, namely 
$ \e \mapsto \alpha^{(\tp)}(\mu,\e)$ is even. Similarly
we deduce that 
$ \epsilon \mapsto \gamma^{(\tp)}(\mu,\e) =
\big(\mathfrak{B}(\mu,\e) f_0^+, f_0^+ \big) $
is even. 
Furthermore
\begin{equation}\label{betaesp}
\beta^{(\tp)}(\mu,\e) =
- \im
\big(\mathfrak{B}(\mu,\e) f_0^-, f_\tp^+ \big) 
= \sum_{n \geq 0}
b^{(\tp)}_n (\mu) \e^n 
= \sum_{i,n \geq 0}
b_{i,n}^{(\tp)} \delta^i \e^n 
\end{equation}
where, by \eqref{Bijs} and 
recalling Definition \ref{defFell} and \eqref{AFlele}, 
\begin{equation}\label{offdiagonalparity}
b_{i,n}^{(\tp)} = 
-\im \big(\mathfrak{B}_{i,n} f_0^-, f_\tp^+ \big)  
\stackrel{\eqref{Bijs}}= 0 
\qquad\text{if } n \not\equiv \tp \mod 2 \, ,
\quad \forall i \in \bN_0 \, . 
\end{equation}  
This proves that the function 
$ \e \mapsto \beta^{(\tp)}(\mu,\e)$ has the   parity of  $\tp$. The lemma is proved. 
\end{proof}

We now prove 
that $\beta^{(\tp)}(\mu,\e)$  vanish up to order $\tp-1$ in $ \epsilon $ 
as well as the order
$ \epsilon^{\tp+1}$.

\begin{lem} 
\label{lem:tBp0} 
Expansion \eqref{ciochevoglio} holds with $\beta_1^{(\tp)}(\tth)\e^\tp :=  -\im \big( \mathfrak{B}_{0,\tp} f_0^-,f_\tp^+ \big)  $.
\end{lem}

\begin{proof}
On $ B_{\delta_0} ( \umu)
\times B_{\epsilon_0} (0) $
we have the Taylor expansion
\eqref{betaesp}
\begin{equation}\label{espbetap}
\beta^{(\tp)}(\mu,\e) 
= \sum_{n =0}^{\tp-1}
b^{(\tp)}_n (\mu) \e^n 
+  
b^{(\tp)}_\tp (\mu) \e^\tp +
b^{(\tp)}_{\tp+1} (\mu) \e^{\tp+1} + r(\epsilon^{\tp+2}) \, ,
\end{equation}
where, for any $ n  $, 
\begin{equation}
\label{svilbetaij}
b_n^{(\tp)} (\mu) 
= \sum_{i \geq 0}
b_{i,n}^{(\tp)} \delta^i  \, , \qquad
b_{i,n}^{(\tp)} = 
- \im \big(\mathfrak{B}_{i,n} f_0^-, f_\tp^+ \big) \, . 
\end{equation}
Now, since $ f_0^- $, $ f_{\tp}^+ $ in \eqref{autovettorikernel0}   are 
Fourier supported respectively on the $0 $ and the $ \tp $ harmonic, 
using \eqref{Bijs},   
\eqref{AFlele} and \eqref{offdiagonalparity}
we have that 
the functions 
\begin{equation}\label{fanno0sottoep+1}
b^{(\tp)}_n (\mu) \equiv 0 \, , \quad \forall  
n < \tp \, , \qquad
b^{(\tp)}_{\tp+1} (\mu) 
\equiv 0 \, .
\end{equation}
By \eqref{espbetap}
and \eqref{fanno0sottoep+1}
we deduce that 
$$
\beta^{(\tp)}(\umu+\delta,\e) =
-\im \big(\mathfrak{B}(\mu,\e)  f_0^-,f_\tp^+ \big) = 
b^{(\tp)}_\tp(\umu+\delta ) \e^\tp   +
r(\e^{\tp+2}) = 
\beta_1^{(\tp)}(\tth)\e^\tp 
+ r( \delta \e^{\tp}, \e^{\tp+2}) \, ,
$$
where
$ \beta_1^{(\tp)}(\tth)\e^\tp 
:= b_{\tp}^{(\tp)} (\umu) \e^\tp = b_{0,\tp}^{(\tp)} \e^\tp = 
-\im \big( \mathfrak{B}_{0,\tp} f_0^-,f_\tp^+ \big) $ by 
\eqref{svilbetaij}.
 The lemma is proved.  
\end{proof}

We  now turn our attention to the second part of Theorem \ref{lem:expansionL}.

\begin{lem}
The expansions  \eqref{abc}, \eqref{omegappomega0m} and \eqref{a1g1} hold. 
\end{lem}

\begin{proof}
By   \eqref{unperturbed} and the Taylor expansion 
\eqref{apinep}  we have  
$$
\begin{aligned}
\alpha^{(\tp)}(\mu,\e) 
& = 
-\omega_\tp^+(\umu+\delta)
+ a^{(\tp)}_2
(\umu+\delta) \e^2 + r(\e^4) \\
\gamma^{(\tp)}(\mu,\e) 
& = 
\omega_0^-(\umu+\delta)
+ \gamma^{(\tp)}_2
(\umu+\delta) \e^2 + r(\e^4)
\end{aligned}
$$
proving \eqref{abc} after Taylor expanding
$ a^{(\tp)}_2
(\umu+\delta)$, $ \gamma^{(\tp)}_2
(\umu+\delta) $ at $\delta = 0$ and denoting 
 $ \alpha^{(\tp)}_2
(\tth) := a^{(\tp)}_2
(\umu) $.

We deduce 
\eqref{omegappomega0m} 
with 
$$
\begin{aligned}
& \alpha_1^{(\tp)}(\tth) :=  -( \pa_\mu \omega^+ )(\tp+\umu,\tth ) \stackrel{\eqref{omeghino}}{=} -\ch + (\pa_\varphi \Omega)(\tp+\umu,\tth) \, , \\
& \gamma_1^{(\tp)}(\tth) :=  ( \pa_\mu \omega^- )(\umu,\tth ) \stackrel{\eqref{omeghino}}{=} \ch + (\pa_\varphi \Omega)(\umu,\tth)\, .
\end{aligned}
$$
Note that, for any 
$ \tth > 0 $,  by \eqref{boomerang}
$$
\begin{aligned}
& \alpha_1^{(\tp)}(\tth)+\gamma_1^{(\tp)}(\tth)  = (\pa_\varphi \Omega)(\umu+\tp,\tth) +(\pa_\varphi \Omega)(\umu,\tth) >
 0 \\
& \gamma_1^{(\tp)}
(\tth) -\alpha_1^{(\tp)}
(\tth) 
=  
2 \ch + (\pa_\varphi \Omega)(\umu,\tth)-
(\pa_\varphi \Omega)(\umu+\tp,\tth)
>  2 \ch
\end{aligned}
$$ 
because 
 the function $x \mapsto (\pa_\varphi \Omega)(\umu +x,\tth)$ is strictly decreasing, 
 by \eqref{boomerang}. 
  Thus 
 also \eqref{a1g1} is proved.
 \end{proof}

We now have to compute the explicit expression \eqref{beta1exp} 
of the first leading term $ \beta_1^{(\tp)} (\tth) \e^\tp  $.  

\subsection{Formula for  $\beta_1^{(\tp)} (\tth)$}\label{Sec:entcoeff}

Let us consider depths $ \tth > 0 $ 
such that $ \umu = \uphi(\tp,\tth)$ in Lemma \ref{collemma} is {\it not} an integer and 
the eigenvectors $ \{ f_j^{\sigma} \}_{j \in \bZ, \sigma = \pm } $, 
$ f_j^{\sigma} = f_j^{\sigma} (\umu )$ in 
\eqref{def:fsigmaj}  form a 
complex symplectic {\it basis} of $L^2(\bT,\bC^2)$, cfr.
Remark \ref{rmk:depth}. In Lemma \ref{actionofL}  
we represent the action of the jets of the 
 Kato projectors 
 in this eigenfunction  basis, 
 and not just the exponential basis (of course these basis are deeply related). This is very convenient for
 the computation of $ \beta_1^{(\tp)} (\tth) $.
 
We also remark that this is with no loss of generality. 
Indeed, since the function $ \tth \mapsto \uphi(\tp,\tth)$ is non zero, real-analytic and bounded (see Lemma \ref{collemma}), 
the excluded depths are isolated and form a closed set, thus its complementary is dense in 
$ \tth > 0 $. In particular, the formula for $ \beta_1^{(\tp)}(\tth)$ in \eqref{beta1exp}, although proved for $\tth $ in a dense set of $(0,+\infty) $, holds for {\it all} $\tth>0$ by continuity
(the function $ \beta_1^{(\tp)}(\tth)$ is actually analytic in $ \tth > 0 $). 

The next lemma, proved in \cite[Lemma 4.1]{BMV4} (Item ($ii$) is formula   \cite[(4.19)]{BMV4}),  provides effective formulas to compute the action of the operators $ {\cal P}[{\cal B}_{\ell_1},\dots,{\cal B}_{\ell_q}]$  on the 
eigenvector basis.
\begin{lem}\label{actionofL}
Let $\ent{\ell}{\kappa}{j'}{j}{\sigma'}{\sigma}$ denote the entanglement coefficients in \eqref{entcoeff} and $f_j^\sigma:=f_j^\sigma(\umu)$ in \eqref{def:fsigmaj}, with $ \umu\in \bR\setminus \bZ$. Then:\\
{\it (i)} for any $q\in \bN$, $\ell_1,\dots, \ell_q \in \bN $ integers  $ j, j',  \kappa_1,\dots,\kappa_q\in \bZ $
and $ \sigma, \sigma' = \pm $,  one has 
\begin{align}\label{cBactspar}
&\big( \cB_{0,\ell_{q+1}}^{[\kappa_{q+1}]}\mathcal{P}[\cB_{0,\ell_q}^{[\kappa_q]},\dots,\cB_{0,\ell_1}^{[\kappa_1]}]  f_j^\sigma , f_{j'}^{\sigma'}\big)  \\ \notag 
&\qquad = \sum_{\sigma_1,\dots,\sigma_q = \pm } \sigma_1\dots\sigma_q \ent{\ell_1}{\kappa_1}{j_1}{j}{\sigma_1}{\sigma}  \ent{\ell_2}{\kappa_2}{j_2}{j_1}{\sigma_2}{\sigma_1} \dots \ent{\ell_q}{\kappa_q}{j_q}{j_{q-1}}{\sigma_q}{\sigma_{q-1}} \ent{\ell_{q+1}}{\kappa_{q+1}}{j'}{j_q}{\sigma'}{\sigma_q} \Res{j, &j_1, &\dots, &j_q}{\sigma, &\sigma_1, &\dots, &\sigma_q}  
\end{align}
where $j_1 := j+\kappa_1$, $j_2 := j_1+\kappa_2$, \dots, $j_q=j_{q-1}+\kappa_q $ and
\begin{equation}\label{generalresidue}
 \Res{j, &j_1, &\dots, &j_q}{\sigma, &\sigma_1, &\dots, &\sigma_q} := \dfrac{ (-\im\!)^q }{2\pi\im} \oint_\Gamma  \dfrac{ \de \lambda }{(\lambda-\im \omega_j^\sigma)(\lambda-\im \omega_{j_1}^{\sigma_1})\dots (\lambda-\im \omega_{j_q}^{\sigma_q}) } \, ,
 \end{equation}
where $\Gamma$ is a circuit winding once around $\im \omega_*^{(\tp)}$
counterclockwise 
(here $ \omega_*^{(\tp)} := \omega_*^{(\tp)}(\tth) $ is given in \eqref{Kreincollision})
and $\omega_{j}^{\sigma}$ are in \eqref{omegajsigma}.
\\[1mm]
(ii)
Let $j_1,\dots,j_q\in \bZ$ and $\sigma_1,\dots,\sigma_q=\pm$, with $(j_1,\sigma_1),\dots,(j_q,\sigma_q)\notin \{(0,-),(\tp,+)\}$. Then
\begin{equation}\label{residuevalue}
\Res{0, &j_1, &\dots, &j_q}{-, &\sigma_1, &\dots, &\sigma_q}= \Res{p, &j_1, &\dots, &j_q}{+, &\sigma_1, &\dots, &\sigma_q}= 
\dfrac{ 1}{ ( \omega_{j_1}^{\sigma_1}-\omega_*^{(\tp)})\ldots( \omega_{j_q}^{\sigma_q}-\omega_*^{(\tp)} )} 
\end{equation}
with $\omega_j^\sigma$ in \eqref{omegajsigma} and $ \omega_*^{(\tp)} = \omega_*^{(\tp)}(\tth) $ in \eqref{Kreincollision}.
\end{lem}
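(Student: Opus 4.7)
The plan is to diagonalize the resolvent $(\cL_{\umu,0}-\lambda)^{-1}$ in the complex symplectic basis $\{f_j^\sigma\}_{j\in\bZ,\sigma=\pm}$ of $L^2(\bT,\bC^2)$, which is indeed a basis because $\umu=\uphi(\tp,\tth)\notin\bZ$ by Remark \ref{rmk:depth}, and to track how successive applications of $\cJ\cB_{0,\ell_r}^{[\kappa_r]}$ produce chains of entanglement coefficients. From the symplectic pairing \eqref{sympbas} any $h\in L^2(\bT,\bC^2)$ expands as $h=\sum_{k,\tau}\tau\im\,(\cJ h,f_k^\tau)\,f_k^\tau$, while by \eqref{omeghino}--\eqref{def:fsigmaj} the resolvent acts diagonally $(\cL_{\umu,0}-\lambda)^{-1}f_j^\sigma=(\im\omega_j^\sigma-\lambda)^{-1}f_j^\sigma$.

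The core computation is the action of a single $\cJ\cB_{0,\ell}^{[\kappa]}$ on a basis vector. Plugging $h=\cJ\cB_{0,\ell}^{[\kappa]}f_j^\sigma$ into the above expansion, using $\cJ^2=-\mathrm{Id}$, Definition \ref{entanglementcoefficients}, and the vanishing of $\ent{\ell}{\kappa}{j'}{j}{\sigma'}{\sigma}$ whenever $j'\neq j+\kappa$ from \eqref{entprop}, one obtains
\begin{equation*}
\cJ\,\cB_{0,\ell}^{[\kappa]}\,f_j^\sigma \;=\; -\!\!\sum_{\sigma'=\pm}\!\sigma'\,\im\, \ent{\ell}{\kappa}{j+\kappa}{j}{\sigma'}{\sigma}\, f_{j+\kappa}^{\sigma'} \, .
\end{equation*}
Each factor $\cJ\cB_{0,\ell_r}^{[\kappa_r]}$ therefore shifts the mode index by $\kappa_r$, sums over a new Krein sign $\sigma_r$, and contributes one entanglement coefficient together with a factor $-\sigma_r\im$. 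Interlacing this with the resolvents, which insert diagonal factors $(\im\omega_{j_r}^{\sigma_r}-\lambda)^{-1}$, and iterating $q$ times inside the integrand of \eqref{hP}, I would obtain
\begin{equation*}
\mathcal{P}[\cB_{0,\ell_q}^{[\kappa_q]},\dots,\cB_{0,\ell_1}^{[\kappa_1]}]\,f_j^\sigma = \frac{(-1)^{q+1}(-\im)^q}{2\pi\im}\!\sum_{\sigma_1,\dots,\sigma_q=\pm}\!\sigma_1\cdots\sigma_q\,\prod_{r=1}^q\!\ent{\ell_r}{\kappa_r}{j_r}{j_{r-1}}{\sigma_r}{\sigma_{r-1}}\!\oint_\Gamma\!\frac{f_{j_q}^{\sigma_q}\,\de\lambda}{\prod_{r=0}^q(\im\omega_{j_r}^{\sigma_r}-\lambda)},
\end{equation*}
with the convention $(j_0,\sigma_0)=(j,\sigma)$, $j_r:=j_{r-1}+\kappa_r$. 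A final application of $\cB_{0,\ell_{q+1}}^{[\kappa_{q+1}]}$ paired with $f_{j'}^{\sigma'}$ furnishes the last entanglement coefficient $\ent{\ell_{q+1}}{\kappa_{q+1}}{j'}{j_q}{\sigma'}{\sigma_q}$. Sign and phase bookkeeping then reduces to noting that the $q+1$ factors $(\im\omega-\lambda)=-(\lambda-\im\omega)$ in the denominator produce a sign $(-1)^{q+1}$ which combines with the $(-1)^{q+1}$ in \eqref{hP} and the $(-\im)^q$ from the $\cJ\cB$ chain into precisely the prefactor $(-\im)^q$ of \eqref{generalresidue}, thus proving \eqref{cBactspar}.

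For (ii) I would apply the residue theorem to the contour integral \eqref{generalresidue}. Under the hypothesis $(j_r,\sigma_r)\notin\{(0,-),(\tp,+)\}$ for $r=1,\ldots,q$, Lemma \ref{thm:unpert.coll} ensures that none of the points $\im\omega_{j_r}^{\sigma_r}$ is enclosed by $\Gamma$; the only simple pole of the integrand inside $\Gamma$ is thus $\im\omega_0^-=\im\omega_*^{(\tp)}$ (respectively $\im\omega_\tp^+=\im\omega_*^{(\tp)}$ in the second case), whose residue produces $\prod_{r=1}^q(\im\omega_*^{(\tp)}-\im\omega_{j_r}^{\sigma_r})^{-1}$. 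Multiplying by $(-\im)^q$ yields exactly $\prod_r(\omega_{j_r}^{\sigma_r}-\omega_*^{(\tp)})^{-1}$, as claimed in \eqref{residuevalue}. The only genuinely delicate part of the whole argument is the sign/phase accounting in (i); everything else is a direct application of the spectral diagonalization in the symplectic basis and of the residue theorem.
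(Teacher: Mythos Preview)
Your proof is correct and follows precisely the natural approach: diagonalize the resolvent in the symplectic basis $\{f_j^\sigma\}$, use the expansion $h=\sum_{k,\tau}\tau\im(\cJ h,f_k^\tau)f_k^\tau$ (which you verified correctly from \eqref{sympbas}) to compute the action of each $\cJ\cB_{0,\ell}^{[\kappa]}$, iterate through the chain, and then evaluate the contour integral by residues. The sign and phase bookkeeping---the $(-\sigma_r\im)$ factors from the $\cJ\cB$ steps, the $(-1)^{q+1}$ from flipping $(\im\omega-\lambda)$ to $(\lambda-\im\omega)$ in the $q+1$ denominators, and the $(-1)^{q+1}$ prefactor of \eqref{hP}---is handled correctly and collapses to the $(-\im)^q$ of \eqref{generalresidue}. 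The paper itself does not prove this lemma but defers to \cite[Lemma~4.1]{BMV4}; your argument is essentially what that reference contains.
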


Recall that in Lemma \ref{lem:tBp0} we have proved that expansion \eqref{ciochevoglio} holds.
We now compute explicitly the coefficient $\im \beta_1^{(\tp)}(\tth) \e^\tp $.

\begin{lem}
\label{lem:tBp}  The coefficient $\beta_1^{(\tp)}
(\tth) $  in \eqref{ciochevoglio} is the function defined 
in \eqref{beta1exp}.
\end{lem}

\begin{proof} By Lemma \ref{lem:tBp0} we have $
( \textup{$\mathfrak{B}$}_{0,\tp} f_0^-, f_{\tp}^+ ) = ( \textup{$\mathfrak{B}$}_{0,\tp}^{[\tp]} f_0^-, f_{\tp}^+ )$. 
By \eqref{kBj}, \eqref{harmonic-gap}, and using that $\cB_\ell, P_\ell \in \mathtt{F}_\ell$, 
\begin{align}\notag
\im \beta_1^{(\tp)} \e^\tp &\,\;\,=\;\,\, ( \cB_{0,\tp}^{[\tp]} f_0^-, f_{\tp}^+ ) + ( [\cB_{0,\tp-1} P_{0,1}]^{[\tp]} f_0^-, f_{\tp}^+ ) + \dots + ([ \cB_{0,1} P_{0,\tp-1}]^{[\tp]} f_0^-, f_{\tp}^+ ) \\ \label{auxmax}
&\stackrel{\text{Lemma} \,  \ref{TFjprop} (i), (ii)}{=}
( \cB_{0,\tp}^{[\tp]} f_0^-, f_{\tp}^+ )
+ ( \cB_{0,\tp-1}^{[\tp-1]}P_{0,1}^{[1]} f_0^-, f_{\tp}^+ ) + \dots + ( \cB_{0,1}^{[1]}P_{0,\tp-1}^{[\tp-1]} f_0^-, f_{\tp}^+ ) \notag \, .
\end{align}
We now express it in terms of the entanglement coefficients in \eqref{entcoeff}. We have
\begin{align*}
\im \beta_1^{(\tp)}\e^\tp & =  \ent{\tp}{\tp}{\tp}{0}{+}{-} + \sum_{i=1}^{\tp-1}  (\cB_{0,\tp -i}^{[\tp-i]} P_{0,i}^{[i]}f_0^-,f_{\tp}^+) \\
 &\stackrel{\eqref{Psani1}}{=} \ent{\tp}{\tp}{\tp}{0}{+}{-} + \sum_{i=1}^{\tp-1} \sum_{q=1}^{i}\sum_{\ell_1+\dots+\ell_q=i} (\cB_{0,\tp-i}^{[\tp-i]}  \mathcal{P}[\cB_{\ell_q},\dots,\cB_{\ell_1}]_{0,i}^{[i]}f_0^-,f_{\tp}^+)  \\
& =  \ent{\tp}{\tp}{\tp}{0}{+}{-} + \sum_{i=1}^{\tp-1} \sum_{q=1}^{i}\sum_{\ell_1+\dots+\ell_q=i} (\cB_{0,\tp -i}^{[\tp-i]}\mathcal{P}[\cB_{0,\ell_q}^{[\ell_q]},\dots,\cB_{0,\ell_1}^{[\ell_1]}]f_0^-,f_{\tp}^+)  
\end{align*}
where in the last equality  we used formula \eqref{hPbis} in which    $\kappa_a \leq \ell_a$ (since  $\cB_{0,\ell_a} \in \mathfrak{F}_{\ell_a}$), and here the only possibility is that 
$ \kappa_a = \ell_a$ for $a = 1, \ldots, q$.

We now use Lemma \ref{actionofL} to compute explicitly the terms in the sum above.
\begin{align*}
\im \beta_1^{(\tp)}\e^\tp   &\stackrel{\eqref{cBactspar}}{=}\ent{\tp}{\tp}{\tp}{0}{+}{-} + \sum_{i=1}^{\tp-1} \sum_{q=1}^{i}\sum_{\substack{\ell_1+\dots+\ell_q=i\\ \sigma_1,\dots,\sigma_q = \pm}} \!\!\!\!\! \sigma_1\dots\sigma_q   \Res{0,\ell_1,\dots,\ell_q}{-,\sigma_1,\dots,\sigma_q} \, \ent{\ell_1}{\ell_1}{j_1}{0}{\sigma_1}{-}  \ent{\ell_2}{\ell_2}{j_2}{j_1}{\sigma_2}{\sigma_1} \dots \ent{\ell_q}{\ell_q}{j_q}{j_{q-1}}{\sigma_q}{\sigma_{q-1}} \ent{\tp-i}{\tp-i}{\tp}{j_q}{+}{\sigma_q} 
\end{align*}
with 
$$
j_1 := \ell_1 \, , \ 
j_2 := j_1+\ell_2 = \ell_1+\ell_2 \, , \ 
\dots \, , \ j_q:=j_{q-1}+\ell_q= \ell_1 +\dots +\ell_q =i \, .
$$
Thus 
$ 1 \leq j_1 < \ldots < j_q \leq\tp- 1 $ and
$(j_1,\sigma_1),\dots,(j_q,\sigma_q)\notin \{(0,-),(\tp,+)\}$ so that, 
using \eqref{residuevalue},  we have
 \begin{equation}
\im \beta_1^{(\tp)} \e^\tp= \ent{\tp}{\tp}{\tp}{0}{+}{-} + \sum_{i=1}^{\tp-1} \sum_{q=1}^{i}\sum_{\substack{\ell_1+\dots+\ell_q=i\\ \sigma_1,\dots,\sigma_q = \pm}} \!\!\!\!\!  \sigma_1\dots\sigma_q \dfrac{\ent{\ell_1}{\ell_1}{j_1}{0}{\sigma_1}{-}  \ent{\ell_2}{\ell_2}{j_2}{j_1}{\sigma_2}{\sigma_1} \dots \ent{\ell_q}{\ell_q}{j_q}{j_{q-1}}{\sigma_q}{\sigma_{q-1}} \ent{\tp-i}{\tp-i}{\tp}{j_q}{+}{\sigma_q} }{ ( \omega_{j_1}^{\sigma_1}-\omega_*^{(\tp)})\ldots(\omega_{j_q}^{\sigma_q}-\omega_*^{(\tp)}) }\, . \label{beta1step}
\end{equation}
 We sum  differently
 \eqref{beta1step} noting that the following map  between sets of natural indices  is a bijection 
 \begin{equation}
     \begin{aligned}
&\phi: \big\{(i,q;\ell_1,\dots,\ell_q)\, :\, 1\leq i\leq \tp -1\, ,\, 1\leq q \leq i\, ,\, \ell_1+\dots+\ell_q=i \big\} \rightarrow  \\
& \quad \ \big\{ (q;j_1,\dots,j_q)\, :\,
1 \leq q \leq \tp -1 \, , \ 0< j_1<\dots<j_q <\tp \big\}  \\
&\quad    \ (i,q;\ell_1,\dots,\ell_q) \mapsto (q; j_1:=\ell_1, j_2 := j_1+\ell_2,\dots, j_q:=j_{q-1}+\ell_q=i) \label{isomorph}
\end{aligned}
\end{equation}
with inverse 
$$
\phi^{-1}(q,j_1,\dots,j_q) = \big( j_q,q,j_1,j_2-j_1,\dots, j_q-j_{q-1} \big) \, .
$$ 
Thus, by \eqref{isomorph}, identity \eqref{beta1step} becomes 
$$
\im \beta_1^{(\tp)} \e^\tp
= \ent{\tp}{\tp}{\tp}{0}{+}{-} +  \sum_{q=1}^{\tp-1}\ \sum_{0<j_1<\dots<j_q <\tp} \  \sum_{\sigma_1,\dots,\sigma_q = \pm }   \sigma_1\dots\sigma_q \dfrac{\ent{\ell_1}{\ell_1}{j_1}{0}{\sigma_1}{-}  \ent{\ell_2}{\ell_2}{j_2}{j_1}{\sigma_2}{\sigma_1} \dots \ent{\ell_q}{\ell_q}{j_q}{j_{q-1}}{\sigma_q}{\sigma_{q-1}} \ent{\tp-j_q}{\tp-j_q}{\tp}{j_q}{+}{\sigma_q}}{ ( \omega_{j_1}^{\sigma_1}-\omega_*^{(\tp)})\ldots(\omega_{j_q}^{\sigma_q}-\omega_*^{(\tp)})  } 
$$
with $\ell_1 :=  j_1 $, $\ell_2:= j_2-j_1$, $\dots$, $\ell_{q} := j_q-j_{q-1}$. This proves formula \eqref{beta1exp}.\end{proof}

Lemmata
\ref{lem:tBp0} and \ref{lem:tBp} prove formula 
\eqref{beta1exp}.

We finally compute  the entanglement 
coefficients in \eqref{entanglementfdj}, in terms of the Fourier-Taylor coefficients of the Stokes wave. 
We use that 
$\Omega_j^{(\tp)}$ and $t_j^{(\tp)}$ in \eqref{tjp} satisfy 
\begin{equation}\label{idtjOmegaj}
\tanh\big(\tth(j+\uphi(\tp,\tth))\big) = \tfrac{\Omega_j^{(\tp)}}{t_j^{(\tp)}} \, , \quad   j+\uphi(\tp,\tth) = \Omega_j^{(\tp)}t_j^{(\tp)}\, ,\quad j\in\bN_0\,  ,
\end{equation}
since $j+\uphi(\tp,\tth) >0 $ for any $j\geq 0$.

\begin{lem}\label{lem:entexp}
For any $j \in \bZ$, $\sigma,\sigma'=\pm 1$, $\ell=1,\dots,\tp$ the  entanglement 
coefficient $\ent{\ell}{\ell}{j+\ell}{j}{\sigma'}{\sigma} $ in \eqref{entcoeff} has the expression  in \eqref{entanglementfdj}.
\end{lem}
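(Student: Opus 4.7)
The plan is to perform a direct computation using the explicit formula \eqref{cBellell} for $\cB_{0,\ell}^{[\ell]}$, the explicit expression \eqref{def:fsigmaj} of the eigenvectors $f_j^\sigma$, and the scalar product \eqref{scalar}. Since both $f_j^\sigma$ and $f_{j+\ell}^{\sigma'}$ are pure Fourier modes, every derivative becomes a multiplication and the integral in the scalar product reduces to matching harmonics.

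First I would apply $\cB_{0,\ell}^{[\ell]}$ from \eqref{cBellell} to $f_j^\sigma = \frac{e^{\im j x}}{\sqrt{2\Omega_j^{(\tp)}}}\begin{pmatrix}-\sqrt\sigma\,\Omega_j^{(\tp)}\\ \sqrt{-\sigma}\end{pmatrix}$. Since $(\pa_x+\im\umu)(u\,e^{\im j x})=\im(j+\umu)u\,e^{\im j x}$ and $(\pa_x+\im\umu+\im\ell)(u\,e^{\im j x})=\im(j+\umu+\ell)u\,e^{\im j x}$, the result is a vector supported on the single harmonic $e^{\im(j+\ell)x}$, with amplitudes
\[
\cB_{0,\ell}^{[\ell]} f_j^\sigma = \frac{\e^\ell e^{\im(j+\ell)x}}{2}\begin{pmatrix}
-a_\ell^{[\ell]}\sqrt\sigma\sqrt{\Omega_j^{(\tp)}/2} -\im(j+\umu)p_\ell^{[\ell]} \,\sqrt{-\sigma}/\sqrt{2\Omega_j^{(\tp)}}\\[1mm]
-\im(j+\umu+\ell)p_\ell^{[\ell]}\sqrt\sigma\sqrt{\Omega_j^{(\tp)}/2}
\end{pmatrix}.
\]
Next I would compute $\big(\cB_{0,\ell}^{[\ell]}f_j^\sigma,f_{j+\ell}^{\sigma'}\big)$: the two $e^{\im(j+\ell)x}$ factors cancel (up to $|e^{\im(j+\ell)x}|^2=1$), leaving a sum of three products of the form (numerical prefactor)$\times\sqrt{\Omega_j^{(\tp)}\Omega_{j+\ell}^{(\tp)}}$ or $\sqrt{\Omega_{j+\ell}^{(\tp)}/\Omega_j^{(\tp)}}$, etc. Using the identities \eqref{idtjOmegaj}, i.e.\ $j+\umu=\Omega_j^{(\tp)} t_j^{(\tp)}$ and $j+\umu+\ell=\Omega_{j+\ell}^{(\tp)} t_{j+\ell}^{(\tp)}$, the square-root ratios collapse:
\[
\tfrac12\sqrt{\Omega_{j+\ell}^{(\tp)}/\Omega_j^{(\tp)}}\cdot(j+\umu)=\tfrac12 t_j^{(\tp)}\sqrt{\Omega_j^{(\tp)}\Omega_{j+\ell}^{(\tp)}},
\]
and similarly for the other term. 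Hence, collecting common factors,
\[
\ent{\ell}{\ell}{j+\ell}{j}{\sigma'}{\sigma}=\frac{\e^\ell}{4}\sqrt{\Omega_j^{(\tp)}\Omega_{j+\ell}^{(\tp)}}\Big[\bar{\sqrt{\sigma'}}\sqrt\sigma\, a_\ell^{[\ell]}+\im\,\bar{\sqrt{\sigma'}}\sqrt{-\sigma}\,t_j^{(\tp)}p_\ell^{[\ell]}-\im\,\bar{\sqrt{-\sigma'}}\sqrt\sigma\,t_{j+\ell}^{(\tp)}p_\ell^{[\ell]}\Big].
\]

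The final step is to recognize this as the claimed formula \eqref{entanglementfdj}. This requires the two scalar identities $\sqrt{-\sigma}\,\im=-\sigma\sqrt\sigma$ and $\bar{\sqrt{-\sigma'}}\,\im=\sigma'\bar{\sqrt{\sigma'}}$, which I would check case by case on $\sigma,\sigma'\in\{\pm\}$ using $\sqrt{+}=1$, $\sqrt{-}=\im$ (principal root). Once these are verified, the second term becomes $-\sqrt\sigma\bar{\sqrt{\sigma'}}\,\sigma t_j^{(\tp)}p_\ell^{[\ell]}$ and the third becomes $-\sqrt\sigma\bar{\sqrt{\sigma'}}\,\sigma' t_{j+\ell}^{(\tp)}p_\ell^{[\ell]}$, yielding exactly \eqref{entanglementfdj}. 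There is no genuine obstacle: the lemma is a bookkeeping computation, and the only subtle point is the careful handling of the four sign cases for the square roots $\sqrt{\pm\sigma},\sqrt{\pm\sigma'}$; the Definition \ref{def:fsigmaj} of the eigenvectors and the symplectic normalization \eqref{sympbas} are precisely engineered so that these signs align.
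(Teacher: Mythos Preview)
Your proposal is correct and follows essentially the same route as the paper: apply the explicit formula \eqref{cBellell} to $f_j^\sigma$, take the scalar product with $f_{j+\ell}^{\sigma'}$, simplify via \eqref{idtjOmegaj}, and then reduce the three terms using the sign identities $\im\sqrt{-\sigma}=-\sigma\sqrt\sigma$ and $-\im\,\bar{\sqrt{-\sigma'}}=-\sigma'\bar{\sqrt{\sigma'}}$, which are exactly the identities \eqref{segni} in the paper. The only cosmetic difference is that you apply \eqref{idtjOmegaj} one step earlier, directly inside the bracket, whereas the paper first displays the unsimplified expression with factors $(j+\umu)$ and $(j+\ell+\umu)$ before invoking \eqref{idtjOmegaj}.
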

\begin{proof}  
By \eqref{cBellell}, \eqref{def:fsigmaj} and \eqref{tjp} we have
$$
{\cal B}_{0,\ell}^{[\pm \ell]} f_j^\sigma = \frac{e^{\im (j\pm  \ell)x}}{2\sqrt{2\Omega_j^{(\tp)}}}\vet{-a_\ell^{[\ell]}\sqrt{\sigma}\Omega_j^{(\tp)} -\im\sqrt{-\sigma}p_\ell^{[\ell]}(j+\umu)}{-\im\sqrt{\sigma}p_\ell^{[\ell]}
(j\pm \ell +\umu) \Omega_j^{(\tp)} }\e^\ell \, ,
$$
whence, by \eqref{entcoeff}, \eqref{def:fsigmaj} we obtain
$$
\ent{\ell}{\pm \ell}{j\pm \ell}{j}{\sigma'}{\sigma}=\frac{\e^\ell}{4\sqrt{\Omega_j^{(\tp)} \Omega_{j\pm \ell}^{(\tp)}}}\big( \sqrt{\sigma}\bar{\sqrt{\sigma'}} a_\ell^{[\ell]} \Omega_j^{(\tp)} 
\Omega_{j\pm \ell}^{(\tp)}  
+ \im \sqrt{-\sigma}\bar{\sqrt{\sigma'}} p_\ell^{[\ell]} 
(j+\umu) \Omega_{j\pm \ell}^{(\tp)} 
 -\im \sqrt{\sigma}\bar{\sqrt{-\sigma'}} p_\ell^{[\ell]} 
(j\pm \ell +\umu) \Omega_j^{(\tp)}\big)\, .
$$
By exploiting the second identity in \eqref{idtjOmegaj}
 and using that, for $\sigma,\sigma'=\pm$,
$$
 \im\sqrt{-\sigma}\bar{\sqrt{\sigma'}} = -\sigma \sqrt{\sigma}\bar{\sqrt{\sigma'}},\quad -\im\sqrt{\sigma}\bar{\sqrt{-\sigma'}} = -\sigma' \sqrt{\sigma}\bar{\sqrt{\sigma'}}, \quad \sqrt{-\sigma}\bar{\sqrt{-\sigma'}} = \sigma\sigma'\sqrt{\sigma}\bar{\sqrt{\sigma'}}\, ,
$$
we obtain formula \eqref{entanglementfdj}.
\end{proof}

The function $ \beta_1^{(\tp)} (\tth) $ is analytic in $ \tth $
as the coefficients 
$ p_\ell^{[\ell]} (\tth) $, $ a_\ell^{[\ell]} (\tth) $
(cfr. Lemma \ref{pastructure})
and $\Omega_j^{(\tp)} (\tth)$,  $t_j^{(\tp)}(\tth) $
in  \eqref{tjp}.
The proof  of Theorem \ref{lem:expansionL} is complete.

\section{Non-degeneracy of  
$\beta_1^{(\tp)} (\tth)  $ 
}\label{combinatoricsincoming}

In this section we prove  that for {\it any}  
$\tp\geq 2 $ 
the analytic function   $\beta_1^{(\tp)} (\tth) $ in \eqref{beta1exp} 
satisfies \eqref{beta1plimit}.

\subsection{Proof of the strong conjecture \eqref{strongconj} }\label{sec61}

We now provide the 
asymptotic behavior of $\beta_1^{(\tp)}(\tth) $ as $\tth\to 0^+ $. 

\begin{teo}[{\bf Shallow-water limit of $\beta_1^{(\tp)}(\tth)$}]\label{asymptotics} For any $\tp\in \bN$, $\tp\geq 2$,
the function $\beta_1^{(\tp)} (\tth) $ in \eqref{beta1exp} 
has the asymptotic expansion  
\begin{equation}\label{limitbeta1ash0+}
\beta_1^{(\tp)}(\tth) = -\sqrt{\frac{\tp^2-1}{3}} \Big(\frac38\Big)^{\tp-1}\frac{\tp^2(\tp+1)^2}{24} \tth^{\frac{11}2-3\tp}+ O\big(\tth^{\frac{15}2-3 \tp }\big) 
\end{equation}
 as $\tth \to 0^+ $.
\end{teo}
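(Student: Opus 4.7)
The plan is to substitute, into the closed-form expression \eqref{beta1exp}-\eqref{entanglementfdj} for $\beta_1^{(\tp)}(\tth)$, the shallow-water asymptotics of every building block as $\tth\to 0^+$: the Floquet shift $\uphi(\tp,\tth)$ from \eqref{expuphinuova1}, the linear speed $\ch=\sqrt{\tanh\tth}$, the dispersion quantities $\Omega_j^{(\tp)}$ and $t_j^{(\tp)}$ of \eqref{tjp}, the small divisors $\omega_{j_i}^{\sigma_i}-\omega_*^{(\tp)}$ obtained by Taylor-expanding \eqref{Oomegino}, and, crucially, the leading Fourier coefficients $a_\ell^{[\ell]}$, $p_\ell^{[\ell]}$ supplied by Lemma \ref{paasymashto0+}. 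Retaining \emph{two} nontrivial orders in each expansion is unavoidable because the naively-leading $\tth^{9/2-3\tp}$-contribution to $\beta_1^{(\tp)}(\tth)$ cancels identically (the first assertion of Lemma \ref{primasommma}), so the actual order $\tth^{11/2-3\tp}$ claimed in \eqref{limitbeta1ash0+} emerges only from subleading corrections.

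The first step is to record the required two-order asymptotics. For interior indices $j\in\{1,\dots,\tp-1\}$ one has $\Omega_j^{(\tp)}=\sqrt{j}\,\tth^{1/2}+O(\tth^{5/2})$ and $t_j^{(\tp)}=\sqrt{j}\,\tth^{-1/2}+O(\tth^{3/2})$, whereas at the boundary indices $j=0$ and $j=\tp$ the corresponding quantities involve $\sqrt{\uphi(\tp,\tth)}\sim\sqrt{\tp(\tp^2-1)/12}\,\tth$, directly controlled by \eqref{expuphinuova1}. A direct Taylor expansion of $\omega^\sigma$ in \eqref{Oomegino} then yields the small-divisor expansions $\omega_j^{+}-\omega_*^{(\tp)}=\tth^{5/2}\rho_j^{(\tp)}+O(\tth^{9/2})$, with explicit nonzero polynomial $\rho_j^{(\tp)}$ for $0<j<\tp$, while the $-$-signature divisors are only of order $\tth^{1/2}$. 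Combined with $p_\ell^{[\ell]}\sim -2\ell(\tfrac38)^{\ell-1}\tth^{5/2-3\ell}$ and $a_\ell^{[\ell]}\sim-\ell(\tfrac38)^{\ell-1}\tth^{2-3\ell}$ from Lemma \ref{paasymashto0+}, this gives leading expansions of the entanglement coefficients \eqref{entanglementfdj}. A careful bookkeeping -- splitting the analysis into the ``all-$+$'' sign configurations (whose summands, by cancellation of the $5/2-3\ell_i$ numerator exponents against the $5/2$-order denominators, all arrange themselves at the same naively-leading power $\tth^{9/2-3\tp}$ independently of $q$) and the mixed-sign configurations (contributing only at $\tth^{11/2-3\tp}$ or higher because each $-$-divisor raises the order by $2$) -- shows that both the vanishing leading coefficient and the nonzero second one reduce to finite explicit algebraic sums over $(q;j_1,\dots,j_q;\sigma_1,\dots,\sigma_q)$ of rational functions of $\sqrt{j_i}$ weighted by $\sigma_1\cdots\sigma_q$.

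The main obstacle is evaluating these two combinatorial sums in closed form. After factoring out the natural normalization $(\tfrac38)^{\tp-1}\sqrt{(\tp^2-1)/3}$, the $\tth^{11/2-3\tp}$-coefficient coincides with the constant $C^{(\tp)}$ of the forthcoming formulas \eqref{Cp}--\eqref{gq}, and \eqref{limitbeta1ash0+} reduces to the closed-form evaluation $C^{(\tp)}=\tp(\tp+1)^2/3$ stated as Lemma \ref{primasommma}. No elementary proof of this hypergeometric-type identity seems available; my plan is to conjecture the value from direct computation at low $\tp$ and then invoke as a black box the computer-algebra proof of Zeilberger--van Hoeij--Koutschan \cite{vHZ}, based on creative telescoping. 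The same machinery handles the companion identity expressing the vanishing of the first sum (the other half of Lemma \ref{primasommma}); this vanishing can alternatively be approached by exploiting the $\sigma_i=\pm$ and permutation symmetries of the sum to produce pairwise cancellations, reducing the matter to a cleaner identity amenable to creative telescoping. Once both identities are in hand, combining them with the explicit prefactors tracked in the previous step yields \eqref{limitbeta1ash0+}.
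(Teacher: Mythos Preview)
Your overall strategy is exactly the paper's: expand every building block of \eqref{beta1exp}--\eqref{entanglementfdj} to two orders as $\tth\to 0^+$, observe that the naively-leading coefficient is the sum $A^{(\tp)}$ in \eqref{Ap} and vanishes identically, and then identify the surviving $\tth^{11/2-3\tp}$-coefficient with $C^{(\tp)}$ in \eqref{Cp}, whose value $\tp(\tp+1)^2/3$ is supplied by the Zeilberger--van Hoeij--Koutschan result (Lemma \ref{primasommma}). You also correctly anticipate that each $\sigma_i=-$ raises the order by $2$, so only the all-$+$ and exactly-one-$-$ configurations contribute to the two relevant orders.

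However, several of the explicit asymptotics you write down are wrong, and these errors are not cosmetic: they would propagate and produce sums that no longer match the identities of Lemma \ref{primasommma}. Specifically, for interior $j\in\{1,\dots,\tp-1\}$ one has $\Omega_j^{(\tp)}\sim j\,\tth^{1/2}$ and $t_j^{(\tp)}\sim \tth^{-1/2}$ (cfr.\ \eqref{expOmegajtj}), \emph{not} $\sqrt{j}\,\tth^{1/2}$ and $\sqrt{j}\,\tth^{-1/2}$ as you state. With the correct expansions, the product of the $q+1$ entanglement coefficients carries the factor $\sqrt{\Omega_0^{(\tp)}\Omega_\tp^{(\tp)}}\,\Omega_{j_1}^{(\tp)}\cdots\Omega_{j_q}^{(\tp)}\sim \tth^{3/2+q/2}$ (using \eqref{startingsqrt}), and together with the $(a_\ell^{[\ell]}-p_\ell^{[\ell]}(\cdot))\sim\tth^{2-3\ell}$ factors and the $\tth^{5q/2}$ denominator, the all-$+$ summands land at order $\tth^{7/2-3\tp}$, \emph{not} $\tth^{9/2-3\tp}$. (Note also that the expansions proceed in steps of $\tth^2$, so a gap of $1$ between your claimed leading order $9/2-3\tp$ and the actual answer $11/2-3\tp$ is already internally inconsistent.) Consequently the resulting combinatorial sums are rational functions of the integers $j_i$ --- precisely the expressions \eqref{cancelingasympcoeff}--\eqref{gq} --- and not of $\sqrt{j_i}$. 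Once you correct these inputs and rerun the bookkeeping (which the paper does in Proposition \ref{summer2023}), the reduction to Lemma \ref{primasommma} goes through as you describe; but with the expansions as you have them, you would not arrive at the right $A^{(\tp)}$ and $C^{(\tp)}$, and the black-box identities would not apply.
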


The rest of the section is devoted to the proof of Theorem \ref{asymptotics}. \\
 The following asymptotic expansions 
 will be often used in the sequel.

\begin{lem}
For any $  j \in \bN_0  $
the quantities   $\Omega_j^{(\tp)} $ and $t_j^{(\tp)}$ in \eqref{tjp} 
satisfy, as $ \tth \to 0^+ $, 
\begin{equation}\label{expOmegajtj}
\begin{aligned}
 \Omega_j^{(\tp)} &=\tth^{\frac12} \Big( j  + 
 \frac{\tp^3-
 \tp -2j^3}{12}\tth^2+\frac{38 j^5-30 j^2\tp(\tp^2-1)-4 \tp^5-15 \tp^3+19 \tp}{720} \tth^4+ O( \tth^6) \Big)\, ,\\
 t_j^{(\tp)} &= \tth^{-\frac12}\Big(1+ \frac{\tth^2 j^2}{6}+O(\tth^4)\Big) \,   .
 \end{aligned}
\end{equation}
It results 
\begin{equation}\label{startingsqrt}
 \sqrt{\Omega_0^{(\tp)}\Omega_{\tp}^{(\tp)}} = \frac12 \tth^{\frac32}\tp \sqrt{\frac{\tp^2-1}{3}} \Big(1 -\frac{3\tp^2+8}{40} \tth^2 + O(\tth^4) \Big) \, .
\end{equation}
Furthermore 
$\omega_j^\sigma$ in \eqref{omegajsigma} satisfies, for any $ j\in \bN $, 
$ \sigma = \pm $, 
as $ \tth \to 0^+ $, 
\begin{align}\notag
\omega_j^+ - \omega_*^{(\tp)} &= \tth^{\frac52}  \Big( \frac{j^3-j+\tp-\tp^3}{6}  - \Big( \frac{19}{60}(j^2+1)  \frac{j^3-j+\tp-\tp^3}{6} - \frac{\tp (\tp^2-1)(\tp^2-j^2)}{90}\Big) \tth^2  +O(\tth^4)\Big) \, ,    \\ \label{diffomegas}
 \omega_j^- - \omega_*^{(\tp)} &=   2j \tth^{\frac12} \Big( 1 -\frac{j^2+1}{12}  \tth^2 +O(\tth^4) \Big) \, . 
\end{align} 
Furthermore for any $j\in \bZ$ and $\ell \in \bN$,  as $\tth\to 0^+$,
\begin{align}\label{exppjjajjbis}
&a_\ell^{[\ell]} - p_\ell^{[\ell]} (\sigma t_j^{(\tp)} + \sigma' t_{j+\ell}^{(\tp)})   \\ \notag 
&=\begin{cases} 3\ell \Big(\dfrac38 \Big)^{\ell-1} \tth^{2-3\ell} \big( 1 + \dfrac{19\ell^2+12j\ell-27\ell+12j^2+2}{54} \tth^2 + O(\tth^4) \big) \,,  &\sigma = \sigma'= + \, ,\\[2mm]
  - 5\ell \Big(\dfrac38 \Big)^{\ell-1} \tth^{2-3\ell} \big(1+O(\tth^2 ) \big)
    \,, &\sigma=\sigma'=-\, ,\\[2mm]
 - \ell \Big(\dfrac38 \Big)^{\ell-1}\tth^{2-3\ell}   \big(1  + \dfrac{25\ell^2-12j\ell-9\ell+2}{18} \tth^2 + O(\tth^4) \big) \,,   & \sigma\neq \sigma'=+\,,\\[2mm]
- \ell \Big(\dfrac38 \Big)^{\ell-1}\tth^{2-3\ell}  \big(1+O(\tth^2 ) \big)
  \,, & \sigma\neq \sigma'=-\,  .
\end{cases}
\end{align}
\end{lem}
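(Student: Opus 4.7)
The plan is to derive all five asymptotic expansions from three inputs: the Taylor series $\tanh(x)=x-\tfrac{x^3}{3}+\tfrac{2x^5}{15}+O(x^7)$, the expansion \eqref{expuphinuova1} of the collision point $\uphi(\tp,\tth)$, and the expansion \eqref{expch} of $\ch=\sqrt{\tanh\tth}$. The only nontrivial algebraic input from earlier sections is formula \eqref{exppjjajj} for $p_\ell^{[\ell]}$ and $a_\ell^{[\ell]}$ as $\tth\to 0^+$.

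First, for \eqref{expOmegajtj} I would write $\varphi_j:=j+\uphi(\tp,\tth)=j+\tfrac{1}{12}\tp(\tp^2-1)\tth^2+O(\tth^4)$ and substitute into
\[
\Omega_j^{(\tp)} = \sqrt{\varphi_j\tanh(\tth\varphi_j)}, \qquad t_j^{(\tp)}=\sqrt{\varphi_j/\tanh(\tth\varphi_j)},
\]
expanding $\tanh(\tth\varphi_j)=\tth\varphi_j(1-\tfrac{1}{3}\tth^2\varphi_j^2+\tfrac{2}{15}\tth^4\varphi_j^4+O(\tth^6))$ and collecting powers of $\tth$ up to $O(\tth^{9/2})$ inside the square root. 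The identity \eqref{startingsqrt} then follows by multiplying the $j=0$ and $j=\tp$ expansions, extracting the common factor $\tfrac{1}{2}\tth^{3/2}\tp\sqrt{(\tp^2-1)/3}$ from the leading term, and squaring/resumming the $\tth^2$ correction.

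Second, for \eqref{diffomegas} I would use the characterization \eqref{Kreincollision} in the form $\omega_*^{(\tp)}=\ch\,\uphi+\Omega_0^{(\tp)}$, so that
\[
\omega_j^+-\omega_*^{(\tp)}=\ch\, j-(\Omega_j^{(\tp)}-\Omega_0^{(\tp)}),\qquad \omega_j^--\omega_*^{(\tp)}=\ch\, j+(\Omega_j^{(\tp)}+\Omega_0^{(\tp)})-2\Omega_0^{(\tp)}\;.
\]
Wait—more usefully, $\omega_j^--\omega_*^{(\tp)}=\ch\, j+\Omega_j^{(\tp)}-\Omega_0^{(\tp)}\cdot(-1)-2\Omega_0^{(\tp)}$: I will just compute directly from $\omega_j^\sigma=\ch(j+\uphi)-\sigma\Omega_j^{(\tp)}$. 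For the $-$ branch the leading terms reinforce and give the simple expansion $2j\tth^{1/2}(1+O(\tth^2))$. For the $+$ branch the leading $\tth^{1/2}$ contributions cancel (this is the source of the $\tth^{5/2}$ scaling), so the point is to track the next two terms. Writing $\ch=\tth^{1/2}(1-\tth^2/6+\tth^4/24+\ldots)$ and using the $\Omega_j^{(\tp)}$ expansion from Step 1 at order $\tth^{9/2}$, the coefficient of $\tth^{5/2}$ reduces to $\tfrac{j^3-j+\tp-\tp^3}{6}$ after using $\ch\uphi$ to absorb the $\uphi$-terms, and the $\tth^{9/2}$ coefficient is obtained by the same bookkeeping—this is the main computational obstacle, as several cancellations conspire and I would verify it by symbolic computation.

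Finally, \eqref{exppjjajjbis} follows by plugging the expansions of $p_\ell^{[\ell]}$ and $a_\ell^{[\ell]}$ from \eqref{exppjjajj} and of $t_j^{(\tp)}$, $t_{j+\ell}^{(\tp)}$ from \eqref{expOmegajtj} into $a_\ell^{[\ell]}-p_\ell^{[\ell]}(\sigma t_j^{(\tp)}+\sigma' t_{j+\ell}^{(\tp)})$. The four cases arise because when $\sigma=\sigma'$ the two $t$-contributions add, whereas when $\sigma\neq\sigma'$ they subtract; in the subtracted case the leading $\tth^{-1/2}$ piece of $t$ cancels, halving the prefactor from $2$ to almost nothing and producing the $-\ell$ coefficient. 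The leading $\tth^{2-3\ell}$ behavior is read off immediately from \eqref{exppjjajj}, and the next-order correction in $\tth^2$ comes from combining the $\tth^{5-3\ell-1/2}$ correction of $p_\ell^{[\ell]}$ with the $\tth^2 j^2/6$ and $\tth^2(j+\ell)^2/6$ corrections of $t_j^{(\tp)}$ and $t_{j+\ell}^{(\tp)}$; the coefficients $\tfrac{19\ell^2+12j\ell-27\ell+12j^2+2}{54}$ etc.\ are obtained by simplifying $\tfrac{11\ell^2-9\ell+1}{9}$ (from \eqref{exppjjajj}) minus a weighted combination of $j^2$ and $(j+\ell)^2$. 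The harder two cases $\sigma\neq\sigma'$ with $=+$ and $\sigma\neq\sigma'$ with $=-$ differ only by the overall sign of $(\sigma t_j^{(\tp)}+\sigma' t_{j+\ell}^{(\tp)})$, so one formula yields the other. The main bookkeeping obstacle here is that the formulas in \eqref{exppjjajj} and \eqref{expOmegajtj} must be expanded consistently to order $\tth^2$ beyond leading, which I would do term-by-term.
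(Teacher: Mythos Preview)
Your plan is correct and follows essentially the same route as the paper, which simply cites \eqref{tjp}, \eqref{expuphinuova1}, \eqref{omeghino} and Lemma~\ref{paasymashto0+} and leaves the Taylor bookkeeping implicit; you have spelled out that bookkeeping. One small slip: in your displayed formula for $\omega_j^+-\omega_*^{(\tp)}$ the sign on $\Omega_0^{(\tp)}$ is wrong (it should be $\ch\,j-\Omega_j^{(\tp)}-\Omega_0^{(\tp)}$, not $\ch\,j-(\Omega_j^{(\tp)}-\Omega_0^{(\tp)})$), but since you immediately abandon that line and compute directly from $\omega_j^\sigma=\ch(j+\uphi)-\sigma\Omega_j^{(\tp)}$ this does not affect the argument.
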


\begin{proof}
 By \eqref{tjp} and \eqref{expuphinuova1} we deduce \eqref{expOmegajtj} and  \eqref{startingsqrt}. Similarly \eqref{omeghino}, \eqref{tjp} and \eqref{expOmegajtj} imply \eqref{diffomegas}.
The last expansion \eqref{exppjjajjbis}
relies on the previous ones together with
Lemma \ref{paasymashto0+}.
\end{proof}

We first show that many of the addends in \eqref{beta1exp}, namely those with 
at least 
two negative signs $ \sigma_{i}
= \sigma_j = -  $, $ i \neq j $, do {\it not}  contribute to the main terms of the asymptotic expansion in \eqref{beta1ash0+}.
This is an important computational simplification.

\begin{lem}\label{ordinivincenti}
For any integer $q=1,\dots,\tp-1$  and any signs $\sigma_1,\dots,\sigma_q \in \{ \pm \} $ we denote  $\vec{\sigma} :=(\sigma_1,\dots,\sigma_q) $  and its cardinality by 
$|\vec{\sigma}|:= |\{ \sigma_j=-\}|$.  Then, for any
$0 < j_1 < \dots < j_q < \tp$,
the term
$ \betone{q}{j_1,\dots,j_q}{\sigma_1,\dots,\sigma_q} $
in \eqref{beta1exp} satisfies, as $\tth \to 0^+$,
\begin{equation}\label{hierarchy}
\betone{q}{j_1,\dots,j_q}{\sigma_1,\dots,\sigma_q} = O\big(\tth^{\frac72-3\tp+2|\vec{\sigma}|} \big) \, .
\end{equation}
\end{lem}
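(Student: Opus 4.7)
The plan is to carry out a direct asymptotic count of each factor appearing in $\betone{q}{j_1,\dots,j_q}{\sigma_1,\dots,\sigma_q}$ from its definition \eqref{beta1exp}, using the explicit form \eqref{entanglementfdj} of the entanglement coefficients and the shallow-water expansions \eqref{expOmegajtj}, \eqref{diffomegas}, \eqref{exppjjajjbis}. The lemma is therefore a bookkeeping statement about how the exponent of $\tth$ depends on how many eigenvalues $\omega_{j_i}^{\sigma_i}$ are of negative Krein signature.

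First I would separate the numerator and the denominator. The numerator is a product of $q+1$ entanglement coefficients $\ent{\ell_i}{\ell_i}{j_i}{j_{i-1}}{\sigma_i}{\sigma_{i-1}}$ with $\ell_1 = j_1$, $\ell_i = j_i - j_{i-1}$, $\ell_{q+1} = \tp - j_q$, $j_0 = 0$, $j_{q+1} = \tp$, $\sigma_0 = -$, $\sigma_{q+1} = +$, and $\ell_1 + \cdots + \ell_{q+1} = \tp$. Using \eqref{entanglementfdj} and the expansion \eqref{exppjjajjbis}, each factor $(a_{\ell_i}^{[\ell_i]} - p_{\ell_i}^{[\ell_i]}(\sigma_{i-1} t_{j_{i-1}}^{(\tp)} + \sigma_i t_{j_i}^{(\tp)}))$ is of order $\tth^{\,2-3\ell_i}$ independently of the signs $\sigma_{i-1}, \sigma_i$; so their product contributes $\tth^{\,2(q+1)-3\tp}$. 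The square-root factors $\sqrt{\Omega_{j_{i-1}}^{(\tp)}\Omega_{j_i}^{(\tp)}}$ are of order $\tth^{1/2}$ for each $i = 2, \dots, q+1$ because $\Omega_j^{(\tp)} \sim j\,\tth^{1/2}$ when $j \geq 1$, by \eqref{expOmegajtj}. The first factor is exceptional: since $\Omega_0^{(\tp)} \sim \tfrac{\tp^3-\tp}{12}\tth^{5/2}$, one has $\sqrt{\Omega_0^{(\tp)}\Omega_{j_1}^{(\tp)}} \sim \tth^{3/2}$, as already recorded in \eqref{startingsqrt}. Multiplying the $q+1$ roots yields $\tth^{(q+3)/2}$, and together with the product of the $\e^{\ell_i}$ factors, the numerator is of order $\e^\tp \,\tth^{(5q+7)/2 - 3\tp}$.

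Second, for the denominator I would use \eqref{diffomegas}: each resonant factor obeys
\begin{equation*}
\omega_{j}^{-} - \omega_*^{(\tp)} = O(\tth^{1/2})\,,\qquad \omega_{j}^{+} - \omega_*^{(\tp)} = O(\tth^{5/2})\,,
\end{equation*}
for $1 \leq j \leq \tp - 1$. Thus if $|\vec\sigma|$ of the signs $\sigma_i$ equal $-$ and the remaining $q - |\vec\sigma|$ equal $+$, the denominator has size $\tth^{|\vec\sigma|/2}\cdot \tth^{5(q-|\vec\sigma|)/2} = \tth^{(5q - 4|\vec\sigma|)/2}$. Taking the ratio numerator/denominator, the overall exponent becomes
\begin{equation*}
\frac{5q+7}{2} - 3\tp - \frac{5q - 4|\vec\sigma|}{2} = \frac{7}{2} - 3\tp + 2|\vec\sigma|\,,
\end{equation*}
which is exactly the power of $\tth$ announced in \eqref{hierarchy}.

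I do not foresee any genuine obstacle: since $1 \leq j_i \leq \tp - 1$ for all $i$ and we never land on the resonant pair $(0,-), (\tp,+)$ in the denominators (the indices are ordered $0 < j_1 < \cdots < j_q < \tp$), no $\omega_{j_i}^{\sigma_i} - \omega_*^{(\tp)}$ vanishes identically, and the leading-order expansions above are uniformly valid. The only vanishing gain above the ``generic'' order $\tth^{7/2-3\tp}$ comes from the $+$ signs in $\vec\sigma$, each of which suppresses the denominator by a further factor $\tth^2$; conversely, each $-$ sign raises the overall exponent by $2$, explaining the $+2|\vec\sigma|$ term. The lemma then follows by assembling these estimates.
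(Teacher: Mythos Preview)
Your proof is correct and follows essentially the same approach as the paper: both estimate the numerator as $O(\tth^{(5q+7)/2-3\tp})$ by combining the entanglement-coefficient asymptotics from \eqref{entanglementfdj}, \eqref{expOmegajtj}, \eqref{startingsqrt}, \eqref{exppjjajjbis}, and the denominator via \eqref{diffomegas}, then take the ratio. The only cosmetic difference is that you split each entanglement coefficient into its $(a_\ell-p_\ell(\cdot))$ factor and its $\sqrt{\Omega_j\Omega_{j+\ell}}$ factor separately, while the paper bundles them into a single estimate $\e^{-\ell}\ent{\ell}{\ell}{j+\ell}{j}{\sigma'}{\sigma}=O(\tth^{5/2-3\ell})$ (with the extra $\tth$ when $j=0$); the arithmetic is identical.
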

\begin{proof}
By \eqref{entanglementfdj}, \eqref{expOmegajtj} and \eqref{exppjjajjbis}, for any  $j,\ell>0$ 
we have 
$ \e^{-\ell}\ent{\ell}{\ell}{j+\ell}{j}{\sigma'}{\sigma}  = O(\tth^{\frac52-3\ell}) $ whereas when $j=0$ using \eqref{startingsqrt} we get $ \e^{-\ell} \ent{\ell}{\ell}{\ell}{0}{\sigma'}{\sigma}  = O(\tth^{\frac72-3\ell}) $.
Therefore 
\begin{equation}\label{numeratorh0+}
 \e^{-\tp}\ent{j_1}{j_1}{j_1}{0}{\sigma_1}{-}\ent{j_2-j_1}{j_2-j_1}{j_2}{j_1}{\sigma_2}{\sigma_1}\dots \ent{\tp-j_q}{\tp-j_q}{\tp}{j_q}{+}{\sigma_q} = O(\tth^{\frac52 q+\frac72-3\tp})\, .
\end{equation}
On the other hand, by \eqref{diffomegas} and since  $j_i \geq 1 $ 
and $j_i^3-j_i+\tp-\tp^3 < 0  $  for any $i=1,\dots,q$, we deduce that
\begin{equation}\label{denominatorh0+}
 (\omega_{j_1}^{\sigma_1}-\omega_*^{(\tp)})\dots(\omega_{j_q}^{\sigma_q}-\omega_*^{(\tp)}) \sim C_{j_1,\dots,j_q}^{\sigma_1,\dots,\sigma_q} \tth^{\frac52(q-|\vec{\sigma}|)} \tth^{\frac12|\vec{\sigma}|}
 \qquad
 \text{for some} \quad 
 C_{j_1,\dots,j_q}^{\sigma_1,\dots,\sigma_q} \neq 0\, .
\end{equation}
The estimate \eqref{hierarchy} follows by \eqref{beta1exp}, \eqref{numeratorh0+} and \eqref{denominatorh0+}.
\end{proof}

We now  exhibit the explicit asymptotic expansion of the function  $\beta_1^{(\tp)}(\tth)$ in \eqref{beta1exp}. We  provide the expansion up to the order $ \tth^{\frac{11}{2} - 3\tp }$  since,
as we prove in 
Lemma \ref{lem:cozero}
below, the term of order $\tth^{\frac72-3\tp }$ vanishes.

\begin{prop}\label{summer2023}
The function  $\beta_1^{(\tp)}(\tth)$ in \eqref{beta1exp} admits the   asymptotic expansion, as $\tth \to 0^+$, 
\begin{equation}\label{beta1ash0+}
\beta_1^{(\tp)}(\tth)  =  -\sqrt{\frac{\tp^2-1}{3}} \frac{3^{\tp-1}\tp}{8^\tp}  \Big\{ A^{(\tp)} \tth^{\frac72-3\tp }   +\Big[ \Big( \frac{299}{1080} \tp^2 -\frac12\tp- \frac4{45} \Big) A^{(\tp)} + C^{(\tp)}  \Big] \tth^{\frac{11}2-3\tp} + O(\tth^{\frac{15}2-3\tp}) \Big\}
\end{equation}
where
\begin{align}\label{Ap}
    &A^{(\tp)} := \tp+\sum_{q=1}^{\tp-1} \sum_{0<j_1<\dots<j_q < \tp} \ac{q}(j_1,\dots,j_q)\, ,\\
    \label{Cp}
    &C^{(\tp)} := \frac{28}{27}\tp^3  +\sum_{q=1}^{\tp-1} \sum_{0<j_1<\dots<j_q < \tp} \ac{q}(j_1,\dots,j_q)  \mathtt{g}^{(\tp)}_q(j_1,\dots,j_q)\, ,
\end{align}
with
\begin{equation}
\label{cancelingasympcoeff}
\ac{q}(j_1,\dots,j_q) := (-12)^{q} j_1 \dots j_q \frac{j_1 (j_2-j_1) \dots (j_q-j_{q-1}) (\tp-j_q)}{(\tp^3-\tp + j_1 - j_1^3 )\dots (\tp^3-\tp + j_q - j_q^3 )}   
\end{equation}
and
\begin{align} \label{gq}
&\mathtt{g}^{(\tp)}_q(j_1,\dots,j_q) := \frac13 \Big[ \frac{49}{45} q + \frac{32}{9} j_1^2 - \frac{4}{9} - \frac{4}{9} \frac{\tp^3-\tp}{j_1} +\frac{5}{18}(\tp^3-\tp)\Big(\frac1{j_1}+\dots+\frac{1}{j_q} \Big) \\ \notag
&+\frac{38}{15} \big(j_1^2+\dots+j_q^2 \big) + \frac{\tp^3-\tp}{5}\Big(\frac{ \tp+j_1}{\tp^2+j_1^2+\tp j_1-1}+\dots+\frac{ \tp+j_q}{\tp^2+j_q^2+\tp j_q-1}\Big) - \frac{13}{9} \big(j_1j_2+j_2j_3 +\dots+j_q\tp \big) \Big]\, .
\end{align}

\end{prop}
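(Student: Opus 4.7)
The strategy is to compute the shallow-water asymptotics of each summand in formula \eqref{beta1exp} up to two leading orders in $\tth$ and then to reorganize the result. The first step is the drastic reduction afforded by Lemma \ref{ordinivincenti}: any addend indexed by a sign vector $\vec\sigma$ with $|\vec\sigma|\geq 2$ is of order $O(\tth^{15/2-3\tp})$ and is absorbed into the error term. Hence only two families of addends enter the expansion \eqref{beta1ash0+}: the all-plus vectors $\vec\sigma=(+,\dots,+)$, which feed both the leading order $\tth^{7/2-3\tp}$ and the subleading order $\tth^{11/2-3\tp}$, and the vectors $\vec\sigma$ with exactly one minus entry, which contribute only at order $\tth^{11/2-3\tp}$.

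Next I would compute the all-plus contributions. Inserting \eqref{startingsqrt}, \eqref{expOmegajtj} and \eqref{exppjjajjbis} into the explicit formula \eqref{entanglementfdj} gives a two-term expansion of each entanglement coefficient $\ent{\ell}{\ell}{j+\ell}{j}{+}{+}$, while \eqref{diffomegas} provides the two-term expansion of each denominator $\omega_{j_i}^+-\omega_*^{(\tp)}$. The algebraic identity
\begin{equation*}
\tp^3-\tp-(j^3-j)\;=\;-(\tp-j)(\tp^2+\tp j+j^2-1)
\end{equation*}
converts the leading denominators into the shape appearing in \eqref{cancelingasympcoeff}, so that a direct multiplication produces the leading contribution $-\sqrt{(\tp^2-1)/3}\,\tfrac{3^{\tp-1}\tp}{8^\tp}\,\ac{q}(j_1,\dots,j_q)\,\tth^{7/2-3\tp}$. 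Summing over $(q;j_1,\dots,j_q)$ and adjoining the initial term $b_0^{(\tp)}$ yields the $A^{(\tp)}\tth^{7/2-3\tp}$ leading term in \eqref{beta1ash0+}.

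I would then extract the sub-leading coefficient. Pushing every factor one order further and writing the resulting relative correction as a sum of factorwise corrections, one obtains two kinds of contributions: universal corrections, independent of $(j_1,\dots,j_q)$, coming from the prefactor $\sqrt{\Omega_0^{(\tp)}\Omega_\tp^{(\tp)}}$ in \eqref{startingsqrt} and from the $q$-independent part of \eqref{exppjjajjbis}, which multiply the leading sum and yield the constant $\tfrac{299}{1080}\tp^2-\tfrac{1}{2}\tp-\tfrac{4}{45}$ times $A^{(\tp)}$; and index-dependent corrections, which assemble into $\mathtt{g}_q^{(\tp)}$ in \eqref{gq}. Specifically, the summands $\tfrac{\tp+j_i}{\tp^2+j_i^2+\tp j_i-1}$ arise from the second-order expansion of $\omega_{j_i}^+-\omega_*^{(\tp)}$ after factoring out the leading order identified above, while the summands $j_i^2$ and $j_ij_{i+1}$ come from the second-order expansion of the adjacent entanglement factors through the $12j\ell + 12j^2$ term in \eqref{exppjjajjbis}. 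The single-minus contributions are added last: fixing the position $i$ of the negative entry, the $i$-th numerator factor changes from the $\sigma=\sigma'=+$ to the mixed-sign expression in \eqref{exppjjajjbis}, and the denominator factor drops from order $\tth^{5/2}$ to order $\tth^{1/2}$ by \eqref{diffomegas}; the ratio of these two effects is $O(\tth^2)$ and contributes precisely the terms proportional to $1/j_i$ and $(\tp^3-\tp)/j_i$ in \eqref{gq}. The isolated constant $\tfrac{28}{27}\tp^3$ is the analogous sub-leading expansion of $b_0^{(\tp)}$, whose mixed-sign structure is already present in $\ent{\tp}{\tp}{\tp}{0}{+}{-}$.

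The main obstacle is bookkeeping: each of the $q+1$ entanglement coefficients, the $q$ frequency denominators, and the overall prefactor contributes its own $O(\tth^2)$ correction, and these must be multiplied, summed against the combinatorial weight $\ac{q}(j_1,\dots,j_q)$ and reshuffled into the specific combination $\mathtt{g}_q^{(\tp)}$. The factorization of $\tp^3-\tp-(j^3-j)$ must be used uniformly throughout to make the rational functions in \eqref{gq} emerge, and one must verify that all spurious cross terms cancel cleanly against each other so that the final answer can be grouped into the clean form $[\,(\tfrac{299}{1080}\tp^2-\tfrac12\tp-\tfrac{4}{45})A^{(\tp)}+C^{(\tp)}\,]\tth^{11/2-3\tp}$ displayed in \eqref{beta1ash0+}.
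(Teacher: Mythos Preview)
Your outline is correct and follows essentially the same route as the paper's proof: reduce via Lemma~\ref{ordinivincenti} to the all-plus and single-minus sign vectors, expand each entanglement factor and frequency denominator to two orders using \eqref{expOmegajtj}--\eqref{exppjjajjbis}, and collect the resulting corrections into $A^{(\tp)}$ and $C^{(\tp)}$. One small slip: the factorization should read $\tp^3-\tp-(j^3-j)=(\tp-j)(\tp^2+\tp j+j^2-1)$ without the minus sign, and the single-minus contributions carry not only the $(\tp^3-\tp)/j_i$ pieces but also constant and $j_i^2$ corrections (with the $i=1$ case differing from $i\geq 2$), which you will need to track when assembling $\mathtt{g}_q^{(\tp)}$.
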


\begin{proof}
We split the proof in different steps.
\\[1mm]
{\bf Step 1: reduction}.  Lemma \ref{ordinivincenti} implies that the asymptotic expansion of  $\beta_1^{(\tp)}(\tth)$ as $\tth \to 0^+$ is given only by terms with at most one minus sign appearing in \eqref{beta1exp}, i.e.
\begin{equation}
\begin{aligned}
\beta_1^{(\tp)}(\tth) & =  b_0^{(\tp)}(\tth) + \sum_{q=1}^{\tp-1} \sum_{0<j_1<\dots <j_q < \tp}   \betone{q}{j_1,\dots,j_q}{+,\dots,+}(\tth)\\  & \ - \sum_{q=1}^{\tp-1} \sum_{i=1}^q \sum_{0<j_1<\dots<j_q < \tp}   \betone{q}{j_1,\dots, j_{i-1},j_i,j_{i+1},\dots,j_q}{+,\dots,\ +\ ,\ -,\ +,\dots,+}(\tth) + O\big( \tth^{\frac{15}2-3\tp} \big) \, . 
\end{aligned}
\end{equation}
We compute the expansion at $\tth \to 0^+$ of these  coefficients.
\\[1mm]
{\bf Step $2$: computation of $ b_0^{(\tp)}(\tth)$.}
By \eqref{beta1exp}, \eqref{entanglementfdj}, \eqref{startingsqrt} and \eqref{exppjjajjbis} we obtain 
\begin{align}\label{bash0+1}
 b_0^{(\tp)}(\tth) =-\sqrt{\frac{\tp^2-1}{3}} \frac{3^{\tp-1}\tp}{8^\tp}  \Big[\tp \tth^{\frac72-3\tp} +  \bc{0} \tth^{\frac{11}2-3\tp}  + O\big( \tth^{\frac{15}2-3\tp} \big) \Big] 
\end{align}
with 
\begin{equation}\label{bc0ecco}
\bc{0} :=   \Big(\frac{473}{360}\tp^2 - \frac12\tp- \frac{4}{45} \Big)  \tp\, .
\end{equation}
\\[1mm]
{\bf Step $3$: computation of $\betone{q}{j_1,\dots,j_q}{+,\dots,+}(\tth)$.}
For any $q=1,\dots,\tp -1$ and any  
integer $0<j_1<\dots<j_q<\tp$, using \eqref{beta1exp} with $\sigma_1=\dots=\sigma_q = +$ together with \eqref{entanglementfdj} and \eqref{expOmegajtj}-\eqref{exppjjajjbis} we get that  
\begin{equation}
\label{N+D+}
\betone{q}{j_1,\dots,j_q}{+,\dots,+}(\tth) =  \frac{N^+}{D^+}
\end{equation}
where, neglecting terms $ O(\tth^4) $, 
\begin{align}\label{bash0+2aux}
 N^+ & :=\Big( \frac14 \Big)^{q+1} \sqrt{\Omega_0^{(\tp)}\Omega_\tp^{(\tp)}} \Omega_{j_1}^{(\tp)}\dots \Omega_{j_q}^{(\tp)}  \big( a_{j_1}^{[j_1]} - p_{j_1}^{[j_1]} 
 (- t_0^{(\tp)} +  t_{j_1}^{(\tp)})\big) \\  \notag
 &\ \ \times  \big( a_{j_2-j_1}^{[j_2-j_1]} - p_{j_2-j_1}^{[j_2-j_1]} ( t_{j_1}^{(\tp)} +  t_{j_2}^{(\tp)})\big) \dots \big( a_{\tp-j_q}^{[\tp-j_q]} - p_{\tp-j_q}^{[\tp-j_q]} ( t_{\tp}^{(\tp)} +  t_{j_q}^{(\tp)})\big)  \\ \notag
 &= \tth^{\frac72+\frac52 q- 3\tp} \frac{3^{\tp-1}}{8^\tp} \sqrt{\frac{\tp^2-1}{3}}\tp2^q   \Big(1 -\tfrac{3\tp^2+8}{40} \tth^2 \Big)  j_1 \dots j_q\prod_{i=1}^q \Big(1   +
 \tfrac{\tp^3-\tp-2j_i^3}{12 j_i}\tth^2 \Big) 
  \\[-2mm] \notag
 &\ \ \times(- j_1) (j_2-j_1) \dots (\tp-j_q) \big(1  + \tfrac{25j_1^2-9j_1+2}{18} \tth^2 \big) \Big[\prod_{i=1}^{q-1}\big( 1 + \tfrac{19(j_{i+1}-j_i)^2+12j_i(j_{i+1}-j_i)-27(j_{i+1}-j_i)+12j_i^2+2}{54} \tth^2\big) \Big] \\ \notag
 &\ \ \times \big( 1 + \tfrac{19(\tp-j_q)^2+12j_q(\tp-j_q)-27(\tp-j_q)+12j_q^2+2}{54} \tth^2\big)  \\ \notag
 \end{align}
 and 
\begin{equation}\label{formaD}
D^+:= 
\tth^{\frac52q} (-6)^{-q}\underbrace{ (\tp^3-\tp-j_1^3+j_1)\dots (\tp^3-\tp-j_q^3+j_q) }_{=:D_1^+}
\prod_{i=1}^q \Big[  1 -\Big( \tfrac{19}{60} (j_i^2 +1) + \tfrac{1}{15} \tfrac{\tp(\tp^2-1) (j_i+\tp)}{j_i^2+j_i \tp+\tp^2-1} \Big) \tth^2\Big]
\, . 
\end{equation}
We write  the terms $ N^+, D^+ $ 
in \eqref{bash0+2aux} and
\eqref{formaD} as 
\begin{equation}
\label{ndtot}
\begin{aligned}
& N^+ = -\tth^{\frac72+\frac52 q- 3\tp} \frac{3^{\tp-1}}{8^\tp} \sqrt{\frac{\tp^2-1}{3}}\tp2^q  \, N_1^+  \big( 1 + \tth^2  N_2^+ +  O(\tth^4)\big) 
\, , \\
& D^+ =\tth^{\frac52q} (-6)^{-q}  D_1^+ \big( 1 + \tth^2  D_2^+ + O(\tth^4)  \big) \, , 
\end{aligned}
\end{equation}
where 
\begin{equation}\label{N1}
    N_1^+ := j_1 \ldots j_q \, j_1 (j_2-j_1) \ldots(\tp-j_q)
\end{equation}
whereas, setting $j_{q+1}:= \tp$, 
\begin{align}
\notag
N_2^+ & := - \tfrac{3\tp^2+8}{40} + \sum_{i=1}^q \tfrac{\tp^3-\tp-2j_i^3}{12 j_i} + \tfrac{25 j_1^2 - 9 j_1 +2}{18} + \sum_{i=1}^q \tfrac{19(j_{i+1}-j_i)^2 +12 j_i (j_{i+1}-j_i) -27 (j_{i+1}-j_i) + 12j_i^2 + 2 )}{54} \\
\notag
&=  - \tfrac{3\tp^2+8}{40} + \tfrac{\tp^3-\tp}{12} \Big(\tfrac{1}{j_1}+\dots + \tfrac{1}{j_q} \Big) - \tfrac16 (j_1^2+\dots+j_q^2) + \tfrac{25 j_1^2 - 9 j_1 +2}{18} + \sum_{i=1}^q \tfrac{19 j_{i+1}^2 +j_i^2 -26 j_i j_{i+1} -27 (j_{i+1}-j_i) + 2 }{54} \\
\notag
&=  - \tfrac{3\tp^2+8}{40} + \tfrac{\tp^3-\tp}{12} \Big(\tfrac{1}{j_1}+\dots + \tfrac{1}{j_q} \Big) - \tfrac16 (j_1^2+\dots+j_q^2) - \tfrac12\tp + \tfrac1{27} q + \tfrac19 \\ 
\notag
&\quad  + \big( \tfrac{25}{18} - \tfrac{19}{54}\big) j_1^2 + \tfrac{19}{27} \big( j_1^2 + \dots + j_q^2 \big)   + \tfrac{19}{54} \tp^2 -\tfrac{13}{27} \big( j_1 j_{2} + j_2 j_3 + \dots + j_q\tp\big) \\
\label{N2}
&=\tfrac{299}{1080} \tp^2 - \tfrac12\tp- \tfrac4{45} + \tfrac1{27} q +  \tfrac{\tp^3-\tp}{12} \Big(\tfrac{1}{j_1}+\dots + \tfrac{1}{j_q} \Big) \\
\notag &\quad + \tfrac{29}{54} (j_1^2+\dots+j_q^2) + \tfrac{28}{27} j_1^2  -\tfrac{13}{27} \big( j_1 j_{2} + j_2 j_3 + \dots + j_q\tp\big) 
\end{align}
and
\begin{align} \label{D2p}
D_2^+ :&= - \sum_{i=1}^q  \tfrac{19}{60} (j_i^2 +1) + \tfrac{1}{15} \tfrac{\tp(\tp^2-1) (j_i+\tp)}{j_i^2+j_i \tp+\tp^2-1}  \\ \notag  
&= -\tfrac{19}{60} (j_1^2 + \dots + j_q^2) - \tfrac{19}{60} q - \tfrac{\tp(\tp^2-1)}{15} \big( \tfrac{j_1+\tp}{\tp^2+j_1\tp+j_1^2-1} + \dots+ \tfrac{j_q+\tp}{\tp^2+j_q\tp+j_q^2-1} \big)  \, .
\end{align}
In conclusion,  in view of \eqref{N+D+} and \eqref{ndtot}, by the explicit expressions of $N_1, D_1, N_2, D_2$ given  respectively in 
\eqref{N1}, \eqref{formaD}, \eqref{N2}, \eqref{D2p} we obtain
\begin{align}\label{quellacoipiu}
    \betone{q}{j_1,\dots,j_q}{+,\dots,+}(\tth) & = 
\frac{N^+}{D^+} = 
-\tth^{\frac72- 3\tp} \frac{3^{\tp-1}}{8^\tp} \sqrt{\frac{\tp^2-1}{3}} \,\tp\,  (-12)^q \, 
\frac{N_1^+}{
D_1^+}
(1+ (N_2^+-D_2^+) \tth^2 +
O(\tth^4)) \\ \notag
& =-\sqrt{\frac{\tp^2-1}{3}} \frac{3^{\tp-1}\tp}{8^\tp}  \Big[\ac{q}(j_1,\dots,j_q) \tth^{\frac72-3\tp} + \bc{q}(j_1,\dots,j_q) \tth^{\frac{11}2-3\tp} + O\big( \tth^{\frac{15}2-3\tp} \big) \Big]   
\end{align}
 with 
 $ \ac{q}(j_1,\dots,j_q):= (-12)^q \frac{N_1^+}{D_1^+}$
 given explicitly  in \eqref{cancelingasympcoeff} and 
 $\bc{q}(j_1,\dots,j_q):= (-12)^q \frac{N_1^+}{D_1^+} (N_2^+-D_2^+)$ given explicitly by 
\begin{align}\label{bcqecco}
& \bc{q}(j_1,\dots,j_q) =  \Big( \frac{299}{1080} \tp^2 -\frac12\tp- \frac4{45} + \frac{191}{540}q +\frac{28}{27} j_1^2 + \frac{461}{540}\sum_{i=1}^q j_i^2 \\ \notag
& \qquad\qquad  \qquad + \frac{\tp^3-\tp}{12} \sum_{i=1}^q \frac{1}{j_i} + \frac{\tp^3-\tp}{15} \sum_{i=1}^q \frac{\tp+j_i}{\tp^2+j_i^2+\tp j_i-1}  - \frac{13}{27} \sum_{i=1}^q j_i j_{i+1} \Big) \ac{q}(j_1,\dots,j_q)
 \end{align} 
where we set $ j_{q+1} :=\tp$.

\noindent{\bf Step $4$: computation of $\betone{q}{j_1,\dots, j_{i-1},j_i,j_{i+1},\dots,j_q}{+,\dots,\ +\ ,\ -,\ +,\dots,+}(\tth)$.} For any $q=1,\dots,\tp-1$ and any  
integer $0<j_1<\dots<j_q<\tp$, using \eqref{beta1exp} with $\sigma_1=-$, $\sigma_2=\dots=\sigma_q = +$ together with \eqref{entanglementfdj} and \eqref{expOmegajtj}-\eqref{exppjjajjbis} we get that  
\begin{equation}\label{NiDi}
\betone{q}{j_1,\dots, j_{i-1},j_i,j_{i+1},\dots,j_q}{+,\dots,\ +\ ,\ -,\ +,\dots,+}(\tth) = \frac{N_i}{ D_i} \, ,
\quad \forall 
i=1,\dots, q \, , 
\end{equation}
where, neglecting terms $ O(\tth^2) $,
\begin{equation}\label{Di}
 D_i:= 2 (-6)^{1-q} \tth^{\frac52q-2} j_i \prod_{k=1\atop k\neq i}^q  (\tp^3-\tp-j_k^3+j_k)\, ,\vspace{-10mm}
\end{equation}
and
\begin{align}\label{N1-}
N_1 & :=\Big( \frac14 \Big)^{q+1} \sqrt{\Omega_0^{(\tp)}\Omega_\tp^{(\tp)}} \Omega_{j_1}^{(\tp)}\dots \Omega_{j_q}^{(\tp)}  \big( a_{j_1}^{[j_1]} - p_{j_1}^{[j_1]} 
 (- t_0^{(\tp)} -  t_{j_1}^{(\tp)})\big)  \big( a_{j_2-j_1}^{[j_2-j_1]} - p_{j_2-j_1}^{[j_2-j_1]} ( - t_{j_1}^{(\tp)} +  t_{j_2}^{(\tp)})\big) \\  \notag
 &\ \ \times  \big( a_{j_3-j_2}^{[j_3-j_2]} - p_{j_3-j_2}^{[j_3-j_2]} ( t_{j_2}^{(\tp)} +  t_{j_3}^{(\tp)})\big) \dots \big( a_{\tp-j_q}^{[\tp-j_q]} - p_{\tp-j_q}^{[\tp-j_q]} ( t_{\tp}^{(\tp)} +  t_{j_q}^{(\tp)})\big)  \\ \notag
 &= \tth^{\frac72+\frac52 q- 3\tp} \frac{3^{\tp-1}}{8^\tp} \sqrt{\frac{\tp^2-1}{3}}\frac{\tp2^q }{3}  j_1 \dots j_q 
(-5 j_1) \big(-(j_2-j_1)\big) (j_3-j_2) \dots (\tp-j_q) \, ,
 \end{align}
whereas for any  $i = 2, \ldots, q $, setting $j_{q+1}:=\tp$,
\begin{align}\label{Ni-}
N_i :&= \Big( \frac14 \Big)^{q+1} \sqrt{\Omega_0^{(\tp)}\Omega_\tp^{(\tp)}} \Omega_{j_1}^{(\tp)}\dots \Omega_{j_q}^{(\tp)}  \big( a_{j_1}^{[j_1]} - p_{j_1}^{[j_1]} 
 (- t_0^{(\tp)} +  t_{j_1}^{(\tp)})\big)  \big( a_{j_{i}-j_{i-1}}^{[{j_{i}-j_{i-1}}]} - p_{j_{i}-j_{i-1}}^{[{j_{i}-j_{i-1}}]} (  t_{j_{i-1}}^{(\tp)} -  t_{j_i}^{(\tp)})\big) \\  \notag
 &\ \ \times  \big( a_{j_{i+1}-j_{i}}^{[{j_{i+1}-j_{i}}]} - p_{j_{i+1}-j_{i}}^{[{j_{i+1}-j_{i}}]} (-  t_{j_{i}}^{(\tp)} +  t_{j_{i+1}}^{(\tp)})\big) \prod_{k=1 \atop k\neq i-1,i}^{q}\big( a_{j_{k+1}-j_k}^{[j_{k+1}-j_k]} - p_{j_{k+1}-j_k}^{[j_{k+1}-j_k]} ( t_{j_k}^{(\tp)} +  t_{j_{k+1}}^{(\tp)})\big)   \\[-2mm] \notag
 &= \tth^{\frac72+\frac52 q- 3\tp} \frac{3^{\tp-1}}{8^\tp} \sqrt{\frac{\tp^2-1}{3}}   \frac{\tp 2^q}9 j_1 \dots j_q 
(-j_1) (j_2-j_1) \dots (\tp-j_q) =  -\frac{N_1}{15} \, .
 \end{align}
By \eqref{NiDi},  \eqref{Di}, \eqref{N1-} and \eqref{Ni-} we obtain
\begin{align}
\label{bash0+3}
 &\betone{q}{j_1,\dots, j_{i-1},j_i,j_{i+1},\dots,j_q}{+,\dots,\ +\ ,\ -,\ +,\dots,+}(\tth) = -\sqrt{\frac{\tp^2-1}{3}} \frac{3^{\tp-1}\tp}{8^\tp}  \Big[\hbc{q}(i;j_1,\dots,j_q) \tth^{\frac{11}2-3\tp} + O\big( \tth^{\frac{15}2-3\tp} \big)\Big]  
 \end{align} 
with
\begin{align}\label{bpIIasymp}
&\hbc{q}(i;j_1,\dots,j_q):=  \begin{cases} \dfrac{5 (\tp^3-\tp+j_1-j_1^3)}{36 j_1} \ac{q}(j_1,j_2,\dots,j_q)  & \textup{if } i=1\, ,  \\[3mm] - \dfrac{\tp^3-\tp+j_i-j_i^3}{108 j_i} \ac{q}(j_1,j_2,\dots j_q) & \textup{if } i = 2, \ldots, q  \, . \end{cases}
\end{align}
{\bf Step 5: conclusion}. 
The sum of the coefficients of order  $\tth^{\frac{7}{2}-3\tp}$ in \eqref{bash0+1} and \eqref{quellacoipiu} gives  $A^{(\tp)}$ in \eqref{Ap}. By summing the coefficients of order  $\tth^{\frac{11}{2}-3\tp}$ in \eqref{bash0+1}-\eqref{bc0ecco}, \eqref{quellacoipiu}-\eqref{bcqecco} and \eqref{bash0+3}-\eqref{bpIIasymp} we obtain the coefficient of $\tth^{\frac{11}{2}-3\tp}$ in \eqref{beta1ash0+} as, setting $\ac{q}:= \ac{q}(j_1,\dots,j_q)$ for brevity,
\begin{align}
& \bc{0}+\sum_{q=1}^{\tp-1} \sum_{0<j_1<\dots<j_q < \tp} \bc{q}(j_1,\dots,j_q) -  \sum_{q=1}^{\tp-1} \sum_{0<j_1<\dots <j_q < \tp} \sum_{i=1}^{q}  \hbc{q}(i;j_1,\dots,j_q)  
\\ \notag
& = \Big[\Big( \tfrac{299}{1080} + \tfrac{28}{27} \Big) \tp^2 - \tfrac12\tp- \tfrac{4}{45} \Big] \tp+\sum_{q=1}^{\tp-1} \sum_{0<j_1<\dots<j_q < \tp} \Big( \tfrac{299}{1080} \tp^2 -\tfrac12\tp- \tfrac4{45} \Big)  \ac{q}
\\ \notag
&+ \sum_{q=1}^{\tp-1} \sum_{0<j_1<\dots<j_q < \tp} \Big[ \tfrac{191}{540} q + \tfrac{28}{27} j_1^2 + \tfrac{461}{540} \sum_{i=1}^q j_i^2 + \tfrac{\tp^3-\tp}{12} \sum_{i=1}^{q} \tfrac{1}{j_i} + \tfrac{\tp^3-\tp}{15} \sum_{i=1}^{q}  \tfrac{\tp+j_i}{\tp^2+j_i^2+\tp j_i-1} -\tfrac{13}{27} \sum_{i=1}^{q} j_i j_{i+1}\Big]\ac{q} \\ \notag
&- \sum_{q=1}^{\tp-1} \sum_{0<j_1<\dots<j_q < \tp}  \Big[ \tfrac{5(\tp^3-\tp+j_1-j_1^3)}{36j_1} + \tfrac{\tp^3-\tp+j_1-j_1^3}{108 j_1} \Big] \ac{q} + \sum_{q=1}^{\tp-1} \sum_{0<j_1<\dots<j_q < \tp} \sum_{i=1}^q \Big[ \tfrac{\tp^3-\tp}{108 j_i} +\tfrac{1}{108} -\tfrac{j_i^2}{108} \Big] \ac{q} \\ \notag 
& = \Big( \tfrac{299}{1080} \tp^2 -\tfrac12\tp- \tfrac4{45} \Big) \Big(\overbrace{ \tp+\sum_{q=1}^{\tp-1} \sum_{0<j_1<\dots<j_q < \tp}\!\!\!\!\!\!\!\!\! \ac{q}(j_1,\dots,j_q)}^{= A^{(\tp)}\text{ by \eqref{Ap}}} \Big) + \tfrac{28}{27} \tp^3
\\ \notag
&+ \sum_{q=1}^{\tp-1} \sum_{0<j_1<\dots<j_q < \tp} \Big[ \big(\tfrac{191}{540}+\tfrac{1}{108}\big) q + \tfrac{28}{27} j_1^2 + \big(\tfrac{461}{540} -\tfrac{1}{108} \big) \sum_{i=1}^q j_i^2 + (\tp^3-\tp) \big(\tfrac{1}{12} + \tfrac{1}{108}\big)
 \sum_{i=1}^{q} \tfrac{1}{j_i} \\\notag
 &+ \tfrac{\tp^3-\tp}{15} \sum_{i=1}^{q}  \tfrac{\tp+j_i}{\tp^2+j_i^2+\tp j_i-1} -\tfrac{13}{27} \sum_{i=1}^{q} j_i j_{i+1}\Big]\ac{q} - \sum_{q=1}^{\tp-1} \sum_{0<j_1<\dots<j_q < \tp}  \tfrac{4}{27}\Big[ \tfrac{\tp^3-\tp}{j_1} + 1 - j_1^2 \Big] \ac{q} 
\end{align}
which, by elementary sums, is equal to 
$
 \Big( \frac{299}{1080} \tp^2 -\frac12\tp- \frac4{45} \Big) A^{(\tp)} + C^{(\tp)} 
$ 
appearing in \eqref{beta1ash0+}.
\end{proof}

As 
anticipated, the problem is  {\it degenerate}   as $ \tth \to 0^+ $, namely 
the term  of order $\tth^{\frac72-3\tp} $ 
of the asymptotic expansion \eqref{beta1ash0+}   is actually zero. 

\begin{lem}[Degeneracy as $ \tth \to 0^+ $]\label{lem:cozero}
For any  $\tp \in \bN$, $\tp \geq 2$, the coefficient 
$ A^{(\tp)} $  in \eqref{Ap} is  
\begin{equation}\label{estate2023}
A^{(\tp)}  = 0\,  \, .  
\end{equation}
\end{lem}

\begin{proof}
In  Appendix \ref{sec:canc}. 
\end{proof}


{The subsequent term, of order $\tth^{\frac{11}{2}-3\tp} $ 
is shown to be non-zero for any integer $ \tp  \geq 2 $ by a result we conjectured and whose proof is due to Koutschan, Van Hoeij, and Zeilberger in \cite{vHZ}.}

\begin{clem}[Koutschan - Van Hoeij - Zeilberger \cite{vHZ}]\label{primasommma}
For any  $\tp \in \bN$, $\tp \geq 2$,  the coefficient 
$C^{(\tp)}$ in \eqref{Cp} is
\begin{equation}\label{primavera2024}
C^{(\tp)}  = \frac{\tp(\tp+1)^2}{3}\,  .
\end{equation} 
\end{clem}
\begin{proof}
{The result is in \cite[Formula (4)]{vHZ}}. 
For the reader's convenience we report a proof in \cite{BCMVadd}. We remark that the function $g_q^{(\tp)}(j_1,\dots,j_q) $ in \cite[Formula (2)]{vHZ} is related to the function $\tg_q^{(\tp)}(j_1,\dots,j_q) $ in \eqref{gq} by the identity
$$
g_q^{(\tp)}(j_1,\dots,j_q) = 3\tg_q^{(\tp)}(j_1,\dots,j_q) - \frac{28}{9}\tp^2 \, .
$$
Using \cite[Formula (3)]{vHZ} one readily   verifies that \cite[Formula (4)]{vHZ} is equivalent to \eqref{primavera2024}. 
\end{proof}
\noindent{\it Proof of  Theorem \ref{asymptotics}}. 
Insert the value $A^{(\tp)} = 0 $ 
and $C^{(\tp)}$ in \eqref{primavera2024} in 
formula \eqref{beta1ash0+}.
\qed

\subsection{Limit of $\beta_1^{(\tp)}(\tth)$ as $\tth\to +\infty$}\label{infinitelim}

We now prove that the term $\beta_1^{(\tp)}(\tth)$ in \eqref{beta1exp} vanishes in the infinite-depth limit.

\begin{prop}\label{deeplimit}
For any $\tp\geq 2 $,
it results $ \lim\limits_{\tth \to +\infty} \beta_1^{(\tp)}(\tth) = 0 $. 
\end{prop}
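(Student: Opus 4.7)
The plan is to take the pointwise limit $\tth\to+\infty$ in the explicit formula \eqref{beta1exp}--\eqref{entanglementfdj} for $\beta_1^{(\tp)}(\tth)$ and show that every addend vanishes, via a purely algebraic cancellation triggered by the deep-water degeneracy $p_\ell^{[\ell]}\piuinf=a_\ell^{[\ell]}\piuinf$ of Lemma \ref{apinf}.

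First I assemble the limits of the building blocks. By \eqref{expaphip}, $\uphi(\tp,\tth)\to(\tp-1)^2/4$, hence by \eqref{tjp}
$$\Omega_j^{(\tp)}\piuinf=t_j^{(\tp)}\piuinf=\omega_j,\qquad \omega_j:=\sqrt{j+(\tp-1)^2/4},$$
with $\omega_0=(\tp-1)/2$, $\omega_\tp=(\tp+1)/2$ and, crucially, $\omega_\tp-\omega_0=1$. Setting $\varpi_\ell:=p_\ell^{[\ell]}\piuinf=a_\ell^{[\ell]}\piuinf$ from Lemma \ref{apinf}, the entanglement coefficient in \eqref{entanglementfdj} limits to
$$\ent{\ell}{\ell}{j+\ell}{j}{\sigma'}{\sigma}\piuinf=\tfrac{\e^\ell\varpi_\ell}{4}\sqrt{\sigma}\bar{\sqrt{\sigma'}}\sqrt{\omega_j\omega_{j+\ell}}\bigl(1-\sigma\omega_j-\sigma'\omega_{j+\ell}\bigr),$$
and the denominators in \eqref{beta1exp} limit to $(\omega_j^\sigma-\omega_*^{(\tp)})\piuinf=j-\omega_0-\sigma\omega_j=:W_j^\sigma$. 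The $q=0$ summand $b_0^{(\tp)}\piuinf$ vanishes at once because its only nontrivial factor equals $1+\omega_0-\omega_\tp=0$.

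The heart of the argument is an algebraic identity. Setting $A_j(\sigma):=\omega_\tp-\sigma\omega_j$ and $B_j(\sigma):=\omega_0+\sigma\omega_j$, the relation $\omega_\tp-\omega_0=1$ rewrites each middle factor as $1-\sigma\omega_j-\sigma'\omega_{j'}=A_{j'}(\sigma')-B_j(\sigma)$, while a direct computation exploiting $\omega_0\omega_\tp=\omega_0^2+\omega_0$ and $\omega_j^2=j+\omega_0^2$ yields
$$A_j(\sigma)\,B_j(\sigma)=-\bigl(j-\omega_0-\sigma\omega_j\bigr)=-W_j^\sigma,\qquad\forall\,\sigma=\pm.$$
For a generic $q\geq 1$ addend in \eqref{beta1exp}, the product $\sqrt{\Omega_{j_i}^{(\tp)}\Omega_{j_{i+1}}^{(\tp)}}$ telescopes along the chain into $\sqrt{\omega_0\omega_\tp}\,\omega_{j_1}\cdots\omega_{j_q}$, the sign factors $\sqrt{\sigma}\bar{\sqrt{\sigma'}}$ telescope into a constant $\im$ independent of $\sigma_1,\dots,\sigma_q$, and the remaining $\vec\sigma$-dependent numerator reads
$$A_{j_1}(\sigma_1)\prod_{i=1}^{q-1}\bigl(A_{j_{i+1}}(\sigma_{i+1})-B_{j_i}(\sigma_i)\bigr)\bigl(-B_{j_q}(\sigma_q)\bigr).$$
Expanding the middle product over subsets $S\subseteq\{1,\dots,q-1\}$ (where $i\in S$ selects the $A$-summand and $i\in S^c$ selects $-B$) the sum over $\vec\sigma$ factorizes across $k=1,\dots,q$ into elementary sums $\sum_{\sigma_k=\pm}\sigma_k X_k(\sigma_k)/W_{j_k}^{\sigma_k}$ with $X_k\in\{1,A_{j_k},B_{j_k},A_{j_k}B_{j_k}\}$. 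By the key identity $A_{j_k}(\sigma)B_{j_k}(\sigma)/W_{j_k}^\sigma=-1$ is $\sigma$-independent, so any contribution with $X_k=A_{j_k}B_{j_k}$ produces $\sum_{\sigma}\sigma\cdot(-1)=0$. A short bookkeeping gives: $X_k=A_{j_k}B_{j_k}$ iff (i) $k=1$ and $1\in S^c$, or (ii) $k=q$ and $q-1\in S$, or (iii) $1<k<q$, $k-1\in S$ and $k\in S^c$. Avoiding this for every $k$ would force $1\in S$, $q-1\in S^c$ and no $S\to S^c$ transition inside $\{1,\dots,q-1\}$, which is impossible for $q\geq 2$; for $q=1$ the only summand equals $-A_{j_1}(\sigma_1)B_{j_1}(\sigma_1)/W_{j_1}^{\sigma_1}=1$ and $\sum_{\sigma_1}\sigma_1=0$. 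Hence $\beta_1^{(\tp)}\piuinf=0$.

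The hard part will be spotting the identity $A_j(\sigma)B_j(\sigma)=-W_j^\sigma$: once isolated, it couples the numerator factors to the denominators in such a way that the combinatorial cancellation becomes essentially automatic. This identity is the precise fingerprint in the spectral problem of the deep-water structural degeneracy of the Stokes wave established in Proposition \ref{degenstokesinfinity} and Lemma \ref{apinf}, and is what ultimately forces $\lim_{\tth\to+\infty}\beta_1^{(\tp)}(\tth)=0$.
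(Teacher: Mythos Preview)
Your proof is correct and takes a genuinely different route from the paper's.

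The paper (Lemma~6.5) first clears the denominators using $(2j-\tp+1-2\nu_j)(2j-\tp+1+2\nu_j)=-4j(\tp-j)$, so that the $\vec\sigma$-dependent part becomes a pure product $B_{j_1,\dots,j_q}^{\sigma_1,\dots,\sigma_q}$, and then proves $\sum_{\vec\sigma}\sigma_1\cdots\sigma_q B^{\vec\sigma}=0$ by \emph{induction} on $q$: computing $B^{\dots,+}-B^{\dots,-}$ in the last sign gives the recursion $S_{j_1,\dots,j_q}=-4\nu_{j_q}j_qS_{j_1,\dots,j_{q-1}}$, and the base case $S_{j_1}=0$ follows from $1+\nu_0-\nu_\tp=0$. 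You instead keep the denominators $W_{j_k}^{\sigma_k}$, isolate the sharper identity $A_{j_k}(\sigma)B_{j_k}(\sigma)=-W_{j_k}^\sigma$ (a refinement of the paper's quadratic identity, exploiting $\omega_\tp-\omega_0=1$ and $\omega_j^2=j+\omega_0^2$), expand the $(q-1)$-fold middle product over subsets $S\subseteq\{1,\dots,q-1\}$, and observe that each resulting monomial factorizes over $k$ and must contain at least one factor $A_{j_k}B_{j_k}/W_{j_k}^{\sigma_k}\equiv-1$, forcing $\sum_{\sigma_k}\sigma_k(\cdot)=0$. Your ``no $S\to S^c$ transition'' obstruction is exactly the combinatorial shadow of the paper's inductive step. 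The paper's argument is slightly shorter and needs less bookkeeping; yours is induction-free and makes more transparent \emph{why} the deep-water degeneracy $a_\ell^{[\ell]}\piuinf=p_\ell^{[\ell]}\piuinf$ together with $\omega_\tp-\omega_0=1$ kills every single term.
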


The rest of this section is devoted to the proof of Proposition 
\ref{deeplimit}.

\begin{lem}
For any $j\geq 0$, 
the terms in \eqref{tjp} and \eqref{omegajsigma} satisfy 
\begin{equation}\label{tjpinfinito} 
\begin{aligned}
& \lim_{\tth \to + \infty}\Omega_j^{(\tp)} = 
\lim_{\tth \to + \infty} t_j^{(\tp)} =  \sqrt{j+\frac{(\tp-1)^2}{4}}=: \nu_j^{(\tp)}
  \, , \\ 
&   \lim_{\tth \to + \infty} \omega_j^\sigma - \omega_*^{(\tp)} =  \frac{2j-\tp+1}{2} - \sigma \nu_j^{(\tp)}\, .
\end{aligned}
 \end{equation}
 The terms in \eqref{beta1exp}-\eqref{entanglementfdj} satisfy,
 for any $1\leq q \leq \tp-1$,  $0<j_1<\dots<j_q<\tp$ and $\sigma_1,\dots,\sigma_q=\pm$,
\begin{equation}\label{cancellazionimagiche1}
\lim_{\tth \to +\infty} b_0^{(\tp)}  = 0\, , \qquad 
\lim_{\tth \to \infty} \betonep{q}{j_1,\dots,j_q}{\sigma_1,\dots,\sigma_q} = A_{j_1,\dots,j_q} B_{j_1,\dots,j_q}^{\sigma_1,\dots,\sigma_q}\, ,
\end{equation}
where, recalling  $\varpi_{j}$ in \eqref{paugualiinf} and setting
 $(j_{q+1},\sigma_{q+1}):=(\tp,+)$,
\begin{align}\label{Bjssigmas}
&A_{j_1,\dots,j_q} :=(-1)^q \frac{\sqrt{\tp^2-1}\nu_{j_1}^{(\tp)}\dots  \nu_{j_q}^{(\tp)}\varpi_{j_1} \varpi_{j_2-j_1} \dots \varpi_{j_q-j_{q-1}} \varpi_{\tp-j_q}}{8^{q+1}j_1 (\tp-j_1)\dots j_q(\tp-j_q) } \\ 
\label{Bjssigmas1} 
&
B_{j_1,\dots,j_q}^{\sigma_1,\dots,\sigma_q}:= \big( 1+ \nu_0^{(\tp)}-\sigma_{1} \nu_{j_1}^{(\tp)} \big) \prod_{i=1}^{q} \big( 1-\sigma_i \nu_{j_i}^{(\tp)}-\sigma_{i+1} \nu_{j_{i+1}}^{(\tp)} \big) \big( 2j_i-\tp+1+2\sigma_i \nu_{j_i}^{(\tp)} \big)\, .
\end{align}
\end{lem}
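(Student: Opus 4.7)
The plan is to compute all four limits termwise from the explicit formulas gathered in the excerpt, then recognize the resulting product as $A_{j_1,\ldots,j_q} B^{\sigma_1,\ldots,\sigma_q}_{j_1,\ldots,j_q}$.

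First I would establish the elementary spectral limits. Since Lemma \ref{collemma}(\ref{it4}) gives $\lim_{\tth\to+\infty}\uphi(\tp,\tth)=(\tp-1)^2/4$ and $\tanh(\tth(j+\uphi))\to 1$ for every $j\ge 0$, the definitions \eqref{tjp} and \eqref{omeghino} immediately yield $\Omega_j^{(\tp)},t_j^{(\tp)}\to \nu_j^{(\tp)}$ and $\omega_j^\sigma\to(j+(\tp-1)^2/4)-\sigma\nu_j^{(\tp)}$, from which subtracting $\omega_*^{(\tp)}=\ch\uphi+\Omega_0^{(\tp)}\to(\tp-1)^2/4+(\tp-1)/2$ produces the claimed limit of $\omega_j^\sigma-\omega_*^{(\tp)}$. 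In particular $\nu_0^{(\tp)}=(\tp-1)/2$ and $\nu_\tp^{(\tp)}=(\tp+1)/2$, so the identity
\[
1+\nu_0^{(\tp)}-\nu_\tp^{(\tp)}=0
\]
will do the work of $b_0^{(\tp)}\to 0$: by formula \eqref{entanglementfdj} with $(j,\ell,\sigma,\sigma')=(0,\tp,-,+)$ together with Lemma \ref{apinf}, the limit of $b_0^{(\tp)}$ factors through exactly this combination.

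For the general coefficient I would substitute the closed form \eqref{entanglementfdj} into each entanglement factor of $[b_1^{(\tp)}]^{\sigma_1,\ldots,\sigma_q}_{j_1,\ldots,j_q}$ and take the limit factorwise using Lemma \ref{apinf} (which provides $a_\ell^{[\ell]},p_\ell^{[\ell]}\to \varpi_\ell$) and the limits of $\Omega_j^{(\tp)},t_j^{(\tp)}$ above. The key bookkeeping is that the phase product
\[
\sqrt{-}\,\overline{\sqrt{\sigma_1}}\cdot\sqrt{\sigma_1}\,\overline{\sqrt{\sigma_2}}\cdots\sqrt{\sigma_q}\,\overline{\sqrt{+}}
\]
telescopes via $|\sqrt{\sigma_i}|^2=1$ down to $\sqrt{-}\,\overline{\sqrt{+}}=\im$, which cancels the prefactor $1/\im$ in \eqref{beta1exp}. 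The $\sqrt{\Omega}$ magnitudes combine to $\sqrt{\nu_0^{(\tp)}\nu_\tp^{(\tp)}}\,\nu_{j_1}^{(\tp)}\cdots\nu_{j_q}^{(\tp)}=\tfrac12\sqrt{\tp^2-1}\,\nu_{j_1}^{(\tp)}\cdots\nu_{j_q}^{(\tp)}$, the factor $(1/4)^{q+1}$ from the entanglement formulas together with this $1/2$ give the denominator $2\cdot 4^{q+1}=8^{q+1}/2^q$, and the numerators of the $(a-p(\sigma t+\sigma' t))$ factors produce, in order, the string
\[
\varpi_{j_1}(1+\nu_0^{(\tp)}-\sigma_1\nu_{j_1}^{(\tp)})\prod_{i=1}^{q}\varpi_{j_{i+1}-j_i}\bigl(1-\sigma_i\nu_{j_i}^{(\tp)}-\sigma_{i+1}\nu_{j_{i+1}}^{(\tp)}\bigr),
\]
with the convention $(j_{q+1},\sigma_{q+1})=(\tp,+)$ that appears in \eqref{Bjssigmas1}.

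Finally I would treat the denominator $\prod_{i=1}^q(\omega_{j_i}^{\sigma_i}-\omega_*^{(\tp)})$ by rationalization: multiplying numerator and denominator of each factor by $(2j_i-\tp+1)/2+\sigma_i\nu_{j_i}^{(\tp)}$, the denominator becomes $((2j_i-\tp+1)/2)^2-\nu_{j_i}^{(\tp)\,2}$ which, after using $\nu_{j_i}^{(\tp)\,2}=j_i+(\tp-1)^2/4$, simplifies (this is the only nontrivial algebraic identity needed) to $j_i(j_i-\tp)$. Each denominator factor therefore produces $-((2j_i-\tp+1)+2\sigma_i\nu_{j_i}^{(\tp)})/(2j_i(\tp-j_i))$, whose product is exactly the $(-1)^q/(2^q\prod j_i(\tp-j_i))$ in $A_{j_1,\ldots,j_q}$ times the $(2j_i-\tp+1+2\sigma_i\nu_{j_i}^{(\tp)})$ strings in $B^{\sigma_1,\ldots,\sigma_q}_{j_1,\ldots,j_q}$. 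Collecting $2\cdot 4^{q+1}\cdot 2^q=8^{q+1}$ into the denominator reproduces the claimed formula $A_{j_1,\ldots,j_q}B^{\sigma_1,\ldots,\sigma_q}_{j_1,\ldots,j_q}$. The main obstacle is none of the limits individually but rather the careful bookkeeping of the four kinds of factors (phases, $\Omega$-magnitudes, $\varpi$-factors, and the rationalized denominators), which needs to be carried out in a way that both the total scalar constant and the sign/$\sigma$-pattern match \eqref{Bjssigmas}–\eqref{Bjssigmas1} without any loose $\pm$.
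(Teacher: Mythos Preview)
Your proposal is correct and follows essentially the same route as the paper: both compute the limits directly from \eqref{tjp}, \eqref{omegajsigma}, \eqref{entanglementfdj} and Lemma~\ref{apinf}, and both pivot on the same rationalization identity $(2j-\tp+1-2\nu_j^{(\tp)})(2j-\tp+1+2\nu_j^{(\tp)})=-4j(\tp-j)$ for the denominators. Your version simply makes explicit the phase-telescoping and constant bookkeeping that the paper leaves implicit.
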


\begin{proof}
The expansion  
\eqref{tjpinfinito} 
follows by \eqref{expaphip}.
By \eqref{beta1exp}-\eqref{entanglementfdj},  
\eqref{tjpinfinito}  and 
\eqref{paugualiinf} we obtain
\begin{equation*}
\lim_{\tth \to \infty} b_0^{(\tp)} =  \frac{\sqrt{\tp^2-1}}{8} \varpi_\tp \Big(1- \frac{\tp+1}{2}+\frac{\tp-1}{2} \Big)  = 0 
\end{equation*}
which gives the first identity \eqref{cancellazionimagiche1}. 
Next, by \eqref{beta1exp}-\eqref{entanglementfdj}, \eqref{tjpinfinito}  and 
\eqref{paugualiinf} we get   
\begin{align*}
&\lim_{\tth \to \infty} \betonep{q}{j_1,\dots,j_q}{\sigma_1,\dots,\sigma_q} =  \frac{\sqrt{\tp^2-1}\nu_{j_1}^{(\tp)}\dots  \nu_{j_q}^{(\tp)}\varpi_{j_1} \varpi_{j_2-j_1} \dots \varpi_{j_q-j_{q-1}} \varpi_{\tp-j_q}}{2^{q+3}(2j_1-\tp+1 - 2\sigma_1 \nu_{j_1}^{(\tp)})\dots (2j_q-\tp+1 - 2\sigma_q \nu_{j_q}^{(\tp)} )} \cdot \\
&\quad \cdot (1+\nu_0^{(\tp)}-\sigma_1 \nu_{j_1}^{(\tp)}) \Bigg( \prod_{i=1}^{q-1} (1-\sigma_i \nu_{j_i}^{(\tp)}-\sigma_{i+1} \nu_{j_{i+1}}^{(\tp)}) \Bigg) (1-\sigma_q \nu_{j_q}^{(\tp)}- \nu_{\tp}^{(\tp)}) \, ,
\end{align*}
which, since 
$(2j-\tp+1 - 2 \nu_{j}^{(\tp)})(2j-\tp+1 + 2 \nu_{j}^{(\tp)}) = -4j(\tp-j) $,  proves the second identity in \eqref{cancellazionimagiche1}
with $ A_{j_1,\dots,j_q} $  and $ B_{j_1,\dots,j_q}^{\sigma_1,\dots,\sigma_q} $ 
defined in 
\eqref{Bjssigmas1}.  
\end{proof}

The key result in the proof of Proposition \ref{deeplimit}
is the following lemma. 
\begin{lem}\label{Sjsvanishing}
For any $q \in \bN$ and integers $0<j_1<\dots<j_q < \tp$ the terms in \eqref{Bjssigmas1} satisfy
\begin{equation}\label{sommaallinfinito}
S_{j_1,\dots,j_q}:=\sum_{\sigma_1,\dots,\sigma_q=\pm} \sigma_1 \dots \sigma_q B_{j_1,\dots,j_q}^{\sigma_1,\dots,\sigma_q} = 0\, .
\end{equation}

\begin{proof} 
Identity
\eqref{sommaallinfinito}
is a direct consequence of 
the following facts 
\begin{align}\label{induzq}
& S_{j_1,\dots,j_q}  
= -4\nu_{j_q}^{(\tp)} j_q\, S_{j_1,\dots,j_{q-1}} \, , \  \forall q \geq 2 \, , \\
& 
S_{j_1} = 0 \, .   \label{induz1}
\end{align}
{\it 
Proof of \eqref{induzq}.} 
For any $ q \geq 2 $ we have
\begin{equation}\label{primopasso}
S_{j_1,\dots,j_q} =\!\!\!\!\! \sum_{\sigma_1,\dots,\sigma_{q-1}=\pm}\!\!\!\!\! \sigma_1 \dots \sigma_{q-1} \Big( B_{j_1,\dots,j_{q-1},j_q}^{\sigma_1,\dots,\sigma_{q-1},+} - B_{j_1,\dots,j_{q-1},j_q}^{\sigma_1,\dots,\sigma_{q-1},-} \Big)  
\end{equation}
where by \eqref{Bjssigmas1} 
\begin{equation}
\label{bdiff} B_{j_1,\dots,j_{q-1},j_q}^{\sigma_1,\dots,\sigma_{q-1},+} - B_{j_1,\dots,j_{q-1},j_q}^{\sigma_1,\dots,\sigma_{q-1},-}  = 
\frac{ B_{j_1,\dots,j_{q-1}}^{\sigma_1,\dots,\sigma_{q-1}} }{1-\sigma_{q-1} \nu_{j_{q-1}}^{(\tp)} - \nu_\tp^{(\tp)}} \Delta_q
\end{equation}
with
\begin{equation}\label{Deltoneinfinito} 
\begin{aligned}\Delta_q\, :=  (1-\sigma_{q-1} \nu_{j_{q-1}}^{(\tp)}- \nu_{j_{q}}^{(\tp)}) (1-\nu_{j_q}^{(\tp)}-\nu_{\tp}^{(\tp)}) (2j_q-\tp+1+2\nu_{j_q}^{(\tp)} ) \\ - (1-\sigma_{q-1} \nu_{j_{q-1}}^{(\tp)}+ \nu_{j_{q}}^{(\tp)}) (1+ \nu_{j_q}^{(\tp)}-\nu_{\tp}^{(\tp)}) (2j_q-\tp+1-2\nu_{j_q}^{(\tp)} ) \,.
\end{aligned}
 \end{equation}
 Setting 
\begin{equation}\label{setabcv}
a:=1-\sigma_{q-1} \nu_{j_{q-1}}^{(\tp)} \,, \quad  b:= 1-\nu_{\tp}^{(\tp)} \stackrel{\eqref{tjpinfinito}}{=} - \frac{\tp-1}{2}\,,\quad c:= 2j_q -\tp+1\,, \quad \nu:= \nu_{j_q}^{(\tp)}\, ,
\end{equation}
 the term $\Delta_q$ in \eqref{Deltoneinfinito} is given by
 \begin{align} \notag
 \Delta_q &= (a-\nu)(b-\nu)(c+2\nu) - (a+\nu)(b+\nu)(c-2\nu) = \big( 4ab -2(a+b) c + 4\nu^2 \big) \nu \\ \label{Deltaexpression}
 & \stackrel{\eqref{setabcv}} = -4\nu_{j_q}^{(\tp)} j_q \Big(\frac{1-\tp}{2}-\sigma_{q-1} \nu_{j_{q-1}}^{(\tp)}\Big)
 = -4\nu_{j_q}^{(\tp)} j_q \Big(1-\sigma_{q-1} \nu_{j_{q-1}}^{(\tp)} - \nu_\tp^{(\tp)} \Big) 
 \end{align}
since $\frac{\tp+1}{2}=\nu_\tp^{(\tp)} $
by \eqref{tjpinfinito}. 
 By \eqref{primopasso}, \eqref{bdiff}, \eqref{Deltaexpression}  we conclude that 
$$ 
 S_{j_1,\dots,j_q} = -4\nu_{j_q}^{(\tp)} j_q\, S_{j_1,\dots,j_{q-1}} = (-4)^{q-1} \nu_{j_2}^{(\tp)} j_2 \dots \nu_{j_q}^{(\tp)} j_q \, S_{j_1}  \, .
$$ 
{\it
Proof of \eqref{induz1}.} 
By \eqref{Bjssigmas1}, $ S_{j_1}= B_{j_1}^+ - B_{j_1}^- =: \Delta_1$ which is still given by formula \eqref{Deltoneinfinito}
setting $(j_0,\sigma_0):=(0,-) $. By \eqref{Deltaexpression} we have
$$
    S_{j_1} = \Delta_1 = - 4 \nu_{j_1}^{(\tp)}j_1 \Big(1+ \nu_0^{(\tp)}-\frac{\tp+1}{2} \Big) = 0 
$$   
 by    \eqref{tjpinfinito}. 
 The proof of the lemma is complete.
\end{proof}
\end{lem}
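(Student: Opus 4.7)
The plan is to proceed by induction on $q$, exploiting a telescoping structure of the product defining $B_{j_1,\dots,j_q}^{\sigma_1,\dots,\sigma_q}$. The key preliminary observation is that the values in \eqref{tjpinfinito} simplify nicely: $\nu_0^{(\tp)}=(\tp-1)/2$ and $\nu_\tp^{(\tp)}=(\tp+1)/2$, so that $1+\nu_0^{(\tp)}=\nu_\tp^{(\tp)}$. This turns the leading factor of $B_{j_1,\dots,j_q}^{\sigma_1,\dots,\sigma_q}$ into $(\nu_\tp^{(\tp)}-\sigma_1\nu_{j_1}^{(\tp)})$, which formally matches the "closing'' factor $(1-\sigma_q\nu_{j_q}^{(\tp)}-\nu_\tp^{(\tp)})$ one encounters when $(j_{q+1},\sigma_{q+1})=(\tp,+)$. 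I would also record the arithmetic identity $(\nu_j^{(\tp)})^2=j+\tfrac14(\tp-1)^2$, which, combined with $c_j:=2j-\tp+1$, yields $(\nu_j^{(\tp)})^2-\tfrac{1}{4}c_j^2=j\,\tfrac{c_j+\tp-1}{\cdots}$ type cancellations — more usefully, $(2\nu_j^{(\tp)}-(\tp-1))(2\nu_j^{(\tp)}+(\tp-1))=4j$.

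For the base case $q=1$, I would directly compute $B_{j_1}^{+}-B_{j_1}^{-}$ by expanding the three factors $(a-\sigma\nu)(b-\sigma\nu)(c+2\sigma\nu)$ with $a=\nu_\tp^{(\tp)}$, $b=1-\nu_\tp^{(\tp)}$, $c=2j_1-\tp+1$, $\nu=\nu_{j_1}^{(\tp)}$. A brief expansion shows the difference equals $4\nu\bigl[(ab+\nu^2)-\tfrac{c}{2}(a+b)\bigr]$; since here $a+b=1$ and $ab+\nu^2=j_1-(\tp-1)/2=c/2$, the bracket vanishes and $S_{j_1}=0$.

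For the inductive step $q\ge 2$, I would factor
\[
B_{j_1,\dots,j_q}^{\sigma_1,\dots,\sigma_q}=\frac{B_{j_1,\dots,j_{q-1}}^{\sigma_1,\dots,\sigma_{q-1}}}{1-\sigma_{q-1}\nu_{j_{q-1}}^{(\tp)}-\nu_\tp^{(\tp)}}\cdot\bigl(1-\sigma_{q-1}\nu_{j_{q-1}}^{(\tp)}-\sigma_q\nu_{j_q}^{(\tp)}\bigr)\bigl(1-\sigma_q\nu_{j_q}^{(\tp)}-\nu_\tp^{(\tp)}\bigr)\bigl(c_{j_q}+2\sigma_q\nu_{j_q}^{(\tp)}\bigr),
\]
using that in $B_{j_1,\dots,j_{q-1}}$ the "dummy'' index is $(\tp,+)$, whereas in $B_{j_1,\dots,j_q}$ it is replaced by $(j_q,\sigma_q)$ and then closed again by $(\tp,+)$. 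Summing over $\sigma_q=\pm$ and applying exactly the same algebraic identity as in the base case, with $a=1-\sigma_{q-1}\nu_{j_{q-1}}^{(\tp)}$, $b=1-\nu_\tp^{(\tp)}$, $c=c_{j_q}$, $\nu=\nu_{j_q}^{(\tp)}$, the bracket becomes $ab+\nu^2-\tfrac{c}{2}(a+b)=j_q\bigl(1-\sigma_{q-1}\nu_{j_{q-1}}^{(\tp)}-\nu_\tp^{(\tp)}\bigr)$ after using $\nu_\tp^{(\tp)}=(\tp+1)/2$ and $(\nu_{j_q}^{(\tp)})^2-c_{j_q}^2/4$ computations. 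The crucial point is that this bracket reproduces the very denominator we divided out, so it cancels, leaving
\[
\sum_{\sigma_q=\pm}\sigma_q B_{j_1,\dots,j_q}^{\sigma_1,\dots,\sigma_q}=-4\,\nu_{j_q}^{(\tp)}\,j_q\,B_{j_1,\dots,j_{q-1}}^{\sigma_1,\dots,\sigma_{q-1}}.
\]
Summing over the remaining signs gives $S_{j_1,\dots,j_q}=-4\nu_{j_q}^{(\tp)}j_q\,S_{j_1,\dots,j_{q-1}}$ and the induction closes.

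The main obstacle is spotting the right telescoping — specifically, recognizing that the quantity produced by the sum over $\sigma_q$ is proportional to the factor $\bigl(1-\sigma_{q-1}\nu_{j_{q-1}}^{(\tp)}-\nu_\tp^{(\tp)}\bigr)$ that one has to extract to expose $B_{j_1,\dots,j_{q-1}}^{\sigma_1,\dots,\sigma_{q-1}}$. This cancellation is not accidental: it is forced by the identities $\nu_0^{(\tp)}+1=\nu_\tp^{(\tp)}$ and $(2\nu_j^{(\tp)})^2=(\tp-1)^2+4j$, which tie together all the otherwise unrelated-looking factors in $B_{j_1,\dots,j_q}^{\vec\sigma}$. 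Once the right algebraic identity $(a-\nu)(b-\nu)(c+2\nu)-(a+\nu)(b+\nu)(c-2\nu)=4\nu[(ab+\nu^2)-\tfrac{c}{2}(a+b)]$ is isolated, the argument is uniform across $q$ and the lemma follows without the need for any case analysis.
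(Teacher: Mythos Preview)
Your proof is correct and follows essentially the same route as the paper: the same recursive factorization of $B_{j_1,\dots,j_q}^{\vec\sigma}$ in terms of $B_{j_1,\dots,j_{q-1}}^{\vec\sigma'}$, the same algebraic identity $(a-\nu)(b-\nu)(c+2\nu)-(a+\nu)(b+\nu)(c-2\nu)=4\nu\bigl[ab+\nu^2-\tfrac{c}{2}(a+b)\bigr]$, and the same key inputs $\nu_0^{(\tp)}=\tfrac{\tp-1}{2}$, $\nu_\tp^{(\tp)}=\tfrac{\tp+1}{2}$. One small slip: in your inductive step the bracket actually evaluates to $-j_q\bigl(1-\sigma_{q-1}\nu_{j_{q-1}}^{(\tp)}-\nu_\tp^{(\tp)}\bigr)$ (note the sign), which is what produces the $-4\nu_{j_q}^{(\tp)}j_q$ factor you correctly state in the next line.
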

\noindent{\it Proof of Proposition \ref{deeplimit}}.
In view of \eqref{beta1exp},  \eqref{cancellazionimagiche1} and Lemma \ref{Sjsvanishing} we conclude that
$$ 
\lim_{\tth \to +\infty } \beta_1^{(\tp)}(\tth) =  \lim_{\tth \to +\infty } b_0^{(\tp)} + \sum_{q=1}^{\tp-1} \sum_{0<j_1<\dots <j_q < \tp} A_{j_1,\dots,j_q} \sum_{\sigma_1,\dots,\sigma_q = \pm }\sigma_1 \dots \sigma_q  B_{j_1,\dots,j_q}^{\sigma_1,\dots,\sigma_q}
=0 
$$
as claimed. \qed

\smallskip

\noindent{\it Proof of Theorem \ref{thm:finalmat}}.
Theorem \ref{thm:finalmat} follows by Theorems \ref{lem:expansionL}, \ref{asymptotics} and Proposition \ref{deeplimit}. \qed

\begin{rmk}\label{beta1rmk}
If $\tp=2,3,4$, inserting in \eqref{beta1exp}-\eqref{entanglementfdj} the explicit values of the Taylor-Fourier coefficients $p_\ell^{[\ell]}$,  $a_\ell^{[\ell]}$, $\ell=1,\dots,4$ computed in \cite[Proposition A.7]{BMV_ed} we have  explicit
 formulas for $\beta_1^{(\tp)}(\tth )$. 
 They are plotted in Figures \ref{plotb1p2},  
obtained by the Mathematica sheet \texttt{beta1.nb} 
encoded in the Mathematica library \texttt{CoeffsLin.m}, collected in 
 \url{https://git-scm.sissa.it/amaspero/isolas}. 
 The zeros of $\beta_1^{(\tp)} (\tth) $ 
 coincide, for $\tp=2,3$, with the numerical 
 values given in  \cite{CDT,HY}.
\end{rmk}

\appendix

\section{Spectral collisions}
\label{spectralcollisions}

In this Appendix we prove Lemmata \ref{collemma} and \ref{thm:unpert.coll}.
\\[1mm]
\noindent{\bf Proof of Lemma \ref{collemma}}. 
We shall use that, since  $ \Omega (\varphi , \tth) = \sqrt{\varphi \tanh (\tth \varphi )} $ 
 is  strictly concave  for $ \varphi > 0 $
(cfr. \eqref{boomerang})
and $ \Omega (0, \tth) = 0 $, then  it is strictly sub-additive
\begin{equation}\label{subadd}
\Omega (\varphi + z , \tth) < \Omega (\varphi, \tth) + \Omega (z, \tth) \, , 
\qquad \forall \varphi , z > 0   \, , 
\end{equation}
and 
\begin{equation}\label{valzero}
\Omega (z, \tth) <  z \Omega (1, \tth)  \,  , \quad \forall z > 1  \, .
\end{equation}
{\sc Proof of Item $i$)}. 
We first prove that the equations
$\omega^\sigma(\varphi,\tth) = \omega^{\sigma}(\varphi+\tp,\tth)$,
for any $ \sigma = \pm $,   
have no solution,  see Figure \ref{fig.nocollision}. 
Recalling \eqref{Oomegino}, 
\begin{equation}\label{concavity}
\omega^+(\varphi,\tth) = \omega^+(\varphi+\tp,\tth) \qquad \Leftrightarrow \qquad
\ch\tp- \Omega ( \varphi + \tp, \tth )  = - \Omega ( \varphi , \tth ) \, . 
\end{equation} 
By \eqref{valzero} 
the function  $ d (\varphi, \tth ) := \ch\tp-  \Omega(\varphi +\tp,\tth) $
satisfies  $ d (0, \tth ) > 0  $ (since $\tp\geq 2 $) and 
$ d (\varphi, \tth )  > -  \Omega(\varphi +\tp,\tth) \geq -  \Omega(\varphi,\tth) $
for any $ \varphi \leq -\tp $ as well as 
$ d (\varphi, \tth )  > 0 > -  \Omega(\varphi,\tth) $
for any $ \varphi \in (-\tp,0)  $, see the right hand side of
Figure \ref{fig.nocollision}. For any $ \varphi > 0 $
we deduce by \eqref{subadd} and \eqref{valzero} that 
$  \Omega(\varphi+\tp,\tth) <  \Omega(\varphi,\tth) + \ch\tp $ and therefore  
the equation \eqref{concavity} has no solutions. 

Since $ \omega^- (\varphi, \tth) = - \omega^+ (-\varphi, \tth) $ 
the equation $ \omega^-(\varphi,\tth) = \omega^-(\varphi+\tp,\tth)$
has no solutions as well. 
\\[1mm]
{\sc Proof of Item $ii$)}.
We now prove that 
 there exists a unique positive solution
 $ \uphi  (\tp, \tth) > 0 $ of 
\begin{equation}\label{intersection}
\omega^-(\varphi,\tth) = \omega^+(\varphi+\tp,\tth) \qquad
\Leftrightarrow \qquad 
\ch\tp-  \Omega(\varphi +\tp,\tth)   = \Omega(\varphi,\tth) \, . 
\end{equation}
Indeed 
$ d (\varphi, \tth ) = \ch\tp-  \Omega(\varphi +\tp,\tth) $
satisfies $d(0,\tth) >0 $ and it  
is strictly decreasing to $-\infty$, 
whereas $\Omega(0,\tth) = 0 $ and 
$\Omega(\cdot,\tth)$ is strictly increasing to $+\infty $,
see Figure \ref{fig.Kreincollision}. This proves  \eqref{Kreincollision}. 
By  simmetry 
$\varphi=-\uphi(\tp,\tth)-\tp $ is the unique 
solution of \eqref{intersection} 
in $ (-\infty, - \tp) $  proving  \eqref{negativesolution}. 
We claim that for $-\tp \leq \varphi\leq 0 $  there are no further intersection points
between the graphs of $ d(\cdot, \tth)$ and $\Omega(\cdot,\tth) $ 
if  $\tp\geq 3$ and one intersection point $\varphi=-\frac{\tp}2 =-1$ if $\tp=2$,
see Figure \ref{fig.Kreincollision}.  
Indeed  $d(\varphi,\tth) > \Omega(\varphi,\tth) $, for $\varphi \in [-\tp,0] $,  if and only if 
$$
\tp \ch >  \Omega
\Big(\frac{\tp}2-y,\tth\Big) +  \Omega\Big(\frac{\tp}2+y,\tth\Big) =: g(y)\,,\quad \forall y \in
\Big[-\frac{\tp}2,\frac{\tp}2 \Big]\, .
$$
The function $g$ is even and, by \eqref{boomerang}, strictly concave. Hence 
it attains at $ y = 0 $  its maximum value
$$
g(0) =2 \Omega\Big(\frac{\tp}2, \tth\Big)  
\stackrel{\eqref{valzero}} < 2 \frac{\tp}{2} \Omega \big(1, \tth\big) =\tp\ch  \,,
\quad \forall \tp \geq 3 \, , 
$$
with equality holding  for $\tp= 2 $.
\\[1mm]
{\sc Proof of Item $iii$)}.
It  follows because $ \omega^- (\varphi, \tth) = - \omega^+ (-\varphi, \tth) $
amounts to
$  \Omega(\varphi,\tth) + \Omega(\varphi +\tp,\tth)  = - \ch\tp$ and the function $ \Omega (\varphi, \tth) $ is non-negative.  
\medskip

We now prove the properties of the positive function $\uphi(\tp,\tth) > 0 $ 
stated in Lemma \ref{collemma}.
\\[1mm]
{\sc Proof of  Item \ref{it1}.}
For any $ \tth > 0 $ 
 we have 
\begin{equation}\label{implicitproblem}
\mathcal{F}_{\tp} (\uphi(\tp,\tth),\tth) = 0 \qquad \text{where} \qquad 
\mathcal{F}_\tp (\varphi,\tth) := \Omega(\varphi,\tth)+\Omega(\varphi+\tp,\tth ) - \ch\tp\, . 
\end{equation}
Since $ \mathcal{F}_\tp (\varphi,\tth)  $ is  analytic and 
 $ \pa_\varphi \mathcal{F}_\tp (\varphi,\tth) > 0 $ by \eqref{boomerang}, 
 we deduce by the analytic implicit function theorem 
that $\tth\mapsto\uphi(\tp,\tth) $  is analytic. 
\\[1mm]
{\sc Proof of item \ref{it2}.}
By \eqref{implicitproblem}  and recalling that $ \ch = \Omega (1, \tth) $
we have 
\begin{align}
 \mathcal{F}_{\tp+1} (\uphi(\tp,\tth),\tth) 
 & =  \Omega (\uphi(\tp,\tth), \tth) + \Omega( \uphi(\tp,\tth) +\tp+ 1, \tth)
 - \ch (\tp+1) \notag \\
&  =  \Omega( \uphi(\tp,\tth) +\tp+ 1, \tth ) - \Omega( \uphi(\tp,\tth) + \tp, \tth ) - 
\Omega (1, \tth) < 0 \label{onega}
 \end{align}
by the sub-additivity property in
 \eqref{subadd} with $ \varphi =   \uphi(\tp,\tth) +\tp$ and $ z = 1 $.  
Then \eqref{onega} and the fact that
$ \partial_\varphi \mathcal{F}_{\tp+1} (\varphi,\tth) > 0 $ 
 imply that $ \uphi(\tp,\tth) < \uphi(\tp+1,\tth) $, for any $\tp\geq 2 $, proving
 the first part of \eqref{uallinfi}. 
 For any $ \tth > 0 $ 
 the monotone sequence
 $\tp\mapsto \uphi(\tp,\tth) $ admits a limit as $\tp\to + \infty $. 
Let us prove that $ \lim_{\tp \to + \infty} \uphi(\tp,\tth) = + \infty $. 
By contradiction suppose that  $  \sup_{\tp \geq 2}\uphi(\tp,\tth) < + \infty $ is finite. 
Then by \eqref{implicitproblem} we  deduce the contradiction
$
\ch  = \lim_{\tp \to \infty} \tp^{-1} 
\Omega (\uphi(\tp,\tth))  
+ \tp^{-1} \Omega (\uphi(\tp,\tth) +\tp)  = 0  $. 
\\[1mm]
{\sc Proof of  item \ref{it3}.}
Recalling \eqref{implicitproblem}, $ \ch = \Omega (1, \tth) $,  \eqref{valzero} and 
that 
$ \Omega (\varphi, \tth) $ is strictly increasing for $ \varphi > 0 $,  
we have that 
$$
\mathcal{F}_2 (0,\tth) = 
\Omega(2,\tth) -  2 \Omega(1,\tth) < 0  \, , \quad 
\mathcal{F}_2(1,\tth) =  \Omega(3,\tth) - \Omega(1,\tth )  > 0   
 \, , \quad \forall \tth > 0 \, , 
$$
and therefore the unique zero $ \uphi(2,\tth)  >  0 $ of 
$\mathcal{F}_2 (\varphi,\tth) = 0 $
satisfies $ \uphi(2,\tth) \in (0,1)$ for any $ \tth > 0 $.

For $\tp\geq 3 $ we already know by Item \ref{it2} 
that 
$ \mathcal{F}_\tp(  \uphi(\tp-1,\tth) ,\tth)  <0  $
and, since $\tfrac14 (\tp-1)^2 \geq 1 $ for any $\tp\geq 3$, we have 
\begin{equation}\label{zerofp}
\mathcal{F}_\tp\big(\tfrac14 (\tp-1)^2,\tth\big) = \frac{\tp-1}{2}\underbrace{\sqrt{\tanh\big(\tfrac{\tth}{4} (\tp-1)^2 \big)}}_{\geq \sqrt{\tanh(\tth)}} + \frac{\tp+1}{2}\underbrace{\sqrt{\tanh\big(\tfrac{\tth}{4} (\tp+1)^2 \big)}}_{> \sqrt{\tanh(\tth)}} -\tp\sqrt{\tanh(\tth)} > 0\, .
\end{equation}
Therefore, since $ \Omega (\varphi, \tth) $ is strictly increasing for $ \varphi > 0 $,  
we deduce that 
$ \uphi(\tp,\tth) \in \big(0,\tfrac14 (\tp-1)^2\big)$ for any $\tp\geq 3 $. 
Furthermore,  
by the  asymptotic expansion 
\begin{equation}\label{expexp}
\sqrt{\tanh (z)} = \Big( 1 - 2 \frac{e^{-2 z}}{1+e^{-2 z}} \Big)^{\frac12}
= 1 -  e^{-2 z} + O (e^{- 4  z}) \qquad \text{as} \qquad z \to + \infty \, , 
\end{equation}  
we deduce that 
$$
\mathcal{F}_2 \Big( \frac14,\tth \Big) 
= \frac12 \big(1-e^{-\frac{\tth}2}+O(e^{-\tth}) \big) + \frac32 \big(1+O(e^{-\tth}) \big) - 2   \big(1+O(e^{-\tth}) \big) < 0\, ,\quad \text{for } \tth \geq \tth_* \textup{ large enough}\, ,
$$
which implies that $ \uphi(2,\tth) > 1/ 4 $ for any $ \tth \geq \tth_* $.  
 \\[1mm]
{\sc Proof of item \ref{it4}.}
For any $\tp\geq 2 $, by the previous item the function $ \tth \mapsto \uphi(\tp,\tth) $ is bounded and 
 $ \tth \uphi(\tp,\tth) \to 0^+ $ as $ \tth  \to 0^+  $. By \eqref{implicitproblem} 
and the Taylor expansion
$ \sqrt{\tanh (z)} = \sqrt{z} \big( 1 - \frac{z^2}{6} + 
\frac{19}{360} z^4 + O(z^6 ) \big)  $ as $ z \to 0^+  $
we first deduce that $\uphi :=\uphi(\tp,\tth) $
\begin{equation}\label{staruphi}
\begin{aligned}
&\uphi \Big(1-\tfrac16 \tth^2 \uphi^2  + \tfrac{19 }{360} \tth^4 \uphi^4 + O(\tth^6 ) \Big) +
\big(\uphi+ \tp\big) \Big(1- \tfrac16 \tth^2 \big(\uphi+ \tp\big)^2  
+ \tfrac{19}{360}  \tth^4 \big(\uphi + \tp\big)^4 + O(\tth^6 )  \Big)  \\
&\qquad =  \tp \Big(1-\tfrac16 \tth^2 + \tfrac{19}{360}\tth^4 + O(\tth^6)\Big) \, ,
\end{aligned}
\end{equation}
and then 
$$
\uphi(\tp,\tth) \big(1+ \cO(\tth^2) \big) 
= \frac{\tp}{12} \tth^2 
\big[ (\uphi(\tp,\tth) + \tp)^2  - 1]  + O(\tth^4)  \, .
$$
This proves that $ \uphi(\tp,\tth) = O( \tth^2) $ as $ \tth \to 0^+ $
and then reinserting the result in \eqref{staruphi} we obtain the sharper Taylor expansion
 in \eqref{expuphinuova1}. 

In order to prove that 
$ \lim_{\tth \to +\infty} \uphi(\tp,\tth) = (\tp-1)^2 / 4 $
it is sufficient to show that 
for any sequence $ \tth_n \to \ + \infty $ there is a subsequence 
$ \tth_{n_k}$ such that  $ \uphi(\tp,\tth_{n_k}) \to (\tp-1)^2 / 4 $.
By item \ref{it3} we have that 
$ 1/ 4  \leq \uphi(\tp,\tth_n) \leq (\tp-1)^2 / 4 < +\infty$ for $ n \geq n_0 $ large enough. 
By compactness, there exists a subsequence $\tth_{n_k}\to +\infty$ 
such that $\uphi(\tp,\tth_{n_k})\to \ell \in [1/4, (\tp-1)^2 / 4 ] $. Thus
by \eqref{implicitproblem} we deduce that 
\begin{equation}\label{infinito}
 \underset{k \to +\infty}{\lim} \mathcal{F}_\tp(\uphi(\tp,\tth_{n_k}),\tth_{n_k}) = \sqrt{\ell} + \sqrt{\ell+\tp} -\tp = 0 \,,
\end{equation}
namely $\ell = \tfrac14 (\tp-1)^2 $, being the  unique solution of \eqref{infinito}.
This proves that  $\uphi(\tp,\tth) \to \tfrac14 (\tp-1)^2 $ as $\tth \to +\infty $.

We now prove the sharper asymptotic expansion in \eqref{expaphip}.
By \eqref{implicitproblem} and \eqref{expexp} we have, as $ \tth \to + \infty $,
\begin{equation}\label{anyp}
0 =\mathcal{F}_\tp (\uphi(\tp,\tth),\tth)
= \underbrace{ \sqrt{\uphi (\tp,\tth)}  + 
\sqrt{\uphi (\tp,\tth) +\tp}
-\tp }_{ = \frac{2\tp}{\tp^2-1} y_\tp (\tth) + O(y_\tp^2 (\tth))} -
\sqrt{\uphi (\tp,\tth)}  e^{-2 \tth \uphi (\tp,\tth)} +
\tp  e^{-2 \tth}  +  O( e ^{- 4 \tth \uphi (\tp,\tth)} )  
+ O(e^{-4 \tth})  
\end{equation}
writing $ \uphi (\tp,\tth) = \frac{(\tp-1)^2}{4} + y_\tp (\tth)$ where $ y_\tp (\tth) \to 0 $
as $ \tth \to + \infty $. 
We first consider the case  $\tp= 2 $.
By Items \ref{it2} and \ref{it3}  
we know  that 
$y_2(\tth) \to 0^+$ as $\tth\to + \infty $. 
Then, by  \eqref{anyp}  
and since $ e^{ - 2 \tth  y_2 (\tth) }  \leq 1$, we deduce that 
\begin{equation}\label{p=2as}
\frac43 y_2 (\tth) = 
\sqrt{  \frac14 + y_2 (\tth) } \, 
e^{ -   \frac{\tth}{2}  }  e^{ - 2 \tth  y_2 (\tth) } 
 + O(y_2^2 (\tth))  + O( e^{-  \tth}) \, .
\end{equation}
Therefore $ | y_2 (\tth) | \leq C e^{- \frac{\tth}{2} } $ and then, 
reinserting this bound in  \eqref{p=2as} we deduce that
$ y_2 (\tth) = \tfrac{3}{8} e^{ -   \frac{\tth}{2}  } + o ( e^{ -   \frac{\tth}{2}  }) $ which is 
the asymptotic 
expansion \eqref{expaphip} for $\tp= 2 $.
The cases $\tp= 3 $ and $\tp\geq 4 $ follow similarly.
By \eqref{anyp}, 
writing $ \uphi (3,\tth) = 1 + y_3 (\tth) $ where $ y_3 (\tth) \to 0^- $, we have 
$$ 
\frac34 y_3 (\tth) = - 3  e^{ - 2 \tth } + \sqrt{1 + y_3 (\tth) } 
e^{- 2 \tth} e^{- 2 \tth y_3 (\tth)}
+ O(y_3^2 (\tth) )   + O(e^{- 4 \tth })
$$
and therefore \eqref{expaphip} for $\tp= 3 $ follows.
Finally for any $\tp\geq 4 $ we deduce by \eqref{anyp} 
and since  $ \frac{(\tp-1)^2}{4} > 1 $ that 
$ \frac{2\tp}{\tp^2-1} y_\tp (\tth) = -\tp e^{ - 2 \tth } + O(y_\tp(\tth)^2) + O(e^{- 4 \tth })$
and therefore \eqref{expaphip} holds for any $\tp\geq 4 $. 
\qed \smallskip
\\[1mm]
\noindent{\bf Proof of Lemma \ref{thm:unpert.coll}}. 
In view of \eqref{omeghino} the operator $ \cL_{\mu,0} $  in \eqref{cLmu} has a non-simple eigenvalue if and only if  the equation 
$\lambda_j^\sigma(\mu,\tth) = \lambda_{j'}^{\sigma'}(\mu,\tth)  $
has a solution $(j,\sigma) \neq (j',\sigma') $ with $j,j'\in \bZ $ and $\sigma,\sigma'=\pm$. This amounts to solve 
$$ 
\omega^\sigma(j+\mu,\tth) = \omega^{\sigma'}(j+\mu+\tp,\tth) 
\quad \text{for some} \quad \sigma, \sigma' \in \{ \pm \} \, , \ 
 j \in \bZ \, ,  \ \tp\in \bN_0 \, ,
 $$ 
namely 
\begin{equation}\label{simdou}
\omega^\sigma(\varphi,\tth) = \omega^{\sigma'}(\varphi+\tp,\tth)
\quad \text{for some} \quad \sigma, \sigma' \in \{ \pm \} \, , \ 
\tp \in \bN_0 \, , \ \varphi-\mu  \in \bZ \, . 
\end{equation}
In view of Lemma \ref{collemma} and Remark \ref{rem:BF}
the equation \eqref{simdou} has a solution 
with 
$ \omega^\sigma(\varphi,\tth) = \omega^{\sigma'}(\varphi+\tp,\tth) \neq 0 $
if and only 
if $ \sigma = - $, $ \sigma' = +  $, $\tp\geq 2 $ and $\uphi(\tp,\tth)-\mu \in \bZ $. 
Then Item \ref{iteml1} follows. For Item \ref{iteml2}, if $\uphi(\tp,\tth)-\mu =: k \in \bZ $ then,  
 in view of \eqref{Kreincollision}, \eqref{omeghino}, \eqref{def:fsigmaj} and since $ \uphi(\tp,\tth) \neq 0 $ then 
  $\im \omega_*^{(\tp)}(\tth)   $ in \eqref{intcollision} 
is a double eigenvalue of $ \cL_{\mu,0}$
with associated eigenvectors  \eqref{autovettorikernel}.
Furthermore 
\eqref{intcollisioncre} follows by \eqref{Kreincollision}
and \eqref{uallinfi} because $ 
\varphi \mapsto \ch \varphi + \Omega (\varphi, \tth ) $ is increasing for 
$ \varphi > 0 $.
Finally by \eqref{negativesolution}, 
the opposite double eigenvalue $-\im \omega_*^{(\tp)} (\tth)  $ 
is obtained for $\varphi=-\uphi(\tp,\tth)-\tp $ and belongs to the spectrum of $\cL_{-\mu,0} $. 
\qed

\section{Proof of the cancellation \eqref{estate2023}}\label{sec:canc}

In this Appendix we  prove   \eqref{estate2023}. 
We first show 
in Proposition \ref{lemB1} 
that $A^{(\tp)}$ is proportional to the   determinant of  a $ \tp\times \tp$ matrix $\mathrm{I\!I\!I} $. Then 
in Lemma \ref{lemB2} we show that 
$\det \mathrm{I\!I\!I} = 0 $ and so $ A^{(\tp)} =0 $. 

\begin{prop}\label{lemB1}
For any integer  $\tp \geq 2$, the term $A^{(\tp)}$ in \eqref{Ap} is 
\begin{equation}\label{Apisadeterminant}
    A^{(\tp)} = \frac1{\prod\limits_{j=1}^{\tp-1} (\tp^3-\tp + j - j^3 )} \det \, \mathrm{I\!I\!I} \, ,
\end{equation}
where 
$\mathrm{I\!I\!I}$ is the $\tp \times \tp$   tridiagonal matrix with integer  entries
\begin{equation}\label{entriesIII}
\mathrm{I\!I\!I}_k^j := \begin{cases} 
2\tp^3-2\tp-2j^3-10 j & \mbox{if } k=j\leq \tp -1\, ,\\
j^3-j-\tp^3+\tp & \mbox{if } k=j+1 \textup{ or }k=j-1 \, ,\\
1 & \mbox{if } j=k=\tp\, ,\\
0 & \mbox{otherwise } ,
\end{cases}
\end{equation} 
with $k$  and $j$ being respectively the row and column indices.
\end{prop}
Before proving Proposition \ref{lemB1}, we show that $ \ker \mathrm{I\!I\!I} \neq \{ 0 \} $. 
A direct calculus shows that:

\begin{lem}\label{lemB2} For any integer $\tp \geq 2$,
the vector 
$ \mathbf{v} :=  \begin{pmatrix} 1,& 2, & \dots, & \tp-1, & 3\tp(\tp-1)^2 \end{pmatrix}^{\top} \in \bR^\tp  
$  
belongs to the kernel of the matrix $\mathrm{I\!I\!I} $ defined in \eqref{entriesIII}. Thus
$\det \, \mathrm{I\!I\!I} = 0$.
\end{lem}



The rest of the Appendix is devoted to the proof of Proposition \ref{lemB1}. 

For any $ 1 \leq q \leq \tp - 1 $, we regard 
any set of $ q $ indices $ \{ j_1, \ldots, j_q  \} $ satisfying 
$ 1 \leq j_1  < \ldots < j_q \leq \tp - 1 $
as 
$ \phi^{-1}(1) = \{ j_1, \ldots, j_q \} $ 
where the function 
$ \phi: \{1,\dots,\tp-1\} \to \{0,1\}$,
$ j \mapsto \phi(j) $, 
attains value $ \phi(j_1) = \ldots = \phi(j_q) = 1  $,  
and $\phi(j)=0$ if and only if  
$j \neq j_1,\dots,j_q $.
In this way  
we  rewrite \eqref{Ap} as
\begin{equation}\label{newsummation}
A^{(\tp)} = \tp+\sum_{q=1}^{\tp-1} \sum_{1 \leq j_1<\dots<j_q \leq \tp-1}\!\! \! \!\! \! \ac{q}(j_1,\dots,j_q) = \!\! \! \!\! \! \sum_{\phi: \{1,\dots,\tp-1\} \to \{0,1\} } 
\!\! \! \!\! \!\!\! \! \!\! \! \ac{| \phi^{-1}(1)|}\big(\phi^{-1}(1)\big) \, ,\quad \textup{with }\ac{0}(\emptyset):=\tp\, ,
\end{equation}
summing on all the 
functions 
$\phi: \{1,\dots,\tp-1\} \to \{0,1\}$,  
arranging  $\phi^{-1}(1) = \{ j_1 < \ldots < j_q \} $ in ascending order, and 
denoting $ q := |\phi^{-1}(1)| $  its  cardinality. If $ \phi \equiv 0 $
we have $ \phi^{-1} (1) = \emptyset $. 

In the sequel we denote 
$\delta_{j,k}:= 1 $ if $ j=k $
and  $ \delta_{j,k}:= 0 $ otherwise, i.e.\
the Kronecker delta.

\begin{lem}
Let $\phi: \{1,\dots,\tp-1\} \to \{0,1\}$.
Then the terms in \eqref{cancelingasympcoeff} are 
\begin{equation}\label{banale}
\ac{|\phi^{-1}(1)|}\big(\phi^{-1}(1)\big) =\frac1{\prod\limits_{j=1}^{\tp-1} (\tp^3-\tp + j - j^3 )}  \det \widetilde{M}_\phi 
\end{equation}
where $\widetilde{M}_\phi$ is the $\tp\times \tp$  matrix   with  entries
\begin{equation}\label{matrixtildeMphi}
[\widetilde M_\phi]_k^j := \begin{cases}
-12 j \min\{j,k\}  &\textup{if } \phi(j)=1 \,, 1 \leq j \leq \tp - 1 \, ,  \ \forall \,  
k = 1, \ldots  \tp  \, ,  
\\
(\tp^3-\tp+j-j^3) \delta_{j,k}  &\textup{if } \phi(j)=0\,, 1 \leq j \leq \tp - 1 \, ,   \ \forall \,  
k = 1, \ldots  \tp  \, ,   \\
k &\textup{if }j=\tp\, , \ \forall \,  
k = 1, \ldots  \tp  \,  .
\end{cases}
\end{equation}
\end{lem}

\begin{proof} 
If $\phi \equiv 0$ the matrix
$ \widetilde M_\phi $ in \eqref{matrixtildeMphi} is triangular with diagonal  entries
$ [\widetilde M_\phi]_j^j = \tp^3-\tp+j-j^3  $ for any $ 1 \leq j \leq \tp - 1 $ and 
$ [M_\phi]_\tp^\tp = \tp $. Thus  \eqref{banale} holds, since 
$ \ac{0}(\emptyset):=\tp $, cfr. \eqref{newsummation}.
Thus in the sequel $ \phi \not \equiv 0 $. 
\\[1mm]
{\bf Step 1}. 
{\it The determinant of 
the $\tp\times \tp$ matrix 
$M_\phi$  with  entries
\begin{equation}\label{matrixMphi}
[M_\phi]_k^j  := \begin{cases}
\min\{j,k\} &\textup{if } \phi(j)=1 \, , \, j \in \{ 1, \ldots, \tp -1 \} \, ,  \textup{ or }j=\tp\,,\\
\delta_{j,k}  &\textup{if } \phi(j)=0 \, , \, j \in \{ 1, \ldots, \tp -1 \} \, , 
\end{cases}
\end{equation}
is equal to} 
\begin{equation}\label{proddifffin}
\det M_\phi = 
j_1(j_2-j_1)\dots(j_q-j_{q-1})(\tp -j_q) \, , 
\end{equation}  
{\it where} $  \{ 1 \leq j_1 < \ldots < j_q 
\leq \tp - 1 \} = \phi^{-1}(1) $.  
 We write  
\begin{equation}\label{toenlarge}
j_1(j_2-j_1)\dots(j_q-j_{q-1})(\tp -j_q) = \det \begingroup 
\setlength\arraycolsep{2pt} 
\underbrace{{\footnotesize \begin{pmatrix}  j_1  & 0 & 0  & \dots & 0 \\
j_1 & j_2-j_1 & 0 & \dots & 0 \\
\vdots & \vdots & \ddots & \ddots & \vdots \\
j_1 & j_2-j_1 & \dots & j_q - j_{q-1} & 0  \\
j_1 & j_2-j_1 & \dots & j_q - j_{q-1} & \tp-j_q \end{pmatrix}}}_{=: Q_\phi } \endgroup = \det \underbrace{\begingroup 
\setlength\arraycolsep{2pt}{\footnotesize\begin{pmatrix}  j_1  & j_1 & j_1  & \dots & j_1 \\
j_1 & j_2 & j_2 & \dots & j_2 \\
\vdots & \vdots & \ddots & \ddots & \vdots \\
j_1 & j_2 & \dots & j_q  & j_q  \\
j_1 & j_2 & \dots & j_q  & \tp \end{pmatrix}}\endgroup }_{=: N_\phi} 
\end{equation}
where the  $j$-th column of the 
matrix $N_\phi$ 
is the sum of the first $j$ columns of the  matrix $ Q_\phi$.  
It results 
\begin{equation}\label{detuguali}
\det N_\phi = \det \widehat M
\end{equation}
where 
$\widehat M$ is the $\tp\times \tp $ matrix  
defined as follows: 
for any  $j=1,\dots,\tp$ 
\begin{itemize}
    \item for any $ 1 \leq j \leq \tp - 1 $ if $\phi(j)=0$, 
    i.e.\ $j \neq j_1,\dots,j_q $, 
    then the $j$-th column of $\widehat M$ is the vector $e_j$ of the standard basis of $\bR^\tp$;
    \item for any $ 1 \leq j \leq \tp - 1 $ if  $\phi(j)=1$, i.e.\ $j = j_\iota$ with $\iota=1,\dots,q$, then the $j$-th column of $\widehat M$ has its $j_1, \ldots, j_q$-th rows equal to the  $1, \ldots, q$-th rows of the $i$-th column of $N_\phi$, i.e.\ $ [\widehat M]^{j_\iota}_{j_\kappa} = [N_\phi]^\iota_{\kappa} $ for any $ \iota,\kappa \in \{1, \ldots, q\} $;
    \item the remaining elements $\widehat M_k^j$ are  equal to $\min\{j,k\}$, in particular the $ \tp$-th column is
    $ \widehat M_k^\tp = k $.
\end{itemize}
The identity \eqref{detuguali}
follows expanding the determinant by the above bullets. 
We actually claim that 
\begin{equation}\label{equalse}
\widehat M = M_\phi \, . 
\end{equation} 
Indeed the $\tp$-th column of $\widehat M $ and $  M_\phi$ are identical. If 
for $ 1 \leq j \leq \tp - 1 $, 
$ \phi (j) = 0 $, then 
by the first bullet, the $ j $-th column of 
$ \widehat M $ and $ M_\phi $ coincide. 
We now check that also the 
$j$-th columns of $\widehat M $ and $  M_\phi$ coincide, if $ j = j_\iota $, $ \iota \in \{1, \ldots, q \}  $.
If $ k \notin \{ j_1, \ldots, j_q \} $
by the third bullet 
$ \widehat M_k^j  = \min\{j,k\} = [M_\phi]_k^j $ as in \eqref{matrixMphi}.
If 
$ k = j_{\kappa} $ for some $ \kappa \in \{1,\dots,q\}$, then,  
by the second bullet 
(and since $j_1,\dots,j_q$ are arranged in ascending order)
$ \widehat M_k^j  =
\widehat M_{j_\kappa}^{j_\iota}  = [N_\phi]_{\kappa}^\iota \stackrel{\eqref{toenlarge}}{=} j_{\min\{\kappa,\iota\}} = \min\{j_{\iota},j_{\kappa}\} = \min\{j,k\} = [M_\phi]_k^j $. 
 This proves \eqref{equalse}. Finally \eqref{proddifffin} follows from  
 \eqref{equalse}, \eqref{detuguali} and \eqref{toenlarge}. 
\\[1mm]
{\bf Step 2}. 
The matrix $\widetilde{M}_\phi$ 
in \eqref{matrixtildeMphi} is obtained   
by multiplying the $j$-th column, $ 1 \leq j \leq \tp - 1 $,  
of the matrix $M_\phi$ in
\eqref{matrixMphi} by the factor  $-12 j$, if $\phi(j)=1$, or  $(\tp^3-\tp+j-j^3) $ if $\phi(j)=0$. Thus, by computing the determinant  of  $\widetilde{M}_\phi$, using 
\eqref{proddifffin} and recalling \eqref{cancelingasympcoeff}, we deduce \eqref{banale}.  
\end{proof}

We now write $A^{(\tp)}$ as the determinant of a matrix. 

\begin{lem} For any integer $ \tp\geq 2$, the term $A^{(\tp)}$ in \eqref{Ap} is 
\begin{equation}\label{partialresult}
    A^{(\tp)} =  \frac1{\prod\limits_{j=1}^{\tp-1} (\tp^3-\tp + j - j^3 )}  \det \mathtt{M} 
\end{equation}
where 
$\mathtt{M}$ is the $\tp \times \tp$     matrix with integer  entries
\begin{equation}\label{matrixttM}
\mathtt{M}_k^j := \begin{cases}
-12 j \min\{j,k\}  &\textup{if } k\neq j \leq \tp -1\,,\\
\tp^3-\tp-j^3-12 j^2 +j   &\textup{if } k = j \leq \tp -1 \,, \\
k &\textup{if }j=\tp \, , \, \forall k =1, \ldots, \tp  \, .
\end{cases}
\end{equation}
\end{lem}

\begin{proof}
By \eqref{newsummation} and 
\eqref{banale} we  have
\begin{equation}\label{granfinale?}
A^{(\tp)} =   \tfrac1{\prod\limits_{j=1}^{\tp-1} (\tp^3-\tp + j - j^3 )}  \sum_{\phi: \{1,\dots,\tp-1\} \to \{0,1\} }  \det \widetilde{M}_\phi \, ,
\end{equation}
and the matrix $ \widetilde{M}_\phi $ in \eqref{matrixtildeMphi} can be written as 
\begin{equation}\label{detmezzo}
\widetilde{M}_\phi = \begin{pmatrix} col_{1;\phi(1)} \ \big| & col_{2;\phi(2)} 
\ \big| & \dots \ \big| & col_{\tp-1;\phi(\tp-1)}
\ \big| & \widetilde{col}_1 \end{pmatrix} 
\end{equation}
with  column vectors
\begin{equation}\label{colmve}
col_{j;1} := \big( -12 j \min\{j,k\} \big)_{k=1,\dots,\tp}\,,\quad col_{j;0} := \big( (\tp^3-\tp + j - j^3 ) \delta_{j,k} \big)_{k=1,\dots,\tp}\,, \quad \widetilde{col}_{1} := \big( 1, \dots, \tp \big)^\top\, .
\end{equation}
By  \eqref{granfinale?}, \eqref{detmezzo} and 
the multilinearity of the determinant  
we deduce
$$
    A^{(\tp)} =  \tfrac1{\prod\limits_{j=1}^{\tp-1} (\tp^3-\tp + j - j^3 )}  \det \underbrace{\begin{pmatrix} col_{1;0} + col_{1;1}  \ \big| & col_{2;0} + col_{2;1} \ \big| & \dots \ \big| & col_{\tp-1;0} + col_{\tp-1;1} \ \big|  & \widetilde{col}_1 \end{pmatrix}}_{= \mathtt{M}} 
$$
where, by  inspection, cfr.\eqref{colmve}, 
the entries of the matrix $\mathtt{M} $ are those given in \eqref{matrixttM}.
\end{proof}

\noindent{\it Proof of Proposition \ref{lemB1}.} 
The determinant of $\mathtt{M} $ is equal to 
\begin{equation}\label{determinatalid}
    \det \mathtt{M} = \det \mathtt{H} 
\end{equation}
where  the matrix 
$\mathtt{H}$ is   obtained by $\mathtt{M} $ in \eqref{matrixttM}  
substituting each row,  except the first one, with the difference between itself and the previous row, 
in formulas 
 $  \mathtt{H}_1^j := \mathtt{M}_1^j $,  $ \mathtt{H}_k^j  := \mathtt{M}_k^j  - \mathtt{M}_{k-1}^j $, $  \forall $, $ k=2,\dots,\tp $, 
 $  \forall \, j=1,\dots,\tp $. 
In view of \eqref{matrixttM} 
its entries are  
\begin{equation}\label{matrixttH}
\mathtt{H}_k^j  :=
    \begin{cases}
-12 j   &\textup{if } k < j \leq \tp -1\,,\\
\tp^3-\tp-j^3-11 j   &\textup{if } k = j \leq \tp -1 \,, \\
- (\tp^3-\tp-j^3 +j)  &\textup{if } k = j+1\,,\\
0  &\textup{if } j+2 \leq k \leq \tp \,,\\
1 &\textup{if }j=\tp \, , \, \forall k =1, \ldots, \tp  \, .
\end{cases}
\end{equation}
Furthermore
\begin{equation}\label{determinatalid1}
    \det \mathtt{H} = 
     \det \mathrm{I\!I\!I}
\end{equation}
where  the matrix 
$ \mathrm{I\!I\!I} $  is  obtained from $\mathtt{H}$ by substituting each row,  except the last one, with the difference between itself and the next row, in formulas 
$  \mathrm{I\!I\!I}_\tp^j := \mathtt{H}_\tp^j $, $ \mathrm{I\!I\!I}_k^j := \mathtt{H}_k^j  - \mathtt{H}_{k+1}^j $, $ \forall \,  k=1,\dots,\tp-1 $, $  \forall j=1,\dots,\tp $.  
In view of \eqref{matrixttH},  the matrix $\mathrm{I\!I\!I} $ is the  tridiagonal one in \eqref{entriesIII}. 

In conclusion  \eqref{partialresult},  \eqref{determinatalid} and
\eqref{determinatalid1} imply  \eqref{Apisadeterminant}. \qed

 \begin{footnotesize} 
 	
\end{footnotesize}

\noindent 
\footnotesize{Work  supported by PRIN 2020 (2020XB3EFL001) “Hamiltonian and dispersive PDEs”, PRIN 2022 (2022HSSYPN)   “Turbulent Effects vs Stability in Equations from Oceanography". A. Maspero is supported by the European Union  ERC CONSOLIDATOR GRANT 2023 GUnDHam, Project Number: 101124921.
P.\ Ventura is supported by the ERC STARTING GRANT 2021 ``Hamiltonian Dynamics, Normal Forms and Water Waves" (HamDyWWa), Project Number: 101039762.
 Views and opinions expressed are however those of the authors only and do not necessarily reflect those of the European Union or the European Research Council. Neither the European Union nor the granting authority can be held responsible for them.
}
\normalsize

\medskip

\noindent 
{\it Massimiliano Berti}, SISSA, Via Bonomea 265, 34136, Trieste, Italy, \texttt{berti@sissa.it},  
\\[1mm] 
{\it Livia Corsi},  Univ. di Roma 3, 
Dip. di Matematica e Fisica, 
Roma, 00146, Italy
\texttt{livia.corsi@uniroma3.it},
\\[1mm]
{   \it Alberto Maspero}, SISSA, Via Bonomea 265, 34136, Trieste, Italy,  \texttt{amaspero@sissa.it}, 
 \\[1mm] 
{   \it Paolo Ventura}, Università degli Studi di Milano, Via Saldini 50, 
20133 Milano, \texttt{paolo.ventura@unimi.it}.
\end{document}